\pgfplotsset{soldot/.style={color=black,only marks,mark=*}} \pgfplotsset{holdot/.style={color=black,fill=white,only marks,mark=*}}
\newtheorem{thm}{Theorem}[section]
\newtheorem{mthm}[thm]{Main Theorem}
\newtheorem{prop}[thm]{Proposition}
\newtheorem{cor}[thm]{Corollary}
\newtheorem{lem}[thm]{Lemma}
\newtheorem{prob}[thm]{Problem}
\newtheorem{conj}[thm]{Conjecture}
\theoremstyle{definition}
\newtheorem{define}[thm]{Definition}
\theoremstyle{remark}
\newtheorem{remark}[thm]{Remark}
\newtheorem{example}[thm]{Example}
\newtheorem{examples}[thm]{Examples}
\numberwithin{equation}{section}
\renewcommand{\theparagraph}{\thesubsection.\arabic{paragraph}}
 \newcommand{\J}{{\mathcal J}}
 \newcommand{\R}{{\mathbb R}}
 \newcommand{\C}{{\mathbb C}}
\newcommand{\sph}{{\mathbb S}} 
 \newcommand{\PP}{{\mathbb P}}
\newcommand{\Kk}{{\EuScript K}}
\newcommand{\Ss}{{\EuScript S}}
\newcommand{\Qq}{{\EuScript Q}}
\newcommand{\Aa}{{\EuScript A}}
\newcommand\Rr{{\EuScript R}}
\newcommand{\Bb}{{\EuScript B}}
\newcommand{\Dd}{{\EuScript D}}
\newcommand{\Tt}{{\EuScript T}}
\newcommand{\Cc}{{\EuScript C}}
\newcommand{\Ee}{{\EuScript E}}
\newcommand{\Gg}{{\EuScript G}}
\newcommand{\tildebaja}{{\raise.17ex\hbox{$\scriptstyle\sim$}}}
\newcommand{\Int}{\operatorname{Int}}
\newcommand{\im}{\operatorname{Im}}
\newcommand{\cl}{\operatorname{Cl}}
\newcommand{\dist}{\operatorname{dist}}
\newcommand{\id}{\operatorname{id}}
\newcommand{\zar}{\operatorname{zar}}
\newcommand{\rk}{\operatorname{rk}}
\newcommand{\Reg}{\operatorname{Reg}}
\newcommand{\Sing}{\operatorname{Sing}}
\newcommand{\x}{{\tt x}} \newcommand{\y}{{\tt y}}
\newcommand{\z}{{\tt z}} \renewcommand{\t}{{\tt t}}
\newcommand{\gtm}{{\mathfrak m}}
 \newcommand{\gtM}{{\mathfrak M}}
\newcommand{\ol}{\overline}
\newcommand{\qq}[1]{\langle{#1}\rangle}
\newcommand{\QQ}[1]{\Big\langle{#1}\Big\rangle}
\newcommand{\veps}{\varepsilon}
\begin{document}
\title[On Nash images of Euclidean spaces]{On Nash images of Euclidean spaces}

\author{Jos\'e F. Fernando}
\address{Departamento de \'Algebra, Facultad de Ciencias Matem\'aticas, Universidad Complutense de Madrid, 28040 MADRID (SPAIN)}
\email{josefer@mat.ucm.es}
\thanks{Author supported by Spanish GAAR MTM2014-55565-P, Spanish MTM2017-82105-P and Grupos UCM 910444. This article has been finished during a one year research stay of the author in the Dipartimento di Matematica of the Universit\`a di Pisa. The author would like to thank the department for the invitation and the very pleasant working conditions. The $1$-year research stay of the author was partially supported by MECD grant PRX14/00016.}

\dedicatory{Dedicated to Prof. Masahiro Shiota, in memoriam}

\date{06/04/2018}
\subjclass[2010]{Primary: 14P20, 58A07; Secondary: 14P10, 14E15.}
\keywords{Nash map, Nash image, semialgebraic set, pure dimension, Nash path, arc-symmetric set, desingularization, blow-up, drilling blow-up, Nash collar, Nash double, well-welded semialgebraic set, checkerboard set, Nash path-connected semialgebraic set}

\begin{abstract}
In this work we characterize the subsets of $\R^n$ that are images of Nash maps $f:\R^m\to\R^n$. We prove Shiota's conjecture and show that \em a subset $\Ss\subset\R^n$ is the image of a Nash map $f:\R^m\to\R^n$ if and only if $\Ss$ is semialgebraic, pure dimensional of dimension $d\leq m$ and there exists an analytic path $\alpha:[0,1]\to\Ss$ whose image meets all the connected components of the set of regular points of $\Ss$\em. Two remarkable consequences are the following: (1) pure dimensional irreducible semialgebraic sets of dimension $d$ with arc-symmetric closure are Nash images of $\R^d$; and (2) semialgebraic sets are projections of irreducible algebraic sets whose connected components are Nash diffeomorphic to Euclidean spaces. 
\end{abstract}

\maketitle
\tableofcontents

\section{Introduction}\enlargethispage{4mm}

Although it is usually said that the first work in Real Geometry is due to Harnack \cite{ha}, who obtained an upper bound for the number of connected components of a non-singular real algebraic curve in terms of its genus, modern Real Algebraic Geometry was born with Tarski's article \cite{ta}, where it is proved that the image of a semialgebraic set under a polynomial map is a semialgebraic set. A map $f:=(f_1,\ldots,f_n):\R^m\to\R^n$ is \em polynomial \em if its components $f_k\in\R[{\tt x}]:=\R[{\tt x}_1,\ldots,{\tt x}_m]$ are polynomials. Analogously, $f$ is \em regular \em if its components can be represented as quotients $f_k=\frac{g_k}{h_k}$ of two polynomials $g_k,h_k\in\R[{\tt x}]$ such that $h_k$ never vanishes on $\R^m$. A subset $\Ss\subset\R^n$ is \em semialgebraic \em when it has a description by a finite boolean combination of polynomial equalities and inequalities, which we will call a \em semialgebraic \em description. Unless stated otherwise, the topology employed in the article is the Euclidean one.

We are interested in studying what might be called the `inverse problem' to Tarski's result. In the 1990 \em Oberwolfach reelle algebraische Geometrie \em week \cite{g} Gamboa proposed: 

\begin{prob}\label{prob0}
To characterize the (semialgebraic) subsets of $\R^n$ that are either polynomial or regular images of $\R^m$. 
\end{prob}

During the last decade we have attempted to understand better polynomial and regular images of $\R^m$. Our main objectives have been the following: 
\begin{itemize}
\item To find obstructions to be either polynomial or regular images. 
\item To prove (constructively) that large families of semialgebraic sets with piecewise linear boundary (convex polyhedra, their interiors, complements and the interiors of their complements) are either polynomial or regular images of Euclidean spaces. 
\end{itemize}

In \cite{fg1,fg2} we presented the first step to approach Problem \ref{prob0}. In \cite{fe1} appears a complete solution to Problem \ref{prob0} for the $1$-dimensional case, whereas in \cite{fgu1,fgu3,fu1,fu2,fu4,fu5,u1,u2} we approached constructive results concerning the representation as either polynomial or regular images of the semialgebraic sets with piecewise linear boundary commented above. A survey concerning this topic, which provides the reader a global idea of the state of the art, can be found in \cite{fgu4}. Articles \cite{fgu2,fu3} are of different nature. In them we find new obstructions for a semialgebraic subset of $\R^n$ to be either a polynomial or a regular image of $\R^m$. In the first one we found some properties concerning the difference ${\rm Cl}(\Ss)\setminus\Ss$ for a polynomial image $\Ss$ of $\R^m$ whereas in the second it is shown that \em the set of points at infinite of $\Ss$ is a connected set\em.

The rigidity of polynomial and regular maps makes really difficult to approach Problem \ref{prob0} in its full generality. Taking into account the flexibility of Nash maps, Gamboa and Shiota discussed in 1990 the possibility of approaching the following variant of Problem \ref{prob0}. 

\begin{prob}\label{prob1}
To characterize the (semialgebraic) subsets of $\R^n$ that are Nash images of $\R^m$. 
\end{prob}

A \em Nash function on an open semialgebraic set \em $U\subset\R^n$ is a semialgebraic smooth function on $U$. Recall that a (non-necessarily continuous) map $f:\Ss\to\Tt$ is \em semialgebraic \em if its graph is a semialgebraic set (in particular we assume that both $\Ss$ and $\Tt$ are semialgebraic sets). Given a semialgebraic set $\Ss\subset\R^n$, a \em Nash function on $\Ss$ \em is the restriction to $\Ss$ of a Nash function on an open semialgebraic neighborhood $U\subset\R^n$ of $\Ss$. 

In 1990 Shiota outlined to Gamboa and the rest of the Real Geometry team at Madrid a vague schedule that sustains the following conjecture (wrongly announced in \cite{g,fg1} as proved by Shiota) in order to provide a satisfactory answer to Problem \ref{prob1}. 

\begin{conj}[Shiota]\label{main0}
Let $\Ss\subset\R^n$ be a semialgebraic set of dimension $d$. Then $\Ss$ is a Nash image of $\R^d$ if and only if $\Ss$ is pure dimensional and there exists an analytic path $\alpha:[0,1]\to\Ss$ whose image meets all connected components of the set of regular points of $\Ss$.
\end{conj}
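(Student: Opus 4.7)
The plan is to establish the two implications separately; the forward direction is short, while the converse is the main content of the paper.

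For \emph{necessity}, suppose $\Ss=f(\R^d)$ for a Nash map $f:\R^d\to\R^n$. Since $\R^d$ is irreducible as a semialgebraic set and $f$ is continuous and semialgebraic, the image $\Ss$ is irreducible, and an irreducible semialgebraic set of dimension $d$ is automatically pure dimensional of dimension $d$. To build the required analytic path, pick one representative $p_i$ in each connected component of $\Reg(\Ss)$, choose preimages $q_i\in\R^d$ under $f$, and join $q_1,\ldots,q_r$ by a single polynomial curve $\beta:[0,1]\to\R^d$ passing through the $q_i$ in order (existing by standard polynomial interpolation); then $\alpha:=f\circ\beta$ is an analytic path in $\Ss$ meeting every connected component of $\Reg(\Ss)$.

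For \emph{sufficiency}, my scheme is to combine a desingularization of $\cl(\Ss)$ with the surgical techniques flagged by the keywords (drilling blow-up, Nash collar, Nash double, well-welded, checkerboard). First I would apply Hironaka's real desingularization to the Zariski closure of $\Ss$, producing a non-singular $d$-dimensional real algebraic variety $X$ and a proper Nash map $\pi:X\to\R^n$ whose image contains $\cl(\Ss)$. Restricting to a suitable Nash subset $\widetilde{\Ss}\subset X$ one obtains a $d$-dimensional Nash manifold with boundary (or corners) whose image under $\pi$ is $\cl(\Ss)$, with the boundary mapping into $\cl(\Ss)\setminus\Reg(\Ss)$. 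Using a Nash collar and then the Nash double I would replace $\widetilde{\Ss}$ by a boundaryless $d$-dimensional Nash manifold $M$ that surjects onto $\Ss$ via (an extension of) $\pi$. The possibly disconnected components of $M$ correspond roughly to the connected components of $\Reg(\Ss)$, and the analytic path hypothesis enters decisively here: piecewise lifts of $\alpha$ produce a finite chain of analytic arcs along which I would perform drilling blow-ups to weld the sheets of $M$ into a single connected, pure $d$-dimensional Nash set whose image remains $\Ss$.

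The final step is to realize this connected welded object $M$ itself as a Nash image of $\R^d$, which I would approach through the intermediate notions of \emph{well-welded} and \emph{checkerboard} semialgebraic sets: prove by induction that a well-welded (respectively checkerboard) $d$-dimensional semialgebraic set admits a Nash surjection from $\R^d$, then reduce the welded manifold produced above to such a situation via a Nash triangulation compatible with the welds. Composing this surjection with the map $M\to\Ss$ yields the desired Nash parametrization. The main obstacle, in my view, is the welding step: one must glue several Nash manifolds along analytic arcs in a way that keeps the result a \emph{Nash} image rather than merely a smooth one, and this is precisely where the drilling blow-up mechanism developed in this paper must replace the familiar $C^\infty$ partition-of-unity arguments; controlling the dimension of the source so that $\R^d$ (rather than some $\R^{d+k}$) suffices adds a further layer of delicacy that presumably motivates the checkerboard formalism.
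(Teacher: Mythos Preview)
Your necessity argument is fine and matches the paper's implicit reasoning (Corollary~\ref{imnm}, Lemma~\ref{imp1}).

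For sufficiency you have identified the right ingredients but assembled them in essentially the reverse order, and one step is not correct as stated. In the paper the flow is: the analytic path in (vi) first shows that $\Ss$ is \emph{well-welded} (Lemma~\ref{imp2}), because it provides Nash \emph{bridges} between the connected components of $\Reg(\Ss)$. Then \emph{classical} blow-ups (Theorem~\ref{clave}, Proposition~\ref{clave0}) convert a well-welded set into a \emph{checkerboard} set $\Tt$ surjecting onto $\Ss$: one repeatedly blows up along irreducible components of $\bigcup_{i\neq j}\cl(M_i)\cap\cl(M_j)$, which merges two opposite-orthant components of $\Reg$ at each step until $\Reg(\Tt)$ is connected. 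Only \emph{after} reaching a checkerboard set does the drilling blow-up enter (Section~\ref{s8}, proof of (vii)$\Rightarrow$(i)), and its role is the \emph{opposite} of welding: it excises the low-dimensional ``bad'' strata of $\Sing(\Tt)$, turning $\Tt$ into a Nash manifold with boundary $H$ still surjecting onto $\Tt$. Then Theorem~\ref{mstone} (via the Nash double of Proposition~\ref{doubleivc} and the triangulation/simplex-erasure of Theorem~\ref{mstone0}) shows $H$ is a Nash image of $\R^d$.

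Your proposed step ``perform drilling blow-ups along lifted arcs to weld the sheets of $M$ into a single connected set'' is where the scheme breaks: the drilling blow-up of $M$ with center $N$ produces a manifold with boundary $\widetilde{M}_+$ whose boundary is a sphere bundle over $N$ --- it \emph{separates} rather than connects, and in any case a $1$-dimensional center in a $d$-manifold does not merge components. The connectedness of the eventual source is obtained instead through the classical blow-ups of Theorem~\ref{clave}, which exploit the local normal-crossings picture $\{x_1>0,\ldots,x_\ell>0\}\cup\{x_1<0,\ldots,x_\ell<0\}$ becoming $\{x_2>0,\ldots,x_\ell>0\}$ after blowing up the origin. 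A second gap is that your desingularization produces a surjection onto $\cl(\Ss)$, not $\Ss$; the paper handles this by working with strict transforms throughout (Proposition~\ref{pww} and the non-closed case of Proposition~\ref{clave0}), which requires additional blow-ups at carefully chosen points.
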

The set of regular points of a semialgebraic set $\Ss\subset\R^n$ is defined as follows. Let $X$ be the Zariski closure of $\Ss$ in $\R^n$ and let $\widetilde{X}$ be the complexification of $X$, that is, the smallest complex algebraic subset of $\C^n$ that contains $X$. Define $\Reg(X):=X\setminus\Sing(\widetilde{X})$ and let $\Reg(\Ss)$ be the interior of $\Ss\setminus\Sing(\widetilde{X})$ in $\Reg(X)$. We will explain this in more detail in \ref{s2a}.

In 2004 we met again with Shiota and discussed about possible ways to attack his conjecture. It was not clear how to follow certain parts of his 1990 schedule and we have performed strong variations and substantially simplified the architecture of the approach. However, that fruitful meeting was the starting point for the present work and some related ones \cite{bfr,fgr}. The latter include useful tools for this article concerning: 
\begin{itemize}
\item[(1)] Extension of Nash functions on a Nash manifold $H$ with boundary to a Nash manifold $M$ of its same dimension that contains $H$ as a closed subset \cite{fgr}.
\item[(2)] Approximation results on a Nash manifold relative to a Nash subset with monomial singularities \cite{bfr}.
\item[(3)] Equivalence of Nash classification and ${\mathcal C}^2$ semialgebraic classification for Nash manifolds with boundary \cite{bfr}.
\end{itemize}

Recall that an \em (affine) Nash manifold with or without boundary \em is a pure dimensional semialgebraic subset $M$ of some affine space $\R^m$ that is a smooth submanifold with or without boundary of an open subset of $\R^m$. As all the Nash manifolds with or without boundary appearing in this work are affine, we will assume this property when referring to Nash manifolds with or without boundary. In addition when we refer to a Nash manifold with boundary, we assume that this boundary is smooth and in fact a Nash submanifold. The zero set of a Nash function on a Nash manifold $M$ is called a \em Nash subset of $M$\em. 

\subsection{Main results}

The main result of this work is Theorem \ref{main} that includes a positive solution to Shiota's Conjecture. Its statement requires some preliminary definitions. Let $\alpha:[0,1]\to\R^n$ be a continuous semialgebraic path. Let $A\subset(0,1)$ be the smallest (finite) subset of $(0,1)$ such that the restriction $\alpha|_{(0,1)\setminus A}$ is a Nash map. Denote ${\eta}(\alpha):=\alpha(A)$.

A semialgebraic set $\Ss\subset\R^n$ is \em well-welded \em if it is pure dimensional and for each pair of points $x,y\in\Ss$ there exists a continuous semialgebraic path $\alpha:[0,1]\to\Ss$ such that $\alpha(0)=x$, $\alpha(1)=y$ and ${\eta}(\alpha)\subset\Reg(\Ss)$.

\begin{mthm}[Characterization of Nash images]\label{main}
Let $\Ss\subset\R^n$ be a semialgebraic set of dimension $d$. The following assertions are equivalent:
\begin{itemize}
\item[(i)] $\Ss$ is a Nash image of $\R^d$.
\item[(ii)] $\Ss$ is a Nash image of $\R^m$ for some $m\geq d$.
\item[(iii)] $\Ss$ is connected by Nash paths.
\item[(iv)] $\Ss$ is connected by analytic paths.
\item[(v)] $\Ss$ is pure dimensional and there exists a Nash path $\alpha:[0,1]\to\Ss$ whose image meets all the connected components of the set of regular points of $\Ss$.
\item[(vi)] $\Ss$ is pure dimensional and there exists an analytic path $\alpha:[0,1]\to\Ss$ whose image meets all the connected components of the set of regular points of $\Ss$.
\item[(vii)] $\Ss$ is well-welded.
\end{itemize}
\end{mthm}

The implications (i) $\Longrightarrow$ (ii) $\Longrightarrow$ (iii) $\Longrightarrow$ (iv) and (i) $\Longrightarrow$ (ii) $\Longrightarrow$ (v) $\Longrightarrow$ (vi) are straightforward. Only the proof of the non-completely trivial implication (ii) $\Longrightarrow$ `$\Ss$ is pure dimensional' requires a comment and it is shown in Corollary \ref{imnm}. We will show in Section \ref{s7} that a semialgebraic set $\Ss$ satisfying either condition (iii), (iv), (v) or (vi) is well-welded. Finally, we will prove in Section \ref{s8} that (vii) $\Longrightarrow$ (i). An important milestone to prove Theorem \ref{main} is the following result, that will be approached in Section \ref{s6}.

\begin{thm}\label{mstone}
Let $H\subset\R^n$ be a connected $d$-dimensional Nash manifold with boundary. Then $H$ is a Nash image of $\R^d$.
\end{thm}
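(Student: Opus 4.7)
The strategy is to reduce Theorem~\ref{mstone} to the boundary-less case via the Nash double of $H$, and then to resolve the boundary-less case by welding together Nash parametrizations of simple ``bricks.''

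First, use item~(1) of the introduction to realize $H$ as a closed subset of a Nash manifold $M$ of the same dimension $d$ without boundary, and choose a Nash function $g:M\to\R$ vanishing transversely along $\partial H$ and non-negative on $H$. Then the Nash double
\[
\widetilde H:=\{(x,t)\in M\times\R : t^2=g(x),\ x\in H\}
\]
is a connected $d$-dimensional Nash manifold without boundary, and its first projection $\pi:\widetilde H\to H$, $(x,t)\mapsto x$, is a surjective Nash map (the two sheets $t=\pm\sqrt{g(x)}$ fold onto $H$ along $\partial H$, and connectedness of $\widetilde H$ follows from that of $H$ together with $\partial H\neq\emptyset$; if $\partial H=\emptyset$ one just takes $\widetilde H=H$). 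Hence it suffices to produce a Nash surjection $\R^d\to\widetilde H$, and I may assume henceforth that $\partial H=\emptyset$.

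For the boundary-less case the local models are elementary: the closed half-space is a Nash image of $\R^d$ via $(t_1,\ldots,t_d)\mapsto(t_1^2,t_2,\ldots,t_d)$, and composing squarings with Nash squashings of the form $s\mapsto s/\sqrt{1+s^2}$ shows the same for standard cubes, simplices and quadrants; moreover the union of two closed half-spaces glued along a common hyperplane is itself a Nash image of $\R^d$ via a bending construction. The plan is then to choose a locally finite Nash triangulation of $\widetilde H$, order its top-dimensional simplices $B_1,B_2,\ldots$ so that each $B_{k+1}$ meets $B_1\cup\cdots\cup B_k$ along a $(d-1)$-dimensional Nash facet, and construct Nash parametrizations $\R^d\to B_1\cup\cdots\cup B_k$ inductively by welding the previous parametrization to a brick parametrization of $B_{k+1}$, bending a half-space of $\R^d$ across the common facet. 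In the non-compact case an exhaustion argument delivers the desired surjection as a controlled limit of the partial parametrizations.

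The principal difficulty is in the welding step: naively concatenating Nash parametrizations of adjacent bricks yields only a semialgebraic, not a Nash, map, because the bend creates a monomial-type singularity along the common facet. Overcoming this is precisely where item~(2) of the introduction enters, namely the approximation of semialgebraic maps by Nash maps that agree with the original on a Nash subset with monomial singularities; this ensures that the assembled map is Nash on all of $\R^d$, while surjectivity is automatic at each step since the new parametrization already covers $B_{k+1}$.
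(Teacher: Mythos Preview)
Your reduction to the boundary-less case via the Nash double is sound (the paper uses instead the surjective Nash map $\Int(H)\to H$ of Proposition~\ref{doubleivc}, but either route works). The serious problems are in your treatment of the boundary-less case.

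First, the ``controlled limit'' in the non-compact case cannot work: a limit of Nash maps has no reason to be Nash, and there is no compactness in the Nash category that would save an exhaustion argument. The paper avoids this entirely by first mapping $M$ Nash-diffeomorphically onto a bounded set and then invoking a \emph{finite} semialgebraic triangulation of $\cl(M)$; the induction is over the finite number $r$ of $d$-simplices, so no limit is ever taken.

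Second, and more fundamentally, your welding step has a real gap. You want to concatenate a Nash parametrization of $B_1\cup\cdots\cup B_k$ with a brick parametrization of $B_{k+1}$, obtain a semialgebraic map, and then Nash-approximate it relative to the common facet. But after approximation the image need not be all of $B_1\cup\cdots\cup B_{k+1}$: the relative approximation results of \cite{bfr} pin the map down on a Nash subset, not on an open piece, so you only know the approximant is $\mathcal{S}^r$-close to the target map on the rest---and closeness alone does not give surjectivity for a map that is already many-to-one. Your sentence ``surjectivity is automatic at each step since the new parametrization already covers $B_{k+1}$'' is exactly where the argument breaks.

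The paper's route is structurally opposite and is designed precisely to make surjectivity survive approximation. Rather than building up, it tears down: by induction one has a Nash surjection $\R^d\to M\setminus\Phi(\sigma_r)$, and one constructs a surjective Nash map $h_0:M\setminus\Phi(\sigma_r)\to M$. This $h_0$ is obtained by (i) using the drilling blow-up (Corollary~\ref{eraseresidual}) to produce surjective Nash maps erasing the lower-dimensional skeleton of $\sigma_r$, and (ii) constructing a semialgebraic \emph{homeomorphism} $f:M\setminus\Phi(\sigma_r)\to M\setminus\bigsqcup N_j$ (Lemma~\ref{homeo}) and Nash-approximating it. The crucial point is Lemma~\ref{surapprox}: any $\mathcal{S}^0$-close approximation of a semialgebraic homeomorphism between Nash manifolds is automatically surjective (a degree-type argument). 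Because the map being approximated is a homeomorphism, surjectivity is genuinely preserved; your welded map is not a homeomorphism, so no such lemma applies.
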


In Section \ref{s3} we treat separately the $1$-dimensional case and we characterize $1$-dimensional Nash images of Euclidean spaces in terms of their irreducibility. The ring ${\mathcal N}(\Ss)$ of Nash functions on a semialgebraic set $\Ss\subset\R^n$ is a noetherian ring \cite[Thm.2.9]{fg3} and we say that $\Ss$ is \em irreducible \em if and only if ${\mathcal N}(\Ss)$ is an integral domain \cite{fg3}.

\begin{prop}[The $1$-dimensional case]\label{curves}
Let $\Ss\subset\R^n$ be a $1$-dimensional semialgebraic set. Then $\Ss$ is a Nash image of some $\R^m$ if and only if $\Ss$ is irreducible. In addition, if such is the case $\Ss$ is a Nash image of $\R$.
\end{prop}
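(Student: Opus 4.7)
The plan is to prove the two implications of the proposition separately.

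\textbf{Nash image implies irreducible.} Given a surjective Nash map $f:\R^m\to\Ss$, the pullback $f^{*}:\Nn(\Ss)\to\Nn(\R^m)$, $h\mapsto h\circ f$, is a ring homomorphism, and it is injective because $f$ is surjective (if $h\circ f\equiv 0$ on $\R^m$, then $h\equiv 0$ on $f(\R^m)=\Ss$). Since $\R^m$ is a connected real-analytic manifold, the identity principle for analytic functions makes $\Nn(\R^m)$ an integral domain. Therefore its subring $\Nn(\Ss)$ is a domain, i.e., $\Ss$ is irreducible.

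\textbf{Irreducible implies Nash image of $\R$.} Assume $\Ss\subset\R^n$ is irreducible and $1$-dimensional. First, I would show $\Ss$ has no isolated points: an isolated $p\in\Ss$ admits a pair of disjoint semialgebraic open neighborhoods $V$ of $p$ and $U$ of $\Ss\setminus\{p\}$, on whose disjoint union $V\sqcup U$ the function taking value $1$ on $V$ and $0$ on $U$ is Nash; multiplied by the squared distance to $p$, this yields two nontrivial zero divisors in $\Nn(\Ss)$, contradicting irreducibility. Hence $\Ss$ is pure $1$-dimensional.

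Next, I would pass to the normalization. The Zariski closure $X:=\zar(\Ss)$ is an irreducible real algebraic curve (the ideal $\Ii(\Ss)$ is prime because $\Nn(\Ss)$ is a domain). Let $\pi:\tilde X\to X$ be the real normalization: a proper Nash map, a Nash isomorphism outside the finite singular locus of $X$, whose source $\tilde X$ is a smooth $1$-dim real algebraic variety whose connected components are each Nash-diffeomorphic to either $\R$ or $\sph^1$. The key step is to use irreducibility of $\Ss$ to locate a single connected component $Y$ of $\tilde X$ with $\Ss\subset\pi(Y)$: otherwise $\Ss$ decomposes as $\bigcup_i(\Ss\cap\pi(Y_i))$ into proper closed pieces meeting only at finitely many points (distinct components of $\tilde X$ collapse only over the finite singular locus of $X$), from which one fashions nontrivial zero divisors in $\Nn(\Ss)$, a contradiction.

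Finally, parametrize. Both $\R$ and $\sph^1$ are Nash images of $\R$ (trivially for $\R$; for $\sph^1$ use the rational ``double-wrap'' $t\mapsto\bigl(\tfrac{t^4-6t^2+1}{(t^2+1)^2},\tfrac{4t(t^2-1)}{(t^2+1)^2}\bigr)$, whose image covers $\sph^1$ fully). Inside $Y$ select a connected semialgebraic subset $T$ (for instance the closure of $\pi^{-1}(\Reg(\Ss))\cap Y$) with $\pi(T)=\Ss$, then construct a surjective Nash map $\R\to T$ by reparametrizing the Nash surjection $\R\to Y$. Composing with $\pi$ yields the desired surjective Nash map $\R\to\Ss$. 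The main technical obstacle is the key step: converting the algebraic irreducibility of $\Nn(\Ss)$ into the geometric selection of a connected lift in the normalization.
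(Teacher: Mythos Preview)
Your approach matches the paper's: both normalize the Zariski closure, use that the connected components of a nonsingular real algebraic curve are Nash diffeomorphic to $\R$ or $\sph^1$, locate a connected lift of $\Ss$ in the normalization, and compose a Nash surjection from $\R$ onto that lift with the normalization map. The paper works through the complex normalization and, for the key step, simply invokes \cite[3.15]{fg3} to produce a $1$-dimensional connected component $\Tt$ of $\pi^{-1}(\Ss)$ with $\pi(\Tt)=\Ss$; your zero-divisor sketch is the right idea for doing this directly, but it has to be applied a second time after you have chosen the component $Y\subset\tilde X$, to show that some connected component of $\pi^{-1}(\Ss)\cap Y$ already surjects onto $\Ss$---a point you assert (``select a connected $T$ \ldots with $\pi(T)=\Ss$'') without argument.
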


Compare Proposition \ref{curves} with the more restrictive characterization results for polynomial and regular images of Euclidean spaces \cite{fe1}.

\subsection{Two consequences}
We present next two remarkable consequences of Theorem \ref{main}. 

\subsubsection{Representation of arc-symmetric semialgebraic sets} Arc-symmetric semialgebraic sets were introduced by Kurdyka in \cite{k} and subsequently studied by many authors. Recall that a semialgebraic set $\Ss\subset\R^n$ is \em arc-symmetric \em if for each analytic arc $\gamma:(-1,1)\to\R^n$ with $\gamma((-1,0))\subset\Ss$ it holds that $\gamma((-1,1))\subset\Ss$. In particular arc-symmetric semialgebraic sets are closed subsets of $\R^n$. An arc-symmetric semialgebraic set $\Ss\subset\R^n$ is \em irreducible \em if it cannot be written as the union of two proper arc-symmetric semialgebraic subsets \cite[\S2]{k}. Equivalently, $\Ss$ is irreducible if and only if the ring ${\mathcal N}(\Ss)$ is an integral domain. It follows from Theorem \ref{main} and \cite[Cor.2.8]{k} that a pure dimensional irreducible arc-symmetric semialgebraic set is a Nash image of $\R^d$ where $d:=\dim(\Ss)$. In addition, it holds:

\begin{cor}\label{consq1}
Let $\Ss\subset\R^n$ be a pure dimensional irreducible semialgebraic set of dimension $d$ whose closure $\cl(\Ss)$ is arc-symmetric. Then $\Ss$ is a Nash image of $\R^d$. 
\end{cor}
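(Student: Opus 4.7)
The plan is to verify condition (vi) of Theorem \ref{main} for $\Ss$ by transferring the corresponding property from $\cl(\Ss)$. First I would check that $\cl(\Ss)$ is itself a pure dimensional irreducible arc-symmetric semialgebraic set of dimension $d$. Pure dimensionality is preserved because $\dim(\cl(\Ss)\setminus\Ss)<d$ whenever $\Ss$ is pure dimensional of dimension $d$; arc-symmetry is the hypothesis; and irreducibility is inherited from $\Ss$ because the restriction map $\Nn(\cl(\Ss))\to\Nn(\Ss)$ is injective (Nash functions are continuous and $\Ss$ is dense in $\cl(\Ss)$), so $\Nn(\cl(\Ss))$ embeds into the integral domain $\Nn(\Ss)$ and is therefore itself an integral domain.

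Next, I would invoke the observation made immediately before the corollary: combining Theorem \ref{main} with \cite[Cor.\,2.8]{k} shows that the pure dimensional irreducible arc-symmetric set $\cl(\Ss)$ is a Nash image of $\R^d$. In particular, applying the implication (i)$\Longrightarrow$(v) of Theorem \ref{main} to $\cl(\Ss)$ yields a Nash (hence analytic) path $\gamma:[0,1]\to\cl(\Ss)$ whose image meets every connected component of $\Reg(\cl(\Ss))$.

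The core of the proof is to promote this to an analytic path lying in $\Ss$ and meeting every component of $\Reg(\Ss)$. One always has $\Reg(\Ss)\subset\Reg(\cl(\Ss))$, and the difference $\Reg(\cl(\Ss))\setminus\Reg(\Ss)$ is contained in the topological frontier of $\Ss$ in $\R^n$, which has dimension strictly less than $d$. Therefore $\Reg(\Ss)$ is dense in $\Reg(\cl(\Ss))$, and every connected component $D$ of $\Reg(\cl(\Ss))$ contains at least one component of $\Reg(\Ss)$. Within each $D$ (a connected $d$-dimensional Nash manifold), the finitely many components of $\Reg(\Ss)\cap D$ can be joined by Nash arcs inside $D$ that cross the lower-dimensional bad locus $D\setminus\Reg(\Ss)$ only at isolated points; the existence of such arcs uses that $D$ is Nash path-connected together with Theorem \ref{mstone} applied to small Nash manifolds with boundary sitting inside $D$. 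Splicing these local arcs into $\gamma$, and perturbing the rest of $\gamma$ off the lower-dimensional set $\cl(\Ss)\setminus\Ss$, yields an analytic path in $\Ss$ meeting every component of $\Reg(\Ss)$, so $\Ss$ satisfies (vi) and hence (i) of Theorem \ref{main}.

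The main obstacle is the last step: within a single component $D$ of $\Reg(\cl(\Ss))$ the components of $\Reg(\Ss)\cap D$ can genuinely be more numerous than one, because the codimension-one part of $\cl(\Ss)\setminus\Ss$ may slice $D$; joining them by an analytic path that stays in $\Ss$ while also respecting the portion of $\gamma$ outside $D$ is what requires careful use of the well-welded technology and the manifold-with-boundary results of Section \ref{s6}, rather than a direct pushforward of $\gamma$.
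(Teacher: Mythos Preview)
Your outline has a genuine gap at the ``splicing and perturbing'' step, and the obstacle you flag in the last paragraph is not cosmetic: it is exactly the point where the argument breaks. There are two separate problems.

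First, the portions of $\gamma$ that leave $\Reg(\cl(\Ss))$ --- the finitely many times $t_0$ at which $\gamma$ transitions between two components $D_1,D_2$ of $\Reg(\cl(\Ss))$ --- land at points of $\cl(\Ss)\setminus\Reg(\cl(\Ss))$, hence at points of $\ol{\Ss}^{\zar}$ where there is no ambient Nash-manifold structure. If $\gamma(t_0)\in\cl(\Ss)\setminus\Ss$, you cannot ``perturb $\gamma$ off $\cl(\Ss)\setminus\Ss$'' there: there is no manifold in which to perturb, the germ $\cl(\Ss)_{\gamma(t_0)}$ may be complicated, and no dimension count helps because the path is one-dimensional. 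Nothing you cite (neither Nash path-connectedness of $D$ nor Theorem~\ref{mstone}, which produces surjective maps $\R^d\to H$ rather than arcs) addresses this.

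Second, even inside a single component $D$ of $\Reg(\cl(\Ss))$, your plan to bridge the components of $\Reg(\Ss)\cap D=\Int_D(\Ss\cap D)$ by arcs in $D$ needs the crossing points to lie in $\Ss$; but those crossing points sit in $D\setminus\Int_D(\Ss\cap D)$, and nothing prevents $\Ss\cap D$ from being open in $D$ and disconnected, in which case no crossing point can be chosen in $\Ss$.

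The paper avoids both issues by passing to a resolution $\pi:\widetilde X\to X=\ol{\Ss}^{\zar}$. Using \cite[Thm.~2.6]{k} for the irreducible arc-symmetric set $\cl(\Ss)$ one singles out a connected component $E$ of $\widetilde X$ with $\pi(E)=\cl(\Ss)$, and sets $\widetilde{\Ss}:=\pi^{-1}(\Ss)\cap E$. The decisive (and nontrivial) step is to show that $\widetilde{\Ss}$ is \emph{connected}; this is where irreducibility of $\Ss$ itself --- not just of $\cl(\Ss)$ --- is used. Once $\widetilde{\Ss}$ is a connected dense semialgebraic subset of the Nash manifold $E$, the paper proves a general lemma (via a local triangulation argument) that any such subset is well-welded: bridges between components of $\Int_E(\widetilde{\Ss})$ can be built through points of $\widetilde{\Ss}$ because connectedness of $\widetilde{\Ss}$ forces adjacent closures to meet in $\widetilde{\Ss}$, and upstairs one \emph{does} have a manifold in which to construct the explicit Nash arcs. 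Finally $\Ss=\pi(\widetilde{\Ss})$ is well-welded by Corollary~\ref{imageww}, hence a Nash image of $\R^d$ by Theorem~\ref{main}. The resolution is not a convenience here; it is what makes the bridging possible.
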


\subsubsection{Elimination of inequalities} Tarski-Seidenberg principle on elimination of quantifiers can be restated geometrically by saying that the projection of a semialgebraic set is again semialgebraic. A converse problem, to find an algebraic set in $\R^{n+k}$ whose projection is a given semialgebraic subset of $\R^n$, is known as the \em problem of eliminating inequalities\em. Motzkin proved in \cite{m2} that this problem always has a solution for $k=1$. However, his solution is rather complicated and is generally a reducible algebraic set. In another direction Andradas--Gamboa proved in \cite{ag1,ag2} that if $\Ss\subset\R^n$ is a closed semialgebraic set whose Zariski closure is irreducible, then $\Ss$ is the projection of an irreducible algebraic set in some $\R^{n+k}$. In \cite{pe} Pecker gives some improvements on both results: for the first by finding a construction of an algebraic set in $\R^{n+1}$ that projects onto the given semialgebraic subset of $\R^n$, far simpler than the original construction of Motzkin; for the second by proving that if $\Ss$ is a locally closed semialgebraic subset of $\R^n$ with an interior point, then $\Ss$ is the projection of an irreducible algebraic subset of $\R^{n+1}$.

In this article we prove the following result that looks for a non-singular algebraic set with the simplest possible topology that projects onto a semialgebraic set.

\begin{cor}\label{consq2}
Let $\Ss\subset\R^n$ be a semialgebraic set of dimension $d$. We have:
\begin{itemize}
\item[(i)] If $\Ss$ is Nash path-connected, it is the projection of an irreducible non-singular algebraic set $X\subset\R^{n+k}$ (for some $k\geq0$) whose connected components are Nash diffeomorphic to $\R^d$. In addition: 
\begin{itemize}
\item[(1)] Each connected component of $X$ projects onto $\Ss$.
\item[(2)] Given any two of the connected components of $X$ there exists an automorphism of $X$ that swaps them. 
\end{itemize}
\item[(ii)] In general $\Ss$ is the projection of an algebraic set $X\subset\R^{n+k}$ (for some $k\geq0$) that is Nash diffeomorphic to a finite pairwise disjoint union of affine subspaces of $\R^{d+1}$.
\end{itemize}
\end{cor}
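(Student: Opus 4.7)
To deduce the corollary from Theorem \ref{main}, the strategy is to algebraize the Nash surjections the theorem supplies. For part (i), the Main Theorem yields a Nash surjection $f\colon\R^d\to\Ss$. To ensure that \emph{every} connected component of the final algebraic set projects onto $\Ss$ and to furnish the swapping automorphisms, we replace the source $\R^d$ by the \emph{hyperbola model}
\[
W:=\{(u_1,v_1,\ldots,u_d,v_d)\in\R^{2d}:u_iv_i=1,\ i=1,\ldots,d\},
\]
an irreducible non-singular algebraic set whose $2^d$ connected components are each Nash diffeomorphic to $\R^d$ and are permuted transitively by the algebraic $\Z_2^d$-action $(u_i,v_i)\mapsto(-u_i,-v_i)$. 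Transporting $f$ equivariantly across all components (using e.g.\ the Nash diffeomorphism $(u_i,v_i)\mapsto(u_i-v_i)_i$ on the all-positive component) yields a $\Z_2^d$-invariant Nash map $\widetilde f\colon W\to\Ss$ whose restriction to every component of $W$ already surjects onto $\Ss$.

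Next, apply the Artin--Mazur theorem to $\widetilde f$, choosing an auxiliary Nash map $h$ $\Z_2^d$-equivariantly, so that the graph of $(\widetilde f,h)$ is a $\Z_2^d$-stable union of connected components of a real algebraic set $V$ that is non-singular along it. Passing to the Zariski closure (which is irreducible because $W$ is), performing an equivariant Hironaka desingularization, and reembedding in affine space produces an irreducible non-singular affine algebraic set $X\subset\R^{n+k}$ on which $\Z_2^d$ acts by algebraic automorphisms permuting the components transitively, each component Nash diffeomorphic to $\R^d$ and projecting onto $\Ss$. This furnishes (i) together with (1) and (2).

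For part (ii), take a finite semialgebraic triangulation $\Ss=\bigsqcup_jT_j$ into relative interiors of simplices. Each $T_j$ is pure dimensional of dimension $e_j\le d$ and Nash diffeomorphic to $\R^{e_j}$ (so Nash path-connected), hence part (i) applied to $T_j$ produces an irreducible non-singular algebraic set $X_j$ whose components are Nash diffeomorphic to $\R^{e_j}$ and each project onto $T_j$. Embedding the $X_j$ disjointly in a common ambient space, using one additional coordinate to translate them to distinct heights, one obtains $X:=\bigsqcup_jX_j\subset\R^{n+k}$; its components are Nash diffeomorphic to pairwise disjoint affine subspaces of $\R^{d+1}$, for instance the $\R^{e_j}\times\{j\}\subset\R^d\times\R$.

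The main technical obstacle is the irreducibility step in (i): the Artin--Mazur theorem only realizes $\R^d$ as \emph{one} connected component of an algebraic set, and the remaining components cannot in general be forced to project into $\Ss$. Building the $\Z_2^d$-symmetric source $W$ and performing the Artin--Mazur algebraization (and subsequent desingularization) equivariantly is what simultaneously enforces irreducibility, the surjective projection on every component, and the swapping automorphisms required in (2); controlling the equivariance under Hironaka (or replacing it by an affine desingularization procedure in the spirit of \cite{bfr,fgr}) is the delicate point of the argument.
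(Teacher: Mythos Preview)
Your approach to (i) has a genuine gap that is not merely a matter of invoking heavy machinery. Even granting an equivariant Artin--Mazur construction and equivariant resolution, the irreducible non-singular variety you obtain will in general have connected components \emph{beyond} those coming from $W$: the Zariski closure of the graph inside the ambient algebraic set, and its desingularization, may contribute extra real components that are neither Nash diffeomorphic to $\R^d$ nor project onto $\Ss$. Nothing in your sketch excludes these parasitic components, and this is precisely the obstacle that must be overcome.

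The paper handles it by a completely different and explicit device that avoids equivariance altogether. It applies Artin--Mazur directly to $f:\R^d\to\R^n$, obtaining an irreducible non-singular $Z$ with one distinguished component $M\cong\R^d$. It then invokes Mostowski's theorem \cite{m} to write $M=Z\cap\{\sum_j P_j\sqrt{Q_j}>0\}$ with each $Q_j>0$ everywhere, and introduces auxiliary variables $y_1,\dots,y_\ell,t$ subject to $y_j^4=Q_j$ and $\bigl(\sum_j P_jy_j^2\bigr)t^2=1$. Since $y_j^2=\sqrt{Q_j}$ automatically, the last equation forces $\sum_j P_j\sqrt{Q_j}>0$, so the resulting variety $Y$ lies over $M$ alone; it is a trivial $2^{\ell+1}$-sheeted cover of the simply connected $M$, hence \emph{every} connected component of $Y$ is Nash diffeomorphic to $\R^d$ and projects onto $\Ss$. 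The sign-change maps $(y_j,t)\mapsto(\pm y_j,\pm t)$ supply the swapping automorphisms for free, and one finishes by taking $X$ to be the irreducible component of $Y$ containing a chosen sheet. The symmetry is thus manufactured \emph{after} algebraization, not before, which is what eliminates the parasitic components and sidesteps any need for equivariant resolution.

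For (ii) your triangulation argument is a valid alternative to the paper's use of the Nash path-connected components of Theorem~\ref{decompww}; both reduce to finitely many Nash path-connected pieces to which (i) applies. You should, however, make explicit that the triangulation is chosen so that its restriction to each open simplex is a Nash embedding (as in \cite[9.2.1--3]{bcr}), so that the pieces are genuinely Nash diffeomorphic to $\R^{e_j}$ rather than merely semialgebraically homeomorphic to it.
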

Even for dimension $1$, it is not possible to impose the connectedness of $X$ (see Example \ref{nint} and Remark \ref{nint2}).

\subsection{Structure of the article}

The article is organized as follows. In Section \ref{s2} we present some basic notions and notations used in this paper as well as some preliminary results. The reader can start directly in Section 3 and resort to the Preliminaries only when needed. In Section \ref{s3} we afford the $1$-dimensional case. Its presentation is short and evidences strong differences with the polynomial and regular cases \cite{fe1}. In Sections \ref{s4} and \ref{s5} we analyze with care the main properties of Nash collars, Nash doubles and the \em drilling blow-up\em, which is an adaptation to the Nash setting of the \em oriented blow-up \em \cite{hi2,ds} of a real analytic space with center a closed real analytic subspace. We refer the reader to \cite[\S5]{hpv} for a presentation of the oriented blow-up of a real analytic manifold $M$ with center a closed real analytic submanifold $N$ whose vanishing ideal inside $M$ is finitely generated (this happens for instance if $N$ is compact). In our case, we take advantage of the noetherianity of the ring of Nash functions on a Nash manifold to develop the drilling blow-up and to obtain stronger global properties than in the general real analytic case. The previous tools (Nash collars, Nash doubles and drilling blow-ups) are the key to build boundaries on Nash manifolds in Proposition \ref{doubleivc} and to modify the topology of a Nash manifold in order to prove Theorem \ref{mstone} in Section \ref{s6}. We are convinced that the strategy followed to prove Theorem \ref{mstone} will have further applications. In Section \ref{s7} we study the main properties of well-welded semialgebraic sets and we show that a semialgebraic set satisfying any of the assertions (iii), (iv), (v) or (vi) in the statement of Theorem \ref{main} is well-welded. Next, in Section \ref{s8} we prove Theorem \ref{main}. In Section \ref{s9} we introduce the concept of \em Nash path-connected components \em of a semialgebraic set and we prove that each semialgebraic set can be (uniquely) written as the (finite) union of its Nash path-components. Finally, in Section \ref{s10} we prove Corollaries \ref{consq1} and \ref{consq2}. The article ends with two Appendices. In the first one we present a miscellanea of ${\mathcal C}^2$ semialgebraic homeomorphisms between intervals that are used in the article and in the second we recall certain results concerning strict transforms of analytic and Nash paths under finite chains of blow-ups.

\subsection*{Acknowledgements}

The author is strongly indebted to Prof. Shiota for some key discussions and suggestions during the initial preparation of some parts of this article. The author thanks Prof. Gamboa and Dr. Ueno for their helpful comments when writing and correcting this article. The author is also very grateful to Dr. Benito for informal discussions about resolution of singularities in order to improve the redaction and simplify some of the results in Sections \ref{s7} and \ref{s8}. The mathematics of this article were finished during a one year research stay at the Dipartimento di Matematica of the Universit\`a di Pisa. The author strongly appreciates the effort of the members of the Departamento de \'Algebra of the Universidad Complutense de Madrid for giving him the opportunity to enjoy a sabbatical year (entirely dedicated to his research activity) during the academic course 2014-2015. The author is also deeply indebted with the anonymous referee who has helped him with his suggestions to improve the final presentation of this work and to clarify some misleadings in the redaction of the proof of implication (vii) $\Longrightarrow$ (i) of Theorem \ref{main}.

\section{Preliminaries on semialgebraic sets and Nash manifolds}\label{s2}

In this section we introduce many concepts and notation needed in the article. We establish the following conventions: $M\subset\R^m$ and $N\subset\R^n$ are Nash manifolds. Nash subsets of a Nash manifold or algebraic subsets of $\R^n$ are denoted with $X, Y$ and $Z$. The semialgebraic sets are denoted with $\Ss,\Tt,\Rr,\ldots$ On the other hand, $H\subset\R^m$ is a Nash manifold of dimension $d$ with (smooth) boundary, $\partial H$ is its boundary and $\Int(H)=H\setminus\partial H$ is its interior. In addition, ${\mathcal C}^r$ semialgebraic and Nash functions on a semialgebraic set are denoted with $f,g,h,\ldots$

Recall some general properties of semialgebraic sets. Semialgebraic sets are closed under Boolean combinations and by quantifier elimination they are also closed under projections. Any set defined by a first order formula in the language of ordered fields is a semialgebraic set \cite[p.28, 29]{bcr}. Thus, the basic topological constructions as closures, interiors or boundaries of semialgebraic sets are again semialgebraic. Also images and preimages of semialgebraic sets by semialgebraic maps are again semialgebraic. The \em dimension \em $\dim(\Ss)$ of a semialgebraic set $\Ss$ is the dimension of its Zariski closure \cite[\S2.8]{bcr}. The \em local dimension $\dim(\Ss_x)$ of $\Ss$ at a point $x\in\cl(\Ss)$ \em is the dimension $\dim(U)$ of a small enough open semialgebraic neighborhood $U\subset\cl(\Ss)$ of $x$. The dimension of $\Ss$ coincides with the maximum of these local dimensions. For any fixed $k$ the set of points $x\in\Ss$ such that $\dim(\Ss_x)=k$ is a semialgebraic subset of $\Ss$. 

\subsection{Set of regular points of a semialgebraic set}\label{s2a}

Let $Z\subset\C^n$ be a complex algebraic set and let $I_\C(Z)$ be the ideal of all polynomials $F\in\C[\x]$ such that $F(z)=0$ for each $z\in Z$. A point $z\in Z$ is \em regular \em if the localization of the polynomial ring $\C[\x]/I_\C(Z)$ at the maximal ideal $\gtM_z$ associated to $z$ is a regular local ring. In this complex setting the Jacobian criterion and Hilbert's Nullstellensatz imply that $z\in Z$ is regular if and only if there exists an open neighborhood $U\subset\C^n$ of $z$ such that $U\cap Z$ is an analytic manifold. We denote $\Reg(Z)$ the set of regular points of $Z$ and it is an open dense subset of $Z$. If $Z$ is irreducible, it is pure dimensional and $\Reg(Z)$ is a connected analytic manifold. In case $Z$ is not irreducible, then the connected components of $\Reg(Z)$ are finitely many analytic manifolds (possibly of different dimensions). We denote $\Sing(Z):=Z\setminus\Reg(Z)$ the \em set of singular points of $Z$\em.

Let $X\subset\R^n$ be a (real) algebraic set and let $I_\R(X)$ be the ideal of all polynomials $f\in\R[\x]$ such that $f(x)=0$ for each $x\in X$. A point $x\in X$ is \em regular \em if the localization of $\R[\x]/I_\R(X)$ at the maximal ideal $\gtm_x$ associated to $x$ is a regular local ring \cite[\S3.3]{bcr}. Let $\widetilde{X}\subset\C^n$ be the complex algebraic set that is the zero set of the extended ideal $I_\R(X)\C[\x]$. We call $\widetilde{X}$ the \em complexification of $X$\em. The ideal $I_\C(\widetilde{X})$ coincides with the tensorized ideal $I_\R(X)\otimes_\R\C$, so $\widetilde{X}$ is the smallest complex algebraic subset of $\C^n$ that contains $X$ and
$$
\C[\x]/I_\C(\widetilde{X})\cong(\R[\x]/I_\R(X))\otimes_\R\C.
$$
The localization $(\R[\x]/I_\R(X))_{\gtm_x}$ is a regular local ring if and only so is its complexification 
$$
(\R[\x]/I_\R(X))_{\gtm_x}\otimes_\R\C\cong(\C[\x]/I_\C(\widetilde{X}))_{\gtM_x}.
$$
Thus, the \em set of regular points \em of $X$ is $\Reg(X)=\Reg(\widetilde{X})\cap X$ and its \em set of singular points \em is $\Sing(X):=X\setminus\Reg(X)=\Sing(\widetilde{X})\cap X$. The connected components of the open semialgebraic subset $\Reg(X)$ of $X$ is a finite union of Nash manifolds (possibly of different dimensions). 

Let $\Ss\subset\R^n$ be a semialgebraic set of dimension $d$. The Zariski closure $\ol{\Ss}^{\zar}$ of $\Ss$ in $\R^n$ is the smallest algebraic subset of $\R^n$ that contains $\Ss$. We define 
$$
\Reg(\Ss):=\Int_{\Reg(\ol{\Ss}^{\zar})}(\Ss\setminus\Sing(\ol{\Ss}^{\zar}))\quad\text{and}\quad\Sing(\Ss):=\Ss\setminus\Reg(\Ss).
$$
The connected components of the open subset $\Reg(\Ss)$ of $\ol{\Ss}^{\zar}$ is a finite union of Nash manifolds (possibly of different dimensions) and $\Sing(\Ss)$ is a semialgebraic set of dimension $<d$, which is closed in $\Ss$. The set $\Reg_k(\Ss)$ of points of dimension $k$ of $\Reg(\Ss)$ is either the empty-set or a Nash manifold of dimension $k$ for each $k=0,1,\ldots,d$. If $\Ss$ is pure dimensional, $\Reg(\Ss)$ is a dense subset of $\Ss$. A point $x\in\Ss$ is \em smooth \em if there exists an open neighborhood $U\subset\R^n$ of $x$ such that $U\cap\Ss$ is a Nash manifold. It holds that each regular point is a smooth point, but the converse is not always true even if $\Ss=X$ is a real algebraic set, as it shows the following example.

\begin{example}\label{snr}\enlargethispage{1mm}
Consider the algebraic set $X:=\{(x^2+zy^2)x-y^4=0\}\subset\R^3$. The set of regular points of $X$ is $X\setminus\{x=0,y=0\}$, whereas the set of smooth points of $X$ is $X\setminus\{x=0,y=0,z\leq0\}$ (see Figure \ref{fig1}). To prove that the points of the open half-line $\{x=0,y=0,z<0\}$ are non-\makebox[\textwidth][s]{smooth we proceed by contradiction. Pick a point $p:=(0,0,-a^2)\in\{x=0,y=0,z<0\}$ and} 

\vspace*{-3mm}
\begin{figure}[ht]
\noindent\begin{minipage}{0.525 \textwidth}
assume that it is smooth. As the line $\{x=0,y=0\}\subset X$, the vector $(0,0,1)$ would be tangent to $X$ at $p$, so the plane $z=-a^2$ would be transversal to $X$ at $p$. Thus, the intersection $X\cap\{z=-a^2\}$ should be a curve that is smooth at $p$, but this is a contradiction because such curve $\{(x^2-(ay)^2)x-y^4=0,z=-a^2\}$ has three tangent lines at $p$, which are those lines of equations $\{x-ay=0\}$, $\{x+ay=0\}$ and $\{x=0\}$ inside the plane $\{z=-a^2\}$. The origin cannot be a smooth point of $X$ because the set of smooth points of $X$ is an open subset of $X$. Consequently, the set of non-smooth points of $X$ contains the closed half-line $\{x=0,y=0,z\leq0\}$. To finish we prove that the points of the open half-line $\{x=0,y=0,z>0\}$ are smooth. To that end, observe that the map $\varphi:\{(t,s)\in\R^2:\ t>0\}\to\R^3, (s,t)\mapsto((s^2+t^2)s^2,(s^2+t^2)s,t^2)$ is a Nash embedding whose image is $X\cap\{z>0\}$.
\end{minipage}\hfill 
\begin{minipage}{0.475\textwidth}
\begin{center}
\includegraphics[width=0.8\textwidth]{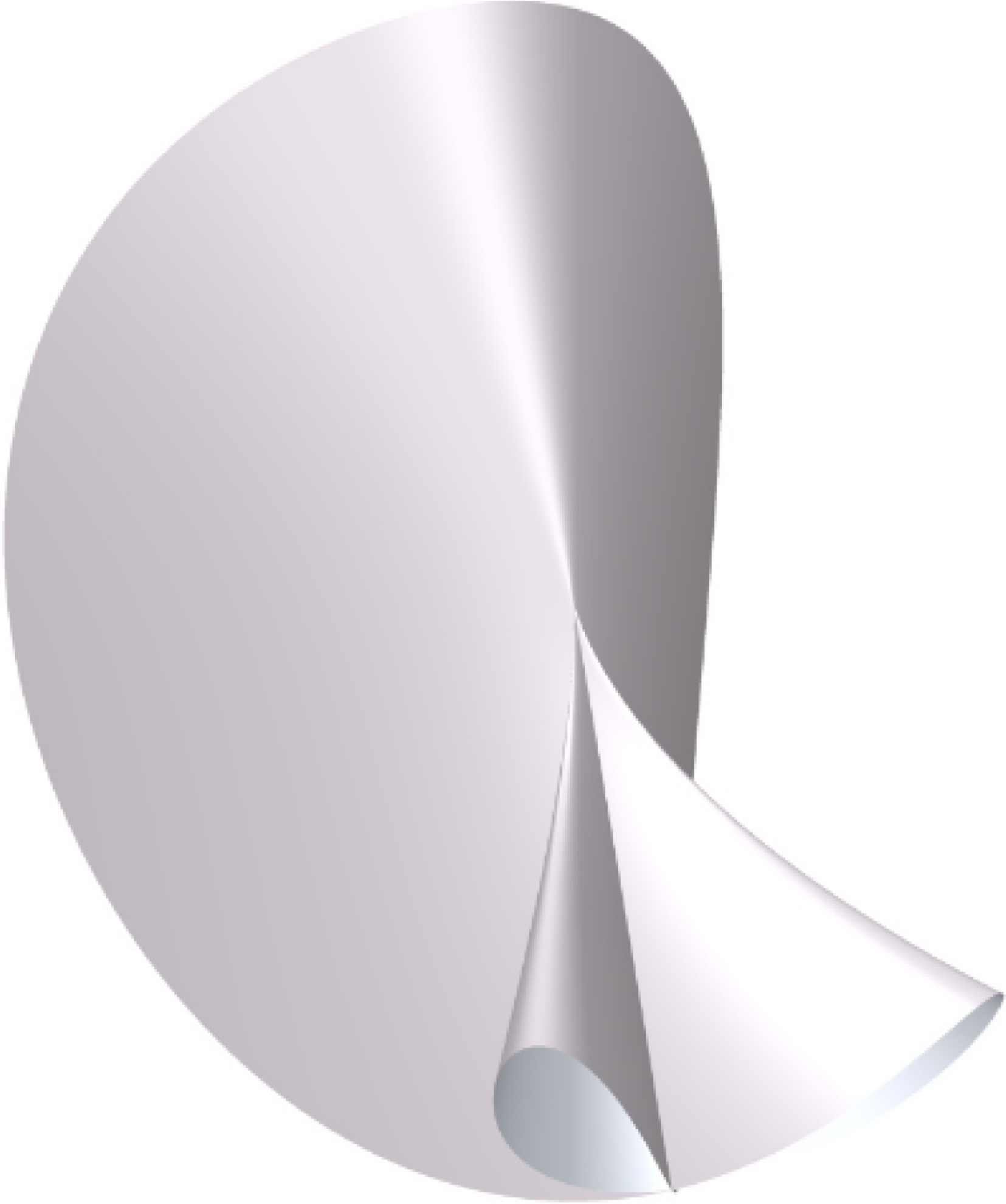}
\caption{$X:\ (x^2+zy^2)x-y^4=0$\label{fig1}}
\end{center}
\end{minipage}
\end{figure}
\end{example}

\vspace*{-4mm}
The set ${\tt Sth}(\Ss)$ of smooth points of a semialgebraic set $\Ss\subset\R^n$ is by \cite{st} a semialgebraic subset of $\R^n$ (and consequently a Nash submanifold of $\R^n$), which contains $\Reg(\Ss)$ (maybe as a proper subset as it happens in Example \ref{snr}), and it is open in $\Ss$. The set ${\tt Sth}_k(\Ss)$ of points of dimension $k$ of ${\tt Sth}(\Ss)$ is either the empty-set or a Nash manifold of dimension $k$ for each $k=0,1,\ldots,d$. Denote ${\tt NSth}(\Ss):=\Ss\setminus{\tt Sth}(\Ss)$ the set of non-smooth points of $\Ss$. If $X$ is an algebraic set, $\Sing(X)$ is always an algebraic subset of $X$ whereas ${\tt NSth}(X)$ is in general only a semialgebraic subset of $X$, see Example \ref{snr}.

\begin{remark}\label{regsmooth}
Let $\Ss\subset\R^n$ be a pure dimensional semialgebraic set such that $\ol{\Ss}^{\rm zar}\subset\R^n$ is a non-singular real algebraic set. Then $\Reg(\Ss)=\Int_{\ol{\Ss}^{\zar}}(\Ss)={\tt Sth}(\Ss)$ and $\Sing(\Ss)={\tt NSth}(\Ss)$.

As $\ol{\Ss}^{\rm zar}$ is a non-singular real algebraic set, $\Reg(\ol{\Ss}^{\zar})=\ol{\Ss}^{\rm zar}$ and $\Sing(\ol{\Ss}^{\rm zar})=\varnothing$. Thus, as $\Ss$ is pure dimensional, $\Int_{\ol{\Ss}^{\zar}}(\Ss)={\tt Sth}(\Ss)$ and 
\begin{align*}
\Reg(\Ss)&=\Int_{\Reg(\ol{\Ss}^{\zar})}(\Ss\setminus\Sing(\ol{\Ss}^{\zar}))=\Int_{\ol{\Ss}^{\zar}}(\Ss)={\tt Sth}(\Ss),\\\Sing(\Ss)&=\Ss\setminus\Reg(\Ss)=\Ss\setminus{\tt Sth}(\Ss)={\tt NSth}(\Ss).
\end{align*}
\end{remark}

Using the definition of smooth point one proves readily the following result.

\begin{lem}\label{sth}
Let $M\subset\R^m$ and $N\subset\R^n$ be Nash manifolds and let $f:M\to N$ be a Nash diffeomorphism. Let $\Ss\subset M$ be a semialgebraic set. Then $f({\tt Sth}(\Ss))={\tt Sth}(f(\Ss))$ and $f({\tt NSth}(\Ss))={\tt NSth}(f(\Ss))$.
\end{lem}

The previous result is no longer true if we consider the set $\Reg(\Ss)$ of regular points of $\Ss$ instead of ${\tt Sth}(\Ss)$. 

\begin{example}\label{atention}
Consider the semialgebraic set $\Ss:=\{x^2+y^2-y^3=0\}\subset\R^2$ and the Nash diffeomorphism 
$$
f:\R^2\to\R^2,\ (x,y)\mapsto(x(x^2+1),x^2+1+y).
$$
Denote $\Tt:=\{(0,-1)\}\cup\{y=0\}$ and observe that $f(\Tt)=\Ss$, $\Sing(\Tt)=\varnothing$ and $\Sing(\Ss)=\{(0,0)\}$. Consequently, $f(\Reg(\Tt))=f(\Tt)=\Ss\neq\Reg(\Ss)$.
\end{example}

The previous example shows that the sets $\Reg(\Ss)$ and $\Sing(\Ss)$ depend on how $\Ss$ is immersed in an affine space, whereas Lemma \ref{sth} points out that the sets ${\tt Sth}(\Ss)$ and ${\tt NSth}(\Ss)$ are subsets of $\Ss$ of intrinsic nature. As we will use in the sequel resolution of singularities, we will need to employ the sets $\Reg(\Ss)$ and $\Sing(\Ss)$ instead of the sets ${\tt Sth}(\Ss)$ and ${\tt NSth}(\Ss)$ in a large part of the article. However, in the proof of implication (vii) $\Longrightarrow$ (i) of Theorem \ref{main} we will take advantage of the intrinsic nature of the sets ${\tt Sth}(\Ss)$ and ${\tt NSth}(\Ss)$. This justifies the introduction of both pairs of concepts: `regular/singular' points and `smooth/non-smooth' points. In Lemma \ref{retoque} we show that under certain conditions we may embedded $\Ss$ in some affine space in order to have $\Reg(\Ss)={\tt Sth}(\Ss)$ and $\Sing(\Ss)={\tt NSth}(\Ss)$.

\subsection{Desingularization of algebraic sets}
Let $X\subset Y\subset\R^n$ be algebraic sets such that $Y$ is non-singular. Recall that $X$ is a \em normal-crossings divisor of $Y$ \em if for each point $x\in Y$ there exists a regular system of parameters $x_1,\ldots,x_d$ for $Y$ at $x$ such that $X$ is given on an open Zariski neighborhood of $x$ in $Y$ by the equation $x_1\cdots x_k=0$ for some $k\leq d$. In particular, the irreducible components of $X$ are non-singular and have codimension $1$ in $Y$. 

A rational map $f:=(f_1,\ldots,f_n):Z\to\R^n$ on an algebraic set $Z\subset\R^m$ is \em regular \em if its components are quotients of polynomials $f_k:=\frac{g_k}{h_k}$ such that $Z\cap\{h_k=0\}=\varnothing$.

Hironaka's desingularization results \cite{hi} are powerful tools that we will use fruitfully in Sections \ref{s7} and \ref{s8}. We recall here the two results we need.

\begin{thm}[Desingularization]\label{hi1}
Let $X\subset\R^n$ be an algebraic set. Then there exist a non-singular algebraic set $X'\subset\R^m$ and a proper regular map $f:X'\to X$ such that
$$
f|_{X'\setminus f^{-1}(\Sing(X))}:X'\setminus f^{-1}(\Sing(X))\rightarrow X\setminus\Sing X
$$
is a diffeomorphism whose inverse map is also regular.
\end{thm}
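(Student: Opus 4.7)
The statement is essentially Hironaka's resolution of singularities, adapted to the real algebraic setting and asking for a proper regular map instead of a birational morphism of schemes. My plan is therefore not to redo Hironaka's work from scratch, but to reduce to the classical complex version and ensure that the construction descends to $\R$.

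First I would pass to the complexification $\widetilde{X}\subset\C^n$ described in \ref{s2a}. The defining ideal $I_\C(\widetilde{X})=I_\R(X)\otimes_\R\C$ is invariant under complex conjugation $\sigma:\C^n\to\C^n$, so $\widetilde{X}$ itself is $\sigma$-invariant, and the (complex) singular locus $\Sing(\widetilde{X})$ satisfies $\sigma(\Sing(\widetilde{X}))=\Sing(\widetilde{X})$. I would then apply Hironaka's embedded desingularization to the pair $(\C^n,\widetilde{X})$: there exists a finite composition $\pi:Z\to\C^n$ of blow-ups with smooth centers $C_i\subset\Sing(\text{strict transform})$ such that the strict transform $\widetilde{X}'\subset Z$ is a smooth complex algebraic variety and $\pi$ restricts to an isomorphism over $\C^n\setminus\Sing(\widetilde{X})$. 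The properness of $\pi$, and hence of its restriction $\widetilde{f}:\widetilde{X}'\to\widetilde{X}$, follows since each blow-up of a smooth center is proper.

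The key point, and the only place where one really uses anything deeper than citing Hironaka, is that the blow-up centers produced by the algorithm are \emph{canonical}, in the sense that they are determined by intrinsic invariants of the ideal (the order, Hilbert--Samuel function, and Hironaka's main numerical character). Canonicity implies $\sigma$-equivariance: since the input data $(\C^n,\widetilde{X})$ is $\sigma$-invariant, every center $C_i$ is $\sigma$-invariant, and therefore each blow-up, and the final ambient variety $Z$ together with $\widetilde{X}'$ and $\pi$, carries a natural antiholomorphic involution extending $\sigma$. Consequently the fixed-point sets $Z^\sigma$ and $X':=(\widetilde{X}')^\sigma$ are real algebraic, $X'$ is non-singular of the same dimension as $X$, and $f:=\pi|_{X'}:X'\to X$ is a proper regular map. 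Over $X\setminus\Sing(X)=X\setminus\Sing(\widetilde{X})$ the map $\pi$ was already an isomorphism of complex varieties, so taking real points yields a diffeomorphism with regular inverse.

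The last ingredient is the embedding $X'\subset\R^m$ as an \emph{affine} real algebraic set. A priori $Z$ is only quasi-projective, but one can cover the projective ambient by the usual affine charts and use a standard trick (embedding via a suitable tuple of global regular functions, or applying the affinization techniques that are routine in real algebraic geometry) to realize $X'$ as an affine real algebraic set in some $\R^m$ with $f$ regular in the sense defined in the paper. The principal obstacle throughout is really the invocation of canonicity of Hironaka's algorithm, which is what allows the descent from $\C$ to $\R$; everything else is formal manipulation of blow-ups and real points.
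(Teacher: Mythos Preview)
The paper does not prove this statement: Theorem~\ref{hi1} is placed in the preliminaries section under ``Desingularization of algebraic sets'' with the sentence ``Hironaka's desingularization results \cite{hi} are powerful tools\ldots We recall here the two results we need,'' and no argument follows. So there is nothing in the paper to compare your proposal against line by line.

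That said, your outline is the standard way to obtain the real statement from Hironaka's complex theorem, and it is essentially correct. Two remarks. First, the canonicity you invoke is precisely the nontrivial input: Hironaka's original 1964 paper does not make functoriality explicit, so in a rigorous write-up you would cite a functorial resolution (Bierstone--Milman, Villamayor, or W{\l}odarczyk) to guarantee $\sigma$-equivariance of the centers. Second, the affinization step is cleaner than you suggest: over $\R$ the projective space $\R\PP^N$ is itself affine (e.g.\ via the Veronese-type embedding $[x]\mapsto\frac{x_ix_j}{\|x\|^2}$), so any real quasi-projective variety sits inside some $\R^m$ as a real algebraic set, and the regular map descends automatically. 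With those two points sourced, your sketch would constitute a complete reduction to the literature, which is exactly what the paper itself is doing by citation.
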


\begin{remark}
If $X$ is pure dimensional, $X\setminus\Sing X$ is dense in $X$. As $f$ is proper, it is surjective.
\end{remark}

\begin{thm}\label{hi2}
Let $X\subset Y\subset\R^n$ be algebraic sets such that $Y$ is non-singular. Then there exists a non-singular algebraic set $Y'\subset\R^m$ and a proper surjective regular map $g:Y'\to Y$ such that $g^{-1}(X)$ is a normal-crossings divisor of $Y'$ and the restriction
$$
g|_{Y'\setminus g^{-1}(X)}:Y'\setminus g^{-1}(X)\to Y\setminus X
$$
is a diffeomorphism whose inverse map is also regular.
\end{thm}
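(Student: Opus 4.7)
The plan is to reduce Theorem \ref{hi2} to a principalization-of-ideals statement and then run Hironaka's inductive blow-up procedure. First, I would replace $X$ by its ideal sheaf $\mathcal{I}_X \subset \mathcal{O}_Y$ and aim to construct a sequence of blow-ups
\[
Y = Y_0 \xleftarrow{\pi_1} Y_1 \xleftarrow{\pi_2} \cdots \xleftarrow{\pi_N} Y_N = Y'
\]
along smooth centers $C_i \subset Y_i$, such that the total transform of $\mathcal{I}_X$ in $Y'$ becomes locally principal and generated by a monomial in coordinates adapted to the exceptional divisor of the composition $g := \pi_1 \circ \cdots \circ \pi_N$. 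Once this is achieved, $g^{-1}(X)$ is the support of a locally monomial ideal, which is automatically a normal-crossings divisor of $Y'$.

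Second, I would govern the choice of the centers $C_i$ by a resolution invariant $\mathrm{inv}(x)$ taking values in some well-ordered set (for example, the Hilbert--Samuel function, or its refinements due to Bierstone--Milman, Villamayor, or W\l odarczyk). The two properties one needs are: (a) for every $i$, the locus $\{x \in Y_i : \mathrm{inv}(x) \text{ is maximal}\}$ is smooth, so it (or a smooth subvariety of it) is an admissible center $C_i$; and (b) after blowing up along $C_i$, the maximum value of $\mathrm{inv}$ strictly drops. Combined with the Noetherianity of $Y$ and the fact that $\mathrm{inv}$ takes only finitely many values above any fixed lower bound, this forces termination in $N$ steps, at which point the invariant is trivial and $g^{-1}(X)$ has normal crossings.

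Third, to secure the two claimed diffeomorphism statements, I would insist that every center $C_i$ be contained in the strict transform of $\Sing(X)$, i.e.\ in the locus where $\mathrm{inv}$ is non-trivial. Each blow-up $\pi_i$ is then an isomorphism over the complement of its center, so the composition $g$ restricts to a regular isomorphism $Y' \setminus g^{-1}(\Sing(X)) \to Y \setminus \Sing(X)$, with regular inverse. By the same argument, $g$ restricts to an isomorphism $g^{-1}(X \setminus \Sing(X)) \to X \setminus \Sing(X)$, since the preimage of $X \setminus \Sing(X)$ never meets any center. Properness and surjectivity of $g$ follow because each blow-up along a closed smooth center is proper and surjective, and these properties are stable under composition. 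Theorem \ref{hi1} (applied to $X$ itself) can then be invoked as a consistency check: the strict transform of $X$ in $Y'$ is a non-singular algebraic set dominating $X$, matching the non-embedded resolution.

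The main obstacle is step two, namely the construction of a resolution invariant with the strict-drop property after blow-up. This is precisely where the technical weight of Hironaka's theorem sits. In the original proof this required a delicate induction on the Hilbert--Samuel function and on the ambient dimension; in the modern proofs one replaces this by an induction on maximal contact hypersurfaces, but in every version the crux is showing that the chosen invariant really decreases and is not merely preserved under the admissible blow-ups. Everything else in the scheme above---smoothness of the maximal stratum, termination given strict decrease, and the isomorphism statements away from the centers---is essentially formal once such an invariant has been produced.
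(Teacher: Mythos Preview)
The paper does not prove Theorem~\ref{hi2} at all: it is stated in the preliminaries as one of ``Hironaka's desingularization results \cite{hi}'' that the author will use as a black box, with no proof or sketch provided. So there is no ``paper's own proof'' to compare against; the theorem is imported wholesale from \cite{hi}.

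Your proposal is a reasonable high-level outline of the modern approach to embedded resolution (principalization via blow-ups along smooth centers governed by an upper-semicontinuous invariant, with centers contained in the singular locus so that the map is an isomorphism off $\Sing(X)$). As a sketch of the strategy it is accurate, and you correctly identify where the real difficulty lies. That said, as written it is a plan rather than a proof: the existence and strict-decrease of the invariant is precisely the content of Hironaka's theorem, and you have not supplied it. For the purposes of this paper, the appropriate move is exactly what the author does---cite \cite{hi} (or one of the streamlined modern treatments you mention) and move on.
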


\subsection{Nash manifolds and Nash normal-crossings divisors}\setcounter{paragraph}{0}

The open semialgebraic subsets of a Nash manifold $M\subset\R^m$ are a base of the topology and therefore Nash functions define a sheaf that we denote with ${\mathcal N}$. In particular, the sheaf ${\mathcal N}$ induces a notion of Nash function $f:U\to \R$ over an arbitrary open subset $U$ of $M$ possibly not semialgebraic. In case $U$ is an open subset of $\R^m$, \em Nash \em means that $f$ is smooth and there exists a nonzero polynomial $P(\x,\t)\in\R[\x,\t]=\R[\x_1,\ldots,\x_m,\t]$ such that $P(x,f(x))=0$ for all $x\in U$. If $U$ is semialgebraic this definition is equivalent to the one in the Introduction \cite[Prop.8.1.8]{bcr}. We apparently have two definitions of Nash function $f:M\to \R$. Both notions are equal because \em every Nash manifold has a Nash tubular neighborhood\em. 

\paragraph{}\label{fact:retract}\cite[Cor.8.9.5]{bcr} \em Let $M\subset\R^m$ be a Nash manifold. Then there exists an open semialgebraic neighborhood $U$ of $M$ in $\R^m$ and a Nash retraction $\rho:U\to M$\em. 

Recall the existence of finite atlas for $M$ with domains Nash diffeomorphic to $\R^{\dim(M)}$:

\paragraph{}\label{cartaspm} \cite[Lem.2.2]{fgr} \em A Nash manifold $M\subset\R^m$ of dimension $d$ admits a finite open (semialgebraic) covering $M=\bigcup_{i=1}^rM_i$ by Nash manifolds $M_i\subset M$ each of them Nash diffeomorphic to $\R^d$\em. 

Let $N\subset M$ be a closed Nash submanifold of dimension $e$. By \cite[Cor.II.5.4]{sh} $N$ is a non-singular Nash subset of $M$ and by \cite[Thm.1.4]{bfr} we have:

\paragraph{} \label{fnashnc2}
\em The Nash submanifold $N$ can be covered by finitely many open semialgebraic subsets $U$ of $M$ equipped with Nash diffeomorphisms $u:=(u_1,\dots,u_d):U\to \R^d$ such that $U\cap N=\{u_1=0,\ldots,u_{d-e}=0\}$\em.

It is also possible to construct tubular neighborhoods of $N$ inside $M$. A \em Nash (vector) bundle over $M$ \em is a (vector) bundle $(\mathscr{E},\theta,M)$ such that $\mathscr{E}$ is an (affine) Nash manifold and the projection $\theta:\mathscr{E}\to M$ is a Nash map. Examples of Nash bundles are: the trivial bundle of $M$, the tangent bundle of $M$, the normal bundle of $M$, etc.

\paragraph{}\label{nmtub} \cite[Lem.II.6.2]{sh} \em There exists a Nash subbundle $(\mathscr{E},\theta,N)$ of the trivial Nash bundle $(N\times\R^m,\eta,N)$, a (strictly) positive Nash function $\delta$ on $N$ and a Nash diffeomorphism $\varphi$ from a semialgebraic neighborhood $V$ of $N$ in $M$ onto 
$$
\mathscr{E}_\delta:=\{(x,y)\in\mathscr{E}:\ \|y\|<\delta(x)\}
$$
such that $\varphi|_{N}=(\id_N,0)$\em. The tuple $(V,\varphi,\mathscr{E},\theta,N,\delta)$ is \em a Nash tubular neighborhood of $N$ in $M$ \em and the composition $\theta\circ\varphi:V\to N$ is a Nash retraction.

As an application of \ref{fnashnc2} it follows a counterpart of \ref{cartaspm} for Nash manifolds with boundary:

\paragraph{}\label{cartaspmh} \em Let $H\subset\R^m$ be a $d$-dimensional Nash manifold with boundary\em. By \cite[Thm.1.11]{fgr} \em $H$ is a closed subset of a Nash manifold $M\subset\R^m$ of dimension $d$ in such a way that:
\begin{itemize}
\item[(i)] $\partial H$ is a closed Nash submanifold of $M$ \em and so a Nash non-singular subset of $M$.\em
\item[(ii)] $M$ can be covered with finitely many open semialgebraic subsets $U$ equipped with Nash diffeomorphisms $(u_1,\dots,u_d):U\to \R^d$ such that 
$$
\begin{cases}
\text{$U\subset H$ or $U\cap H=\varnothing$}&\text{if $U$ does not meet $\partial H$,}\\ 
\text{$U\cap H=\{u_1\ge0\}$}&\text{if $U$ meets $\partial H$.} 
\end{cases}
$$
\end{itemize}\em

A \em Nash normal-crossings divisor of \em $M$ is a Nash subset $X\subset M$ whose irreducible components are non-singular Nash hypersurfaces $X_1,\dots,X_p$ of $M$ in general position. This means that at every point $x\in X_{i_1},\dots,X_{i_r}$, $x\notin X_i$ for $i\ne i_1,\ldots,i_r$, the tangent hyperplanes $T_xX_{i_1},\dots,T_xX_{i_r}$ are linearly independent in the tangent space $T_xM$. In \cite[Thm.1.6]{fgr} we prove the following:

\paragraph{}\label{fnashnc}
\em Let $X$ be a Nash normal-crossings divisor of $M$. Then $X$ can be covered by finitely many open semialgebraic subsets $U$ of $M$ equipped with Nash diffeomorphisms $(u_1,\dots,u_d):U\to \R^d$ such that $U\cap X=\{u_1\cdots u_r=0\}$, where $r$ depends on $U$\em.

Both Nash submanifolds and Nash normal-crossings divisors are particular cases of \em coherent \em Nash subsets $X$ of $M$ (see \cite[\S2.B, Lem.5.1]{bfr}). We can say by \cite[\S2.B]{bfr} and \cite[Prop.8.6.9]{bcr} that a Nash set $X\subset M$ is \em coherent \em if the ${\mathcal N}$-sheaf of ideals $\J_x=I(X_x)$ for $x\in M$ is of \em finite type\em, that is, for every $x\in M$ there exists an open neighborhood $U$ and a surjective morphism ${\mathcal N}^s|_U\to\J|_U$. By \cite[Eq.(2.2)]{bfr} it holds that if $X\subset M$ is a coherent Nash subset, then for any semialgebraic open subset $U$ of $M$ we have
\begin{equation}\label{eq:ix}
I(X){\mathcal N}(U)=I(X\cap U)
\end{equation}
where $I(X\cap U):=\{f\in{\mathcal N}(U): f|_{X\cap U}=0\}$ and $I(X):=I(X\cap M)$. Consequently, if $x\in X$, the ideal $I(X_x)=I(X){\mathcal N}(M_x)$. Conversely, we have \cite[(I.6.5)]{sh}:

\paragraph{}\label{division} \em If $f_1,\ldots,f_r\in{\mathcal N}(M)$ generate $I(X_x)$ for all $x\in X$, then $f_1,\ldots,f_r$ generate also $I(X)$\em. In particular, if $f\in{\mathcal N}(M)$ satisfies $f_x\in I(X_x)$ for all $x\in X$, then $f\in I(X)$.

\subsection{Approximation of differentiable semialgebraic maps by Nash maps}\setcounter{paragraph}{0}

Let $M\subset\R^m$ be a Nash manifold of dimension $d$. Denote the set of all continuous semialgebraic functions on $M$ with ${\mathcal S}^0(M)$. For every integer $r\ge 1$ we denote the set of all semialgebraic functions $f:M\to\R$ that are differentiable of class $r$ with ${\mathcal S}^r(M)$. We equip ${\mathcal S}^r(M)$ with the \em ${\mathcal S}^r$ semialgebraic Whitney topology \em \cite[\S II.1, p.79--80]{sh}. If $r\geq1$, let $\xi_1,\dots,\xi_s$ be semialgebraic ${\mathcal S}^{r-1}$ tangent fields on $M$ that span the tangent bundle of $M$. For every strictly positive continuous semialgebraic function $\veps:M\to\R$ we denote the set of all functions $g\in{\mathcal S}^r(M)$ such that
$$
\begin{cases}
|g|<\veps&\text{if $r=0$,}\\
|g|<\veps \quad\text{and}\quad|\xi_{i_1}\cdots\xi_{i_\ell}(g)|<\veps\, 
 \text{ for }\, 1\le i_1,\dots,i_\ell\le s, 1\le\ell\le r&\text{if $r\geq1$} 
\end{cases}
$$
with ${\mathcal U}_\epsilon$. These sets ${\mathcal U}_\epsilon$ form a basis of neighborhoods of the zero function for a topology in ${\mathcal S}^r(M)$ that does not depend on the choice of the tangent fields if $r\geq1$. The first important result is that the inclusion ${\mathcal N}(M)\subset {\mathcal S}^r(M)$ is dense.

\paragraph{}\cite[Thm.II.4.1]{sh}\label{shiota}
\em Every semialgebraic ${\mathcal S}^r$ function on $M$ can be approximated in the ${\mathcal S}^r$ topology by Nash functions\em.

Let $N\subset\R^n$ be a Nash manifold. A semialgebraic map $f:=(f_1,\dots,f_n):M\to N\subset\R^n$ is ${\mathcal S}^r$ if each component $f_k:M\to\R$ is ${\mathcal S}^r$. We denote the set of all ${\mathcal S}^r$ maps $M\to N$ with ${\mathcal S}^r(M,N)$. We consider in ${\mathcal S}^r(M,N)$ the subspace topology given by the canonical inclusion in the following product space endowed with the product topology \cite[Rmk.II.1.3]{sh}:
$$
{\mathcal S}^r(M,N)\subset{\mathcal S}^r(M,\R^n)={\mathcal S}^r(M,\R)\times\overset{n}{\cdots}\times{\mathcal S}^r(M,\R):f\mapsto(f_1,\dots,f_n).
$$
Roughly speaking, $g$ is close to $f$ when its components $g_k$ are close to the components $f_k$ of $f$. The previous topologies can be extended to the set ${\mathcal S}^r(\Ss,\Tt)$ of ${\mathcal S}^r$ semialgebraic maps between two semialgebraic sets $\Ss$ and $\Tt$ if $\Ss\subset M$ is closed \cite[\S2.D-E]{bfr}. This allows for instance to provide a topology on the set of ${\mathcal S}^r$ maps between two Nash manifolds with boundary.

A map $h:\Ss\to \Tt$ is an \em ${\mathcal S}^r$ diffeomorphism \em if it is a bijection and both $h$ and $h^{-1}$ are ${\mathcal S}^r$ maps. Diffeomorphisms between Nash manifolds behave well with respect to approximation if $r\geq1$.

\paragraph{}\label{diffo}\cite[Lem.II.1.7]{sh} \em Let $h:M\to N$ be an ${\mathcal S}^r$ diffeomorphism of Nash manifolds. If an ${\mathcal S}^r$ map $g:M\to N$ is ${\mathcal S}^r$ close enough to $h$, then $g$ is also an ${\mathcal S}^r$ diffeomorphism, and $g^{-1}$ is ${\mathcal S}^r$ close to $h^{-1}$\em.

From this and the existence of Nash tubular neighborhoods (\ref{fact:retract}) we deduce that for all $r\ge 1$ \em every ${\mathcal S}^r$ diffeomorphism $f:M\to N$ can be approximated by Nash diffeomorphisms, \em hence ${\mathcal S}^1$ and Nash classifications coincide for Nash manifolds. In the case of Nash manifolds with boundary we proved in \cite[Rmk.9.6]{bfr} that ${\mathcal S}^2$ and Nash classifications coincide.

\paragraph{}\label{s2boun} \em Two Nash manifolds with boundary that are ${\mathcal S}^2$ diffeomorphic are Nash diffeomorphic\em. 

\paragraph{}\label{s2bounb} In addition, \em if $f:H_1\to H_2$ is an ${\mathcal S}^2$ diffeomorphism between two Nash manifolds with boundary and $f|_{\partial H_1}:\partial H_1\to\partial H_2$ is a Nash diffeomorphism, then there exists a Nash diffeomorphism $g:H_1\to H_2$ close to $f$ such that $g|_{\partial H_1}=f|_{\partial H_1}$\em.

We will deal in addition with ${\mathcal S}^0$ homeomorphisms between Nash manifolds. In this case it is not possible to approximate them by Nash diffeomorphisms, but the following result allows us to approximate them by Nash surjective maps. 

\begin{lem}\label{surapprox}
Let $M\subset\R^m$ and $N\subset\R^n$ be Nash manifolds and let $f:M\to N$ be a semialgebraic homeomorphism. Let $g:M\to N$ be a semialgebraic map close to $f$ in the ${\mathcal S}^0$ topology. Then $g$ is surjective.
\end{lem}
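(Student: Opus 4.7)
The strategy has two movements: reduce the problem to perturbations of the identity on $N$, and then apply a local-degree argument to force surjectivity.

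First, I would compose with $f^{-1}$ to turn the problem into a statement about $N$ alone. Since $f$ is a semialgebraic homeomorphism, so is $f^{-1}:N\to M$, and $h:=g\circ f^{-1}:N\to N$ is a continuous semialgebraic map that is surjective if and only if $g$ is. If $\epsilon:M\to(0,\infty)$ is the positive continuous semialgebraic function defining the $\mathcal{S}^0$-neighborhood of $f$ that contains $g$, then the push-forward $\delta:=\epsilon\circ f^{-1}:N\to(0,\infty)$ is again positive continuous semialgebraic, and $\|g(x)-f(x)\|<\epsilon(x)$ for all $x\in M$ is the same as $\|h(y)-y\|<\delta(y)$ for all $y\in N$. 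It is therefore enough to show: for a suitably chosen positive continuous semialgebraic $\delta:N\to(0,\infty)$, every continuous semialgebraic map $h:N\to N$ with $\|h(y)-y\|<\delta(y)$ on $N$ is surjective.

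Next, I would fix $y_0\in N$ and run a local-degree argument. By \ref{cartaspm} there is a Nash chart $\varphi:U\to\R^d$ with $\varphi(y_0)=0$, where $d=\dim N$; put $K:=\varphi^{-1}(\bar B(0,1))$, a compact neighborhood of $y_0$ with spherical boundary $\partial K=\varphi^{-1}(S^{d-1})$. Let $\rho:V\to N$ be a Nash retraction from an open semialgebraic neighborhood $V$ of $N$ in $\R^n$ (\ref{fact:retract}). Provided $\delta$ is small enough so that (i) for every $y\in N$ and $t\in[0,1]$ the segment $(1-t)y+th(y)$ lies in $V$ and (ii) for every $y\in\partial K$ that segment avoids $\rho^{-1}(y_0)$, the formula $H_t(y):=\rho((1-t)y+th(y))$ is a continuous homotopy $N\times[0,1]\to N$ from $\mathrm{id}_N$ to $h$ with $H_t(\partial K)\subset N\setminus\{y_0\}$. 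If $y_0\notin h(K)$, then $h|_K$ takes values in $N\setminus\{y_0\}$, so $h|_{\partial K}$ extends over $K$ and is null-homotopic in $N\setminus\{y_0\}$; together with $H$ this forces the inclusion $\partial K\hookrightarrow N\setminus\{y_0\}$ to be null-homotopic, contradicting the fact that it represents a generator of the local homology $H_{d-1}(N,N\setminus\{y_0\})\cong\Z$ (since $N$ is a $d$-manifold). Hence $y_0\in h(K)\subset h(N)$, and $h$ is surjective.

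The main obstacle is the globalization: packaging the two pointwise smallness conditions into a single $\delta:N\to(0,\infty)$ that is positive, continuous, and semialgebraic, with bounds independent of $y_0$. Condition (i) can be arranged by requiring $\delta(y)\le\tfrac{1}{2}\dist(y,\R^n\setminus V)$, which is positive continuous semialgebraic on $N$ as the distance to a closed semialgebraic set. Condition (ii) is subtler: using the finite Nash-chart cover supplied by \ref{cartaspm}, I would fix a semialgebraic family of compact neighborhoods $K_i(y_0)\subset U_i$ varying semialgebraically in $y_0\in U_i$, derive a positive continuous semialgebraic lower bound for the Euclidean distance between $\rho^{-1}(y_0)$ and $\partial K_i(y_0)$, and take the minimum of these finitely many functions. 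The resulting $\delta$ is the required one, and the local argument above then produces a preimage of every $y_0\in N$ under $h$.
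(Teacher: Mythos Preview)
Your reduction to self-maps close to $\id_N$ matches the paper's first move, but from there the two arguments diverge. The paper does not run a local degree argument on $N$: instead it invokes Shiota's compactification \cite[VI.2.1]{sh} to realize $N$ (up to Nash diffeomorphism) as an open piece $M'$ of a compact non-singular algebraic set $X$ with boundary divisor $Y$, chooses $\varepsilon$ a Nash equation of $Y$, checks that any $h\in\mathcal S^0(M',M')$ with $\|h-\id\|<\varepsilon$ extends by the identity to a continuous self-map $H$ of all of $X$, and then cites Hirsch \cite[2.1.8]{h} for compact $X$ (any continuous map $\delta$-close to $\id_X$ is onto). This buys a one-line endgame at the cost of the compactification machinery; your route is more self-contained topologically but must manufacture the global semialgebraic $\delta$ by hand, which is exactly the ``main obstacle'' you flagged.

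Your local argument needs one repair. You claim the inclusion $\partial K\hookrightarrow N\setminus\{y_0\}$ ``represents a generator of $H_{d-1}(N,N\setminus\{y_0\})$''; the correct local homology group is $H_d(N,N\setminus\{y_0\})\cong\Z$, and the class $[\partial K]\in H_{d-1}(N\setminus\{y_0\})$ can actually vanish (it does whenever $N$ is compact orientable, since then $H_d(N)\to H_d(N,N\setminus\{y_0\})$ is onto and the connecting map is zero). The fix is to phrase the argument relatively: your homotopy $H_t$ is a map of pairs $(K,\partial K)\to(N,N\setminus\{y_0\})$, so $h|_K$ and $\id|_K$ induce the same map $H_d(K,\partial K)\to H_d(N,N\setminus\{y_0\})$; the latter is an isomorphism by excision, while the former factors through $H_d(N\setminus\{y_0\},N\setminus\{y_0\})=0$ if $y_0\notin h(K)$. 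With this correction the strategy is sound, though the semialgebraic uniformization of $\delta$ over the finite chart cover still needs to be written out carefully.
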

\begin{proof}
We adapt the proof of \cite[Thm.2.1.8]{h}. By \cite[Rmk.II.1.15]{sh} the map 
$$
f^{-1}_*:{\mathcal S}^0(M,N)\to{\mathcal S}^0(M,M),\ h\mapsto f^{-1}\circ h
$$ 
is continuous. Thus, if $g$ is close to $f$, then $f^{-1}\circ g$ is close to $\id_M$. If we prove that each $g'\in{\mathcal S}^0(M,M)$ close to the identity is surjective, then each $g$ close to $f$ will satisfy that $f^{-1}\circ g$ is surjective, so $g$ is surjective.

By \cite[Thm.VI.2.1]{sh} there exist a compact affine non-singular algebraic set $X$, a non-singular algebraic subset $Y$ of $X$ that either has codimension $1$ if $M$ is non compact or is empty if $M$ is compact and a union $M'$ of some connected components of $X\setminus Y$ such that $M$ is Nash diffeomorphic to $M'$ and $\cl(M')$ is a compact Nash manifold with boundary $Y$. Let $\veps$ be a non-negative Nash equation of $Y$ in $X$ and let $h\in{\mathcal S}^0(M',M')$ be such that $\|h-\id_{M'}\|<\veps|_{M'}$. We claim: \em $h$ extends continuously to $X\setminus M'$ as the identity map $\id_{X\setminus M'}$\em. It is enough to check: \em if $\{z_k\}_{k\geq1}\subset M'$ tends to $y\in Y$, then $\{h(z_k)\}_k$ tends to $y$\em.

Indeed, $\|h(z_k)-z_k\|<\veps(z_k)$ for each $k\geq1$. As $\{z_k\}_k$ tends to $y\in Y$, we have that $\{\veps(z_k)\}_k$ tends to $0$, so $\{h(z_k)\}_k$ tends to $y$, as claimed.

Consider next the continuous semialgebraic map
$$
H:X\to X,\ x\mapsto\begin{cases}
h(x)&\text{if $x\in M'$},\\
x&\text{if $x\in X\setminus M'$}.
\end{cases}
$$
It satisfies 
$$
\|H(x)-\id_X(x)\|\begin{cases}
<\veps(x)&\text{if $x\in M'$},\\
=0&\text{if $x\in X\setminus M'$}.
\end{cases}
$$
As $X$ is compact, we know by \cite[Thm.2.1.8]{h} that there exists $\delta>0$ such that if $\xi:X\to X$ is a continuous map and $\|\xi-\id_X\|<\delta$, then $\xi$ is surjective. Consequently, if $h\in{\mathcal S}^0(M',M')$ satisfies $\|h-\id_{M'}\|<\min\{\veps,\delta\}$, then $H:X\to X$ is surjective. Let us check that also $h\in{\mathcal S}^0(M',M')$ is surjective. Pick a point $y\in M'$. Then there exists $x\in X$, such that $H(x)=y$. If $x\in X\setminus M'$, then $H(x)=x\in X\setminus M'$, which is a contradiction. So $x\in M'$ and $h(x)=H(x)=y$, that is, $h$ is surjective, as required.
\end{proof}

\subsection{Modification of analytic arcs by Nash arcs}

The handling of well-welded semialgebraic sets in Section \ref{s7} requires the modification of analytic arcs by Nash arcs that avoid certain algebraic sets. To that end we will use the following result:

\begin{lem}\label{mod}
Let $M\subset\R^m$ be a connected Nash manifold and let $Y\subset\R^m$ be an algebraic set. Let $M_1,M_2$ be open semialgebraic subsets of $M$ and let $\alpha:(-1,1)\to M_1\cup M_2\cup\{0\}$ be an analytic arc such that $\alpha(0)=0$, $\alpha((0,1))\subset M_1$ and $\alpha((-1,0))\subset M_2$. Assume that $M\not\subset Y$. Then for every integer $\nu\geq 1$ there exist $\veps>0$ and a Nash arc $\beta:(-\veps,\veps)\to M_1\cup M_2\cup\{0\}$ such that: 
\begin{itemize}
\item[(i)] $\beta(0)=0$, $\alpha(\t)-\beta(\t)\in(\t)^{\nu}\R\{\t\}^m$, 
\item[(ii)] $\beta((-\veps,\veps))\cap Y\subset\{0\}$,
\item[(iii)] $\beta((0,\veps))\subset M_1$ and $\beta((-\veps,0))\subset M_2$.
\end{itemize}
\end{lem}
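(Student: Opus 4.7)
The plan is to construct $\beta$ by perturbing a Taylor truncation of $\alpha$ and projecting onto $M$. Pick a Nash retraction $\rho:U\to M$ from an open semialgebraic neighborhood $U$ of $M$ in $\R^m$ (see \ref{fact:retract}) and write the Taylor expansion $\alpha(\t)=\sum_{k\ge1}a_k\t^k$. For an integer $N$ to be determined and a vector $w\in\R^m$, consider the polynomial arc
\[
P_{N,w}(\t):=\sum_{k=1}^N a_k\t^k+\t^{N+1}w
\]
and the composed Nash arc $\beta_w:=\rho\circ P_{N,w}$, defined on some interval $(-\delta,\delta)$ with $\delta>0$ uniform for $w$ on bounded sets. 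Because $\rho$ fixes $M$ pointwise and $\alpha(\t)\in M$, a first-order Taylor expansion of $\rho$ along $\alpha$ gives $\beta_w(\t)-\alpha(\t)\in(\t)^{N+1}\R\{\t\}^m$ for every $w$, so (i) holds as soon as $N+1\ge\nu$.

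To secure the sidedness condition (iii), I use a leading-term argument. Since $M_1$ is open semialgebraic, near $0$ it is a finite union of basic open sets cut out by strict polynomial inequalities; a pigeonhole argument on $\alpha((0,\veps_0))$ yields polynomials $Q_1,\dots,Q_s$ with $\alpha((0,\veps_0))\subset\{Q_1>0,\dots,Q_s>0\}\subset M_1$. Each analytic function $Q_j\circ\alpha$ is positive on $(0,\veps_0)$, so its Taylor expansion starts $c_{jk_j}\t^{k_j}+\cdots$ with $c_{jk_j}>0$; the same reasoning on the negative side produces analogous orders $\widetilde{k}_j$ for polynomials defining $M_2$. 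Choose $N\ge\max\{\nu-1,\max_j k_j,\max_j\widetilde{k}_j\}$. Then for $w$ in a fixed bounded neighborhood of $0$ and $\t>0$ small,
\[
Q_j(\beta_w(\t))=Q_j(\alpha(\t))+O(\t^{N+1})=c_{jk_j}\t^{k_j}+O(\t^{k_j+1})>0,
\]
so $\beta_w((0,\veps))\subset M_1$; symmetrically $\beta_w((-\veps,0))\subset M_2$ for some $\veps>0$ uniform in $w$.

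Finally, to achieve (ii), let $q_1,\dots,q_r\in\R[\x]$ generate the vanishing ideal of $Y$ and suppose for contradiction that $\beta_w\subset Y$ for every $w$ in some neighborhood $V$ of the origin of $\R^m$. Fix a small $\t_0>0$; as $w$ ranges over $V$, the vector $P_{N,w}(\t_0)=\sum_{k=1}^N a_k\t_0^k+\t_0^{N+1}w$ sweeps an open neighborhood of $P_{N,0}(\t_0)$ in $\R^m$, and since $\rho$ is a Nash submersion at points of $M$ (its differential there restricts to the identity on the tangent space), it is an open map near $M$, so $\{\beta_w(\t_0):w\in V\}$ contains a nonempty open subset $W$ of $M$. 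Each $q_i$ then vanishes on $W$, and the identity principle on the connected real analytic manifold $M$ forces $q_i\equiv 0$ on $M$, contradicting $M\not\subset Y$. Hence some $w\in V$ yields $\beta:=\beta_w\not\subset Y$, and being Nash and not identically contained in $Y$, its preimage $\beta^{-1}(Y)$ is a finite subset of $(-\delta,\delta)$; after shrinking $\veps$ we obtain $\beta((-\veps,\veps))\cap Y\subset\{0\}$, which is (ii). The main obstacle I expect is precisely this last point, where the \emph{global} hypothesis $M\not\subset Y$ must be converted into the \emph{local} escape property of a single $\beta_w$; the mechanism is the combination of openness of the Nash retraction with the identity principle on the connected real analytic manifold $M$.
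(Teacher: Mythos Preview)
Your proof is correct and follows a different route from the paper's. The paper passes to a local Nash chart $\varphi:V\to\R^d$ around $0$, so the perturbed arc automatically stays in $M$ and the avoidance of $Y$ becomes a pure dimension count: $\overline{\varphi(Y\cap V)}^{\zar}$ has dimension $<d$, hence a generic perturbation $\gamma(\t)+\t^{s+1}y_0$ of the polynomial truncation escapes it, and one pulls back by $\varphi^{-1}$. You instead stay in the ambient $\R^m$, push the truncation back to $M$ via a Nash retraction $\rho$, and obtain (ii) by contradiction through openness of $\rho$ combined with the identity principle on the connected analytic manifold $M$. Both work; the chart argument is slightly lighter (no need to analyse $d\rho$), while yours makes the role of the connectedness hypothesis on $M$ more transparent.

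Two small points to clean up. In (iii) the inclusion $\{Q_1>0,\dots,Q_s>0\}\subset M_1$ cannot hold as written, since $M_1\subset M$ has dimension $d<m$; what you need is $\{Q_j>0\}\cap M\subset M_1$, obtained by first writing $M_1=W_1\cap M$ for an ambient open semialgebraic set (e.g.\ $W_1=\R^m\setminus\cl(M\setminus M_1)$), choosing the $Q_j$ to cut out a basic piece of $W_1$ containing $\alpha((0,\veps_0))$, and then using that $\beta_w=\rho\circ P_{N,w}$ already lands in $M$. In (ii) you invoke that $\rho$ is a submersion, but you need this at $P_{N,0}(\t_0)$ rather than at points of $M$; this holds because having maximal rank is an open condition and $P_{N,0}(\t_0)$ is close to $\alpha(\t_0)\in M$ for $\t_0$ small.
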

\begin{proof}
For simplicity we can assume $M_1\cap M_2=\varnothing$, $0\not\in M_1\cup M_2$ and $0\in Y$. Let $V\subset M$ be an open semialgebraic neighborhood of the origin equipped with a Nash diffeomorphism $\varphi:V\to\R^d$ such that $\varphi(0)=0$. Shrinking the domain of $\alpha$, we may assume $\im(\alpha)\subset V$. Denote $\widehat{\alpha}:=\varphi\circ\alpha:(-\delta,\delta)\to\R^d$ where $\delta>0$ is small enough.

Shrinking $M_i$, $V$ and the domain of $\widehat{\alpha}$, we may assume that $0\not\in\varphi(M_i\cap V)$ and $\varphi(M_i\cap V)=\{g_{1i}>0,\ldots,g_{\ell i}>0\}$ for some polynomials $g_{ji}\in\R[\x]$. There exists $s\geq\nu$ large enough such that if $\gamma\in\R\{\t\}^d$ and $\gamma-\widehat{\alpha}\in(\t)^s\R\{\t\}^d$, we have $(g_{j1}\circ\gamma)(t)>0$ and $(g_{j2}\circ\gamma)(-t)>0$ for $t>0$ small enough and $j=1,\ldots,\ell$. Let $\gamma\in\R[\t]^d$ be a polynomial tuple such that $\gamma-\widehat{\alpha}\in(\t)^s\R\{\t\}^d$.

As $Y\cap V$ has dimension $<d$, also the algebraic set $Y':=\ol{\varphi(Y\cap V)}^{\zar}$ has dimension $<d$. Let $h\in\R[\x]$ be a polynomial equation of $Y'$. Consider the surjective polynomial map
$$
\R\times\R^d\to\R^d,\ (t,y)\mapsto\gamma(t)+t^{s+1}y.
$$ 
Let $y_0\in\R^d$ be such that the univariate polynomial $h(\gamma(\t)+\t^{s+1}y_0)\in\R[\t]$ is not identically zero. Let $\veps>0$ be such that $\gamma_0(\t):=\gamma(\t)+\t^{s+1}y_0\in\R[\t]^d$ satisfies
$$
g_{j1}(\gamma_0(t))>0,\quad g_{j2}(\gamma_0(-t))>0,\quad h(\gamma_0(t))\neq0\quad\text{and}\quad h(\gamma_0(-t))\neq0
$$ 
for $0<t<\veps$. The Nash arc $\beta:=\varphi^{-1}\circ\gamma_0:(-\veps,\veps)\to M_1\cup M_2\cup\{0\}$ satisfies the required properties.
\end{proof}

\section{A light start-up: The $1$-dimensional case}\label{s3}

In this short section we prove Proposition \ref{curves} and present some enlightening examples. Nash images of Euclidean spaces contained in the real line are its intervals and all of them are Nash images of $\R$. To be convinced of this fact it is enough to have a look at the following examples.

\begin{examples}\label{dilatacion}
(i) The interval $(0,1)$ is Nash diffeomorphic to $\R$. Consider the Nash diffeomorphism (together with its inverse):
$$
f:\R\to (0,1),\ t\mapsto\frac{t}{2\sqrt{1+t^2}}+\frac{1}{2}
\quad\text{and}\quad
f^{-1}:(0,1)\to\R,\ t\mapsto\frac{2t-1}{2\sqrt{t(1-t)}}.
$$
In addition $f((0,+\infty))=(\frac{1}{2},1)$ and $f([0,+\infty))=[\frac{1}{2},1)$.

(ii) The interval $[0,1)$ is the image of the Nash map
$$
h_1:\R\to\R,\ t\mapsto\frac{t^2}{t^2+1},
$$
whereas $[0,1]$ is the image of the Nash map
$$
h_2:\R\to\R,\ t\mapsto\frac{t}{t^2+1}+\frac{1}{2}.
$$
\end{examples}

Nash images of Euclidean spaces contained in a circumference are its connected subsets and all of them are Nash images of $\R$.

\begin{examples}\label{circle}
(i) The circumference $\sph^1:\ x^2+y^2=1$ is a Nash image of $\R$. Consider the inverse of the stereographic projection from the point $(1,0)$, which is the map
$$
f:\R\to\sph^1\setminus\{(1,0)\},\ t\mapsto\Big(\frac{1-t^2}{1+t^2},\frac{2t}{1+t^2}\Big).
$$
Next, we identify $\R^2$ with $\C$ and the coordinates $(x,y)$ with $x+\sqrt{-1}y$. Consider the map 
$$
g:\C\to\C,\ z:=x+\sqrt{-1}y\mapsto z^2=(x^2-y^2)+\sqrt{-1}(2xy).
$$
The image of $\R$ under $g\circ f$ is $\sph^1$. 

(ii) Any connected proper subset $\Ss$ of $\sph^1$ is a Nash image of $\R$ because it is Nash diffeomorphic to either $(0,1)$, $[0,1)$ or $[0,1]$ and these are Nash images of $\R$.
\end{examples}

We are ready to prove Proposition \ref{curves}.

\begin{proof}[Proof of Proposition \em \ref{curves}]
Assume $\Ss$ is irreducible. Let $X$ be the Zariski closure of $\Ss$ in $\R^n$ and let $\widetilde{X}$ be its complexification. Let $(\widetilde{Y},\pi)$ be the normalization of $\widetilde{X}$ and let $\widehat{\sigma}$ be the involution of $\widetilde{Y}$ induced by the involution $\sigma$ of $\widetilde{X}$ that arises from the restriction to $\widetilde{X}$ of the complex conjugation in $\C^n$. We may assume that $\widetilde{Y}\subset\C^m$ and that $\widehat{\sigma}$ is the restriction to $\widetilde{Y}$ of the complex conjugation of $\C^m$. By \cite[Thm.3.15]{fg3} and since $\Ss$ is irreducible, $\pi^{-1}(\Ss)$ has a $1$-dimensional connected component $\Tt$ such that $\pi(\Tt)=\Ss$. As $X$ has dimension $1$, it is a coherent analytic set, so $\Tt\subset Y:=\widetilde{Y}\cap\R^m$. As $\widetilde{Y}$ is a normal-curve, $Y$ is a non-singular real algebraic curve. We claim: \em the connected components of $Y$ are Nash diffeomorphic either to $\sph^1$ or to the real line $\R$\em.

By \cite[Thm.VI.2.1]{sh} there exist a compact affine non-singular real algebraic curve $Z$, a finite set $F$ which is empty if $Y$ is compact and a union $Y'$ of some connected components of $Z\setminus F$ such that $Y$ is Nash diffeomorphic to $Y'$ and $\cl(Y')$ is a compact Nash curve with boundary $F$. As $Z$ is a compact affine non-singular real algebraic curve, its connected components are diffeomorphic to $\sph^1$, so by \cite[Thm.VI.2.2]{sh} the connected components of $Z$ are in fact Nash diffeomorphic to $\sph^1$. Now, each connected component of $Y$ is Nash diffeomorphic to an open connected subset of $\sph^1$, so it is Nash diffeomorphic either to $\sph^1$ or to the real line $\R$, as claimed.

Consequently, $\Tt$ is Nash diffeomorphic to a connected subset of either $\sph^1$ or $\R$. By Examples \ref{dilatacion} and \ref{circle} the semialgebraic set $\Tt$ is a Nash image of $\R$, so also $\Ss$ is a Nash image of $\R$. The converse is straightforward.
\end{proof}

\section{Building boundaries on Nash manifolds}\label{s4}

The purpose of this section is to develop a tool to build boundaries on a non-compact Nash manifold. More precisely we will prove the following.

\begin{prop}\label{doubleivc}
Let $H\subset\R^m$ be a Nash manifold with boundary. Then there exists a surjective Nash map $f:\Int(H)\to H$ that has local representations of the type $(x_1,\ldots,x_d)\mapsto(x_1^2,x_2,\ldots,x_d)$ at each point of $f^{-1}(\partial H)$.
\end{prop}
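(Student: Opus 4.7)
The plan is to realize $f$ as a composition $f=\pi\circ\sigma$, where $\pi\colon D(H)\to H$ is the Nash folding map attached to the Nash double $D(H)$ constructed earlier in this section, and $\sigma\colon\Int(H)\to D(H)$ is a Nash surjection that restricts to a local Nash diffeomorphism on an open neighborhood of $\sigma^{-1}(\partial H)$. The Nash double $D(H)$ is a Nash manifold without boundary in which $\partial H$ sits as a closed Nash hypersurface, and the folding $\pi$ is a Nash surjection which restricts to a $2$-to-$1$ covering of $\Int(H)$, to the identity on $\partial H$, and admits at every point of the ramification locus $\partial H\subset D(H)$ local Nash coordinates of the form $(s,y_2,\dots,y_d)\mapsto(s^2,y_2,\dots,y_d)$. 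Granting such $\sigma$, the chain rule guarantees that $f=\pi\circ\sigma$ is Nash, surjective onto $H$, and enjoys the required local expression $(x_1,\dots,x_d)\mapsto(x_1^2,x_2,\dots,x_d)$ at every point of $f^{-1}(\partial H)=\sigma^{-1}(\partial H)$.

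To construct $\sigma$ I identify $\Int(H)$ with one of the two open components of $D(H)\setminus\partial H$ via the natural Nash open embedding, calling this image $\Int(H)_+$ and the other component $\Int(H)_-$. Since $\partial H$ is a two-sided closed Nash hypersurface of $D(H)$, the Nash tubular neighborhood theorem \ref{nmtub} supplies a Nash collar $\psi\colon\partial H\times(-\veps,\veps)\hookrightarrow D(H)$ with $\psi(y,0)=y$, $\psi(\partial H\times(0,\veps))\subset\Int(H)_+$, and $\psi(\partial H\times(-\veps,0))\subset\Int(H)_-$. On the half-collar $\psi(\partial H\times(0,\veps))\subset\Int(H)_+$ I define $\sigma$ by $\psi(y,t)\mapsto\psi(y,g(t))$, where $g\colon(0,\veps)\to(-\veps,\veps)$ is a Nash diffeomorphism with $g(t_0)=0$ for some interior $t_0\in(0,\veps)$; such a $g$ is obtained by composing the explicit Nash diffeomorphisms of Example \ref{dilatacion}(i) with suitable rescalings. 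Then $\sigma$ is a local Nash diffeomorphism around the Nash hypersurface $N:=\psi(\partial H\times\{t_0\})=\sigma^{-1}(\partial H)$, and it carries the half-collar of $\Int(H)_+$ onto the full tubular neighborhood $\psi(\partial H\times(-\veps,\veps))$ of $\partial H$ in $D(H)$, thereby already covering $\partial H$ and nearby points on both sides.

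The main obstacle is extending $\sigma$ from the half-collar to a single globally defined Nash surjection onto all of $D(H)$: the analytic rigidity of Nash functions prevents naive piecewise glueings with any other formula on the bulk $\Int(H)_+\setminus\psi(\partial H\times(0,\veps/2))$, and moreover by connectedness the bulk image must pass through the collar in order to cover pieces of both $\Int(H)_+$ and $\Int(H)_-$ far from $\partial H$. To circumvent this I would first construct an $\mathcal{S}^2$ semialgebraic surjection $\tilde\sigma\colon\Int(H)\to D(H)$ satisfying all the required properties, using the flexibility of the $\mathcal{S}^2$ category (where partitions of unity and piecewise-smooth glueings are available) to interpolate between the half-collar fold above and a bulk map surjecting onto $D(H)$, and then Nash-approximate $\tilde\sigma$ invoking \ref{shiota}, \ref{s2boun} and \ref{s2bounb}. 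Closeness of the approximation preserves the local Nash diffeomorphism property near $\sigma^{-1}(\partial H)$ by \ref{diffo}, and surjectivity of the approximant is ensured by a compactness argument in the spirit of Lemma \ref{surapprox}.
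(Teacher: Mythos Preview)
Your overall strategy—factor $f$ through the Nash double as $\pi\circ\sigma$ with $\pi:D(H)\to H$ the folding map—is exactly the paper's. The local normal form for $\pi$ at points of $\partial H\subset D(H)$ is established in \ref{doubleii}(iii), so once $\sigma$ is a Nash local diffeomorphism near $\sigma^{-1}(\partial H)$ you are done. The disagreement is in the construction of $\sigma$, and here your plan has a genuine gap.

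You aim for a Nash \emph{surjection} $\sigma:\Int(H)\to D(H)$ onto the \emph{entire} double. This forces $\sigma$ to be non-injective, and then your final step—Nash-approximate an $\mathcal S^2$ surjection and retain surjectivity ``in the spirit of Lemma \ref{surapprox}''—does not go through. Lemma \ref{surapprox} is proved by precomposing with the inverse of a semialgebraic \emph{homeomorphism} to reduce to maps close to the identity; without injectivity there is no such reduction, and no degree argument is available either since $D(H)$ need not be compact. You also need the approximant's preimage of $\partial H$ to sit inside the region where you have established the local-diffeomorphism property; this is not automatic and would require a relative approximation fixing $\partial H$, which you have not set up.

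The paper sidesteps all of this by shrinking the target. One does not need to cover the whole double: since already $\pi(H_-)=H$, it suffices to map $\Int(H)$ onto the open Nash submanifold $M_-:=H_-\cup V\subset D(H)$, where $V$ is a small two-sided collar of $\partial H\times\{0\}$ (this is \ref{doubleiv}). The point is that $M_-$ is Nash \emph{diffeomorphic} to $\Int(H_-)$, hence to $\Int(H)$: one stretches the one-sided collar $(-1,0)$ of $\partial H$ in $\Int(H_-)$ to the two-sided collar $(-1,1)$ via an $\mathcal S^2$ diffeomorphism of intervals (Example \ref{s2diffeo}(iii)), and then invokes \ref{s2boun} to upgrade the resulting $\mathcal S^2$ diffeomorphism of Nash manifolds to a Nash one. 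Thus $\sigma=g:\Int(H)\to M_-$ is a Nash \emph{diffeomorphism}, surjectivity is immediate, and the local normal form for $f=\pi\circ g$ at $f^{-1}(\partial H)=g^{-1}(\partial H\times\{0\})$ follows directly from \ref{doubleii}(iii). Your half-collar reparametrization is morally the same stretching move; the missing observation is that aiming only at $M_-$ turns the problem into one about diffeomorphisms rather than surjections.
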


In order to ease the understanding of the strategy followed to prove Proposition \ref{doubleivc} we refer the reader to Figure \ref{fig2} (b). This proof requires the use of Nash collars and Nash doubles of a Nash manifold with boundary $H$ (see Figure \ref{fig2} (a)). These constructions for $H$ compact are a common tool in Nash Geometry \cite[\S VI]{sh} but as far as we know there is no explicit reference to them in the literature when $H$ is non-compact. In \ref{scollar} we afford the construction of Nash collars when $H$ is non necessarily compact. In \ref{double} we endow the (smooth) double of $H$ with a Nash manifold structure. The resulting Nash manifold ${\rm D}(H)$ is called the \em Nash double of $H$\em. Its construction requires a Nash equation of $\partial H$ that is strictly positive on $\Int(H)$ and has rank $1$ at the points of $\partial H$. 

\subsection{Nash collars}\label{scollar}\setcounter{paragraph}{0}

Let $H\subset\R^m$ be a Nash manifold with boundary $\partial H$. A \em Nash collar \em of $\partial H$ is an open semialgebraic neighborhood $W\subset H$ of $\partial H$ equipped with a Nash diffeomorphism $\psi:W\to\partial H\times[0,1)$ such that $\psi(x)=(x,0)$ for all $x\in\partial H$. We recall next how Nash collars are constructed. For the smooth case see \cite[Thm.I.5.9]{mu}. 

\begin{lem}\label{neighcollar}
Let $M\subset\R^m$ be a Nash manifold of dimension $d$ and let $N\subset M$ be a Nash submanifold of dimension $d-1$. Let $U\subset M$ be an open semialgebraic neighborhood of $N$ and $\rho:U\to N$ a Nash retraction. Let $h$ be a Nash function on $U$ such that $\{h=0\}=N$ and $d_xh:T_xM\to\R$ is surjective for all $x\in N$. Consider the Nash map $\varphi:=(\rho,h):U\to N\times\R$. Then there exist an open semialgebraic neighborhood $V\subset U$ of $N$ and a strictly positive Nash function $\veps$ on $N$ such that $\varphi(V)=\{(x,t)\in N\times\R:\ |t|<\veps(x)\}$ and $\varphi|_V:V\to\varphi(V)$ is a Nash diffeomorphism.
\end{lem}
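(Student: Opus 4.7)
The plan is to mimic the classical collar/tubular-neighborhood construction via the inverse function theorem, carried out in the Nash category.

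\emph{Step 1 (local Nash diffeomorphism).} First I would compute $d_x\varphi:T_xM\to T_xN\oplus\R$ at each $x\in N$. Since $\rho|_N=\mathrm{id}_N$, one has $d_x\rho|_{T_xN}=\mathrm{Id}_{T_xN}$; since $h$ vanishes identically on $N$, one has $d_xh|_{T_xN}=0$; and by hypothesis $d_xh:T_xM\to\R$ is surjective, hence nonzero on any linear complement of $T_xN$ in $T_xM$. Writing $T_xM=T_xN\oplus L_x$, the matrix of $d_x\varphi$ in block form is
$$
d_x\varphi=\begin{pmatrix}\mathrm{Id}_{T_xN}&*\\ 0&\neq 0\end{pmatrix},
$$
which is invertible. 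The Nash inverse function theorem then yields an open semialgebraic neighborhood $V_1\subset U$ of $N$ on which $\varphi|_{V_1}$ is a local Nash diffeomorphism; here I may also cover $N$ by finitely many Nash charts from \ref{cartaspm}--\ref{fnashnc2} to reduce to the local Euclidean situation.

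\emph{Step 2 (uniform injectivity radius).} Next I would promote local to global injectivity near $N$. Replacing $V_1$ by its intersection with a Nash tubular neighborhood of $N$ in $M$ provided by \ref{nmtub} (so that preimages under $\rho$ of small subsets of $N$, intersected with small slabs of $h$, are forced to lie close to $N$ in the ambient sense), I define the semialgebraic function
$$
\mu(y):=\sup\bigl\{\delta\in(0,1]:\ \varphi\ \text{is injective on}\ V_1\cap\rho^{-1}(\{z\in N:\|z-y\|<\delta\})\cap\{|h|<\delta\}\bigr\}
$$
on $N$. Combining Step 1 with the tubular structure shows that $\mu(y)>0$ for every $y\in N$. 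Using semialgebraic continuous selection I choose a continuous semialgebraic function $0<\veps_0\leq\mu/2$ on $N$, and then approximate $\veps_0$ from below by a strictly positive Nash function $\veps$ on $N$ via the density result \ref{shiota}.

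\emph{Step 3 (construction of $V$).} Finally I would set
$$
V:=\varphi^{-1}\bigl(\{(y,t)\in N\times\R:|t|<\veps(y)\}\bigr)\cap V_1,
$$
restricted to the union of connected components meeting $N$. Injectivity of $\varphi|_V$ is then immediate from $\veps\leq\mu$; the local Nash diffeomorphism property is inherited from Step 1; and surjectivity of $\varphi|_V$ onto $\{(y,t):|t|<\veps(y)\}$ is obtained by lifting each radial segment $s\mapsto(y,st)$, $s\in[0,1]$, through the base-point $\varphi^{-1}(y,0)=y\in N\subset V$ via the local inverse of $\varphi$, the lift being automatically confined to $\rho^{-1}(y)\cap\{|h|<\veps(y)\}\subset V_1$.

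The hardest step is Step 2: on a non-compact $N$ the positivity of the injectivity radius $\mu$ is not automatic from the pointwise local diffeomorphism property of Step 1, because in principle two very distant points of $V_1$ could be identified by $\varphi$. One must therefore genuinely exploit the Nash tubular neighborhood of \ref{nmtub} to rule out such collisions. Once $\mu>0$ is secured on $N$, its Nash minorization via \ref{shiota} and the lifting argument in Step 3 are routine.
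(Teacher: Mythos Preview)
Your Step 1 and the endgame of Step 2 (Nash approximation of a positive semialgebraic width function via \ref{shiota}) match the paper's proof exactly. The divergence is in how you obtain \emph{global} injectivity of $\varphi$ near $N$, and there your argument has a genuine gap.

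The paper does not build an injectivity radius by hand. Instead, after setting $U':=\{x\in U:\ d_x\varphi\ \text{is an isomorphism}\}$, it observes that $\varphi|_{U'}$ is a local homeomorphism and that $\varphi|_N=(\id_N,0)$ is already a homeomorphism onto $N\times\{0\}$; it then invokes \cite[Lem.~9.2]{bfr} to get open semialgebraic neighborhoods $U''\supset N$ and $W\supset N\times\{0\}$ with $\varphi|_{U''}:U''\to W$ a semialgebraic homeomorphism. After that one simply takes $\delta(y):=\dist((y,0),(N\times\R)\setminus W)$, Nash--approximates $\tfrac{1}{2}\delta<\veps<\delta$, and sets $V:=(\varphi|_{U''})^{-1}(\{|t|<\veps\})$. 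Injectivity and surjectivity are then automatic, so no path--lifting is needed.

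Your Step 2, by contrast, asserts $\mu(y)>0$ without proof. The difficulty you flag is real and is not resolved by intersecting with a tubular neighborhood from \ref{nmtub}: that tubular neighborhood is organized by its \emph{own} retraction $\theta\circ\varphi$, not by the given $\rho$, so the set $V_1\cap\rho^{-1}(B(y,\delta))\cap\{|h|<\delta\}$ can still contain points far from $y$ even inside the tube, and local invertibility at $y$ says nothing about those. The same issue undermines the inclusion $\rho^{-1}(y)\cap\{|h|<\veps(y)\}\subset V_1$ that your Step~3 lifting argument relies on. In short, the missing idea is precisely the ``local homeomorphism that is a homeomorphism on a subspace extends to one on a neighborhood'' lemma; once you cite it (or reprove it in the semialgebraic category via curve selection), your outline collapses to the paper's proof and Steps 2--3 become a single line.
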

\begin{proof}
We show first: \em The derivative $d_x\varphi=(d_x\rho,d_xh):T_xM\to T_xN\times\R$ is an isomorphism for all $x\in N$\em. As $\dim(T_xM)=\dim(T_xN\times\R)$, it is enough to show: \em $d_x\varphi$ is surjective\em. 

As $\varphi|_{N}=(\id_{N},0)$, we have $d_x\varphi|_{T_xN}=(\id_{T_xN},0)$, so $T_xN\times\{0\}\subset\im(d_x\varphi)$. In addition $d_xh:T_xM\to\R$ is surjective, so there exists $v\in T_xM$ such that $d_xh(v)=1$. Thus, $d_x\varphi(v)=(d_x\rho(v),1)$ and $d_x\varphi$ is surjective.

Let $U':=\{x\in U:\ d_x\varphi\ \text{is an isomorphism}\}$, which is an open semialgebraic neighborhood of $N$ in $U$. Thus, $\varphi|_{U'}:U'\to N\times\R$ is an open map and $\varphi(U')$ is an open semialgebraic neighborhood of $N\times\{0\}$ in $N\times\R$. As $\varphi|_{U'}:U'\to\varphi(U')$ is a local homeomorphism and $\varphi|_{N}=(\id_{N},0)$ is a homeomorphism (onto its image), there exist by \cite[Lem.9.2]{bfr} open semialgebraic neighborhoods $U''\subset U'$ of $N$ and $W\subset N\times\R$ of $N\times\{0\}$ such that $\varphi|_{U''}:U''\to W$ is a semialgebraic homeomorphism.

Consider the strictly positive semialgebraic map 
$$
\delta:N\to(0,+\infty),\ x\mapsto\dist((x,0),(N\times\R)\setminus W).
$$ 
By \ref{shiota} there exists a strictly positive Nash function $\veps$ on $N$ such that $\frac{1}{2}\delta<\veps<\delta$. Consider the open semialgebraic neighborhood $W':=\{(x,t)\in N\times\R:\ |t|<\veps(x)\}\subset W$ of $N\times\R$ and define $V:=(\varphi|_{U''})^{-1}(W')$. The restriction $\varphi|_V:V\to W'$ is a Nash diffeomorphism, as required.
\end{proof}

\begin{lem}\label{eqnash}
Let $H\subset\R^m$ be a $d$-dimensional Nash manifold with boundary $\partial H$. Let $M\subset\R^m$ be a Nash manifold of dimension $d$ that contains $H$ as a closed subset. Then there exist an open semialgebraic neighborhood $M'\subset M$ of $H$ and a Nash equation $h$ of $\partial H$ in $M'$ such that $\Int(H)=\{h>0\}$ and $d_xh:T_xH\to\R$ is surjective for all $x\in\partial H$.
\end{lem}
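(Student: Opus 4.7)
The strategy is to build a ${\mathcal C}^\infty$-semialgebraic function $F$ with all the required sign and derivative properties on an open neighborhood of $H$ in $M$ by patching the local boundary charts of $H$, and then to replace $F$ by a Nash function $h$ via an approximation that preserves the vanishing on the coherent Nash hypersurface $\partial H$.

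By \ref{cartaspmh}(ii) cover $\partial H$ by finitely many open semialgebraic subsets $U_1,\dots,U_k$ of $M$ equipped with Nash diffeomorphisms $(u_{i,1},\dots,u_{i,d})\colon U_i\to\R^d$ with $U_i\cap H=\{u_{i,1}\geq 0\}$; each $u_{i,1}$ is a local Nash equation of $\partial H\cap U_i$, positive on $U_i\cap\Int(H)$, negative on $U_i\cap(M\setminus H)$, with surjective differential along $\partial H\cap U_i$. Add open semialgebraic subsets $W_1,\dots,W_\ell\subset\Int(H)$ with $\cl_M(W_j)\cap\partial H=\varnothing$ so that $M'':=\bigcup_iU_i\cup\bigcup_jW_j$ is an open semialgebraic neighborhood of $H$ in $M$. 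Using a ${\mathcal C}^\infty$-semialgebraic partition of unity $\{\phi_i,\psi_j\}$ on $M''$ subordinate to this cover, set
$$
F:=\sum_i\phi_i\,u_{i,1}+\sum_j\psi_j
$$
(each $\phi_i u_{i,1}$ extended by $0$ outside $U_i$). A direct check shows that $F$ is ${\mathcal C}^\infty$-semialgebraic on $M''$, $F>0$ on $\Int(H)$, $F<0$ on $M''\setminus H$, $F=0$ on $\partial H$, and at every $x\in\partial H$ the differential $d_xF=\sum_i\phi_i(x)\,du_{i,1}|_x$ is a strictly positive combination of proportional nonzero covectors with common kernel $T_x\partial H$, hence surjective onto $\R$.

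By \ref{cartaspmh}(i), $\partial H$ is a closed Nash submanifold of $M$, so by \ref{division} the ideal $I(\partial H)\subset{\mathcal N}(M'')$ is finitely generated by Nash functions $f_1,\dots,f_r$ whose germs generate $I(\partial H)_x$ at each $x\in\partial H$. Since $F$ is ${\mathcal C}^\infty$-semialgebraic and vanishes on the non-singular hypersurface $\partial H$, a Hadamard-type factorization on each chart $U_i$ combined with the partition of unity yields a presentation $F=\sum_s f_s a_s$ with ${\mathcal C}^\infty$-semialgebraic coefficients $a_s$ on $M''$. Approximate each $a_s$ by a Nash function $b_s$ via \ref{shiota} and set $h:=\sum_s f_s b_s\in{\mathcal N}(M'')$; then $h\in I(\partial H)$, so $h|_{\partial H}=0$. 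Choosing the approximation fine enough in the ${\mathcal S}^1$ topology with respect to a strictly positive Nash control function $\varepsilon$ dominated by $|F|$ on $\Int(H)\cup (M''\setminus H)$ and by $|dF|$ on $\partial H$, one forces $h>0$ on $\Int(H)$, $h<0$ on a neighborhood of $\partial H$ inside $M\setminus H$, and $d_xh$ surjective for every $x\in\partial H$. The required neighborhood is then $M':=\Int(H)\cup V$ where $V$ is a sufficiently small open semialgebraic neighborhood of $\partial H$ on which $\{h=0\}\cap V=\partial H$, guaranteed by the implicit function theorem applied to the surjective differential.

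The main obstacle is the simultaneous control of two constraints in the approximation step: $h$ must vanish identically on the possibly non-compact Nash hypersurface $\partial H$, and $h$ must remain strictly positive on the possibly non-compact open set $\Int(H)$. Writing $F$ itself in terms of the Nash generators of $I(\partial H)$ and approximating only the coefficients takes care of the first difficulty, while choosing the Nash control function $\varepsilon$ to be small relative to $F$ handles the second.
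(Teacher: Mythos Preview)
Your overall strategy is sound and close to the paper's, but there are two genuine gaps in the execution.

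First, there are no ${\mathcal C}^\infty$-semialgebraic partitions of unity: a semialgebraic ${\mathcal C}^\infty$ function is Nash, hence real-analytic, so a bump supported in a proper open set of a connected chart is identically zero. The paper uses an ${\mathcal S}^2$ partition of unity for exactly this reason. This is easily repaired by replacing ${\mathcal C}^\infty$ with ${\mathcal C}^2$ throughout (the Hadamard quotient then drops one order of differentiability, which is harmless for the approximation).

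Second, and more seriously, your control function cannot exist as stated: you ask for a strictly positive $\varepsilon$ on $M''$ dominated by $|F|$ on $\Int(H)\cup(M''\setminus H)$, but $|F|\to 0$ along any approach to $\partial H$, forcing $\varepsilon\to 0$ there, contradicting strict positivity. What actually works is to bound the \emph{coefficients}: since $h-F=\sum_s f_s(b_s-a_s)$, one needs $\varepsilon<|F|/\bigl(r\sqrt{\textstyle\sum_s f_s^2}\bigr)$ on $\Int(H)$, and this quotient \emph{does} extend to a strictly positive continuous semialgebraic function on $M''$ because near $\partial H$ both numerator and denominator vanish to the same (first) order. This is the real content of the step you labeled ``fine enough'', and it deserves the argument.

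The paper sidesteps this delicate point entirely. It builds the ${\mathcal S}^2$ function $h_0$ only from the boundary charts (so $h_0$ need not be positive away from $\partial H$), invokes the relative approximation result \cite[Prop.~8.2]{bfr} to get a Nash $h_1$ that vanishes on $\partial H$ with the correct differential and the correct sign on a \emph{neighborhood} $W$ of $\partial H$, and then forces global positivity on $\Int(H)$ by an explicit algebraic correction: $h:=h_1+f^2g^2(h_1^2+1)$, where $f$ is a nonnegative Nash equation of $\partial H$ and $g$ is chosen large off $W$. Your route---baking positivity into $F$ from the start and preserving it through approximation of the $a_s$---is legitimate, but trades the paper's transparent correction term for a nontrivial control-function estimate that you have not carried out.
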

\begin{proof}
We may assume $\partial H\neq\varnothing$. The proof is conducted in several steps:

\paragraph{} We construct first an ${\mathcal S}^2$ semialgebraic function $h_0$ on $M$ such that $\partial H\subset\{h_0=0\}$ and $d_xh_0(v)>0$ for each $x\in\partial H$ and each non-zero vector $v\in T_xM$ pointing `inside $M$'.

By \ref{cartaspmh} we can cover $\partial H$ with finitely many open semialgebraic subsets $U_i$ of $M$ that are equipped with Nash diffeomorphisms $u_i:=(u_{i1},\ldots,u_{id}):U_i\to\R^d$ such that $U_i\cap H=\{u_{i1}\geq 0\}$ for $i=1,\ldots,r$. Let $\{\theta_i\}_{i=1}^{r+1}$ be an ${\mathcal S}^2$ partition of unity subordinated to the finite covering $\{U_i\}_{i=1}^r\cup\{M\setminus\partial H\}$ of $M$ and consider the ${\mathcal S}^2$ function $h_0:=\sum_{i=1}^r\theta_iu_{i1}$. It holds $\partial H\subset\{h_0=0\}$. 

Fix $x\in\partial H$ and let $v\in T_xM$ be a non-zero vector pointing `inside $M$', that is, $d_xu_{i1}(v)>0$ if $x\in U_i$. We have
$$
d_xh_0=\sum_{x\in U_i}u_{i1}(x)d_x\theta_i+\sum_{x\in U_i}\theta_i(x)d_xu_{i1}=\sum_{x\in U_i}\theta_i(x)d_xu_{i1}\ \leadsto\ d_xh_0(v)=\sum_{x\in U_i}\theta_i(x)d_xu_{i1}(v)>0
$$
because $\sum_{x\in U_i}\theta_i(x)=1$, $\theta_i(x)\geq0$ and $d_xu_{i1}(v)>0$ if $x\in U_i$. 

\paragraph{}\label{wh1} By \cite[Prop.8.2]{bfr} there exists a Nash function $h_1$ on $M$ close to $h_0$ in the ${\mathcal S}^2$ topology such that $\partial H\subset\{h_1=0\}$ and $d_xh_1(v)>0$ for each $x\in\partial H$ and each non-zero vector $v\in T_xM$ pointing `inside $M$'. We claim: \em there exists an open semialgebraic neighborhood $W\subset M$ of $\partial H$ such that $\{h_1>0\}\cap W=\Int(H)\cap W$ and $\{h_1=0\}\cap W=\partial H$\em.

Pick a point $x\in\partial H$ and assume $x\in U_1$. As $h_1$ vanishes identically at $\partial H$, we may write $h_1|_{U_1}=u_{11}a_1$ where $a_1$ is a Nash function on $U_1$. Pick $y\in\partial H\cap U_1$ and observe that $d_yh_1=a_1(y)d_yu_{11}$. Let $v\in T_yM$ be a non-zero vector pointing `inside $M$'. As $d_yu_{11}(v)>0$ and $d_yh_1(v)>0$, we deduce $a_1(y)>0$. Define $W_1:=\{a_1>0\}\subset U_1$ and notice that $\partial H\cap U_1\subset W_1$, $\{h_1>0\}\cap W_1=\Int(H)\cap W_1$ and $\{h_1=0\}\cap W_1=\partial H\cap W_1$. Construct analogously $W_2,\dots, W_r$ and observe that $W:=\bigcup_{i=1}^rW_i$ satisfies the required properties.

\paragraph{} Next, we construct $h$. If $W=M$, it is enough to set $h:=h_1$. Suppose $W\neq M$. Let $\veps_0$ be a (continuous) semialgebraic function whose value is $1$ on $\partial H$ and $-1$ on $M\setminus W$. Let $\veps$ be a Nash approximation of $\veps_0$ such that $|\veps-\veps_0|<\frac{1}{2}$. Then
$$
\veps(x)\begin{cases}
>\frac{1}{2}&\text{if $x\in\partial H$,}\\
<\frac{-1}{2}&\text{if $x\in M\setminus W$.}
\end{cases}
$$
Thus, $\{\veps>0\}\subset W$ is an open semialgebraic neighborhood of $\partial H$. By \cite[Prop.II.5.3]{sh} $\partial H$ is a Nash subset of $M$. Let $f$ be a Nash equation of $\partial H$ in $M$. Substituting $f$ by $\frac{f^2}{\veps^2+f^2}$ we may assume that $f$ is non-negative and $f(x)=1$ if $\veps(x)=0$. 

Consider the (continuous) semialgebraic function on $M$ given by
$$
\delta(x):=
\begin{cases}
1&\text{if $\veps(x)>0$,}\\
\frac{1}{f^2(x)}&\text{if $\veps(x)\leq0$}.
\end{cases}
$$
Let $g$ be a Nash function on $M$ such that $\delta<g^2$. Consider the Nash function 
$$
h:=h_1+f^2g^2(h_1^2+1)
$$ 
and let us prove that it satisfies the required conditions. 

\paragraph{}We claim: \em $h$ is positive on $\Int(H)$\em. 

Let $x\in \Int(H)$. If $h_1(x)>0$, then $h(x)>0$. If $h_1(x)\leq0$, then $\veps(x)\leq0$ and
\begin{multline*}
h(x)=h_1(x)+g^2(x)f^2(x)(h_1^2(x)+1)\\
>h_1(x)+\frac{1}{f^2(x)}f^2(x)(h_1^2(x)+1)=h_1^2(x)+h_1(x)+1>0.
\end{multline*}

\paragraph{} It holds: \em $\{h=0\}\cap W=\partial H$, $\{h>0\}\cap W=\Int(H)\cap W$ and $d_xh:T_xH\to\R$ is surjective for all $x\in\partial H$\em.

Recall that $W=\bigcup_{i=1}^rW_i$ and fix $i=1$. We have seen in \ref{wh1} that there exists a Nash function $a_1$ on $U_1$ such that $h_1|_{U_1}=u_{11}a_1$ and $\partial H\cap U_1\subset W_1:=\{a_1>0\}$. As $f$ vanishes identically at $\partial H$, we deduce that $f|_{U_1}=u_{11}b_1$ where $b_1$ is a Nash function on $U_1$. Consequently, 
\begin{multline*}
h|_{U_1}=u_{11}a_1+g^2|_{U_1}u_{11}^2b_1^2(u_{11}^2a_1^2+1)=u_{11}(a_1+g^2|_{U_1}u_{11}b_1^2(u_{11}^2a_1^2+1))\\
\text{and}\quad d_xh=a_1(x)d_xu_{11}=d_xh_1\quad\forall\,x\in\partial H\cap U_1. 
\end{multline*} 
Define $W_1':=\{a_1+g^2|_{U_1}u_{11}b_1^2(u_{11}^2a_1^2+1)>0\}\cap W_1$, which is an open semialgebraic subset of $M$. We have $\partial H\cap U_1\subset W_1'$ and $\{h>0\}\cap W_1'=\Int(H)\cap W_1'$. Construct analogously $W_2',\ldots,W_r'$ and observe that $W':=\bigcup_{i=1}^rW_i'\subset M$ is an open neighborhood of $\partial H$ that satisfies $\{h=0\}\cap W'=\partial H$, $\{h>0\}\cap W'=\Int(H)\cap W'$ and $d_xh:T_xH\to\R$ is surjective for all $x\in\partial H$. 

Consequently, $M':=\Int(H)\cup W'$ satisfies the required conditions.
\end{proof}

\begin{lem}\label{collar}
Let $H\subset\R^m$ be a $d$-dimensional Nash manifold with boundary $\partial H$. Then every semialgebraic neighborhood $U\subset H$ of $\partial H$ contains a Nash collar $(W,\psi)$ of $\partial H$ in $H$.
\end{lem}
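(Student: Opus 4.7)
The plan is to assemble the collar from Lemmas \ref{eqnash} and \ref{neighcollar} together with a Nash rescaling of the defining function. First I would embed $H$ as a closed subset of a $d$-dimensional Nash manifold $M\subset\R^m$ using \ref{cartaspmh}, and then apply Lemma \ref{eqnash} to obtain an open semialgebraic neighborhood $M'\subset M$ of $H$ and a Nash function $h$ on $M'$ satisfying $\{h=0\}=\partial H$, $\{h>0\}=\Int(H)\cap M'$, and such that $d_xh:T_xM\to\R$ is surjective at every $x\in\partial H$.

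Since $\partial H$ is a closed Nash submanifold of $M'$, by \ref{nmtub} there exist an open semialgebraic neighborhood $U'\subset M'$ of $\partial H$ and a Nash retraction $\rho:U'\to\partial H$. Feeding this data into Lemma \ref{neighcollar} (with ambient manifold $M'$, submanifold $\partial H$, retraction $\rho$, and equation $h|_{U'}$) produces an open semialgebraic neighborhood $V\subset U'$ of $\partial H$ and a strictly positive Nash function $\veps_0$ on $\partial H$ such that
$$
\varphi:=(\rho,h)|_V:V\longrightarrow\{(x,t)\in\partial H\times\R:\ |t|<\veps_0(x)\}
$$
is a Nash diffeomorphism. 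Because $h$ is non-negative on $H\cap M'$ with zero set exactly $\partial H$, the restriction $\varphi_0:=\varphi|_{V\cap H}$ is a Nash diffeomorphism onto $\{(x,t)\in\partial H\times\R:\ 0\leq t<\veps_0(x)\}$ fixing $\partial H\times\{0\}$.

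It remains to shrink $\veps_0$ so that the resulting half-tube lies inside the prescribed neighborhood $U$ and then rescale it to have unit height. The set
$$
A:=\{(x,t)\in\partial H\times[0,+\infty):\ t<\veps_0(x),\ \varphi_0^{-1}(x,t)\in U\}
$$
is an open semialgebraic neighborhood of $\partial H\times\{0\}$ in $\partial H\times[0,+\infty)$. Taking half the distance from $(x,0)$ to the complement of $A$ (with respect to the Euclidean metric of a fixed affine embedding) produces a strictly positive continuous semialgebraic function $\delta:\partial H\to(0,\veps_0)$ with $\{0\leq t<\delta(x)\}\subset A$. By \ref{shiota} applied to $\delta/2$ with tolerance $\delta/4$, there exists a Nash function $\veps$ on $\partial H$ with $0<\veps<\delta$ throughout.

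Setting $W:=\varphi_0^{-1}(\{(x,t):\ 0\leq t<\veps(x)\})\subset U$ and composing $\varphi_0|_W$ with the Nash diffeomorphism $(x,t)\mapsto(x,t/\veps(x))$ onto $\partial H\times[0,1)$ yields the collar
$$
\psi:W\longrightarrow\partial H\times[0,1),\quad y\longmapsto\left(\rho(y),\frac{h(y)}{\veps(\rho(y))}\right),
$$
which is a Nash diffeomorphism with $\psi(x)=(x,0)$ for all $x\in\partial H$. The main subtlety lies in the shrinking step: one must choose a \emph{globally defined} Nash function $\veps$ strictly smaller than the continuous semialgebraic $\delta$, which requires the semialgebraic Nash approximation theorem of \ref{shiota} rather than a local construction.
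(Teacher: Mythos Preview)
Your proof is correct and follows essentially the same strategy as the paper: embed $H$ in a Nash manifold $M$, use Lemma~\ref{eqnash} to get a Nash equation $h$ of $\partial H$, take a Nash retraction $\rho$ onto $\partial H$, feed $(\rho,h)$ into Lemma~\ref{neighcollar}, and then rescale by $\veps$ to obtain a collar of unit height.

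The only procedural difference is where the constraint ``$W\subset U$'' enters. You apply Lemma~\ref{neighcollar} on an arbitrary tubular neighborhood $U'$ of $\partial H$, obtain $(V,\veps_0)$, and \emph{afterwards} shrink $\veps_0$ to a Nash $\veps<\delta$ via \ref{shiota} so that the resulting half-tube lands inside $U$. The paper instead chooses from the outset an open semialgebraic $U'\subset M$ with $U'\cap H=U$, restricts the retraction to $U'$, and applies Lemma~\ref{neighcollar} directly on $U'$; the resulting $V$ is then automatically contained in $U'$, so $W=\psi^{-1}(\partial H\times[0,1))\subset V\cap H\subset U$ with no further shrinking needed. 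This saves your extra step (and the second appeal to \ref{shiota}), but the difference is purely organizational---both arguments are valid.
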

\begin{proof}
By \ref{cartaspmh} there exists a Nash manifold $M\subset\R^m$ of dimension $d$ that contains $H$ as a closed subset and such that $\partial H$ is a Nash submanifold of $M$ of dimension $d-1$. Let $U'\subset M$ be an open semialgebraic neighborhood of $\partial H$ such that $U'\cap H=U$.

As $\partial H$ is a Nash manifold without boundary, there exists a Nash tubular neighborhood $A$ of $\partial H$ in $\R^m$ together with a Nash retraction $\rho:A\to\partial H$. Substituting $U'$ by $U'\cap A$, we may assume $U'\subset A$. By Lemma \ref{eqnash} we can shrink $M$ to have a Nash equation $h$ of $\partial H$ in $M$ such that $\{h>0\}\cap M=\Int(H)$ and $d_xh:T_xH\to\R$ is surjective for all $x\in\partial H$. Denote 
$$
\varphi:=(\rho|_{U'},h|_{U'}):U'\to\partial H\times\R. 
$$
By Lemma \ref{neighcollar} there exists an open semialgebraic neighborhood $V\subset U'$ of $\partial H$ and a strictly positive Nash function $\veps$ on $\partial H$ such that $\varphi(V)=\{(y,t)\in\partial H\times\R:\ |t|<\veps(y)\}$ and $\varphi|_V:V\to\varphi(V)$ is a Nash diffeomorphism. Consider the Nash diffeomorphism
$$
\psi:V\to\partial H\times(-1,1),\ x\mapsto\Big(\rho(x),\frac{h(x)}{\veps(\rho(x))}\Big)
$$
and $W:=\psi^{-1}(\partial H\times[0,1))\subset U'\cap H$. The restriction $\psi|_W:W\to\partial H\times[0,1)$ provides a collar of $\partial H$ such that $W\subset U$, as required.
\end{proof}

\subsection{Nash doubles}\label{double}\setcounter{paragraph}{0}
Let $H\subset\R^m$ be a $d$-dimensional Nash manifold with boundary $\partial H$. Let $h$ be a Nash equation of $\partial H$ such that $\Int(H)=\{h>0\}$ and $d_xh:T_xH\to\R$ is surjective for all $x\in\partial H$ (see Lemma \ref{eqnash}). We have:

\paragraph{}\label{doublei} \em ${\rm D}(H):=\{(x,t)\in H\times\R:\ t^2-h(x)=0\}$ is a Nash manifold of dimension $d$ that contains $\partial H\times\{0\}$ as the Nash subset $\{t=0\}$\em.
\begin{proof}
The semialgebraic set ${\rm D}(H)$ satisfies 
$$
D(H)\setminus(\partial H\times\{0\})=\{(x,t)\in\Int(H)\times\R:\ t=\pm\sqrt{h(x)}\}, 
$$
which is the union of two disjoint graphs of Nash functions on the Nash manifold $\Int(H)$. Consequently, it is enough to show: \em for each $(x_0,0)\in\partial H\times\{0\}$ there exists an open semialgebraic neighborhood $W\subset {\rm D}(H)$ of $(x_0,0)$ such that $W$ is a Nash manifold\em. 

Let $M'\subset\R^m$ be a Nash manifold of dimension $d$ that contains $H$ as a closed subset. Pick a point $x_0\in\partial H$ and let $U\subset M'$ be an open semialgebraic neighborhood of $x_0$ equipped with a Nash diffeomorphism $u:=(u_1,\ldots,u_d):U\to(-1,1)\times\R^{d-1}$ such that $u(x_0)=0$ and $U\cap H=\{u_1\geq0\}$. We may assume $u_1=h|_U$. Observe that $V:=(U\cap H)\times\R$ is an open semialgebraic subset of $H\times\R$ and 
$$
W:={\rm D}(H)\cap V=\{(x,\pm\sqrt{h(x)}):\ x\in U\cap H\}
$$
is an open semialgebraic neighborhood of $(x_0,0)$ in ${\rm D}(H)$. Consider the Nash map 
$$
u':=(u_1',\ldots,u_d'):W\to(-1,1)\times\R^{d-1},\ (x,t)\mapsto(t,u_2(x),\ldots,u_d(x))
$$
and let us check that it is a Nash diffeomorphism. As 
$$
(u_2',\ldots,u_d')(W)=(u_2,\ldots,u_d)(U\cap H)=\R^{d-1},
$$
we have $u'(W)=(-1,1)\times\R^{d-1}$, so $u'$ is surjective. 

Pick $(x_1,t_1),(x_2,t_2)\in W$ such that $u'(x_1,t_1)=u'(x_2,t_2)$. Then $t_1=t_2$, so 
$$
u_1(x_1)=h(x_1)=t_1^2=t_2^2=h(x_2)=u_1(x_2)
$$
and $u(x_1)=u(x_2)$. As $u$ is injective, we have $x_1=x_2$, so $(x_1,t_1)=(x_2,t_2)$. Thus, $u'$ is injective. Denote $u^{-1}:=\phi:=(\phi_1,\ldots,\phi_m)$. The inverse of $u'$ is the Nash map
$$
\zeta:(-1,1)\times\R^{d-1}\to W,\ (t,y'):=(t,y_2,\ldots,y_d)\mapsto(\phi(t^2,y'),t).
$$
The differential of $\zeta$ at a point $(t,y')\in(-1,1)\times\R^{d-1}$ is
$$
\left(\begin{array}{cccc}
2t\frac{\partial\phi_1}{\partial y_1}(t^2,y')&\frac{\partial\phi_1}{\partial y_2}(t^2,y')&\cdots&\frac{\partial\phi_1}{\partial y_d}(t^2,y')\\
\vdots&\vdots&\ddots&\vdots\\
2t\frac{\partial\phi_m}{\partial y_1}(t^2,y')&\frac{\partial\phi_m}{\partial y_2}(t^2,y')&\cdots&\frac{\partial\phi_m}{\partial y_d}(t^2,y')\\
1&0&\cdots&0
\end{array}\right)
$$
and it has rank $d$. Consequently, $u'$ is a Nash diffeomorphism as required.
\end{proof}

\paragraph{}\label{doubleii} Consider the surjective Nash map $\pi:{\rm D}(H)\to H,\ (x,t)\to x$ and write $\epsilon=\pm$. We have: 
\begin{itemize}\em
\item[(i)] The restriction $\pi|_{{\rm D}(H)\cap\{\epsilon t>0\}}:{\rm D}(H)\cap\{\epsilon t>0\}\to\Int(H)$ is a Nash diffeomorphism. 
\item[(ii)] $\pi(x,0)=x$ for all $(x,0)\in\partial H\times\{0\}={\rm D}(H)\cap\{t=0\}$. 
\item[(iii)] $\pi$ has local representations $(y_1,\ldots,y_d)\mapsto(y_1^2,y_2,\ldots,y_d)$ at each point of ${\rm D}(H)\cap\{t=0\}$.
\end{itemize}
\begin{proof}
(i) The intersection ${\rm D}(H)\cap\{\epsilon t>0\}$ is the graph of the strictly positive Nash function $\epsilon \sqrt{h}$ on $\Int(H)$. Consequently $\pi|_{{\rm D}(H)\cap\{\epsilon t>0\}}:{\rm D}(H)\cap\{\epsilon t>0\}\to\Int(H)$ is Nash diffeomorphism. 

Statement (ii) is evident.

(iii) Consider the Nash diffeomorphism $u'$ constructed in the proof of \ref{doublei} and its inverse $\zeta$. We have
$$
(-1,1)\times\R^{d-1}\overset{\zeta}{\to} W\overset{\pi}{\to} H\cap U\overset{u}{\to}(-1,1)\times\R^{d-1},\ (t,y')\mapsto(\phi(t^2,y'),t)\mapsto\phi(t^2,y')\mapsto (t^2,y'),
$$
as required.
\end{proof}

\paragraph{}\label{doubleiii} \em $H$ is Nash diffeomorphic to $H_\epsilon:={\rm D}(H)\cap\{\epsilon t\geq0\}$ for $\epsilon=\pm$ and ${\rm D}(H)$ is a Nash manifold structure for the double of $H$\em. In addition, \em the Nash map $\tau:{\rm D}(H)\to {\rm D}(H),\ (x,t)\mapsto(x,-t)$ is an involution such that $\tau(H_+)=H_-$ and whose set of fixed points is $\partial H\times\{0\}$\em.
\begin{proof}
The proof is conducted in several steps:

\noindent{\em Step \em 1}. To prove that \em $H_\epsilon$ and $H$ are Nash diffeomorphic \em we need to construct first suitable neighborhoods $U\subset H$ of $\partial H$ and $V\subset D(H)$ of $\partial H\times\{0\}$.

Let $U\subset H$ be an open semialgebraic neighborhood of $\partial H$ equipped with a Nash retraction $\rho:U\to\partial H$. Define $\varphi:=(\rho,h):U\to\partial H\times\R$. By Lemma \ref{neighcollar} we may assume there exists a strictly positive Nash function $\veps$ on $\partial H$ such that
$$
\varphi(U)=\{(y,s)\in\partial H\times\R:\ 0\leq s<\veps(y)\}
$$
and $\varphi:U\to\varphi(U)$ is a Nash diffeomorphism. 

Let $V:=\pi^{-1}(U)$ and $V':=\{(y,t)\in\partial H\times\R:\ |t|<\sqrt{\veps(y)}\}$. We claim: \em The Nash map 
\begin{equation}\label{retractm}
\psi:V\to\partial H\times\R,\ (x,t)\mapsto(\rho(x),t)
\end{equation} 
is a Nash diffeomorphism onto its image $V'$\em.

(1) \em $\psi$ is injective\em. If $(x_1,t_1),(x_2,t_2)\in V$ satisfy $\psi(x_1,t_1)=\psi(x_2,t_2)$, then $\rho(x_1)=\rho(x_2)$ and $h(x_1)=t_1^2=t_2^2=h(x_2)$. Consequently, $\varphi(x_1)=\varphi(x_2)$, so $x_1=x_2$. Thus, $(x_1,t_1)=(x_2,t_2)$.

(2) $\psi(V)=V'$. Pick $(x,t)\in V$. Then $x\in U$ and $\varphi(x)=(\rho(x),h(x))\in\varphi(U)$, so $t^2=h(x)<\veps(\rho(x))$ and $\psi(x,t)\in V'$. Conversely, let $(y,t)\in V'$. As $(y,t^2)\in\varphi(U)$, there exists $x\in U$ such that $\varphi(x):=(\rho(x),h(x))=(y,t^2)$. Then $(x,t)\in V$ and $\psi(x,t):=(\rho(x),t)=(y,t)$.

(3) \em The derivative $d_z\psi:T_z{\rm D}(H)\to T_{\rho(z)}\partial H\times\R$ is an isomorphism for each $z\in V$\em. Write $z:=(x,t)$ and notice that $T_z{\rm D}(H)=\{(v,r)\in T_xH\times\R:\ d_xh(v)-2tr=0\}$ and $d_z\psi(v,r)=(d_x\rho(v),r)$. If $t\neq0$,
$$
d_z\psi(v,r)=\Big(d_x\rho(v),\frac{1}{2t}d_xh(v)\Big).
$$
As $d_x\varphi=(d_x\rho,d_xh)$ is an isomorphism, also $d_z\psi$ is an isomorphism. If $t=0$, that is, $z=(x,0)\in\partial H\times\R$, then $T_z{\rm D}(H)=\{(v,r)\in T_xH\times\R:\ d_xh(v)=0\}=T_x\partial H\times\R$ and $d_z\psi(v,r)=(v,r)$ because $\rho|_{\partial H}=\id_{\partial H}$. Consequently, $d_z\psi$ is an isomorphism also in this case.

\noindent{\em Step \em 2}. Define 
\begin{align*}
H^{\bullet}&:=H\setminus\varphi^{-1}\Big(\Big\{(y,s)\in\partial H\times\R:\ 0\leq|s|<\frac{\veps(y)}{4}\Big\}\Big),\\
H_\epsilon^{\bullet}&:=H_\epsilon\setminus\psi^{-1}\Big(\Big\{(y,s)\in\partial H\times\R:\ 0\leq|s|<\frac{\sqrt{\veps(y)}}{2}\Big\}\Big).
\end{align*}
and let us show: \em The restriction $\varpi_\epsilon:=\pi|_{H_\epsilon^{\bullet}}:H_\epsilon^{\bullet}\to H^{\bullet}$ is a Nash diffeomorphism\em.

Indeed, $\varpi_\epsilon$ is clearly injective. Let $x\in H^{\bullet}$. As $x\in\Int(H)$, we have $h(x)>0$ and write $t:=\epsilon\sqrt{h(x)}$. It holds that $(x,t)\in{\rm D}(H)$ and $\pi(x,t)=x$. We want to check that $(x,t)\in H_\epsilon^{\bullet}$. If $x\not\in U$, then $(x,t)\in H_{\epsilon}\setminus V \subset H_\epsilon^{\bullet}$. If $x\in U$, then $\psi(x,t)=(\rho(x),t)$. As $x\in H^{\bullet}$, it holds $\frac{\veps(\rho(x))}{4}\leq h(x)$, so $\frac{\sqrt{\veps(\rho(x))}}{2}\leq\sqrt{h(x)}=\epsilon t$. Consequently, $(x,t)\in H_\epsilon^{\bullet}$ and $\varpi_\epsilon$ is surjective. In addition, by \ref{doubleii}(i) $d_z\varpi_\epsilon=d_z\pi$ is an isomorphism for each $z\in H_\epsilon^{\bullet}\subset{\rm D}(H)\cap\{\epsilon t>0\}$. Consequently, $\varpi_\epsilon$ is a Nash diffeomorphism.

\noindent{\em Step \em 3}. \em $H^{\bullet}$ is Nash diffeomorphic to $H$ \em and \em $H_\epsilon^{\bullet}$ is Nash diffeomorphic to $H_\epsilon$\em. By \ref{s2boun} it is enough to show that the pairs of objects above are ${\mathcal S}^2$ diffeomorphic. 

As $\tau$ is an involution such that $\tau(H_+)=H_-$, we assume $\epsilon=+$. Denote 
\begin{align*}
U'&:=\{(y,s)\in\partial H\times\R:\ 0\leq s<\veps(y)\},\\ 
W'&:=\{(y,t)\in\partial H\times\R:\ 0\leq t<\sqrt{\veps(y)}\},\\
W''&:=\{(y,t)\in\partial H\times\R:\ 0\leq -t<\sqrt{\veps(y)}.
\end{align*} 
Observe that $U=\varphi^{-1}(U')$, $V=\psi^{-1}(W'\cup W'')$ and set $W:=\psi^{-1}(W')$. Consider the Nash diffeomorphisms
\begin{align*}
\Lambda:U'\to\partial H\times[0,1),&\ (y,s)\mapsto\Big(y,\frac{s}{\veps(y)}\Big),\\
\Delta:W'\to\partial H\times[0,1),&\ (y,t)\mapsto\Big(y,\frac{t}{\sqrt{\veps(y)}}\Big).
\end{align*}

Observe that $(\Lambda\circ\varphi)(H^{\bullet}\cap U)=\partial H\times[\frac{1}{4},1)$ and $(\Delta\circ\psi)(H_+^{\bullet}\cap W)=\partial H\times[\frac{1}{2},1)$.

Let $f_1:[\frac{1}{4},1)\to[0,1)$ and $f_2:[\frac{1}{2},1)\to[0,1)$ be ${\mathcal S}^2$ diffeomorphisms such that $f_1|_{[\frac{3}{4},1)}=\id_{[\frac{3}{4},1)}$ and $f_2|_{[\frac{3}{4},1)}=\id_{[\frac{3}{4},1)}$ (see Examples \ref{s2diffeo}(i) and (ii) in Appendix \ref{A}). Consider the ${\mathcal S}^2$ diffeomorphisms 
\begin{align*}
&F_1:\partial H\times[\tfrac{1}{4},1)\to\partial H\times[0,1),\ (y,t)\mapsto(y,f_1(t)),\\
&F_2:\partial H\times[\tfrac{1}{2},1)\to\partial H\times[0,1),\ (y,t)\mapsto(y,f_2(t))
\end{align*} 
It holds $F_i|_{\partial H\times[\frac{3}{4},1)}=\id_{\partial H\times[\frac{3}{4},1)}$ for $i=1,2$. Denote again $\varphi$ and $\psi$ the respective restrictions of these Nash maps to $U$ and $W$. Define
$$
\begin{array}{ll}
g:H^{\bullet}\to H,& x\mapsto\begin{cases}
x&\text{if $x\in H^{\bullet}\setminus U$},\\
(\Lambda\circ\varphi)^{-1}(F_1((\Lambda\circ\varphi)(x)))&\text{if $x\in H^{\bullet}\cap U$,}
\end{cases}
\end{array}
$$
$$
\begin{array}{ll}
g_+:H_+^{\bullet}\to H_+,& z\mapsto\begin{cases}
z&\text{if $z\in H_+^{\bullet}\setminus W$},\\
(\Delta\circ\psi)^{-1}(F_2((\Delta\circ\psi)(z)))&\text{if $z\in H_+^{\bullet}\cap W$.}
\end{cases}
\end{array}
$$
Both $g$ and $g_+$ are ${\mathcal S}^2$ diffeomorphisms.

\noindent{\em Step \em 4}. Consequently, both $H_+$ and $H_-=\tau(H_+)$ are Nash diffeomorphic to $H$ and have as common boundary $\partial H\times\{0\}$. In addition $\tau|_{\partial H\times\{0\}}=\id_{\partial H\times\{0\}}$, so ${\rm D}(H)$ is by \cite[\S I.6]{mu} the double of $H$.
\end{proof}

\paragraph{}\label{doubleiv} \em There exists an open semialgebraic neighborhood $V$ of $\partial H\times\{0\}$ in ${\rm D}(H)$ such that $M_\epsilon:=H_\epsilon\cup V$ is a Nash manifold Nash diffeomorphic to $\Int(H)$ that contains $H_\epsilon$ as a closed subset. In particular $\pi(M_\epsilon)=H$\em.
\begin{proof}
Define $V':=\{(y,s)\in\partial H\times\R:\ |s|<\sqrt{\veps(y)}\}$ and $V:=\psi^{-1}(V')$ where $\psi$ is the Nash map defined in \eqref{retractm}. Consider the Nash diffeomorphism
$$
\Theta:V'\to\partial H\times(-1,1),\ (y,s)\to\Big(y,\frac{s}{\sqrt{\veps(y)}}\Big).
$$
By \ref{doubleiii} it is enough to show that $M_\epsilon:=H_\epsilon\cup V$ and $\Int(H_\epsilon)$ are Nash diffeomorphic. As $\tau$ is an involution such that $\tau(H_-)=H_+$, we assume $\epsilon=-$. By \ref{s2boun} it is enough to show that the objects above are ${\mathcal S}^2$ diffeomorphic.

Let $f_3:(-1,1)\to(-1,0)$ be an ${\mathcal S}^2$ diffeomorphism such that $f_3|_{(-1,-\frac{1}{2}]}=\id_{(-1,-\frac{1}{2}]}$ (see Example \ref{s2diffeo}(iii)). Consider the ${\mathcal S}^2$ diffeomorphism 
$$
F_3:\partial H\times(-1,1)\to\partial H\times(-1,0),\ (y,t)\mapsto(y,f_3(t))
$$
and define
$$
h:M_-\to\Int(H_-),\ z\mapsto\begin{cases}
z&\text{if $z\in M_-\setminus V$},\\
(\Theta\circ\psi)^{-1}(F_3((\Theta\circ\psi)(z)))&\text{if $x\in V$,}
\end{cases}
$$
which is an ${\mathcal S}^2$ diffeomorphism.

Observe $H_-\subset M_-$ and $H=\pi(H_-)\subset\pi(M_-)\subset\pi(D(H))\subset H$, as required.
\end{proof}

\begin{figure}[!ht]
\begin{center}
\begin{tikzpicture}[scale=0.75]

\draw[line width=1pt,rotate=-90,dashed] (-3.5,0) parabola bend (-5.5,4) (-7.5,0);
\draw[rotate=-90,draw=none,fill=gray!100] (-4.5,6) parabola bend (-4.5,6) (-6.5,2)--
(-7.5,0) parabola bend (-5.5,4) (-6.5,3)--(-5.5,5);
\draw[rotate=-90,draw=none,fill=gray!20] (-2.5,2) parabola bend (-4.5,6) (-6,3.75) parabola bend (-5.5,4) (-3.5,0);

\draw[line width=0.5pt,rotate=-90,dashed] (-2.5,2) parabola bend (-4.5,6) (-6.5,2);
\draw[line width=0.5pt,dashed](2,2.5)--(0,3.5);
\draw[line width=1.5pt,draw](6,4.5)--(4,5.5);
\draw[line width=0.5pt,dashed](2,6.5)--(0,7.5);
\draw[line width=0.5pt,dashed](5,5.5)--(3,6.5);

\draw[draw=none,fill=gray!20] (2,0.5) -- (6,0.5) -- (4,1.5) -- (0,1.5) -- (2,0.5);
\draw[line width=0.5pt,dashed](2,0.5)--(6,0.5);
\draw[line width=1.5pt,draw](6,0.5)--(4,1.5);
\draw[line width=0.5pt,dashed](4,1.5)--(0,1.5);
\draw[line width=0.5pt,dashed](0,1.5)--(2,0.5);

\draw[<->, line width=0.5pt] (1.5,5) -- (6.5,5);
\draw[<->, line width=0.5pt] (3.5,5.75) -- (6.5,4.25);
\draw[<->, line width=0.5pt] (5,3) -- (5,7);

\draw[->, line width=1pt] (5.5,3) -- (5.5,1.5);

\draw (2,5.25) node{\small$x_1$};
\draw (6.85,4.25) node{\small$x_d$};
\draw (5.25,6.75) node{\small$t$};
\draw (0.6,6.85) node{\small$H_+$};
\draw (0.6,5.5) node{\small${\rm D}(H)$};
\draw (0.6,4) node{\small$H_-$};
\draw (6.1,5.25) node{\small$\partial H\times\{0\}$};
\draw (2.25,1) node{\small$H$};
\draw (6,1) node{\small$\partial H$};
\draw (5.75,2.25) node{\small$\pi$};

\draw[draw=none,fill=gray!20] (9,0.5) -- (13,0.5) -- (11,1.5) -- (7,1.5) -- (9,0.5);
\draw[line width=0.5pt,dashed](9,0.5)--(13,0.5);
\draw[line width=0.5pt,dashed](11,1.5)--(7,1.5);
\draw[line width=0.5pt,dashed](7,1.5)--(9,0.5);

\draw (9.25,1) node{\small$\Int(H)$};

\draw[fill=gray!100,draw=none] (13,0.5) .. controls (10,1) and (10,1) .. (11,1.5) -- (13,0.5);
\draw[line width=1.5pt] (13,0.5) .. controls (10,1) and (10,1) .. (11,1.5);
\draw[line width=1pt,dashed](13,0.5)--(11,1.5);

\draw[line width=1pt,rotate=-90,dashed] (-3.5,13) parabola bend (-5.5,17) (-7.5,13);
\draw[rotate=-90,draw=none,fill=gray!100] (-4.5,19) parabola bend (-4.5,19) (-6.5,15)--
(-7.5,13) parabola bend (-5.5,17) (-6.5,16)--(-5.5,18);
\draw[rotate=-90,draw=none,fill=gray!20] (-2.5,15) parabola bend (-4.5,19) (-6,16.75) parabola bend (-5.5,17) (-3.5,13);

\draw[line width=0.5pt,rotate=-90,dashed] (-2.5,15) parabola bend (-4.5,19) (-6.5,15);

\draw[line width=1pt,draw=none,fill=gray!100] (19,4.5) .. controls (17.5,5.75) and (17.5,5.75) .. (17,5.5) -- (19,4.5);
\draw[line width=1pt,dashed] (19,4.5) .. controls (17.5,5.75) and (17.5,5.75) .. (17,5.5);

\draw[line width=0.5pt,dashed](15,2.5)--(13,3.5);
\draw[line width=1.5pt,draw](19,4.5)--(17,5.5);
\draw[line width=0.5pt,dashed](15,6.5)--(13,7.5);
\draw[line width=0.5pt,dashed](18,5.5)--(16,6.5);

\draw[draw=none,fill=gray!20] (15,0.5) -- (19,0.5) -- (17,1.5) -- (13,1.5) -- (15,0.5);
\draw[line width=0.5pt,dashed](15,0.5)--(19,0.5);
\draw[line width=1.5pt,draw](19,0.5)--(17,1.5);
\draw[line width=0.5pt,dashed](17,1.5)--(13,1.5);
\draw[line width=0.5pt,dashed](13,1.5)--(15,0.5);

\draw[line width=1pt,draw=none,fill=gray!100] (19,0.5) .. controls (16.5,1) and (16.5,1) .. (17,1.5) -- (19,0.5);
\draw[line width=1pt,dashed] (19,0.5) .. controls (16.5,1) and (16.5,1) .. (17,1.5);
\draw[line width=1pt,dashed] (19,0.5)--(17,1.5);
\draw[line width=1.5pt,draw](19,0.5)--(17,1.5);

\draw (16.25,3.75) node{\small$M_-$};

\draw[->, line width=1pt] (12.5,1) -- (13.5,1);
\draw[->, line width=1pt] (11.25,1.75) -- (13.75,2.75);
\draw[->, line width=1pt] (18.5,3) -- (18.5,1.5);
\draw[->, line width=0.5pt] (9.85,2.2) -- (10.35,1.15);

\draw (13.6,6.85) node{\small$H_+$};
\draw (13.6,5.5) node{\small${\rm D}(H)$};
\draw (13.6,4) node{\small$H_-$};
\draw (19.1,4.25) node{\small$\partial H\times\{0\}$};
\draw (15.25,1) node{\small$H$};
\draw (19,1) node{\small$\partial H$};
\draw (18.75,2.25) node{\small$\pi$};
\draw (12.85,1.35) node{\small$f$};
\draw (12.3,2.5) node{\small$g$};
\draw (12.3,1.8) node{\small$\cong$};
\draw (10,2.5) node{\small$f^{-1}(\partial H)$};


\end{tikzpicture}
\end{center}
\caption{(a) Nash double of $H$ and (b) Surjective Nash map $f:\Int(H)\to H$.\label{fig2}}
\end{figure}
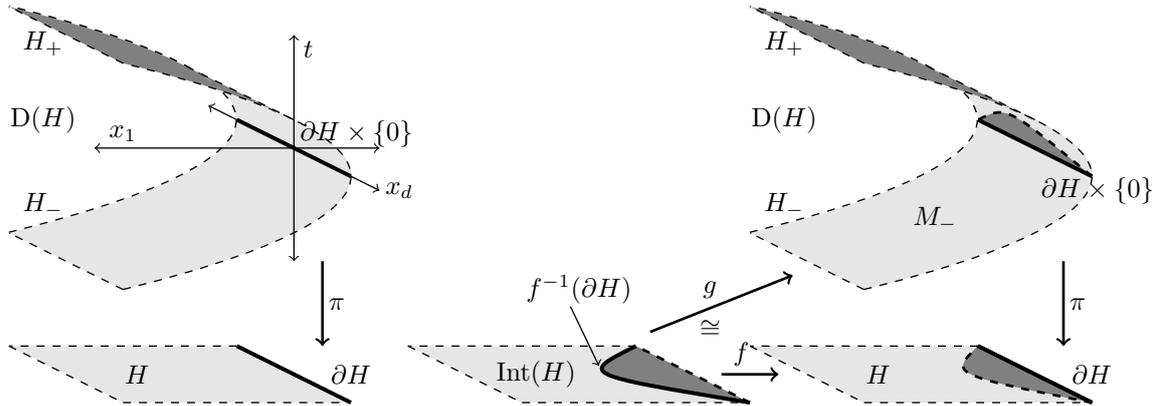

\subsection{Proof of Proposition \ref{doubleivc}}
Let ${\rm D}(H)$ be the Nash double of $H$. By \ref{doubleiv} there exist 
\begin{itemize}
\item a Nash manifold $M_-\subset{\rm D}(H)$ such that $H_-\subset M_-$ is a closed subset and $\pi(M_-)=H$,
\item a Nash diffeomorphism $g:\Int(H)\to M_-$.
\end{itemize}
Let us check that $f:=\pi\circ g$ satisfy the required conditions. First $f(\Int(H))=\pi(M_-)=H$. Next, pick a point $y_0\in\partial H$ and let $x_0\in f^{-1}(y_0)$. We have to prove that $f$ has a local representation $(x_1,\ldots,x_d)\mapsto(x_1^2,x_2,\ldots,x_d)$ at $x_0$. It holds $z_0:=g(x_0)=(y_0,0)$. As $g$ is a Nash diffeomorphism, it is enough to prove that $\pi$ has a local representation $(x_1,\ldots,x_d)\mapsto(x_1^2,x_2,\ldots,x_d)$ at $z_0$. But this follows by \ref{doubleii}(iii), as required.
\qed

\section{A main ingredient: The drilling blow-up}\label{s5}

In this section we construct the \em drilling blow-up \em of a Nash manifold $M$ with center a closed Nash submanifold $N$. We refer the reader to \cite{hi2,ds} for the oriented blow-up of a real analytic space with center a closed subspace, which is the counterpart of the tool we present in this section for the real analytic setting. In \cite[\S5]{hpv} appears a presentation of the oriented blow-up in the analytic case closer to the drilling blow-up we provide here. The authors consider there the case of the oriented blow-up of a real analytic manifold $M$ with center a closed real analytic submanifold $N$ whose vanishing ideal inside $M$ is finitely generated (this happens for instance if $N$ is compact). In \cite[\S3]{fe2} we present a similar construction in the semialgebraic setting, which is used to `appropriately embed' semialgebraic sets in Euclidean spaces. The following result, whose proof makes use of the drilling blow-up and which will be a key tool for our purposes, will allow to erase a closed Nash submanifold from a Nash manifold (see Figure \ref{fig5}).

\begin{lem}[Erasing a closed Nash submanifold]\label{eraseresidual}
Let $M$ be a Nash manifold and let $N\subset M$ be a closed Nash submanifold. Then there exists a surjective Nash map $h:M\setminus N\to M$.
\end{lem} 
 
\subsection{Local structure of the drilling blow-up}\label{bigstep}\setcounter{paragraph}{0}
Let $M\subset\R^m$ be a Nash manifold of dimension $d$ and let $N\subset M$ be a closed Nash submanifold of dimension $e$. Assume that there exists a Nash diffeomorphism $u:=(u_1,\ldots,u_d):M\to\R^d$ such that $N=\{u_{e+1}=0,\ldots,u_d=0\}$. Denote $\psi:=u^{-1}:\R^d\equiv\R^e\times\R^{d-e}\to M$. Let $\zeta_{e+1},\ldots,\zeta_d:\R^d\to\R^k$ be Nash maps such that the vectors $\zeta_{e+1}(y,0),\ldots,\zeta_d(y,0)$ are linearly independent for each $y\in\R^e$. Write $z\in\R^{d-e}$ as $z:=(z_{e+1},\ldots,z_d)$. Consider the Nash maps 
\begin{align*}
&\varphi:\R^d\equiv\R^e\times\R^{d-e}\to\R^k,\ (y,z)\mapsto\zeta_{e+1}(y,z)z_{e+1}+\ldots+\zeta_d(y,z)z_d,\\
&\phi:\R^e\times\R\times\sph^{d-e-1}\to\R^k,\ (y,\rho,w)\mapsto\zeta_{e+1}(y,\rho w)w_{e+1}+\cdots+\zeta_d(y,\rho w)w_d
\end{align*} 
and assume that $\varphi(y,z)=0$ if and only if $z=0$. Consider the projections 
\begin{align*}
&\theta_1:\R^d\equiv\R^e\times\R^{d-e}\to\R^e,\ (y,z)\mapsto y,\\
&\theta_2:\R^d\equiv\R^e\times\R^{d-e}\to\R^{d-e},\ (y,z)\mapsto z. 
\end{align*}
Define
$$
\Phi:\R^e\times\R\times\sph^{d-e-1}\to M\times\sph^{k-1},\ (y,\rho,w)\mapsto\Big(\psi(y,\rho w),\frac{\phi(y,\rho,w)}{\|\phi(y,\rho,w)\|}\Big).
$$

\paragraph{} \em $\Phi$ is a well-defined Nash map\em.
\begin{proof}
Observe first that $\varphi(y,\rho w)=\rho\phi(y,\rho,w)$ for all $(y,\rho,w)\in\R^e\times\R\times\sph^{d-e-1}$. If $\rho\neq0$, the product $\rho w\neq0$, so $\rho\phi(y,\rho,w)=\varphi(y,\rho w)\neq0$ and $\phi(y,\rho,w)\neq0$. If $\rho=0$, then 
$$
\phi(y,0,w)=\zeta_{e+1}(y,0)w_{e+1}+\cdots+\zeta_d(y,0)w_d.
$$
As $\zeta_{e+1}(y,0),\ldots,\zeta_d(y,0)$ are linearly independent and $\|w\|=1$, we conclude $\phi(y,0,w)\neq0$. Consequently, $\Phi$ is a well-defined Nash map, as required.
\end{proof}

\paragraph{}\label{bigstepa2}\em Fix $\epsilon=\pm$ and denote 
$$
I_\epsilon:=\begin{cases}
[0,+\infty)&\text{ if $\epsilon=+$,}\\
(-\infty,0]&\text{ if $\epsilon=-$.}
\end{cases}
$$ 
The closure $\widetilde{M}_\epsilon$ in $M\times\sph^{k-1}$ of the set
$$
\Gamma_\epsilon:=\left\{\Big(\psi(y,z),\epsilon\frac{\varphi(y,z)}{\|\varphi(y,z)\|}\Big)\in M\times\sph^{k-1}:\,z\neq 0\right\}
$$ 
is a Nash manifold with boundary such that:
\begin{itemize}
\item[(i)] $\widetilde{M}_\epsilon\subset\im(\Phi)$.
\item[(ii)] The restriction of $\Phi$ to $\R^e\times I_\epsilon\times\sph^{d-e-1}$ induces a Nash diffeomorphism between $\R^e\times I_\epsilon\times\sph^{d-e-1}$ and $\widetilde{M}_\epsilon$\em. Consequently, \em $\partial\widetilde{M_\epsilon}=\Phi(\R^e\times\{0\}\times\sph^{d-e-1})$ and $\Gamma_\epsilon=\Int(\widetilde{M_\epsilon})=\Phi(\R^e\times(I_\epsilon\setminus\{0\})\times\sph^{d-e-1})$\em.
\end{itemize}\em
\begin{proof}
(i) Recall $\varphi(y,\rho w)=\rho\phi(y,\rho,w)$ for all $(y,\rho,w)\in \R^e\times\R\times \sph^{d-e-1}$. As ${\rm sign}(\rho)=\frac{\rho}{|\rho|}$ for $\rho\neq 0$, we have
\begin{equation}\label{sign}
\frac{\phi(y,\rho,w)}{\|\phi(y,\rho,w)\|}={\rm sign}(\rho)\frac{\varphi(y,\rho w)}{\|\varphi(y,\rho w)\|}.
\end{equation}

Consequently, $\Gamma_+\sqcup\Gamma_-\subset\im(\Phi)$. Let us check: $\widetilde{M}_\epsilon\setminus\Gamma_\epsilon\subset\im(\Phi)$. 

Pick a point $(a,b)\in\widetilde{M}_\epsilon\setminus\Gamma_\epsilon$. By the Nash curve selection lemma \cite[Prop.8.1.13]{bcr} there exists a Nash arc $\gamma:(-1,1)\to M\times \sph^{k-1}$ such that $\gamma(0)=(a,b)$ and $\gamma((0,1))\subset\Gamma_\epsilon$. Let $\pi:M\times\sph^{k-1}\to M$ be the projection onto the first factor and let $(\alpha,\beta):(-1,1)\to\R^e\times\R^{d-e}\equiv\R^d$ be a Nash arc such that $\psi(\alpha,\beta)=\pi\circ\gamma$. We write 
$$
\gamma|_{(0,1)}=\Big(\psi,\epsilon\frac{\varphi}{\|\varphi\|}\Big)\circ(\alpha,\beta)|_{(0,1)}.
$$ 
Note that $\beta(0)=0$ because otherwise $(a,b)\in\Gamma_\epsilon$. In addition, $a=\psi(\alpha(0),0)$. Observe that 
\begin{equation}\label{limitb}
b=\lim_{t\to 0^+}\epsilon\frac{\varphi(\alpha(t),\beta(t))}{\|\varphi(\alpha(t),\beta(t))\|}.
\end{equation}
As $\gamma((0,1))\subset\Gamma_\epsilon$, the Nash arc $\beta$ is not identically $0$ and we may assume $\beta(t)=0$ if and only if $t=0$. Let $\xi\in\R[[\t]]_{\rm alg}$ be a Nash series such that $\|\beta(t)\|=\xi(t)$ for $t>0$ small. As
$$
\text{order}(\xi)=\min_{e+1\leq i\leq d}\{\text{order}(\beta_i)\},
$$
the quotient $\frac{\beta}{\|\beta\|}|_{(0,1)}$ extends to a Nash arc $\eta:(-1,1)\to\sph^{d-e-1}$. We have $\beta=(\epsilon\xi)(\epsilon\eta)$. By \eqref{sign} and \eqref{limitb}
$$
b=\lim_{t\to 0^+}\frac{\phi(\alpha(t),\epsilon\xi(t),\epsilon\eta(t))}{\|\phi(\alpha(t),\epsilon\xi(t),\epsilon\eta(t))\|}
$$
Consequently, as $\xi(0)=0$,
$$
b=\frac{\phi(\alpha(0),0,\epsilon\eta(0))}{\|\phi(\alpha(0),0,\epsilon\eta(0))\|}\quad\text{and}\quad(a,b)=\Phi(\alpha(0),0,\epsilon\eta(0))\in\im(\Phi).
$$

(ii) The proof of this statement is conducted in several steps:

\noindent{\em Step \em 1}. Denote $R:=\Phi(\R^e\times\{0\}\times \sph^{d-e-1})$ and consider the open semialgebraic subset of $\im(\Phi)$
\begin{equation}\label{u}
U:=\{(a,b)\in \im(\Phi):\,{\rm rk}((\zeta_{e+1}\circ u)(a),\ldots,(\zeta_d\circ u)(a))=d-e\}.
\end{equation}
As $\zeta_{e+1}(y,0),\dots,\zeta_d(y,0)$ are linearly independent for each $y\in\R^e$ we have $R\subset U$. In addition,
$$
\Phi^{-1}(U)=\{(y,\rho,w)\in \R^e\times\R\times\sph^{d-e-1}:\ {\rm rk}(\zeta_{e+1}(y,\rho w),\ldots,\zeta_d(y,\rho w))=d-e\}.
$$
We claim: \em $\Phi|_{\Phi^{-1}(U)}:\Phi^{-1}(U)\to U$ is a Nash diffeomorphism\em. In particular, \em $U$ is a Nash manifold that contains $R$\em.

Let $(a,b)\in U$ and let $(y,\rho,w)\in\R^e\times\R\times\sph^{d-e-1}$ be such that $\Phi(y,\rho,w)=(a,b)$, that is, $a=\psi(y,\rho w)$ and $b=\frac{\phi(y,\rho,w)}{\|\phi(y,\rho,w)\|}$. Consequently, $(y,\rho w)=u(a)=(\theta_1(u(a)),\theta_2(u(a)))$. Consider the system of linear equations
\begin{equation}\label{system}
{\tt v}_{e+1}(\zeta_{e+1}\circ u)(a)+\cdots+{\tt v}_d(\zeta_d\circ u)(a)=b.
\end{equation}
As $(a,b)\in U$ and $b\in\sph^{k-1}$, the system \eqref{system} has a unique non-zero solution $v\in\R^{d-e}$. If we solve \eqref{system} using Cramer's rule, one sees that the map $U\to\sph^{d-e-1},\ (a,b)\mapsto w:=(w_{e+1},\dots,w_d):=\frac{v}{\|v\|}$ is Nash. As $\|b\|=1$,
\begin{equation*}
\begin{split}
b&=v_{e+1}(\zeta_{e+1}\circ u)(a)+\cdots+v_d(\zeta_d\circ u)(a)=\frac{v_{e+1}(\zeta_{e+1}\circ u)(a)+\cdots+v_d(\zeta_d\circ u)(a)}{\|v_{e+1}(\zeta_{e+1}\circ u)(a)+\cdots+v_d(\zeta_d\circ u)(a)\|}\\
&=\frac{\frac{v_{e+1}}{\|v\|}(\zeta_{e+1}\circ u)(a)+\cdots+\frac{v_d}{\|v\|}(\zeta_d\circ u)(a)}{\|\frac{v_{e+1}}{\|v\|}(\zeta_{e+1}\circ u)(a)+\cdots+\frac{v_d}{\|v\|}(\zeta_d\circ u)(a)\|}=\frac{w_{e+1}(\zeta_{e+1}\circ u)(a)+\cdots+w_d(\zeta_d\circ u)(a)}{\|w_{e+1}(\zeta_{e+1}\circ u)(a)+\cdots+w_d(\zeta_d\circ u)(a)\|}.
\end{split}
\end{equation*}

As $\rho w=\theta_2(u(a))$, the vectors $\theta_2(u(a))$ and $w$ are linearly dependent. As $w$ is a unitary vector, $\theta_2(u(a))=\qq{\theta_2(u(a)),w}w$. It holds 
\begin{multline*}
\Phi(\theta_1(u(a)),\qq{\theta_2(u(a)),w},w)\\
=\Big(\psi(\theta_1(u(a)),\theta_2(u(a))),\frac{w_{e+1}(\zeta_{e+1}\circ u)(a)+\cdots+w_d(\zeta_d\circ u)(a)}{\|w_{e+1}(\zeta_{e+1}\circ u)(a)+\cdots+w_d(\zeta_d\circ u)(a)\|}\Big)=(a,b)
\end{multline*}
Write $w(a,b):=w$. The Nash map
\begin{equation}\label{psi0}
\Psi_0:U\to\R^e\times\R\times\sph^{d-e-1},\ (a,b)\mapsto(\theta_1(u(a)),\qq{\theta_2(u(a)),w(a,b)},w(a,b))
\end{equation} 
satisfies: \em $\im(\Psi_0)\subset\Phi^{-1}(U)$ and $\Phi\circ\Psi_0=\id_U$\em. Let us check in addition $\Psi_0\circ\Phi|_{\Phi^{-1}(U)}=\id_{\Phi^{-1}(U)}$ to conclude \em $\Phi|_{\Phi^{-1}(U)}$ is a Nash diffeomorphism.\em

Let $(y,\rho,w)\in\Phi^{-1}(U)$. We have
$$
(\Psi_0\circ\Phi)(y,\rho,w)=\Psi_0\Big(\psi(y,\rho w),\frac{\phi(y,\rho,w)}{\|\phi(y,\rho,w)\|}\Big)=\Big(y,\rho\QQ{w,\frac{v}{\|v\|}},\frac{v}{\|v\|}\Big)
$$
where $v\in\R^{d-e}$ is the unique solution of the system
$$
{\tt v}_{e+1}\zeta_{e+1}(y,\rho w)+\cdots+{\tt v}_d\zeta_d(y,\rho w)=\frac{w_{e+1}\zeta_{e+1}(y,\rho w)+\cdots+w_d\zeta_d(y,\rho w))}{\|w_{e+1}\zeta_{e+1}(y,\rho w)+\cdots+w_d\zeta_d(y,\rho w))\|}.
$$
Consequently, 
$$
v=\frac{w}{\|w_{e+1}\zeta_{e+1}(y,\rho w)+\cdots+w_d\zeta_d(y,\rho w))\|}
$$ 
and $\frac{v}{\|v\|}=w$ (recall that $\|w\|=1$). Thus, $(\Psi_0\circ\Phi)(y,\rho,w)=(y,\rho,w)$, as claimed. 

\noindent{\em Step \em 2}. Next, let us check: \em $\Phi^{-1}(\Gamma_\epsilon)=\R^e\times(I_\epsilon\setminus\{0\})\times\sph^{d-e-1}$ and the restriction 
$$
\Phi|:\R^e\times(I_\epsilon\setminus\{0\})\times\sph^{d-e-1}\to\Gamma_\epsilon
$$ 
is a Nash diffeomorphism\em.

Define 
$$
\Psi_\epsilon:\Gamma_\epsilon\to\R^e\times(I_\epsilon\setminus\{0\})\times\sph^{d-e-1}, (a,b)\mapsto\Big(\theta_1(u(a)),\epsilon\|\theta_2(u(a))\|,\frac{\theta_2(u(a))}{\epsilon\|\theta_2(u(a))\|}\Big).
$$
The previous map is well-defined and Nash because $\theta_2(u(a))\neq0$ for all $(a,b)\in\Gamma_\epsilon$. It follows that $\Phi|_{\R^e\times(I_\epsilon\setminus\{0\})\times\sph^{d-e-1}}$ is a Nash diffeomorphism whose inverse is $\Psi_\epsilon$.

\noindent{\em Step \em 3}. We deduce from Steps 1 \& 2 that the restriction $\Phi|:\R^e\times I_\epsilon\times\sph^{d-e-1}\to\widetilde{M}_\epsilon$ is a Nash diffeomorphism. As $\R^e\times I_\epsilon\times\sph^{d-e-1}$ is a Nash manifold with boundary, $\widetilde{M}_\epsilon$ is also a Nash manifold with boundary. In addition $\partial\widetilde{M_\epsilon}=R$ and $\Int(\widetilde{M_\epsilon})=\Gamma_\epsilon$, as required.
\end{proof}

\paragraph{}\label{bigstepa3}\em Denote $R:=\partial\widetilde{M}_+=\partial\widetilde{M}_-$ and $\widehat{M}:=\widetilde{M}_+\cup\widetilde{M}_-=\Gamma_+\sqcup R\sqcup\Gamma_-$. Then $\Phi$ induces a Nash diffeomorphism between $\R^e\times\R\times\sph^{d-e-1}$ and $\widehat{M}$. In addition, the Nash map $\sigma: M\times\sph^{k-1}\to M\times\sph^{k-1}, (a,b)\to(a,-b)$ induces a Nash involution on $\widehat{M}$ without fixed points such that $\sigma(\widetilde{M}_+)=\widetilde{M}_-$ and $\Phi(y,-\rho,-w)=(\sigma\circ\Phi)(y,\rho,w)$ for each $(y,\rho,w)\in\R^e\times\R\times\sph^{d-e-1}$\em. We call $\widehat{M}$ the \em twisted Nash double of $\widetilde{M}_+$\em. 
\begin{proof}
By \ref{bigstepa2} it holds 
\begin{equation}\label{doubleme}
\Phi(\R^e\times\R\times\sph^{d-e-1})=\Gamma_-\sqcup R\sqcup\Gamma_+=\widehat{M}.
\end{equation}
In addition, it follows from the proof of \ref{bigstepa2}(ii) (Steps 1 \& 2) that $\Phi$ is injective and a local Nash diffeomorphism (for the points of $\R^e\times\{0\}\times\sph^{d-e-1}$ use the map $\Psi_0$ introduced in \eqref{psi0}). Consequently, $\Phi$ is a Nash diffeomorphism onto its image $\widehat{M}$ and the latter is a Nash manifold.

The second part of the statement is straightforward.
\end{proof}

\paragraph{}\em\label{bigstepa4} Consider the projection $\pi:M\times\sph^{k-1}\to M$ onto the first factor and denote $\pi_\epsilon:=\pi|_{\widetilde{M_\epsilon}}$. Then
\begin{itemize}
\item[(i)] $\pi_\epsilon$ is proper, $\pi_\epsilon(\widetilde{M_\epsilon})=M$ and $R=\pi^{-1}_\epsilon(N)$.
\item[(ii)] The restriction $\pi_\epsilon|_{\Gamma_\epsilon}:\Gamma_\epsilon\to M\setminus N$ is a Nash diffeomorphism.
\item[(iii)] For each $q\in N$ it holds $\pi_\epsilon^{-1}(q)=\{q\}\times\sph_q^{d-e-1}$ where $\sph_q^{d-e-1}$ is the sphere of dimension $d-e-1$ obtained when intersecting the sphere $\sph^{k-1}$ with the linear subspace $L_q$ generated by $(\zeta_{e+1}\circ u)(q),\ldots,(\zeta_d\circ u)(q)$.
\end{itemize}\em
\begin{proof}
(i) Let $K\subset M$ be a compact set. Then $\pi^{-1}(K)=K\times\sph^{k-1}$, which is a compact set. As $\widetilde{M}_\epsilon=\cl(\Gamma_\epsilon)$ is a closed subset of $M\times\sph^{k-1}$, the intersection $\pi^{-1}(K)\cap\widetilde{M}_\epsilon$ is a compact set, so $\pi_\epsilon$ is proper. Thus, $\pi(\widetilde{M}_\epsilon)=\cl(\pi(\Gamma_\epsilon))=\cl(M\setminus N)=M$. In addition $\pi^{-1}_\epsilon(N)=\widetilde{M}_\epsilon\setminus\Gamma_\epsilon=R$.

(ii) We have the commutative diagram
$$
\xymatrix{
\R^d\setminus\{z=0\}\ar@{<->}[d]_{\id}\ar[rr]^\Theta&&\Gamma_\epsilon\ar[d]^{\pi|_{\Gamma_\epsilon}}&(y,z)\ar@{|->}[d]\ar@{|->}[rr]&&\Big(\psi(y,z),\epsilon\frac{\varphi(y,z)}{\|\varphi(y,z)\|}\Big)\ar@{|->}[d]\\
\R^d\setminus\{z=0\}\ar[rr]^{\psi|}_\cong&&M\setminus N&(y,z)\ar@{|->}[rr]&&\psi(y,z)}
$$ 
As $\Theta$ is a Nash diffeomorphism, we conclude that $\pi|_{\Gamma_\epsilon}$ is also a Nash diffeomorphism.

(iii) Let $q\in N$. We have 
$$
\pi_\epsilon^{-1}(q)=\pi^{-1}(q)\cap R=\pi^{-1}(q)\cap\Phi(\R^e\times\{0\}\times\sph^{d-e-1})=\{q\}\times\Delta_u(\sph^{d-e-1}),
$$
where 
$$
\Delta_u:\sph^{d-e-1}\to\sph^{k-1},\ w:=(w_{e+1},\ldots,w_d)\mapsto\frac{(\zeta_{e+1}\circ u)(q)w_{e+1}+\cdots+(\zeta_d\circ u)(q)w_d}{\|(\zeta_{e+1}\circ u)(q)w_{e+1}+\cdots+(\zeta_d\circ u)(q)w_d\|}.
$$
The map $\Delta_u$ is a Nash diffeomorphism between $\sph^{d-e-1}$ and the sphere obtained when intersecting $\sph^{k-1}$ with the linear subspace $L_q$ generated by $(\zeta_{e+1}\circ u)(q),\ldots,(\zeta_d\circ u)(q)$.
\end{proof}

\paragraph{}\label{bigstepa5}
Denote $\widehat{\pi}:=\pi|_{\widehat{M}}$ and consider the commutative diagram (see also Figures \ref{fig3} and \ref{fig4}).
\begin{equation}\label{diagpol}
\begin{gathered}
\xymatrix{
\R^e\times \R\times\sph^{d-e-1}\ar[d]_{u\circ\widehat{\pi}\circ\Phi}\ar[rr]^(0.6){\Phi}_(0.6){\cong}&&\widehat{M}\ar[d]^{\widehat{\pi}}&(y,\rho,w)\ar@{|->}[d]\ar@{|->}[r]&\Big(\psi(y,\rho w),\frac{\phi(y,\rho,w)}{\|\phi(y,\rho,w)\|}\Big)\ar@{|->}[d]\\
\R^d&&\ar[ll]_{u}^{\cong}M&(y,\rho w)&\psi(y,\rho w)\ar@{|->}[l]
}
\end{gathered}
\end{equation}

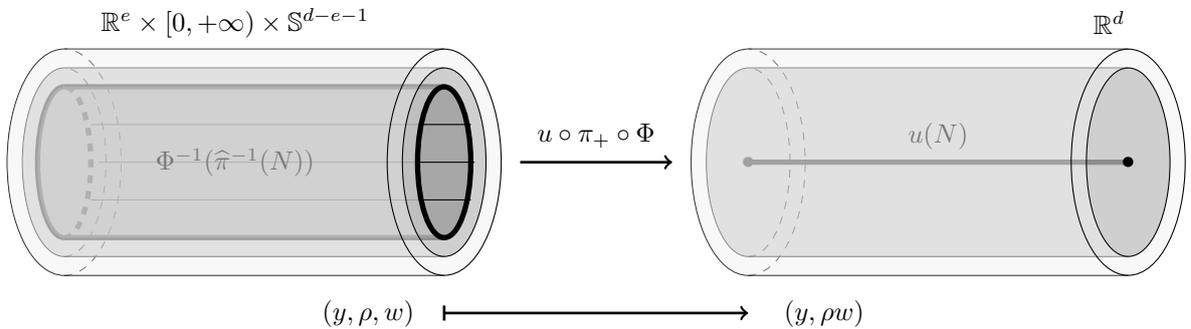
\begin{figure}[!ht]
\begin{center}
\begin{tikzpicture}[scale=1]

\draw (0.75,4) arc (90:270:0.75cm and 1.5cm);
\draw[dashed] (0.75,1) arc (270:450:0.75cm and 1.5cm);

\draw (1.1,2) -- (6.1,2);
\draw (1.2,2.5) -- (6.15,2.5);
\draw (1.1,3) -- (6.1,3);

\draw(0.75,3.75) arc (90:270:0.55cm and 1.25cm);
\draw[dashed] (0.75,1.25) arc (270:450:0.55cm and 1.25cm);

\draw[line width=2pt] (0.75,3.5) arc (90:270:0.35cm and 1cm);
\draw[line width=2pt, dashed] (0.75,1.5) arc (270:450:0.35cm and 1cm);

\draw (0.75,1) -- (5.75,1);
\draw (0.75,4) -- (5.75,4);

\draw(0.75,1.25) -- (5.75,1.25);
\draw(0.75,3.75) -- (5.75,3.75);

\draw[line width=2pt] (0.75,1.5) -- (5.75,1.5);
\draw[line width=2pt] (0.75,3.5) -- (5.75,3.5);

\draw[fill=gray!100,opacity=0.5,draw=none] (0.75,3.5) arc (90:270:0.35cm and 1cm) -- (5.75,1.5) arc (270:90:0.35cm and 1cm) -- (5.75,3.5);
\draw[fill=gray!70,opacity=0.5,draw=none] (0.75,3.75) arc (90:270:0.55cm and 1.25cm) -- (5.75,1.25) arc (270:90:0.55cm and 1.25cm) -- (5.75,3.75);

\draw (3,2.5) node{\small$\Phi^{-1}(\widehat{\pi}^{-1}(N))$};

\draw[fill=gray!10,opacity=0.5,draw=none] (0.75,4) arc (90:270:0.75cm and 1.5cm) -- (5.75,1) arc (270:90:0.75cm and 1.5cm) -- (5.75,4);

\draw[fill=gray!10,opacity=0.5,draw=none] (5.75,2.5) ellipse (0.75cm and 1.5cm);
\draw[fill=gray!70,opacity=0.5,draw=none] (5.75,2.5) ellipse (0.55cm and 1.25cm);
\draw[fill=gray!100,opacity=0.5,draw=none] (5.75,2.5) ellipse (0.35cm and 1cm);

\draw (5.45,2) -- (6.1,2);
\draw (5.40,2.5) -- (6.15,2.5);
\draw (5.45,3) -- (6.1,3);

\draw (5.75,2.5) ellipse (0.75cm and 1.5cm);
\draw (5.75,2.5) ellipse (0.55cm and 1.25cm);
\draw[line width=2pt] (5.75,2.5) ellipse (0.35cm and 1cm);

\draw (9.75,4) arc (90:270:0.75cm and 1.5cm);
\draw[dashed] (9.75,1) arc (270:450:0.75cm and 1.5cm);

\draw (9.75,3.75) arc (90:270:0.55cm and 1.25cm);
\draw[dashed] (9.75,1.25) arc (270:450:0.55cm and 1.25cm);

\draw (9.75,1) -- (14.75,1);
\draw (9.75,4) -- (14.75,4);

\draw (9.75,1.25) -- (14.75,1.25);
\draw (9.75,3.75) -- (14.75,3.75);

\draw (9.75,2.5) node{\small$\bullet$};
\draw[line width=2pt] (9.75,2.5) -- (14.75,2.5);

\draw[fill=gray!70,opacity=0.5,draw=none] (9.75,3.75) arc (90:270:0.55cm and 1.25cm) -- (14.75,1.25) arc (270:90:0.55cm and 1.25cm) -- (14.75,3.75);

\draw (12.25,2.85) node{\small$u(N)$};
\draw[fill=gray!10,opacity=0.5,draw=none] (9.75,4) arc (90:270:0.75cm and 1.5cm) -- (14.75,1) arc (270:90:0.75cm and 1.5cm) -- (14.75,4);

\draw[fill=gray!10,opacity=0.5,draw=none] (14.75,2.5) ellipse (0.75cm and 1.5cm);
\draw[fill=gray!70,opacity=0.5,draw=none] (14.75,2.5) ellipse (0.55cm and 1.25cm);

\draw (14.75,2.5) ellipse (0.75cm and 1.5cm);
\draw (14.75,2.5) ellipse (0.55cm and 1.25cm);

\draw[->, line width=1pt] (6.75,2.5) -- (8.75,2.5);
\draw[->, line width=1pt] (5.75,0.5) -- (9.75,0.5);
\draw[line width=1pt] (5.75,0.4) -- (5.75,0.6);
\draw (4.75,0.5) node{\small$(y,\rho,w)$};
\draw (10.75,0.5) node{\small$(y,\rho w)$};
\draw (14.5,4.35) node{\small$\R^d$};
\draw (3,4.35) node{\small$\R^e\times[0,+\infty)\times\sph^{d-e-1}$};
\draw (7.75,2.85) node{\small$u\circ\pi_+\circ\Phi$};
\draw (14.75,2.5) node{\small$\bullet$};

\end{tikzpicture}
\end{center}
\caption{Local structure of the drilling blow-up $\widetilde{M}_+$ of $M$ of center $N$.\label{fig3}}
\end{figure}

As a consequence, we have: \em The Nash maps $\pi_\epsilon$ and $\widehat{\pi}$ have local representations 
\begin{equation}\label{lr}
x:=(x_1,\ldots,x_d)\mapsto (x_1,\ldots,x_e,x_{e+1},x_{e+1}x_{e+2},\ldots,x_{e+1}x_d)
\end{equation}
in an open neighborhood of each point $p\in R$. In addition, $d\pi_p(T_p \widehat{M})\not\subset T_{\pi(p)}N$\em. 
\begin{proof}
After a change of coordinates in $\R^e\times\R\times\sph^{d-e-1}$, we may assume that $p\in R$ is the image of the point $(0,0,(1,0,\ldots,0))$. Consider the local parametrization around $(0,0,(1,0,\ldots,0))$ of the set $\R^e\times\R\times\sph^{d-e-1}$ given by 
$$
\begin{array}{cccl}
\eta: &\R^e\times\R\times\Bb&\to&\R^e\times\R\times\sph^{d-e-1},\\
&(y,\rho,v:=(v_{e+2},\ldots,v_d))&\mapsto& (y,\rho,(\sqrt{1-\|v\|^2},v))
\end{array}
$$
where $\Bb$ is the open ball of center the origin and radius $1$ in $\R^{d-e-1}$. It holds 
$$
\begin{array}{cccl}
u\circ\widehat{\pi}\circ\Phi\circ\eta:&\R^e\times\R\times\Bb&\to&\R^d,\\ 
&(y,\rho,v)&\mapsto& (y,\rho\sqrt{1-\|v\|^2},\rho v).
\end{array}
$$
Consider the Nash diffeomorphism
$$
f:\R^e\times\R\times\Bb\to\R^d, (y,\rho,v)\to \Big(y,\rho\sqrt{1-\|v\|^2},\frac{v}{\sqrt{1-\|v\|^2}}\Big),
$$
whose inverse is
$$
f^{-1}:\R^d\to\R^e\times\R\times\Bb,\ (y,\rho',v')\mapsto\Big(y,\rho'\sqrt{1+\|v'\|^2},\frac{v'}{\sqrt{1+\|v'\|^2}}\Big).
$$
The Nash map
$$
\pi':=u\circ\widehat{\pi}\circ\Phi\circ\eta\circ f^{-1}:\R^d\to\R^d, (y,\rho',v')\mapsto (y,\rho',\rho'v').
$$
represents $\widehat{\pi}$ locally around $p$ and the restriction
$$
\pi'_\epsilon:=\pi'|_{\{\epsilon\rho'\geq 0\}}:\{\epsilon\rho\geq 0\}\to\R^d, (y,\rho',v')\mapsto (y,\rho',\rho'v').
$$
represents $\pi_\epsilon$ locally around $p$.

To prove that $d\pi_p(T_p \widehat{M})\not\subset T_{\pi(p)}N$, it is enough to show $
d\pi'_0(\R^d)\not\subset\{z=0\}=u(N)$. 
 
If $\vec{\tt e}_{e+1}:=(0,\ldots,0,\overset{(e+1)}{1},0,\ldots,0)$, we have $d\pi'_0({\tt e}_{e+1})={\tt e}_{e+1}\not\in\{z=0\}$, as required.
\end{proof}

\subsection{Global structure of the drilling blow-up}\label{bbu}\setcounter{paragraph}{0} 
We construct next the \em drilling blow-up of a Nash manifold with center a closed Nash submanifold\em. We refer the reader to Figure \ref{fig4} in order to get a global idea of the involved strategy. Let $M\subset\R^m$ be a Nash manifold of dimension $d$ and let $N\subset M$ be a closed Nash submanifold of dimension $e$. Let $f_1,\ldots,f_k\in{\mathcal N}(M)$ be a finite system of generators of the ideal $I(N)$ of Nash functions on $M$ vanishing identically on $N$. Consider the Nash map
\begin{equation}\label{XX}
F:M\setminus N\to\sph^{k-1}, x\mapsto\frac{(f_1(x),\ldots,f_k(x))}{\|(f_1(x),\ldots,f_k(x))\|}.
\end{equation}
We have:

\paragraph{}\label{bigstepa2g}\em Fix $\epsilon=\pm$. The closure $\widetilde{M}_\epsilon$ in $M\times\sph^{k-1}$ of the graph 
$$
\Gamma_\epsilon:=\{(x,\epsilon F(x))\in M\times\sph^{k-1}:\ x\in M\setminus N\}
$$ 
is a Nash manifold with boundary. Denote $R:=\partial\widetilde{M}_+=\partial\widetilde{M}_-$ and $\widehat{M}:=\widetilde{M}_+\cup\widetilde{M}_-=\Gamma_+\sqcup R\sqcup\Gamma_-$. In addition, $\widehat{M}$ is a Nash manifold and the Nash map $\sigma: M\times\sph^{k-1}\to M\times\sph^{k-1}, (a,b)\to(a,-b)$ induces a Nash involution on $\widehat{M}$ without fixed points that maps $\widetilde{M}_+$ onto $\widetilde{M}_-$\em. We call $\widehat{M}$ the \em twisted Nash double of $\widetilde{M}_+$\em. 

\begin{proof}
The Nash manifold $M$ can be covered by \ref{fnashnc2} (applied to $N$) and by \ref{cartaspm} (applied to $M\setminus N$) with finitely many open semialgebraic subsets $U_i$ equipped with Nash diffeomorphisms $u_i:=(u_{i,1},\ldots,u_{i,d}):U_i\to\R^d$ such that either $U_i\cap N=\varnothing$ or $U_i\cap N=\{u_{i,e+1}=0,\ldots,u_{i,d}=0\}$. Denote $\psi_i:=u_i^{-1}$.

The collection $\{U_i\times\sph^{k-1}\}_i$ is a finite open semialgebraic covering of $M\times\sph^{k-1}$. To prove that $\widetilde{M_\epsilon}$ is a Nash manifold with boundary, we will show that $\widetilde{M_\epsilon}\cap(U_i\times\sph^{k-1})$ is a Nash manifold with (possibly empty) boundary for $i=1,\ldots,r$. Analogously, to check that $\widehat{M}$ is a Nash manifold, we will show that each intersection $\widehat{M}\cap(U_i\times\sph^{k-1})$ is a Nash manifold.

\noindent{\em Case \em 1.} If $N\cap U_i=\varnothing$, the intersection $\widetilde{M_\epsilon}\cap(U_i\times\sph^{k-1})$ is the graph of the Nash map $\epsilon F|_{U_i}$, so it is a Nash manifold. Observe that $(\widetilde{M_+}\cap\widetilde{M_-})\cap(U_i\times\sph^{k-1})=\varnothing$, so 
$$
\widehat{M}\cap(U_i\times\sph^{k-1})=(\widetilde{M}_+\cap(U_i\times\sph^{k-1}))\sqcup(\widetilde{M}_-\cap(U_i\times\sph^{k-1}))
$$
is also a Nash manifold.

\noindent{\em Case \em 2.} If $N\cap U_i\neq\varnothing$, the intersection $\widetilde{M_\epsilon}\cap(U_i\times\sph^{k-1})$ is the closure in $U_i\times\sph^{k-1}$ of the set 
$$
\Gamma_{i,\epsilon}:=\Gamma_\epsilon\cap(U_i\times\sph^{k-1})=\{(\psi_i(y,z),\epsilon F(\psi_i(y,z)))\in U_i\times\sph^{k-1}:\, z\neq 0\}
$$
and $\widehat{M}\cap (U_i\times\sph^{k-1})=(\widetilde{M}_+\cap (U_i\times\sph^{k-1}))\cup(\widetilde{M}_-\cap (U_i\times\sph^{k-1}))$.

By \eqref{eq:ix} it holds
$$
I(N\cap U_i)=I(N){\mathcal N}(U_i)=(f_1|_{U_i},\ldots,f_k|_{U_i}){\mathcal N}(U_i).
$$
As $\psi_i$ is a Nash diffeomorphism, $f_1\circ\psi_i,\ldots,f_k\circ\psi_i$ generate the ideal $I(\{z=0\})$ of Nash functions on $\R^d$ vanishing identically on $\psi_i^{-1}(N\cap U_i)=\{z=0\}$. The ideal $I(\{z=0\})$ is also generated by $z_{e+1},\ldots,z_d$. Thus, there exist Nash functions $\zeta_{j,\ell}\in{\mathcal N}(\R^d)$ such that
$$
f_\ell\circ\psi_i=\zeta_{e+1,\ell}(y,z)z_{e+1}+\cdots+\zeta_{d,\ell}(y,z)z_d.
$$
Notice that 
\begin{equation}\label{jc}
\frac{\partial}{\partial z_j}(f_\ell\circ\psi_i)(y,0)=\zeta_{j,\ell}(y,0)
\end{equation}
for each $y\in\R^e$. Write $\zeta_j:=(\zeta_{j,1},\ldots,\zeta_{j,k}):\R^d\to\R^k$. We have
$$
((f_1\circ\psi_i)(y,z),\ldots,(f_k\circ\psi_i)(y,z))=\zeta_{e+1}(y,z)z_{e+1}+\cdots+\zeta_d(y,z)z_d.
$$
As $f_1\circ\psi_i,\ldots,f_k\circ\psi_i$ generate the ideal $I(\{z=0\})$, we deduce:
\begin{itemize}
\item $\zeta_{e+1}(y,z)z_{e+1}+\cdots+\zeta_d(y,z)z_d=0$ if and only if $z=0$.
\item The vectors $\zeta_{e+1}(y,0),\ldots,\zeta_d(y,0)$ are linearly independent for all $y\in\R^e$ (by \eqref{jc} and the Jacobian criterion). 
\end{itemize}
By \ref{bigstepa2} and \ref{bigstepa3} we deduce that $\widetilde{M_\epsilon}\cap(U_i\times\sph^{k-1})$ is a Nash manifold with boundary and $\widehat{M}\cap(U_i\times\sph^{k-1})$ is a Nash manifold.

The rest of the statement follows from \ref{bigstepa3}. 
\end{proof}

\paragraph{}\label{bigstepb2} We keep the notations already introduced in \ref{bigstepa2g}. \em Consider the projection $\pi:M\times\sph^{k-1}\to M$ onto the first factor. Denote $\pi_\epsilon:=\pi|_{\widetilde{M_\epsilon}}$ and $\widehat{\pi}:=\pi|_{\widehat{M}}$. We have:
\begin{itemize}
\item[(i)] $\pi_\epsilon$ is proper, $\pi_\epsilon(\widetilde{M_\epsilon})=M$ and $R=\pi^{-1}_\epsilon(N)$.
\item[(ii)] The restriction $\pi_\epsilon|_{\Gamma_\epsilon}:\Gamma_\epsilon\to M\setminus N$ is a Nash diffeomorphism.
\item[(iii)] Consider the Nash map $f:=(f_1,\ldots,f_k):M\to\R^k$ (whose coordinates generate $I(N)$). Fix $q\in N$ and let $E_q$ be any complementary linear subspace of $T_qN$ in $T_qM$. Then $\pi_\epsilon^{-1}(q)=\{q\}\times\sph_q^{d-e-1}$, where $\sph_q^{d-e-1}$ denotes the sphere of dimension $d-e-1$ obtained when intersecting $\sph^{k-1}$ with the $(d-e)$-dimensional linear subspace $d_qf(E_q)$. 
 \item[(iv)] The Nash maps $\pi_\epsilon$ and $\widehat{\pi}$ have local representations of the type 
$$
x:=(x_1,\ldots,x_d)\mapsto (x_1,\ldots,x_e,x_{e+1},x_{e+1}x_{e+2},\ldots,x_{e+1}x_d)
$$
in an open neighborhood of each point $p\in R$. In addition, $d\pi_p(T_p \widehat{M})\not\subset T_{\pi(p)}N$.
\end{itemize}\em
\begin{proof}
This statement follows straightforwardly from \ref{bigstepa4} and \ref{bigstepa5}. To prove (iii) we use in addition \eqref{jc} and the equality $\ker(d_qf)=T_qN$.
\end{proof}

\paragraph{}\label{bigstepb3}\em The pairs $(\widetilde{M_\epsilon},\pi_\epsilon)$ and $(\widehat{M},\widehat{\pi})$ do not depend on the generators $f_1,\ldots,f_k$ of $I(N)$ up to Nash diffeomorphisms compatible with the respective projections. Moreover, such Nash diffeomorphisms are unique\em. 
\begin{proof}
Let $f_{k+1}:=g_1f_1+\cdots+g_kf_k\in I(N)$ for some $g_j\in{\mathcal N}(M)$. Let $(\widetilde{M_\epsilon}',\pi_\epsilon')$ and $(\widehat{M}',\widehat{\pi}')$ be the pairs associated to the system of generators $f_1,\ldots,f_k,f_{k+1}$ of $I(N)$. Let us construct a Nash diffeomorphism $\Theta:\widehat{M}\to\widehat{M}'$ such that the following diagram commutes.
\begin{equation}\label{diagunic}
\begin{gathered}
\xymatrix{
\widehat{M}\ar_{\widehat{\pi}}[dr]\ar^{\Theta}[rr]&&\widehat{M}'\ar^{\widehat{\pi}'}[dl]\\
&M&
}
\end{gathered}
\end{equation}
Denote $R':=(\widehat{\pi}')^{-1}(N)$. Consider the Nash maps 
$$
g:=(g_1,\ldots,g_k):M\to\R^k,\quad f:=(f_1,\ldots,f_k):M\to\R^k,\quad\text{and}\quad f':=(f,f_{k+1}):M\to\R^{k+1}
$$
and let $F:M\setminus N\to\sph^{k-1}$ be the Nash map introduced in \eqref{XX}.

If $(a,b)\in\widetilde{M}_\epsilon\setminus R$, then $b=\epsilon F(a)=\epsilon\frac{f(a)}{\|f(a)\|}$ and 
\begin{equation}\label{unicden}
\Big(a,\epsilon\frac{f'(a)}{\|f'(a)\|}\Big)=\Big(a,\frac{(\epsilon F(a),\qq{g(a),\epsilon F(a)})}{\sqrt{1+\qq{g(a),\epsilon F(a)}^2}}\Big)=\Big(a,\frac{(b,\qq{g(a),b)}}{\sqrt{1+\qq{g(a),b}^2}}\Big)\in\widetilde{M}_\epsilon'\setminus R'.
\end{equation}
Consider the Nash map
$$
\Theta:\widehat{M}\to\widehat{M}',\ (a,b)\mapsto\Big(a,\frac{(b,\qq{g(a),b)}}{\sqrt{1+\qq{g(a),b}^2}}\Big)
$$
and let us check that it is the Nash diffeomorphism we are looking for.

We claim: \em If $(a,c:=(c_1,\ldots,c_k,c_{k+1}))\in\widehat{M}'$, then $c':=(c_1,\ldots,c_k)\neq0$ and $c=(c',\qq{g(a),c'})$\em. 

We distinguish two cases: 

\noindent{\em Case \em 1.} If $p\not\in N$, there exist $\epsilon=\pm$ such that 
$$
c=\epsilon\frac{f'(a)}{\|f'(a)\|}=\epsilon \frac{(f(a),\qq{g(a),f(a)})}{\|f'(a)\|}=(c',\qq{g(a),c'}).
$$
\noindent{\em Case \em 2.} If $p\in N$, there exists by the Nash curve selection lemma a Nash arc $\gamma:(-1,1)\to \widehat{M}'$ such that $\gamma(0)=(a,c)$ and $\gamma((0,1))\subset\widetilde{M}_\epsilon'\setminus R'$. It holds 
\begin{equation*}
\begin{split}
c&=\epsilon\lim_{t\to0}\frac{(f'\circ\widehat{\pi}'\circ\gamma)(t)}{\|(f'\circ\widehat{\pi}'\circ\gamma)(t)\|}\\
&=\epsilon\lim_{t\to0}\frac{((f\circ\widehat{\pi}'\circ\gamma)(t),\qq{(g\circ\widehat{\pi}'\circ\gamma)(t),(f\circ\widehat{\pi}'\circ\gamma)(t)})}{\|(f'\circ\widehat{\pi}'\circ\gamma)(t)\|}=(c',\qq{g(a),c'}).
\end{split}
\end{equation*}
As in both cases $c\in\sph^k$, we have $c'\neq0$, as claimed.

The Nash map
$$
\Psi:\widehat{M}'\to\widehat{M}, (a,c)\mapsto\Big(a,\frac{c'}{\|c'\|}\Big).
$$
is the inverse of $\Theta$, so $\Theta$ is a Nash diffeomorphism. In addition, $\widehat{\pi}=\widehat{\pi}'\circ\Theta$ and $\Theta(\widetilde{M}_\epsilon)=\widetilde{M}_\epsilon'$ for $\epsilon=\pm$, as required. The unicity of $\Theta$ follows from \eqref{diagunic} and \eqref{unicden}.
\end{proof}

\begin{define}
The pair $(\widetilde{M}_+,\pi_+)$ will be called the \em drilling blow-up of the Nash manifold $M$ with center the closed Nash submanifold $N\subset M$\em. 
\end{define}
\begin{remark}
We will see in \ref{dbme} that the twisted Nash double $\widehat{M}$ of $\widetilde{M}_+$ together with $\widehat{\pi}:\widehat{M}\to M$ relates the drilling blow-up of $M$ with center $N$ with the classical blow-up of $M$ with center $N$.
\end{remark}

We are ready to prove Lemma \ref{eraseresidual}. We will make a strong use of the global structure of the drilling blow-up of a Nash manifold $M$ with center a closed Nash submanifold $N$.

\begin{proof}[Proof of Lemma \em \ref{eraseresidual}]
Let $(\widetilde{M}_+,\pi)$ be the drilling blow-up of $M$ with center $N$ and let $R:=\pi_+^{-1}(N)$, see Figure \ref{fig5}. The Nash map $\pi_+:\widetilde{M}_+\to M$ is surjective. By Proposition \ref{doubleivc} there exists a surjective Nash map 
$$
f:\Int(\widetilde{M}_+)=\widetilde{M}_+\setminus R\to\widetilde{M}_+.
$$
By \ref{bigstepb2} the restriction $\pi|_{\widetilde{M}_+\setminus R}:\widetilde{M}_+\setminus R\to M\setminus N$ is a Nash diffeomorphism. Consequently,
$$
h:=\pi_+\circ f\circ(\pi_+|_{\widetilde{M}_+\setminus R})^{-1}:M\setminus N\to\widetilde{M}_+\setminus\pi^{-1}(N)\to M
$$
is a surjective Nash map, as required.
\end{proof}

\subsection{Alternative presentation of the drilling blow-up}\setcounter{paragraph}{0}
Our purpose next is to extend the construction in diagram \eqref{diagpol} to an open semialgebraic neighborhood of the center $N$ of the drilling blow-up of $M$. Let $M\subset\R^m$ be a Nash manifold of dimension $d$ and let $N\subset M$ be a closed Nash submanifold of dimension $e$. Let $(\widehat{M},\widehat{\pi})$ be the twisted Nash double of the drilling blow-up $(\widetilde{M}_+,\pi_+)$ of $M$ with center $N$. In the next lemma we will use notations already introduced in \ref{nmtub}. We refer the reader to Figure \ref{fig4} to appreciate its importance.

\begin{lem}\label{neighfeten}
Let $U_1\subset M$ and $U_2\subset\widehat{M}$ be respective open semialgebraic neighborhoods of $N$ and $R:=\widehat{\pi}^{-1}(N)$. Then there exist Nash tubular neighborhoods $(V_1,\varphi_1,\mathscr{E},\theta_1,N,\delta)$ of $N$ in $U_1$ and $(V_2,\varphi_2,\mathscr{F},\theta_2,R,\delta\circ\widehat{\pi}|_R)$ of $R$ in $U_2$ and a Nash embedding $\psi:=(\psi_1,\psi_2):R\to N\times\sph^{m-1},\ (x,v)\mapsto(x,\psi_2(x,v))$ such that $\widehat{\pi}(V_2)=V_1$,
$$
\varphi_1\circ\widehat{\pi}\circ \varphi_2^{-1}:\mathscr{F}_{\delta\circ\widehat{\pi}|_R}\to\mathscr{E}_{\delta},\ (x,v,t)\mapsto(x,t\psi_2(x,v))
$$
and $V_2$ is the Nash double of a collar of $R$ in $\widetilde{M}_+$.
\end{lem}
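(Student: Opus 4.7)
The plan is to derive the tubular neighborhood of $R$ in $\widehat M$ from that of $N$ in $M$ by exploiting the local description of $\widehat\pi$ in diagram \eqref{diagpol}, and to realize $V_2$ as the Nash double of a Nash collar of $R$ in $\widetilde M_+$ via the constructions of Section \ref{s4}. I would proceed in three steps.

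\emph{First}, I would apply \ref{nmtub} to $N\subset M$, taking $\mathscr E$ to be the normal bundle of $N$ in $M$, realized as the Nash subbundle of $N\times\R^m$ whose fibre at $x$ is the orthogonal complement of $T_xN$ in $T_xM$. This yields a Nash tubular neighborhood $(V_1,\varphi_1,\mathscr E,\theta_1,N,\delta)$ with $V_1\subset U_1$ for a strictly positive Nash function $\delta$ on $N$ to be shrunk later. Using \ref{bigstepb2}(iii) with $E_q:=\mathscr E_q$, every $(x,v)\in R$ satisfies $v\in d_xf(\mathscr E_x)\cap\sph^{k-1}$, and by \eqref{jc} the map $d_xf|_{\mathscr E_x}\colon\mathscr E_x\to d_xf(\mathscr E_x)$ is an isomorphism. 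I would then define $\psi_2(x,v)$ to be the unique unit vector in $\mathscr E_x\subset\R^m$ whose image under $d_xf$ is a positive multiple of $v$, and set $\psi:=(\widehat\pi|_R,\psi_2)\colon R\to N\times\sph^{m-1}$. Nash-ness of $\psi$ follows from a local calculation in the charts of \ref{bigstepa2}, and injectivity is immediate from the construction.

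\emph{Second}, since by \ref{bigstepa2g} the set $\widetilde M_+$ is a Nash manifold with boundary $R$, Lemma \ref{collar} supplies a Nash collar $\chi_+\colon W_+\to R\times[0,1)$ of $R$ in $\widetilde M_+$ which I can choose with $W_+\subset U_2\cap\widetilde M_+$ and $\widehat\pi(W_+)\subset V_1$. I would then set $W_-:=\sigma(W_+)\subset\widetilde M_-$, $V_2:=W_+\cup R\cup W_-$, and $\mathscr F:=R\times\R$, and define $\varphi_2\colon V_2\to\mathscr F$ to be $\chi_+$ on $W_+$ and $z\mapsto(r,-s)$ on $W_-$ with $(r,s):=\chi_+(\sigma(z))$. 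By the construction in \ref{double}, $V_2$ is the Nash double of the collar $W_+$. A radial reparametrization of $\chi_+$ using $\delta\circ\widehat\pi|_R$ would then make $\varphi_2$ a Nash diffeomorphism from $V_2$ onto
$$\mathscr F_{\delta\circ\widehat\pi|_R}=\{(x,v,t)\in R\times\R:|t|<\delta(x)\}.$$

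\emph{Third}, to check $\varphi_1\circ\widehat\pi\circ\varphi_2^{-1}(x,v,t)=(x,t\psi_2(x,v))$, I would work in the local charts of \ref{bigstepa2}. In coordinates $(y,\rho,w)\in\R^e\times\R\times\sph^{d-e-1}$ on $\widehat M$ around a point of $R$, the map $\widehat\pi$ reads $(y,\rho,w)\mapsto(y,\rho w)$ by diagram \eqref{diagpol}, the involution $\sigma$ reads $(y,\rho,w)\mapsto(y,-\rho,-w)$ by \ref{bigstepa3}, and the sign-flipped definition of $\varphi_2$ matches this involution, so $\varphi_2$ locally reads $(y,\rho,w)\mapsto((y,w),\rho)$. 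Under the local trivialization of $\mathscr E$ the vector $\psi_2(y,w)$ is identified with $w$ viewed as the unit normal to $N$ in $\mathscr E_y$, and $\varphi_1$ carries $(y,\rho w)\in M$ to $(y,\rho w)\in\mathscr E$, so both sides of the claimed identity reduce to $(y,\rho w)$. The main obstacle is the simultaneous control of the radial exit function: a single Nash function $\delta\colon N\to(0,\infty)$ must govern both $\mathscr E_\delta$ and $\mathscr F_{\delta\circ\widehat\pi|_R}$. I would handle this by first fixing $(\mathscr E,\varphi_1,\delta)$, then choosing $\chi_+$ small enough that $\widehat\pi(W_+)\subset V_1$, and finally reparametrizing $\chi_+$ radially via $\delta\circ\widehat\pi|_R$.
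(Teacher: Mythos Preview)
There is a genuine gap in your Step~3, and it propagates back to undermine Step~2. You choose the collar $\chi_+$ of $R$ in $\widetilde M_+$ \emph{independently} of the tubular neighborhood $\varphi_1$ of $N$ in $M$, and then assert that in the local charts of \ref{bigstepa2} your $\varphi_2$ reads $(y,\rho,w)\mapsto((y,w),\rho)$. This does not follow: Lemma~\ref{collar} only guarantees \emph{some} collar, and a generic one will have the form $(y,\rho,w)\mapsto(r(y,\rho,w),s(y,\rho,w))$ with $r$ an arbitrary Nash retraction to $R$ and $s$ an arbitrary defining function for $R$. Your ``radial reparametrization'' can adjust $s$ to match $\delta\circ\widehat\pi|_R$, but it does nothing to force the retraction $r$ to equal the projection $(y,\rho,w)\mapsto(y,w)$. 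Similarly, the claim that ``$\varphi_1$ carries $(y,\rho w)\in M$ to $(y,\rho w)\in\mathscr E$'' is only correct to first order at $N$; the tubular map $\varphi_1=(\phi_1,\phi_2)$ has $\phi_2$ landing in $\R^m$, not in $\R^{d-e}$, and there is no reason it should be the identity in these charts. The identity $\varphi_1\circ\widehat\pi\circ\varphi_2^{-1}(x,v,t)=(x,t\psi_2(x,v))$ is a rigid \emph{global} compatibility between $\varphi_1$ and $\varphi_2$; you cannot pick them separately and hope they match.

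The paper avoids this by building $\varphi_2$ \emph{out of} $\varphi_1$. It first observes that the components of $\phi_2$ generate $I(N)$, so by \ref{bigstepb3} the drilling blow-up computed with $\phi_2$ is canonically Nash diffeomorphic to $\widehat M$ near $R$. It then performs the drilling blow-up of $\mathscr E_\delta$ with center $N\times\{0\}$ using the ambient coordinates $\y_1,\dots,\y_m$, obtaining $\widehat{\mathscr E}_\delta$ with an explicit description \eqref{parte1}--\eqref{parte2}. The diffeomorphism $\varphi_1$ lifts to a Nash diffeomorphism $\Lambda\colon\widehat V'\to\widehat{\mathscr E}_\delta$ compatible with the projections, and on $\widehat{\mathscr E}_\delta$ the paper constructs a specific signed height function $h$ and retraction $\rho$ for which $(\rho,h)$ is, by Lemma~\ref{neighcollar}, a Nash diffeomorphism onto a trivial bundle over $\widehat\Pi^{-1}(N\times\{0\})$; the formula $\widehat\Pi\circ g^{-1}(x,0,w,t)=(x,tw)$ then holds by direct inspection. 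Transporting everything back to $\widehat M$ via $\Lambda\circ\Theta$ yields $\varphi_2$ with the exact required formula. The collar structure (and hence the Nash-double assertion) comes out of this same construction, not from an independent appeal to Lemma~\ref{collar}. The missing idea in your proposal is precisely this use of \ref{bigstepb3} to transport the problem to the blow-up of the \emph{linear} model $\mathscr E_\delta$, where the tubular neighborhood of the exceptional set can be written down explicitly.
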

\begin{proof}
By \ref{nmtub} there exists a Nash tubular neighborhood $(V,\varphi,\mathscr{E},\theta,N,\delta)$ of $N$ in $U_1$. Recall that $(\mathscr{E},\theta,N)$ is a Nash subbundle of the trivial bundle $(N\times\R^m,\theta',N)$. Write $\varphi:=(\phi_1,\phi_2):V\to N\times\R^m$ and observe that $\phi_1|_N=\id_N$ and $\phi_2|_N=0$. 

\paragraph{} For each $x\in N$ we have $d_x\varphi=(d_x\phi_1,d_x\phi_2):T_xM\to T_xN\times\R^m$ and $d_x\phi_1|_{T_xN}=(\id_{T_xN},0)$. Let $H_x$ be a complementary linear subspace of $T_xN$ in $T_xM$. As $d_x\varphi$ is an isomorphism, $d_x\phi_1|_{T_xN}=\id_{T_xN}$ and $d_x\phi_1(H_x)\subset T_xN$, we have
\begin{multline}\label{te}
T_{(x,0)}\mathscr{E}=d_x\varphi(T_xM)=d_x\varphi(T_xN\oplus H_x)=(T_xN\times\{0\})\oplus(d_x\phi_1,d_x\phi_2)(H_x)\\
=(T_xN\times\{0\})\oplus(\{0\}\times d_x\phi_2(H_x))=T_xN\times d_x\phi_2(H_x),
\end{multline}
so $\dim(d_x\phi_2(H_x))=d-e$. By the Jacobian criterion the tuple $\phi_2:=(\phi_{21},\ldots,\phi_{2m})$ generates $I(N_x)$ for each $x\in N$, that is, $(\phi_2){\mathcal N}_x:=(\phi_{21},\ldots,\phi_{2m}){\mathcal N}_x=I(N_x)$ for each $x\in N$. Write $I(N):=\{f\in{\mathcal N}(M):\ f|_N=0\}$ and $I_V(N):=\{f\in{\mathcal N}(V):\ f|_N=0\}$. By \eqref{eq:ix} and \ref{division}
$$
(\phi_2){\mathcal N}(V):=(\phi_{21},\ldots,\phi_{2m}){\mathcal N}(V)=I_V(N)=I(N){\mathcal N}(V). 
$$
Let $(\widehat{V},\widehat{\pi}^*)$ be the twisted Nash double of the drilling blow-up of $V$ with center $N$ that arises from a finite system of generators of $I(N)$ restricted to $V$ and let $(\widehat{V}',\widehat{\pi}')$ be the twisted Nash double of the drilling blow-up of $V$ centered at $N$ that arises from $\phi_2$. It holds $\widehat{V}=\widehat{\pi}^{-1}(V)$ and $\widehat{\pi}^*=\widehat{\pi}|_{\widehat{V}}$ if $(\widehat{M},\widehat{\pi})$ is the twisted Nash double of the drilling blow-up of $M$ with center $N$. By \ref{bigstepb3} there exists a Nash diffeomorphism $\Theta:\widehat{V}\to\widehat{V}'$ that makes the following diagram commutative.
$$
\xymatrix{
\widehat{V}\ar_{\widehat{\pi}^*}[dr]\ar^{\Theta}[rr]&&\widehat{V}'\ar^{\widehat{\pi}'}[dl]\\
&V&
}
$$

\paragraph{} Denote the coordinates in $\R^m$ with $\y:=(\y_1,\ldots,\y_m)$. The restriction to $\mathscr{E}_\delta$ of the tuple $(\y_1,\ldots,\y_m)$ generates the ideal $I(N\times\{0\})$ of ${\mathcal N}(\mathscr{E}_\delta)$. Let $(\widehat{\mathscr{E}}_\delta,\widehat{\Pi})$ be the twisted Nash double of the drilling blow-up $(\widetilde{\mathscr{E}}_{\delta,+},\Pi_+)$ of $\mathscr{E}_\delta$ with center $N\times\{0\}$ that arises from the tuple $(\y_1,\ldots,\y_m)$. It holds
$$
\widehat{\mathscr{E}}_\delta:=\cl(\{(x,y,\pm\tfrac{y}{\|y\|})\in\mathscr{E}_\delta\times\sph^{m-1}:\ y\neq0\}).
$$
By \ref{bigstepb2}(iii) $\widehat{\Pi}^{-1}(x,0)=\{(x,0)\}\times\sph_{(x,0)}$ where $\sph_{(x,0)}$ is the intersection of $\sph^{m-1}$ with the linear subspace $\theta^{-1}(x)$ of $\R^m$. Consequently,
\begin{align}
\widehat{\mathscr{E}}_\delta\setminus\widehat{\Pi}^{-1}(N\times\{0\})&=\{(x,y,\pm\tfrac{y}{\|y\|})\in\mathscr{E}_\delta\times\sph^{m-1}:\ y\neq0\},\label{parte1}\\
\widehat{\Pi}^{-1}(N\times\{0\})&=\{(x,0,\pm\tfrac{y}{\|y\|})\in\mathscr{E}_\delta\times\sph^{m-1}:\ (x,y)\in \mathscr{E}_\delta,\ y\neq0\}.\label{parte2}
\end{align}

\paragraph{} It holds $\widehat{V}'\subset V\times\sph^{m-1}$ and $\widehat{\mathscr{E}}_\delta\subset\mathscr{E}_\delta\times\sph^{m-1}$. We claim: \em The image of the Nash map
$$
\Lambda:\widehat{V}'\to\mathscr{E}_\delta\times\sph^{m-1},\ (a,b)\mapsto(\varphi(a),b)
$$
is $\widehat{\mathscr{E}}_\delta$. The image of the Nash map
$$
\Delta:\widehat{\mathscr{E}}_\delta\to V\times\sph^{m-1},\ (x,y,w)\mapsto(\varphi^{-1}(x,y),w).
$$
is $\widehat{V}'$\em. Thus, \em $\Delta:\widehat{\mathscr{E}}_\delta\to{\widehat V}'$ and $\Lambda:{\widehat V}'\to\widehat{\mathscr{E}}_\delta$ are mutually inverse Nash diffeomorphisms.
\em

Pick $(a,b)\in\widehat{V}'\setminus(\widehat{\pi}')^{-1}(N)$. We have 
$$
(a,b)=\Big(a,\pm\frac{\phi_2(a)}{\|\phi_2(a)\|}\Big).
$$
Consequently, 
$$
\Lambda(a,b)=\Big(\varphi(a),\pm\frac{\phi_2(a)}{\|\phi_2(a)\|}\Big)=\Big(\phi_1(a),\phi_2(a),\pm\frac{\phi_2(a)}{\|\phi_2(a)\|}\Big)\in\widehat{\mathscr{E}}_\delta.
$$ 
By continuity $\Lambda(\widehat{V}')\subset\widehat{\mathscr{E}}_\delta$.

Pick now $(x,y,w)\in\widehat{\mathscr{E}}_\delta\setminus\widehat{\Pi}^{-1}(N\times\{0\})$. We have $w=\pm\frac{y}{\|y\|}=\pm\frac{\phi_2(\varphi^{-1}(x,y))}{\|\phi_2(\varphi^{-1}(x,y))\|}$ and 
$$
\Delta(x,y,w)=\Big(\varphi^{-1}(x,y),\pm\frac{\phi_2(\varphi^{-1}(x,y))}{\|\phi_2(\varphi^{-1}(x,y))\|}\Big)\in\widehat{V}'.
$$
By continuity $\Delta(\widehat{\mathscr{E}}_\delta)\subset\widehat{V}'$.

\paragraph{} The maps in the rows of the following commutative diagram are Nash diffeomorphisms:
\begin{equation}\label{diagrama}
\begin{gathered}
\xymatrix{\widehat{V}\ar[rr]^{\Theta}\ar[d]_{\widehat{\pi}^*}&&\widehat{V}'\ar[rr]^{\Lambda}\ar[d]_{\widehat{\pi}'}&&\widehat{\mathscr{E}}_\delta\ar[d]_{\widehat{\Pi}}\\
V\ar[rr]^{\id_V}&&V\ar[rr]^{\varphi}&&\mathscr{E}_\delta
}
\end{gathered}
\end{equation}

\paragraph{} It holds: \em The Nash maps
\begin{align*}
&\rho:\widehat{\mathscr{E}}_\delta\to\widehat{\Pi}^{-1}(N\times\{0\}),\ (x,y,w)\mapsto(x,0,w),\\
&\varrho:\mathscr{E}_\delta\to N\times\{0\},\ (x,y)\mapsto(x,0)
\end{align*}
are Nash retractions such that $\widehat{\Pi}\circ\rho=\varrho\circ\widehat{\Pi}$\em.

\paragraph{} Define 
$$
h:\widehat{\mathscr{E}}_\delta\to\R,\ (x,y,w)\mapsto\begin{cases}
+\|y\|&\text{if $(x,y,w)\in\widetilde{\mathscr{E}}_{\delta,+}$,}\\
-\|y\|&\text{if $(x,y,w)\in\widetilde{\mathscr{E}}_{\delta,-}$.}
\end{cases}
$$
We claim: \em The semialgebraic function $h$ is Nash and $d_qh:T_q\widehat{\mathscr{E}}_\delta\to\R$ is surjective for all $q\in\widehat{\Pi}^{-1}(N\times\{0\})$\em. In addition, \em $|h(x,y,w)|=\|y\|$ for all $(x,y,w)\in\widehat{\mathscr{E}}_\delta$, so $\{h=0\}=\widehat{\Pi}^{-1}(N\times\{0\})$\em.

Pick a point $q\in\widehat{\Pi}^{-1}(N\times\{0\})$. By \ref{bigstepb2}(iv) there exist semialgebraic neighborhoods $A_1\subset\widehat{\mathscr{E}}_\delta$ of $q$ and $A_2\subset\mathscr{E}_\delta$ of $\widehat{\Pi}(q)$ and Nash diffeomorphisms 
\begin{align*}
&u:=(u_1,\ldots,u_d):A_1\to\R^d,\\
&v:=(v_1,\ldots,v_d):A_2\to\R^d 
\end{align*}
such that $u(q)=0$, $v(\widehat{\Pi}(q))=0$, $v((N\times\{0\})\cap A_2)=\{v_{e+1}=0,\ldots,v_d=0\}$ and 
\begin{multline*}
\widehat{\Pi}_0:=v\circ\widehat{\Pi}\circ u^{-1}:\R^d\to\R^d,\\ 
(x_1,\ldots,x_d)\mapsto(z_1,\ldots,z_d):=(x_1,\ldots,x_e,x_{e+1},x_{e+2}x_{e+1},\ldots,x_dx_{e+1}).
\end{multline*}
As ${\tt y}_1,\dots,{\tt y}_m$ generate the ideal $I(N\times\{0\})$ of ${\mathcal N}(\Ee_{\delta})$, their restrictions to $(N\times\{0\})\cap A_2$ generate by \eqref{eq:ix} the ideal $I((N\times\{0\})\cap A_2)$ of ${\mathcal N}(A_2)$. We have $\y_i(v^{-1}(z_1,\ldots,z_e,0,\ldots,0))=0$ for $(z_1,\ldots,z_e)\in\R^e\times\{0\}$ and $i=1,\ldots,m$ and
$$
z_{e+1}=(\y_1\circ v^{-1})\xi_1+\cdots+(\y_m\circ v^{-1})\xi_m
$$
for some $\xi_1,\ldots,\xi_m\in{\mathcal N}(\R^d)$. By Schwarz's inequality
$$
z_{e+1}^2\leq(\xi_1^2+\cdots+\xi_m^2)((\y_1\circ v^{-1})^2+\cdots+(\y_m\circ v^{-1})^2).
$$
Composing with $\widehat{\Pi}_0$, we have
$$
x_{e+1}^2\leq((\xi_1\circ\widehat{\Pi}_0)^2+\cdots+(\xi_m\circ\widehat{\Pi}_0)^2)((\y_1\circ v^{-1}\circ\widehat{\Pi}_0)^2+\cdots+(\y_m\circ v^{-1}\circ\widehat{\Pi}_0)^2).
$$
Comparing orders at the origin we deduce that the Nash series $(\y_1\circ v^{-1}\circ\widehat{\Pi}_0)^2+\cdots+(\y_m\circ v^{-1}\circ\widehat{\Pi}_0)^2$ has order $2$ at the origin. As $\y_i(v^{-1}(z_1,\ldots,z_e,0,\ldots,0))=0$ for each $(z_1,\ldots,z_e)\in\R^e\times\{0\}$,
$$
\y_i\circ v^{-1}\circ\widehat{\Pi}_0=x_{e+1}\gamma_i
$$
where $\gamma_i$ is a Nash function on $\R^d$. Thus,
$$
(\y_1\circ v^{-1}\circ\widehat{\Pi}_0)^2+\cdots+(\y_m\circ v^{-1}\circ\widehat{\Pi}_0)^2=x_{e+1}^2(\gamma_1^2+\cdots+\gamma_m^2).
$$
As $(\y_1\circ v^{-1}\circ\widehat{\Pi}_0)^2+\cdots+(\y_m\circ v^{-1}\circ\widehat{\Pi}_0)^2$ is a Nash series of order $2$ at the origin, $\gamma_1^2+\cdots+\gamma_m^2$ is a unit at the origin. Consequently, $h\circ u^{-1}=x_{e+1}\sqrt{\gamma_1^2+\cdots+\gamma_m^2}$ on an open semialgebraic neighborhood of the origin. In particular,
$$
\frac{\partial(h\circ u^{-1})}{\partial x_{e+1}}(0)=\sqrt{\gamma_1^2+\cdots+\gamma_m^2}(0)>0.
$$
Thus, $h$ is a Nash function and $d_qh:T_q\widehat{M}\to\R$ is surjective for all $q\in\widehat{\Pi}^{-1}(N\times\{0\})$.

\paragraph{} Consider the trivial Nash bundle $\mathscr{G}:=\widehat{\Pi}^{-1}(N\times\{0\})\times\R$ over $\widehat{\Pi}^{-1}(N\times\{0\})$ and let $\theta_2':\mathscr{G}\to\widehat{\Pi}^{-1}(N\times\{0\})$ be the projection onto the first factor. Define
$$
g:=(\rho,h):\widehat{\mathscr{E}}_\delta\to\mathscr{G},\ (x,y,w)\mapsto\begin{cases}
(x,0,w,+\|y\|)&\text{if $(x,y,w)\in\widetilde{\mathscr{E}}_{\delta,+}$,}\\
(x,0,w,-\|y\|)&\text{if $(x,y,w)\in\widetilde{\mathscr{E}}_{\delta,-}$.}
\end{cases}
$$
By Lemma \ref{neighcollar} there exists an open semialgebraic neighborhood $W\subset\widehat{\mathscr{E}}_\delta$ of $\widehat{\Pi}^{-1}(N\times\{0\})$ such that $g(W)\subset\mathscr{G}$ is an open semialgebraic neighborhood of $\widehat{\Pi}^{-1}(N\times\{0\})\times\{0\}$ and the restriction map $g|_W:W\to g(W)$ is a Nash diffeomorphism. 

As $\widehat{\Pi}$ is proper, $\widehat{\Pi}(\widehat{\mathscr{E}}_\delta\setminus W)$ is a closed subset of $\mathscr{E}_\delta$ that does not meet $N\times\{0\}$. As the restriction $\widehat{\Pi}|_{\widehat{\mathscr{E}}_\delta\setminus\widehat{\Pi}^{-1}(N\times\{0\})}$ is a Nash diffeomorphism, $\widehat{\Pi}(\widehat{\mathscr{E}}_\delta\setminus W)=\mathscr{E}_\delta\setminus\widehat{\Pi}(W)$. Consequently, $W':=\widehat{\Pi}(W)$ is an open semialgebraic neighborhood of $N\times\{0\}$ in $\mathscr{E}_\delta$.

\paragraph{} We may assume taking a smaller strictly positive $\delta\in{\mathcal N}(N)$ that $\mathscr{E}_\delta=W'$ and $W=\widehat{\Pi}^{-1}(\mathscr{E}_\delta)=\widehat{\mathscr{E}}_\delta$. Define $\delta':=\delta\circ\theta|_{N\times\{0\}}\in{\mathcal N}(N\times\{0\})$. We claim: 
$$
g(\widehat{\mathscr{E}}_\delta)=\mathscr{G}_{\delta'\circ\widehat{\Pi}}:=\{(x,0,w,t)\in\mathscr{G}:\ |t|<(\delta'\circ\widehat{\Pi})(x,0,w)\}. 
$$
Consequently, \em $\widehat{\mathscr{E}}_\delta$ is a Nash tubular neighborhood of $\widehat{\Pi}^{-1}(N\times\{0\})$ and $g:\widehat{\mathscr{E}}_\delta\to\mathscr{G}_{\delta'\circ\widehat{\Pi}}$ is a Nash diffeomorphism\em.

\clearpage
We have 
\begin{equation*}
\begin{split}
\widehat{\mathscr{E}}_\delta&=\widehat{\Pi}^{-1}(\mathscr{E}_\delta)=\{(x,y,w)\in\widehat{\mathscr{E}}_\delta:\ \|y\|<\delta(x)\}\\
&=\{(x,y,w)\in\widehat{\mathscr{E}}_\delta:\ |h(x,y,w)|<(\delta'\circ\widehat{\Pi})(x,0,w)\}\\
&=g^{-1}(\{(x,0,w,t)\in\mathscr{G}:\ |t|<(\delta'\circ\widehat{\Pi})(x,0,w)\})=g^{-1}(\mathscr{G}_{\delta'\circ\widehat{\Pi}}),
\end{split}
\end{equation*}
as claimed.

\paragraph{} The composition $\widehat{\Pi}\circ g^{-1}$ is a Nash map that satisfies
\begin{equation}
\widehat{\Pi}\circ g^{-1}:g(\widehat{\mathscr{E}}_\delta)\to\mathscr{E}_\delta,\ (x,0,w,t)\mapsto(x,tw).
\end{equation}

\paragraph{}\label{5c10} Consider the trivial Nash vector bundle $\mathscr{F}:=R\times\R$ over $R:=\widehat{\pi}^{-1}(N)$ and let $\theta_2:R\times\R\to R$ be the projection onto the first factor. By \eqref{parte2} and \eqref{diagrama} the Nash map
$$
\psi_0:=(\Lambda\circ\Theta)|_R:R\subset\widehat{V}\to\widehat{\Pi}^{-1}(N\times\{0\})\subset\widehat{\mathscr{E}}_\delta
$$
is a Nash diffeomorphism and there exists a Nash map $\psi_2:R\to\sph^{m-1}$ such that $\psi_0(x,v)=(x,0,\psi_2(x,v))$ for each $(x,v)\in R$. In addition, $\psi_0$ induces a Nash isomorphism of Nash vector bundles 
$$
\Psi:\mathscr{F}\to\mathscr{G}, (x,v,t)\mapsto(x,0,\psi_2(x,v),t)
$$
such that the following diagram is commutative.
$$
\xymatrix{
\mathscr{F}\ar[rr]^{\Psi}_{\cong}\ar[d]_{\theta_2}&&\mathscr{G}\ar[d]_{\theta_2'}\\
R\ar[rr]^(0.4){\psi_0}_(0.4){\cong}&&\widehat{\Pi}^{-1}(N\times\{0\})
}
$$
By \eqref{diagrama} $\widehat{\Pi}\circ\psi_0=\varphi\circ\widehat{\pi}|_R$, so $\delta'\circ\widehat{\Pi}\circ\psi_0=\delta\circ\theta\circ\varphi\circ\widehat{\pi}|_R=\delta\circ\widehat{\pi}|_R$ because $\theta\circ\varphi|_N=\id_N$. Consequently, $\Psi(\mathscr{F}_{\delta\circ\widehat{\pi}|_R})=\mathscr{G}_{\delta'\circ\widehat{\Pi}}$.

\paragraph{} Define $\varphi_1:=\varphi$ and $\varphi_2:=\Psi^{-1}\circ g\circ\Lambda\circ\Theta$. We have the following commutative diagram:
$$
\xymatrix{
\widehat{\pi}^{-1}(V)\ar@{=}[r]\ar[d]_{\widehat{\pi}|_{\widehat{V}}}
\ar@/^2.5pc/[rrrrr]^{\varphi_2}_{\cong}
&\widehat{V}\ar[r]^{\Theta}_{\cong}\ar[d]_{\widehat{\pi}^*}&\widehat{V}'\ar[r]^{\Lambda}_{\cong}\ar[d]_{\widehat{\pi}'}&\widehat{\mathscr{E}}_\delta\ar[d]_{\widehat{\Pi}}\ar[r]^(0.35){g}_(0.35){\cong}&\mathscr{G}_{\delta'\circ\widehat{\Pi}}&\mathscr{F}_{\delta\circ\widehat{\pi}|_R}\ar[l]_{\Psi}^{\cong}\ar[lld]^{\quad\qquad\widehat{\Pi}\circ g^{-1}\circ\Psi=\varphi_1\circ\widehat{\pi}\circ\varphi_2^{-1}}&(x,v,t)\ar@{|->}[d]^{\varphi_1\circ\widehat{\pi}\circ\varphi_2^{-1}}\\
V\ar@{=}[r]^{\id_V}\ar@/_1.5pc/[rrr]_{\varphi_1}^{\cong}&V\ar@{=}[r]^{\id_V}&V\ar[r]^(0.4){\varphi}_(0.4){\cong}&\mathscr{E}_\delta&&&(x,t\psi_2(x,v))
}
$$
Take $V_1:=V$, $\theta_1:=\theta$, $V_2:=\widehat{\pi}^{-1}(V)$. The Nash tubular neighborhoods $(V_1,\varphi_1,\mathscr{E},\theta_1,N,\delta)$ of $N$ in $U_1$ and $(V_2,\varphi_2,\mathscr{F},\theta_2,R,\delta\circ\widehat{\pi}|_R)$ of $R$ in $U_2$ and the Nash embedding 
$$
\psi:R\to N\times\sph^{m-1},\ (x,v)\mapsto(x,\psi_2(x,v))
$$
satisfy the required conditions.
\end{proof}

The following result justifies the first part of the name of the drilling blow-up $(\widetilde{M}_+,\pi_+)$, see also Figures \ref{fig4} and \ref{fig5}.

\begin{cor}[Alternative description of the drilling blow-up]\label{altdb}
Let $M\subset \R^m$ be a Nash manifold, let $N\subset M$ be a closed Nash submanifold and let $U$ be an open semialgebraic neighborhood of $N$ in $M$. Then there exist a Nash tubular neighborhood $(V,\varphi,\mathscr{E},\theta,N,\delta)$ of $N$ in $U$ such that $M\setminus V$ is a Nash manifold with boundary $\partial V$ and a Nash diffeomorphism $g:M\setminus V\to\widetilde{M}_+$.
\end{cor}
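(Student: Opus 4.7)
The strategy is to realize $\widetilde{M}_+$ as what remains after ``drilling out'' a tubular neighborhood of $N$, so that the boundary $R=\widehat\pi^{-1}(N)$ replaces the sphere bundle $\partial V$. First I would apply \ref{nmtub} to fix a Nash tubular neighborhood $(V_1,\varphi_1,\mathscr{E},\theta_1,N,\delta_1)$ of $N$ contained in $U$, set $\delta:=\delta_1/2$ and $V:=\varphi_1^{-1}(\mathscr{E}_\delta)$. Then $V$ is a Nash tubular neighborhood of $N$ in $U$, $\overline V\subset V_1$, the boundary $\partial V=\varphi_1^{-1}\{\|y\|=\delta(\theta_1(y))\}$ is a closed Nash submanifold of $M$, and (using \ref{cartaspmh}) $M\setminus V$ is a Nash manifold with boundary $\partial V$.

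Next, I would apply Lemma \ref{neighfeten} with $U_1:=V_1$ and $U_2:=\widehat\pi^{-1}(V_1)$, obtaining a Nash tubular neighborhood $(V_2,\varphi_2,\mathscr{F},\theta_2,R,\delta_1\circ\widehat\pi|_R)$ of $R$ inside $\widehat M$ and a Nash embedding $\psi=(\psi_1,\psi_2):R\to N\times\sph^{m-1}$ such that
\[
\varphi_1\circ\widehat\pi\circ\varphi_2^{-1}(x,v,t)=(x,t\psi_2(x,v)).
\]
Restricting to $\widetilde{M}_+$, the piece $V_2\cap\widetilde{M}_+$ corresponds via $\varphi_2$ to the half-tube $\{(x,v,t)\in\mathscr{F}:0\le t<\delta_1(\widehat\pi(x,v))\}$, which is a collar of $R$ inside $\widetilde{M}_+$.

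I would then build an $\mathcal{S}^2$ semialgebraic diffeomorphism $g_0:M\setminus V\to\widetilde{M}_+$ piecewise: on $M\setminus V_1$ set $g_0:=(\pi_+|_{\widetilde{M}_+\setminus R})^{-1}$, which is Nash by \ref{bigstepb2}(ii); and on the annulus $V_1\setminus V$, identified with $\{(x,y)\in\mathscr{E}:\delta(x)\le\|y\|<\delta_1(x)\}$, use an $\mathcal{S}^2$ semialgebraic reparametrization of the radial coordinate (normalized to $[1/2,1)\to[0,1)$ and constructed from the interval diffeomorphisms in Appendix \ref{A} as in the proofs of \ref{doubleiii} and \ref{doubleiv}) that is the identity near $\delta_1$, composing with $\psi_2$ in the spherical direction and $\varphi_2^{-1}$. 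The ``identity near $\delta_1$'' property makes the two pieces glue $\mathcal{S}^2$-smoothly. By construction $g_0$ is a bijection, $g_0(\partial V)=R$, and the restriction $g_0|_{\partial V}:\partial V\to R$ (sending $\varphi_1^{-1}(x,\delta(x)\psi_2(x,v))$ to $\varphi_2^{-1}(x,v,0)$) is manifestly Nash.

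Finally, I would invoke \ref{s2bounb}: since $g_0$ is an $\mathcal{S}^2$ diffeomorphism between Nash manifolds with boundary whose boundary restriction is a Nash diffeomorphism, there is a Nash diffeomorphism $g:M\setminus V\to\widetilde{M}_+$ close to $g_0$ with $g|_{\partial V}=g_0|_{\partial V}$, completing the proof. \textbf{The main obstacle} is that the radial interpolation in the annulus cannot be taken Nash: if a Nash reparametrization coincided with the identity on an open set it would be the identity everywhere by the Nash identity theorem, contradicting $\rho(\delta)=0$. This forces the two-step construction---$\mathcal{S}^2$ gluing followed by Nash approximation via \ref{s2bounb}---and requires verifying carefully the $\mathcal{S}^2$ matching of the two pieces at the interface $\{\|y\|=\delta_1\}$ and the Nash character of the boundary identification, both of which reduce to the explicit formula from Lemma \ref{neighfeten}.
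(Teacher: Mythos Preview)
Your approach is correct and essentially the same as the paper's: shrink the tubular neighborhood coming from Lemma~\ref{neighfeten}, build an $\mathcal{S}^2$ diffeomorphism to $\widetilde{M}_+$ using the interval reparametrizations of Appendix~\ref{A}, check that the boundary restriction is Nash, and upgrade via \ref{s2bounb}. The paper organizes it slightly more cleanly by applying Lemma~\ref{neighfeten} directly with $U_1=U$ (rather than first fixing $V_1$ via \ref{nmtub}, which \ref{neighfeten} may then shrink), and by factoring $g$ as the Nash diffeomorphism $(\pi_+|_{\widetilde{M}_+^{\bullet}})^{-1}:M\setminus V\to\widetilde{M}_+^{\bullet}:=\pi_+^{-1}(M\setminus V)$ followed by an $\mathcal{S}^2$ collar-stretch $\Phi:\widetilde{M}_+^{\bullet}\to\widetilde{M}_+$ carried out entirely in the $\mathscr{F}$-coordinates on the blow-up side; this makes the Nash character of the boundary restriction immediate without having to invert $\psi_2$ fiberwise.
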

\begin{proof}
By Lemma \ref{neighfeten} there exist Nash tubular neighborhoods $(V_1,\varphi_1,\mathscr{E},\theta_1,N,\delta_1)$ of $N$ in $U$ and $(V_2,\varphi_2,\mathscr{F},\theta_2,R,\veps_1:=\delta_1\circ\widehat{\pi}|_R)$ of $R:=\widehat{\pi}^{-1}(N)$ in $\widehat{M}$ and a Nash embedding $\psi:R\to N\times\sph^{m-1},\ (x,v)\mapsto(x,\psi_2(x,v))$ such that
$$
\varphi_1\circ\widehat{\pi}\circ \varphi_2^{-1}:\mathscr{F}_{\veps_1}\to\mathscr{E}_{\delta_1},\ (x,v,t)\mapsto(x,t\psi_2(x,v))
$$
and $V_2\subset\widehat{M}$ is the Nash double of a collar of $R$ in $\widetilde{M}_+$. By \ref{5c10} we may write $\mathscr{F}:=R\times\R$ and $\theta_2:R\times\R\to R$ is the projection onto the first factor. Recall that $\pi_+|_{\widetilde{M}_+\setminus R}:\widetilde{M}_+\setminus R\to M\setminus N$ is a Nash diffeomorphism. Define $\delta:=\frac{\delta_1}{4}$, $V:=\varphi_1^{-1}(\mathscr{E}_{\delta})$, $\varphi:=\varphi_1|_V$ and $\theta:=\theta_1$. We claim: \em $(V,\varphi,\mathscr{E},\theta,N,\delta)$ is the Nash tubular neighborhood we sought\em.

Notice that $\pi_+^{-1}(V)=\varphi_2^{-1}(\mathscr{F}_{\veps})\cap\widetilde{M}_+$ where $\veps:=\delta\circ\widehat{\pi}|_R$. Define $M^{\bullet}:=M\setminus V$ and $\widetilde{M}_+^{\bullet}:=\pi_+^{-1}(M^{\bullet})=\widetilde{M}_+\setminus\varphi_2^{-1}(\mathscr{F}_{\veps})$. It holds $\pi_+|_{\widetilde{M}_+^{\bullet}}:\widetilde{M}_+^{\bullet}\to M^{\bullet}$ is a Nash diffeomorphism.

Denote $W:=\{(z,t)\in\mathscr{F}:\ 0\leq t<\veps_1(z)\}=\varphi_2(\varphi_2^{-1}(\mathscr{F}_{\veps_1})\cap\widetilde{M}_+)$ and consider the Nash diffeomorphism
$$
\Lambda:W\to R\times[0,1),\ (z,t)\to\Big(z,\frac{t}{\veps_1(z)}\Big).
$$
As $\frac{\veps}{\veps_1}=\frac{1}{4}$, we have $(\Lambda\circ\varphi_2)(\widetilde{M}_+^{\bullet}\cap\varphi_2^{-1}(W))=R\times[\frac{1}{4},1)$. Let $f_1:[\frac{1}{4},1)\to[0,1)$ be an ${\mathcal S}^2$ diffeomorphism such that $f_1|_{[\frac{3}{4},1)}=\id_{[\frac{3}{4},1)}$ (see Example \ref{s2diffeo}(i)). Consider the ${\mathcal S}^2$ diffeomorphism 
$$
F_1:R\times[\tfrac{1}{4},1)\to R\times[0,1),\ (z,t)\mapsto(z,f_1(t)).
$$
It holds $F_1|_{R\times[\frac{3}{4},1)}=\id_{R\times[\frac{3}{4},1)}$. Denote again $\varphi_2$ the restriction of this Nash map to $\varphi_2^{-1}(W)$ and define
$$
\Phi:\widetilde{M}_+^{\bullet}\to\widetilde{M}_+,\ x\mapsto\begin{cases}
x&\text{if $x\in\widetilde{M}_+^{\bullet}\setminus\varphi_2^{-1}(W)$},\\
(\Lambda\circ\varphi_2)^{-1}(F_1((\Lambda\circ\varphi_2)(x)))&\text{if $x\in\widetilde{M}_+^{\bullet}\cap\varphi_2^{-1}(W)$,}
\end{cases}
$$
which is an ${\mathcal S}^2$ diffeomorphisms. The restriction $\Phi|_{\pi_+^{-1}(\partial V)}:\pi_+^{-1}(\partial V)=\partial\widetilde{M}_+^{\bullet}\to\pi_+^{-1}(N)=\partial\widetilde{M}_+$ is a Nash diffeomorphism. By \ref{s2bounb} there exists a Nash diffeomorphism $\Phi':\widetilde{M}_+^{\bullet}\to\widetilde{M}_+$ such that $\Phi'|_{\partial\widetilde{M}_+^{\bullet}}=\Phi|_{\partial\widetilde{M}_+^{\bullet}}$. 

The composition $g:=\Phi'\circ(\pi_+|_{\widetilde{M}_+^{\bullet}}^{-1}):M^{\bullet}\to\widetilde{M}_+$ is a Nash diffeomorphism, so $M^{\bullet}$ is a Nash manifold with boundary $\partial V=g^{-1}(\partial\widetilde{M}_+)$, as required.
\end{proof}

\begin{figure}[!ht]
\begin{center}
\begin{tikzpicture}[scale=0.75]

\draw[fill=gray!20,opacity=0.5,draw=none] (2.75,11.25) ellipse (0.75cm and 0.25cm);
\draw (2.75,11.25) ellipse (0.75cm and 0.25cm);

\draw[fill=gray!60,opacity=0.5,draw=none] (2,9.5) arc (180:360:0.75cm and 0.25cm) -- (3.5,11.25) arc (0:-180:0.75cm and 0.25cm) -- (2,9.5);

\draw[fill=gray!100,opacity=0.75,draw=none] (2,8.75) arc (180:360:0.75cm and 0.25cm) -- (3.5,9.5) arc (0:-180:0.75cm and 0.25cm) -- (2,8.75);

\draw[fill=gray!20,opacity=0.5,draw=none] (2,6.25) arc (180:360:0.75cm and 0.25cm) -- (3.5,8.75) arc (0:-180:0.75cm and 0.25cm) -- (2,6.25);

\draw[line width=0.75pt] (2,9.5) arc (180:360:0.75cm and 0.25cm);
\draw[line width=0.75pt, dashed] (3.5,9.5) arc (0:180:0.75cm and 0.25cm);

\draw[line width=1.5pt] (2,8.75) arc (180:360:0.75cm and 0.25cm);
\draw[line width=1.5pt,dashed] (3.5,8.75) arc (0:180:0.75cm and 0.25cm);

\draw (2,6.25) arc (180:360:0.75cm and 0.25cm);
\draw[dashed] (3.5,6.25) arc (0:180:0.75cm and 0.25cm);

\draw (2,6.25) -- (2,11.25);
\draw (3.5,6.25) -- (3.5,11.25);

\draw[fill=gray!60,opacity=0.5,draw=none] (1.5,1.5) -- (4,1.5) -- (4,4) -- (1.5,4) -- (1.5,1.5);
\draw (1.5,1.5) -- (4,1.5) -- (4,4) -- (1.5,4) -- (1.5,1.5);

\draw[fill=gray!100,opacity=0.75,draw=none] (2.75,2.75) ellipse (0.75cm and 0.75cm);
\draw (2.75,2.75) ellipse (0.75cm and 0.75cm);

\draw[fill=gray!20,opacity=0.5,draw=none] (8.5,11) ellipse (2.5cm and 0.5cm);

\draw (8.5,11) ellipse (2.5cm and 0.5cm);

\draw[fill=gray!10,opacity=0.4,draw=none,rotate=-90] (-6.5,6) arc (270:450:0.5cm and 2.5cm) parabola bend (-8.75,9.5) (-8.75,9.5) arc (90:-90:0.25cm and 1cm) parabola bend (-8.75,7.5) (-6.64,6.1);

\draw[fill=gray!20,opacity=0.5,draw=none,rotate=-90] (-7.5,7.025) arc (270:450:0.3cm and 1.48cm) parabola bend (-8.75,9.5) (-8.75,9.5) arc (90:-90:0.25cm and 1cm) parabola bend (-8.75,7.5) (-7.5,7.025);

\draw[fill=gray!60,opacity=0.3,draw=none,rotate=-90] (-8.75,7.5) arc (270:450:0.25cm and 1cm) parabola bend (-8.75,9.5) (-10.86,10.9) -- (-11,11) arc (90:-90:0.5cm and 2.5cm) -- (-10.86,6.1) parabola bend (-8.75,7.5) (-8.75,7.5);

\draw[fill=gray!60,opacity=0.5,draw=none,rotate=-90] (-8.75,7.5) arc (270:450:0.25cm and 1cm) parabola bend (-8.75,9.5) (-10,9.975) arc (90:-90:0.3cm and 1.48cm) parabola bend (-8.75,7.5) (-8.75,7.5);

\draw[fill=black!60,opacity=0.4,draw=none,rotate=-90] (-8.75,7.5) arc (270:450:0.25cm and 1cm) parabola bend (-8.75,9.5) (-9.375,9.61) arc (90:-90:0.25cm and 1.11cm) parabola bend (-8.75,7.5) (-8.75,7.5);

\draw[fill=gray!100,opacity=0.75,draw=none,rotate=-90] (-8.75,7.5) arc (270:450:0.25cm and 1cm) parabola bend (-8.75,9.5) (-9,9.51) arc (90:-90:0.25cm and 1.01cm) -- (-9,7.49) parabola bend (-8.75,7.5) (-8.75,7.5);

\draw[line width=0.75pt] (7.49,9) arc (180:360:1.01cm and 0.25cm);
\draw[line width=0.75pt,dashed] (9.51,9) arc (0:180:1.01cm and 0.25cm);

\draw (7.39,9.375) arc (180:360:1.11cm and 0.25cm);
\draw[dashed] (9.61,9.375) arc (0:180:1.1cm and 0.25cm);

\draw (7.025,7.5) arc (180:360:1.48cm and 0.3cm);
\draw[dashed] (9.975,7.5) arc (0:180:1.48cm and 0.3cm);

\draw (7.025,10) arc (180:360:1.48cm and 0.3cm);
\draw[dashed] (9.975,10) arc (0:180:1.48cm and 0.3cm);

\draw (6,6.5) arc (180:360:2.5cm and 0.5cm);
\draw[dashed] (11,6.5) arc (0:180:2.5cm and 0.5cm);

\draw[line width=1.5pt] (7.5,8.75) arc (180:360:1cm and 0.25cm);
\draw[line width=1.5pt,dashed] (9.5,8.75) arc (0:180:1cm and 0.25cm);

\draw[line width=0.5pt,rotate=-90] (-6.64,6.1) parabola bend (-8.75,7.5) (-10.86,6.1);
\draw[line width=0.5pt,rotate=-90] (-6.64,10.9) parabola bend (-8.75,9.5) (-10.86,10.9);

\draw[fill=gray!20,opacity=0.5,draw=none] (13.5,10) ellipse (1cm and 0.25cm);
\draw (13.5,10) ellipse (1cm and 0.25cm);

\draw[fill=gray!20,opacity=0.5,draw=none] (12.5,7.5) arc (180:360:1cm and 0.25cm) -- (14.5,8.75) arc (0:-180:1cm and 0.25cm) -- (12.5,7.5);
\draw[fill=gray!20,opacity=0.5,draw=none] (12.5,8.75) arc (180:360:1cm and 0.25cm) -- (14.5,10) arc (0:-180:1cm and 0.25cm) -- (12.5,8.75);
\draw[fill=black!60,opacity=0.4,draw=none] (12.5,8.75) arc (180:360:1cm and 0.25cm) -- (14.5,9.375) arc (0:-180:1cm and 0.25cm) -- (12.5,8.75);

\draw (12.5,9.375) arc (180:360:1cm and 0.25cm);
\draw[dashed] (14.5,9.375) arc (0:180:1cm and 0.25cm);
\draw[line width=1.5pt] (12.5,8.75) arc (180:360:1cm and 0.25cm);
\draw[line width=1.5pt,dashed] (14.5,8.75) arc (0:180:1cm and 0.25cm);
\draw (12.5,7.5) arc (180:360:1cm and 0.25cm);
\draw[dashed] (14.5,7.5) arc (0:180:1cm and 0.25cm);

\draw (12.5,7.5) -- (12.5,10);
\draw (14.5,7.5) -- (14.5,10);

\draw[fill=gray!100,opacity=0.4,draw=none] (18.5,8.75) ellipse (2cm and 1cm);
\draw (18.5,8.75) ellipse (2cm and 1cm);
\draw[fill=gray!100,opacity=0.4,draw=none] (18.5,8.75) ellipse (1.5cm and 0.75cm);
\draw(18.5,8.75) ellipse (1.5cm and 0.75cm);
\draw[fill=gray!100,opacity=0.75,draw=none] (18.5,8.75) ellipse (1cm and 0.5cm);
\draw[line width=1.5pt] (18.5,8.75) ellipse (1cm and 0.5cm);
\draw (18.5,8.75) node{\large$\bullet$}; 

\draw[fill=gray!60,opacity=0.3,draw=none] (6.5,4) .. controls (7,3.5) and (8,3.5) .. (9,3.75) .. controls (10,4) and (10.5,4.25) .. (11.5,4) .. controls (10.4,3) and (10.3,2) .. (10.5,0.75) .. controls (9.5,0.5) and (9,0.5) .. (8,0.75) .. controls (7.5,0.9) and (6.5,1.1) .. (6,1) .. controls (6,2.5) and (6.2,3.5) .. (6.5,4);
\draw (6.5,4) .. controls (7,3.5) and (8,3.5) .. (9,3.75) .. controls (10,4) and (10.5,4.25) .. (11.5,4) .. controls (10.4,3) and (10.3,2) .. (10.5,0.75) .. controls (9.5,0.5) and (9,0.5) .. (8,0.75) .. controls (7.5,0.9) and (6.5,1.1) .. (6,1) .. controls (6,2.5) and (6.2,3.5) .. (6.5,4);

\draw[fill=gray!60,opacity=0.5,draw=none] (7.25,3) .. controls (7,3) and (6.9,2) .. (7,1.5) .. controls (7.5,1.5) and (8.5,1.4) .. (9.5,1.1) .. controls (9.5,2) and (9.7,3) .. (10,3.5) .. controls (9,3) and (8,3) .. (7.25,3);
\draw (7.25,3) .. controls (7,3) and (6.9,2) .. (7,1.5) .. controls (7.5,1.5) and (8.5,1.4) .. (9.5,1.1) .. controls (9.5,2) and (9.7,3) .. (10,3.5) .. controls (9,3) and (8,3) .. (7.25,3);

\draw[fill=gray!60,opacity=0.3,draw=none] (15,4) .. controls (15.5,3.5) and (16.5,3.5) .. (17.5,3.75) .. controls (18.5,4) and (19,4.25) .. (20,4) .. controls (18.9,3) and (18.8,2) .. (19,0.75) .. controls (18,0.5) and (17.5,0.5) .. (16.5,0.75) .. controls (16,0.9) and (15,1.1) .. (14.5,1) .. controls (14.5,2.5) and (14.7,3.5) .. (15,4);
\draw (15,4) .. controls (15.5,3.5) and (16.5,3.5) .. (17.5,3.75) .. controls (18.5,4) and (19,4.25) .. (20,4) .. controls (18.9,3) and (18.8,2) .. (19,0.75) .. controls (18,0.5) and (17.5,0.5) .. (16.5,0.75) .. controls (16,0.9) and (15,1.1) .. (14.5,1) .. controls (14.5,2.5) and (14.7,3.5) .. (15,4);

\draw[fill=gray!100,opacity=0.4,draw=none] (8.25,2.25) ellipse (1.5cm and 0.6cm);
\draw(8.25,2.25) ellipse (1.5cm and 0.6cm);
\draw[fill=gray!100,opacity=0.75,draw=none] (8.25,2.25) ellipse (1cm and 0.3cm);
\draw[line width=1.5pt] (8.25,2.25) ellipse (1cm and 0.3cm);

\draw[fill=gray!100,opacity=0.75,draw=none] (16.75,2.25) ellipse (1.5cm and 0.6cm);
\draw (16.75,2.25) ellipse (1.5cm and 0.6cm);
\draw[fill=white!100,draw=none] (16.75,2.25) ellipse (1cm and 0.3cm);
\draw[line width=1.5pt] (16.75,2.25) ellipse (1cm and 0.3cm);

\draw (8.25,2.25) node{\large$\bullet$};
\draw[gray!50] (16.75,2.25) node{\large$\bullet$};
\draw (2.75,2.75) node{\large$\bullet$};

\draw (3.75,8.75) node{\small$0$};
\draw (0.75,8.75) node{\small$(y,\rho,w)$};
\draw (0.65,2.75) node{\small$(y,\rho w)$};
\draw[->, line width=1pt] (0.75,8.25) -- (0.75,3.25);
\draw[->, line width=1pt] (0.75,8.25) -- (0.75,3.25);
\draw[line width=1pt] (0.65,8.25) -- (0.85,8.25);
\draw[->] (1.75,8.5) arc (180:360:1cm and 0.4cm);
\draw (2.75,7.9) node{\small$w$};
\draw[->] (1.75,8.75) -- (1.75,11.25);
\draw (1.5,10) node{\small$\rho$};
\draw (3.75,1.25) node{\small$\R^d$};
\draw (3,12) node{\small$\R^e\times\R\times\sph^{d-e-1}$};
\draw (1.8,5.5) node{\small$u\circ\widehat{\pi}\circ\Phi$};
\draw (1.8,4.9) node{\small\eqref{diagpol}};
\draw[->, line width=1pt] (4,9) -- (6.5,9);
\draw (5.25,9.25) node{\small$\Phi$};
\draw (5.25,9.75) node{\small\ref{bigstepa2}(ii)};
\draw (5.25,8.675) node{\tiny$\cong$ onto ${\rm im}(\Phi)$};
\draw[->, line width=1pt] (5.5,2.75) -- (4.5,2.75);
\draw (5,3) node{\small$u$};
\draw[->, line width=1pt] (8.5,5.5) -- (8.5,4);
\draw (8.25,4.75) node{\small$\widehat{\pi}$};
\draw[->, line width=1pt] (14,2.25) -- (11,2.25);
\draw (12.5,2.6) node{\small$\widehat{\pi}\circ g$};
\draw (12.5,2) node{\small Cor. \ref{altdb}};
\draw (10.8,3.75) node{\small$M$};
\draw (8.7,2.25) node{\small$N$};
\draw[gray!50] (17.2,2.25) node{\small$N$};
\draw(18,3) node{\small$V_1\setminus V$};
\draw (9.5,3) node{\small$U_i$};
\draw (19,3.75) node{\small$M\setminus V$};
\draw (13.5,10.75) node{\small$(x,v,t)$};
\draw (19.1,10.75) node{\small$(x,t\psi_2(x,v))$};
\draw[->, line width=1pt] (14.5,10.75) -- (17.5,10.75);
\draw[line width=1pt] (14.5,10.65) -- (14.5,10.85);
\draw (16,11.1) node{\small$\varphi_1\circ\widehat{\pi}\circ\varphi_2^{-1}$};
\draw (16,10.5) node{\small Lem. \ref{neighfeten}};
\draw[->, line width=1pt] (12,4.5) -- (17,7.5);
\draw (14.25,6.25) node{\small$\varphi_1$};
\draw[->, line width=1pt] (10,8.75) -- (12,8.75);
\draw (11,9) node{\small$\varphi_2$};
\draw[line width=1pt] (16.75,4) -- (13.8,5.4);
\draw[->, line width=1pt] (13.4,5.6) -- (11.5,6.5);
\draw (14,5) node{\small$g$};
\draw (16,5.25) node{\small$\cong$ onto $\widetilde{M}_+$};
\draw[->, line width=1pt] (17.5,4) -- (18,7.5);
\draw (18.75,5.75) node{\small$\varphi_1|_{V_1\setminus V}$};
\draw (20.25,7.75) node{\small$\mathscr{E}$};
\draw (19.75,8.75) node{\footnotesize$\mathscr{E}_\delta$};
\draw (19,8.75) node{\small$N$}; 
\draw (15,7.25) node{\small$\mathscr{F}$};
\draw (15.25,9) node{\small$\mathscr{F}_{\delta\circ\widehat{\pi}|_R}$};
\draw (8.5,8.25) node{\tiny$R:=\widehat{\pi}^{-1}(N)$};
\draw (13.5,8.15) node{\small$R\times\{0\}$};
\draw (8.5,6.5) node{\small$\widetilde{M}_-$};
\draw (8.5,10.15) node{\small$\widetilde{M}_+$};
\draw (6.5,8.15) node{\small$\widehat{M}$};


\end{tikzpicture}
\end{center}
\caption{Full picture of the drilling blow-up $\widetilde{M}_+$ of $M$ with center $N$.\label{fig4}}
\end{figure}
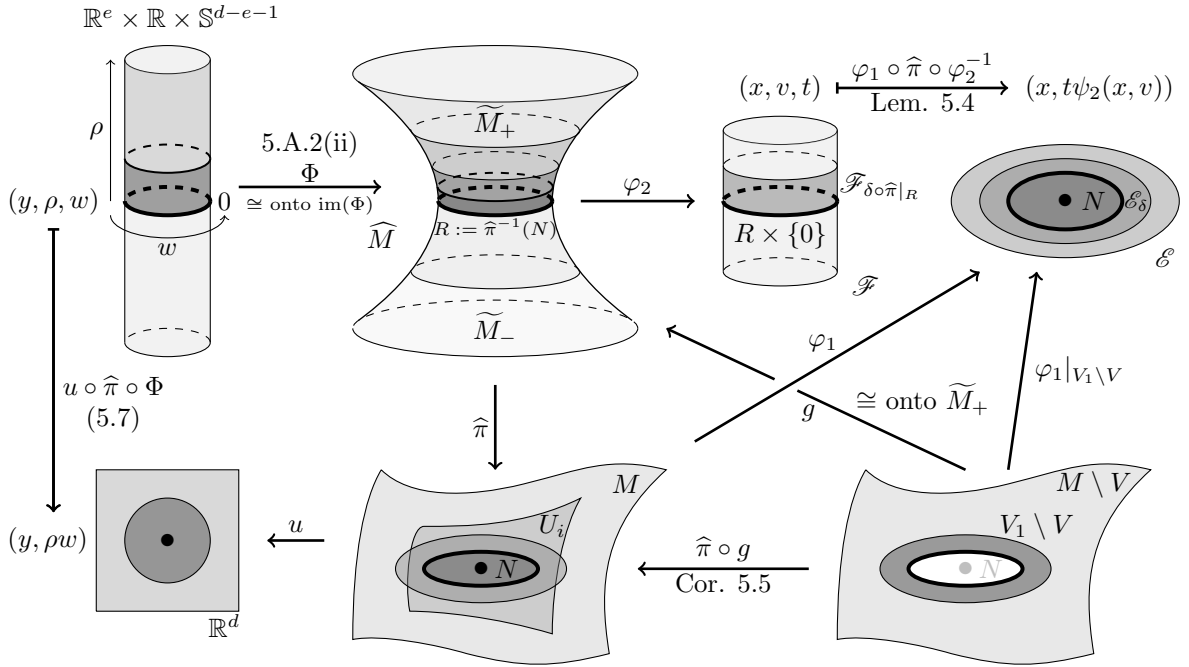

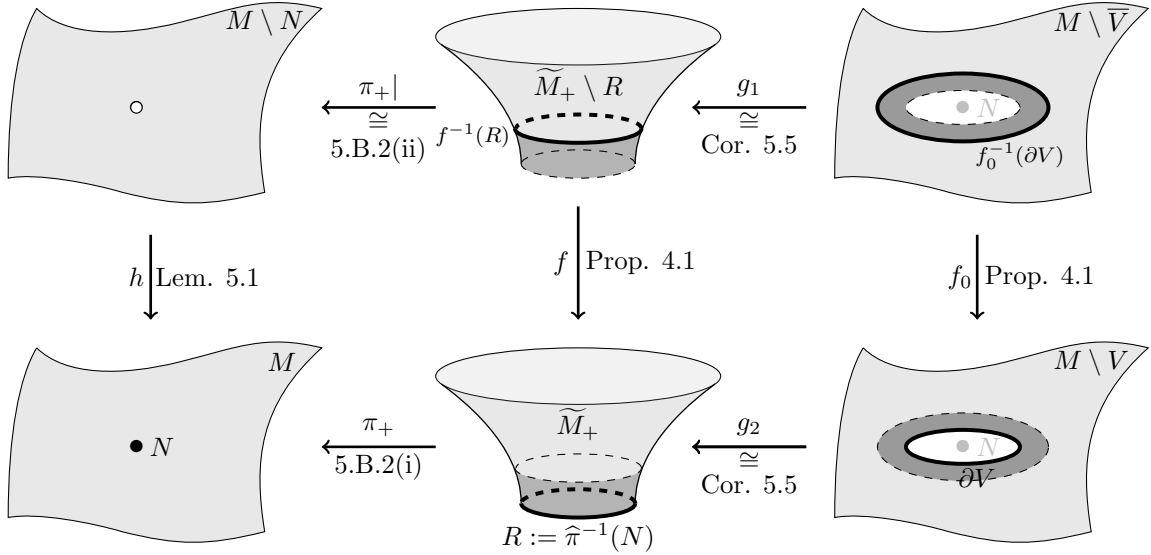
\begin{figure}[!ht]
\begin{center}
\begin{tikzpicture}[scale=0.75]

\draw[fill=gray!20,opacity=0.5,draw=none] (10,3.5) ellipse (2.5cm and 0.5cm);

\draw (10,3.5) ellipse (2.5cm and 0.5cm);

\draw[fill=gray!60,opacity=0.3,draw=none,rotate=-90] (-1.25,9) arc (270:450:0.25cm and 1cm) parabola bend (-1.25,11) (-3.36,12.4) -- (-3.5,12.5) arc (90:-90:0.5cm and 2.5cm) -- (-3.36,7.6) parabola bend (-1.25,9) (-1.25,9);

\draw[fill=black!60,opacity=0.4,draw=none,rotate=-90] (-1.25,9) arc (270:450:0.25cm and 1cm) parabola bend (-1.25,11) (-1.875,11.11) arc (90:-90:0.25cm and 1.11cm) parabola bend (-1.25,9) (-1.25,9);

\draw[dashed] (8.89,1.875) arc (180:360:1.11cm and 0.25cm);
\draw[dashed] (11.11,1.875) arc (0:180:1.1cm and 0.25cm);

\draw[line width=1.5pt] (9,1.25) arc (180:360:1cm and 0.25cm);
\draw[line width=1.5pt,dashed] (11,1.25) arc (0:180:1cm and 0.25cm);

\draw[line width=0.5pt,rotate=-90] (-1.25,9) parabola bend (-1.25,9) (-3.36,7.6);
\draw[line width=0.5pt,rotate=-90] (-1.25,11) parabola bend (-1.25,11) (-3.36,12.4);

\draw[fill=gray!60,opacity=0.3,draw=none] (0.5,4) .. controls (1,3.5) and (2,3.5) .. (3,3.75) .. controls (4,4) and (4.5,4.25) .. (5.5,4) .. controls (4.4,3) and (4.3,2) .. (4.5,0.75) .. controls (3.5,0.5) and (3,0.5) .. (2,0.75) .. controls (1.5,0.9) and (0.5,1.1) .. (0,1) .. controls (0,2.5) and (0.2,3.5) .. (0.5,4);
\draw (0.5,4) .. controls (1,3.5) and (2,3.5) .. (3,3.75) .. controls (4,4) and (4.5,4.25) .. (5.5,4) .. controls (4.4,3) and (4.3,2) .. (4.5,0.75) .. controls (3.5,0.5) and (3,0.5) .. (2,0.75) .. controls (1.5,0.9) and (0.5,1.1) .. (0,1) .. controls (0,2.5) and (0.2,3.5) .. (0.5,4);

\draw[fill=gray!60,opacity=0.3,draw=none] (15,4) .. controls (15.5,3.5) and (16.5,3.5) .. (17.5,3.75) .. controls (18.5,4) and (19,4.25) .. (20,4) .. controls (18.9,3) and (18.8,2) .. (19,0.75) .. controls (18,0.5) and (17.5,0.5) .. (16.5,0.75) .. controls (16,0.9) and (15,1.1) .. (14.5,1) .. controls (14.5,2.5) and (14.7,3.5) .. (15,4);
\draw (15,4) .. controls (15.5,3.5) and (16.5,3.5) .. (17.5,3.75) .. controls (18.5,4) and (19,4.25) .. (20,4) .. controls (18.9,3) and (18.8,2) .. (19,0.75) .. controls (18,0.5) and (17.5,0.5) .. (16.5,0.75) .. controls (16,0.9) and (15,1.1) .. (14.5,1) .. controls (14.5,2.5) and (14.7,3.5) .. (15,4);

\draw[fill=gray!100,opacity=0.75,draw=none] (16.75,2.25) ellipse (1.5cm and 0.6cm);
\draw[dashed] (16.75,2.25) ellipse (1.5cm and 0.6cm);
\draw[fill=white!100,draw=none] (16.75,2.25) ellipse (1cm and 0.3cm);
\draw[line width=1.5pt] (16.75,2.25) ellipse (1cm and 0.3cm);

\draw (2.25,2.25) node{\large$\bullet$};
\draw[gray!50] (16.75,2.25) node{\large$\bullet$};

\draw[fill=gray!20,opacity=0.5,draw=none] (10,9.5) ellipse (2.5cm and 0.5cm);

\draw (10,9.5) ellipse (2.5cm and 0.5cm);

\draw[fill=gray!60,opacity=0.3,draw=none,rotate=-90] (-7.25,9) arc (270:450:0.25cm and 1cm) parabola bend (-7.25,11) (-9.36,12.4) -- (-9.5,12.5) arc (90:-90:0.5cm and 2.5cm) -- (-9.36,7.6) parabola bend (-7.25,9) (-7.25,9);

\draw[fill=black!60,opacity=0.4,draw=none,rotate=-90] (-7.25,9) arc (270:450:0.25cm and 1cm) parabola bend (-7.25,11) (-7.875,11.11) arc (90:-90:0.25cm and 1.11cm) parabola bend (-7.25,9) (-7.25,9);

\draw[line width=1.5pt] (8.89,7.875) arc (180:360:1.11cm and 0.25cm);
\draw[line width=1.5pt,dashed] (11.11,7.875) arc (0:180:1.1cm and 0.25cm);

\draw[dashed] (9,7.25) arc (180:360:1cm and 0.25cm);
\draw[dashed] (11,7.25) arc (0:180:1cm and 0.25cm);

\draw[line width=0.5pt,rotate=-90] (-7.25,9) parabola bend (-7.25,9) (-9.36,7.6);
\draw[line width=0.5pt,rotate=-90] (-7.25,11) parabola bend (-7.25,11) (-9.36,12.4);

\draw[fill=gray!60,opacity=0.3,draw=none] (0.5,10) .. controls (1,9.5) and (2,9.5) .. (3,9.75) .. controls (4,10) and (4.5,10.25) .. (5.5,10) .. controls (4.4,9) and (4.3,8) .. (4.5,6.75) .. controls (3.5,6.5) and (3,6.5) .. (2,6.75) .. controls (1.5,6.9) and (0.5,7.1) .. (0,7) .. controls (0,8.5) and (0.2,9.5) .. (0.5,10);
\draw (0.5,10) .. controls (1,9.5) and (2,9.5) .. (3,9.75) .. controls (4,10) and (4.5,10.25) .. (5.5,10) .. controls (4.4,9) and (4.3,8) .. (4.5,6.75) .. controls (3.5,6.5) and (3,6.5) .. (2,6.75) .. controls (1.5,6.9) and (0.5,7.1) .. (0,7) .. controls (0,8.5) and (0.2,9.5) .. (0.5,10);

\draw[fill=gray!60,opacity=0.3,draw=none] (15,10) .. controls (15.5,9.5) and (16.5,9.5) .. (17.5,9.75) .. controls (18.5,10) and (19,10.25) .. (20,10) .. controls (18.9,9) and (18.8,8) .. (19,6.75) .. controls (18,6.5) and (17.5,6.5) .. (16.5,6.75) .. controls (16,6.9) and (15,7.1) .. (14.5,7) .. controls (14.5,8.5) and (14.7,9.5) .. (15,10);
\draw (15,10) .. controls (15.5,9.5) and (16.5,9.5) .. (17.5,9.75) .. controls (18.5,10) and (19,10.25) .. (20,10) .. controls (18.9,9) and (18.8,8) .. (19,6.75) .. controls (18,6.5) and (17.5,6.5) .. (16.5,6.75) .. controls (16,6.9) and (15,7.1) .. (14.5,7) .. controls (14.5,8.5) and (14.7,9.5) .. (15,10);

\draw[fill=gray!100,opacity=0.75,draw=none] (16.75,8.25) ellipse (1.5cm and 0.6cm);
\draw[line width=1.5pt] (16.75,8.25) ellipse (1.5cm and 0.6cm);
\draw[fill=white!100,draw=none] (16.75,8.25) ellipse (1cm and 0.3cm);
\draw[dashed] (16.75,8.25) ellipse (1cm and 0.3cm);

\draw (2.25,8.25) node{\large$\bullet$};
\draw[fill=white,draw] (2.25,8.25) circle (1mm);
\draw[gray!50] (16.75,8.25) node{\large$\bullet$};

\draw[->, line width=1pt] (14,2.25) -- (12,2.25);

\draw[->, line width=1pt] (2.5,6) -- (2.5,4.5);
\draw (2.25,5.25) node{\small$h$};
\draw (3.5,5.25) node{\small Lem. \ref{eraseresidual}};
\draw[->, line width=1pt] (10,6.5) -- (10,4.5);
\draw (9.7,5.5) node{\small$f$};
\draw (11.1,5.5) node{\small Prop. \ref{doubleivc}};
\draw[->, line width=1pt] (17,6) -- (17,4.5);
\draw (16.7,5.25) node{\small$f_0$};
\draw (18.1,5.25) node{\small Prop. \ref{doubleivc}};
\draw[->, line width=1pt] (14,2.25) -- (12,2.25);
\draw (13,2.6) node{\small$g_2$};
\draw (13,2) node{\small$\cong$};
\draw (13,1.6) node{\small Cor. \ref{altdb}};
\draw[->, line width=1pt] (7.5,2.25) -- (5.5,2.25);
\draw (6.5,2.6) node{\small$\pi_+$};
\draw (6.5,1.9) node{\small\ref{bigstepb2}(i)};
\draw[->, line width=1pt] (14,8.25) -- (12,8.25);
\draw (13,8.6) node{\small$g_1$};
\draw (13,8) node{\small$\cong$};
\draw (13,7.6) node{\small Cor. \ref{altdb}};
\draw[->, line width=1pt] (7.5,8.25) -- (5.5,8.25);
\draw (6.5,8.6) node{\small$\pi_+|$};
\draw (6.5,8) node{\small$\cong$};
\draw (6.5,7.5) node{\small\ref{bigstepb2}(ii)};

\draw (4.8,3.75) node{\small$M$};
\draw (4.5,9.75) node{\small$M\setminus N$};
\draw (2.7,2.25) node{\small$N$};
\draw[gray!50] (17.2,2.25) node{\small$N$};
\draw (19,3.75) node{\small$M\setminus V$};
\draw[gray!50] (17.2,8.25) node{\small$N$};
\draw (19,9.75) node{\small$M\setminus\overline{V}$};
\draw (17.75,7.4) node{\scriptsize$f_0^{-1}(\partial V)$};
\draw (17,1.7) node{\small$\partial V$};

\draw (10,0.65) node{\small$R:=\widehat{\pi}^{-1}(N)$};
\draw (10,2.65) node{\small$\widetilde{M}_+$};
\draw (10,8.65) node{\small$\widetilde{M}_+\setminus R$};
\draw (8.15,7.75) node{\scriptsize$f^{-1}(R)$};

\end{tikzpicture}
\end{center}
\caption{Geometry of the tool introduced in Lemma \ref{eraseresidual} to erase a closed Nash submanifold $N$ from the Nash manifold $M$.\label{fig5}}
\end{figure}

\subsection{Relationship between drilling blow-up and classical blow-up}\label{dbme}
We justify next the second part of the name of the drilling blow-up relating it with the classical blow-up. Let $M\subset\R^m$ be a Nash manifold of dimension $d$ and let $N\subset M$ be a closed Nash submanifold of dimension $e$. Let $f_1,\ldots,f_k\in{\mathcal N}(M)$ be a system of generators of $I(N)$. Define
$$
\Gamma':=\{(x,(f_1(x):\ldots:f_k(x)))\in M\times\R\PP^{k-1}:\ x\in M\setminus N\}.
$$
The closure $B(M,N)$ of $\Gamma'$ in $M\times\R\PP^{k-1}$ together with the restriction $\pi'$ to $B(M,N)$ of the projection $M\times\R\PP^{k-1}\to M$ is the classical \em blow-up of $M$ with center $N$\em. 

\begin{cor}\label{bu}
Let $(\widehat{M},\widehat{\pi})$ be the twisted Nash double of the drilling blow-up $(\widetilde{M}_+,\pi_+)$. Let $\sigma:\widehat{M}\to\widehat{M},\ (a,b)\mapsto(a,-b)$ be the involution of $\widehat{M}$ without fixed points. Consider the Nash map
$$
\Theta:M\times\sph^{k-1}\to M\times\R\PP^{k-1}, (p,q)\to (p,[q])
$$
and its restriction $\theta:\widehat{M}\to B(M,N)$. We have
\begin{itemize}
\item[(i)] $\theta(\widehat{M})=B(M,N)$, $\theta\circ\sigma=\theta$, $\pi'\circ\theta=\widehat{\pi}$ and $\theta^{-1}(a,[b])=\{(a,b),(a,-b)\}$ for each $(a,[b])\in B(M,N)$. 
\item[(ii)] $\theta$ is an unramified two to one Nash covering of $B(M,N)$.
\end{itemize}
\end{cor}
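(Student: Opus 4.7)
The plan is to establish part~(i) by straightforward computations from the definitions of $\Theta$, $\widehat{M}$, and $B(M,N)$, and then to derive part~(ii) from part~(i) together with the fact that the antipodal map $\sph^{m-1}\to\R\PP^{m-1}$ is itself an unramified two-sheeted covering.

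I would begin with the algebraic identities in~(i). The relation $\theta\circ\sigma=\theta$ is immediate, since $[b]=[-b]$ in $\R\PP^{m-1}$; similarly $\pi'\circ\theta=\widehat{\pi}$, because both sides coincide with the restriction to $\widehat{M}$ of the projection onto the first factor. For the fiber description $\theta^{-1}(a,[b])=\{(a,b),(a,-b)\}$, I would first note that in the ambient product one has $\Theta^{-1}(a,[b])=\{(a,q)\in M\times\sph^{m-1}:[q]=[b]\}=\{(a,b),(a,-b)\}$ for any unit vector $b$, and then intersect with $\widehat{M}$. Since $\widehat{M}$ is $\sigma$-invariant by~\ref{bigstepa2g}, the point $(a,-b)=\sigma(a,b)$ belongs to $\widehat{M}$ as soon as $(a,b)$ does; as $\sigma$ has no fixed points on $\widehat{M}$, the two preimages are distinct. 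For the surjectivity $\theta(\widehat{M})=B(M,N)$, I would observe that for each $x\in M\setminus N$ and each $\epsilon=\pm$,
\[
\theta(x,\epsilon F(x))=\bigl(x,[\epsilon F(x)]\bigr)=\bigl(x,[f_1(x):\cdots:f_k(x)]\bigr),
\]
so $\theta(\Gamma_+\sqcup\Gamma_-)=\Gamma'$. The projection $M\times\sph^{m-1}\to M\times\R\PP^{m-1}$ is proper because the sphere is compact, and $\widehat{M}$ is closed in $M\times\sph^{m-1}$; thus $\theta$ is a closed map, and
\[
\theta(\widehat{M})=\theta\bigl(\cl(\Gamma_+\sqcup\Gamma_-)\bigr)=\cl\bigl(\theta(\Gamma_+\sqcup\Gamma_-)\bigr)=\cl(\Gamma')=B(M,N).
\]

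For part~(ii), I would use that the antipodal map $\sph^{m-1}\to\R\PP^{m-1}$ is an unramified Nash double covering, so $\Theta$ is a Nash local diffeomorphism at every point of $M\times\sph^{m-1}$. Its restriction $\theta$ to the Nash manifold $\widehat{M}$ therefore remains a Nash local diffeomorphism, its fibers have exactly two points by~(i), and its deck transformation $\sigma$ is a fixed-point-free Nash involution (established in~\ref{bigstepa2g}). Properness of $\theta$ follows from the identity $\pi'\circ\theta=\widehat{\pi}$ and the properness of $\widehat{\pi}$ recorded in~\ref{bigstepb2}(i). Altogether, $\theta$ is an unramified two-to-one Nash covering onto $B(M,N)$. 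The only genuinely non-formal step is the closure identity used in establishing surjectivity, and this rests on the properness of the antipodal covering rather than on any deeper property of the drilling blow-up.
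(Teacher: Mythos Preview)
The paper gives no proof of this corollary; it is stated and immediately followed by a remark, so the authors evidently regard it as a direct consequence of the definitions in \S\ref{bbu} and \S\ref{dbme}. Your argument supplies exactly those details and is correct.

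One small imprecision in part~(ii): the sentence ``Its restriction $\theta$ to the Nash manifold $\widehat{M}$ therefore remains a Nash local diffeomorphism'' does not follow just from $\Theta$ being a local diffeomorphism of the ambient spaces---restricting a local diffeomorphism to a submanifold gives an immersion, not automatically a local diffeomorphism onto the image. The missing observation is that since $\Theta$ is locally a diffeomorphism of open sets, it carries the submanifold $\widehat{M}\cap U$ diffeomorphically onto a submanifold of $M\times\R\PP^{m-1}$; this simultaneously shows $B(M,N)$ is a Nash manifold and that $\theta$ is a local Nash diffeomorphism onto it. Equivalently, one can argue that $B(M,N)=\widehat{M}/\sigma$ is the quotient by a free Nash involution. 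Either way the gap is routine to close, and your use of properness and the fiber count then finishes the covering claim.
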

\begin{remark}
Many well-known properties of $(B(M,N),\pi')$ concerning: the fibers of $\pi'$, the local representations \eqref{lr} of $\pi'$ at the points of ${\pi'}^{-1}(N)$, finite coverings of $B(M,N)$ whose members are Nash diffeomorphic either to $\R^e\times\R\PP^{d-e}$ or to $\R^d$, the fact that $\pi'$ is proper and the restriction $\pi'|:B(M,N)\setminus {\pi'}^{-1}(N)\to M\setminus N$ is a Nash diffeomorphism, the fact that $B(M,N)$ does not depend on the generators of $I(N)$, etc. follow at once from \ref{bigstep}, \ref{bbu} and Corollary \ref{bu}.
\end{remark}

\section{Connected Nash manifolds with boundary as Nash images of Euclidean spaces}\label{s6}

In this section we prove Theorem \ref{mstone}. By Proposition \ref{doubleivc} every Nash manifold $H$ with boundary is the image under a Nash map of its interior $\Int(H)$. Consequently, we are reduced to prove the following.

\begin{thm}\label{mstone0}
Let $M\subset\R^m$ be a connected $d$-dimensional Nash manifold. Then $M$ is a Nash image of $\R^d$.
\end{thm}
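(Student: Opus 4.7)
My plan is to proceed by induction on the minimum number $r$ of open Nash charts needed to cover $M$ by sets Nash diffeomorphic to $\R^d$; the existence of such a finite covering is guaranteed by \ref{cartaspm}. The base case $r=1$ is trivial since then $M$ is itself Nash diffeomorphic to $\R^d$.

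The inductive step rests on the following reduction: if we can construct a closed Nash submanifold $N\subset M$ of positive codimension such that $M\setminus N$ is still a connected Nash manifold admitting an open covering by strictly fewer than $r$ charts Nash diffeomorphic to $\R^d$, then by the inductive hypothesis there is a surjective Nash map $\R^d\to M\setminus N$, and composing with the surjective Nash map $M\setminus N\to M$ furnished by Corollary \ref{eraseresidual} yields the desired surjective Nash map $\R^d\to M$. Observe that this use of Corollary \ref{eraseresidual} is the only place where $M$ (as opposed to a simpler manifold) is touched, which is why the construction of $N$ can be done chart by chart.

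To construct $N$, by connectedness of $M$ we may relabel the covering $M=\bigcup_{i=1}^rM_i$ so that $M_r$ meets the union $M_1\cup\cdots\cup M_{r-1}$. The geometric picture is to select $N$ as a codimension-one closed Nash submanifold sitting inside the overlap region $M_r\cap(M_1\cup\cdots\cup M_{r-1})$, chosen so that erasing $N$ effectively absorbs $M_r$ into one of the other charts and thereby eliminates one chart from the cover. To realize $N$ as an honest closed Nash submanifold of $M$, I would combine Hironaka-type desingularization (Theorems \ref{hi1} and \ref{hi2}) applied to a suitable algebraic Zariski closure in order to produce a candidate algebraic hypersurface in the correct position, the Nash approximation theorem \ref{shiota} to pass from the smooth to the Nash category without losing transversality, and the tubular neighborhood theorem \ref{nmtub} to control the ambient geometry across the overlap.

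The main obstacle I anticipate is ensuring simultaneously that $N$ is a bona fide closed Nash submanifold of $M$, that $M\setminus N$ remains connected, and that the covering complexity strictly decreases. I expect the drilling blow-up machinery of Section \ref{s5} to be essential here: it permits cutting $M$ open along a prescribed closed Nash submanifold while preserving a Nash surjection onto the original manifold. Combined with the Nash doubles of Section \ref{s4} and Proposition \ref{doubleivc}, which let us translate freely between a Nash manifold with boundary and its interior, this provides enough flexibility to reduce the topological complexity in a controlled, Nash-compatible manner, and thereby to carry through the inductive step.
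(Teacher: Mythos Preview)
Your proposal shares the paper's overall skeleton---an induction that, at each step, erases a piece of $M$ via Corollary \ref{eraseresidual} and applies the inductive hypothesis to the complement---but the heart of the argument, namely the construction of the submanifold $N$ and the verification that $M\setminus N$ has strictly smaller complexity, is not carried out. This is a genuine gap, and the tools you list (desingularization, Nash approximation, tubular neighborhoods) do not by themselves produce such an $N$.

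Concretely, your plan has two difficulties. First, if $N$ has codimension one, then $M\setminus N$ need not be connected: already for $M=\sph^d$ covered by two slightly-overlapping hemispheres, any closed hypersurface $N$ in the overlap that separates the two charts disconnects $M$. Second, and more seriously, there is no mechanism by which removing $N$ forces the \emph{minimum} number of Nash charts of $M\setminus N$ to drop. The charts $M_i\setminus N$ are no longer Nash diffeomorphic to $\R^d$, and you give no argument that the new manifold $M\setminus N$ admits a covering by fewer than $r$ copies of $\R^d$. The phrase ``erasing $N$ effectively absorbs $M_r$ into one of the other charts'' is suggestive but has no precise content; absorbing an open set into another via a homeomorphism is exactly the hard part.

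The paper circumvents these issues by inducting not on a chart count but on the number $r$ of top-dimensional simplices in a semialgebraic triangulation of $\cl(M)$, and by removing at each step an entire closed simplex $\Phi(\sigma_r)$ rather than a hypersurface. The combinatorics of simplicial complexes then does the real work: the simplices are ordered so that $M\setminus\Phi(\sigma_r)$ stays connected (\ref{ordersim}); Lemma \ref{divide} subdivides $\sigma_r$ into pieces $\eta_j$, each adjacent to a neighboring simplex across a facet; Corollary \ref{eraseresidual} is used to erase the lower-dimensional skeleta $N_0,\dots,N_{d-1}$ of this subdivision (these are genuine closed Nash submanifolds of the appropriate complements); and the explicit piecewise-linear homeomorphism of Lemma \ref{homeo} stretches each neighboring simplex over the corresponding $\eta_j$, after which Lemma \ref{surapprox} upgrades the resulting ${\mathcal S}^0$ homeomorphism to a surjective Nash map. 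The rigidity of the triangulation is what makes both the connectedness and the decrease in $r$ automatic---this combinatorial scaffolding is precisely what your chart-based approach lacks.
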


The proof of Theorem \ref{mstone0} still requires some preliminary work that we develop next. We prove first that connected Nash manifolds with boundary are connected by Nash paths, so they are under the assumptions of Theorem \ref{main}.

\begin{lem}\label{mcnp}
Let $M\subset\R^m$ be a connected Nash manifold. Then $M$ is connected by Nash paths.
\end{lem}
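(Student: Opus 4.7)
The plan is to upgrade a continuous semialgebraic path to a Nash path, using the Nash tubular neighborhood of $M$ in its ambient Euclidean space together with a polynomial approximation that fixes the two endpoints.

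\emph{Reduction to a semialgebraic path.} Any connected semialgebraic set is semialgebraically path-connected \cite[2.5.13]{bcr}, so for every $p,q\in M$ there is a continuous semialgebraic map $\gamma:[0,1]\to M\subset\R^m$ with $\gamma(0)=p$ and $\gamma(1)=q$. I will replace $\gamma$ by a Nash map with the same endpoints and the same image (up to a small perturbation) inside $M$.

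\emph{Polynomial approximation preserving endpoints.} By the Weierstrass approximation theorem applied coordinate-wise, for every $\varepsilon>0$ there exists a polynomial map $P:\R\to\R^m$ such that $\|\gamma(t)-P(t)\|<\varepsilon$ for every $t\in[0,1]$. Correcting the boundary values I define
\[
\tilde\gamma(t):=P(t)+(1-t)\bigl(p-P(0)\bigr)+t\bigl(q-P(1)\bigr),
\]
which is polynomial, hence Nash, and satisfies $\tilde\gamma(0)=p$, $\tilde\gamma(1)=q$ and $\|\tilde\gamma(t)-\gamma(t)\|<3\varepsilon$ on $[0,1]$.

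\emph{Projection to $M$.} By \ref{fact:retract} there exist an open semialgebraic neighborhood $U\subset\R^m$ of $M$ and a Nash retraction $\rho:U\to M$. Since $\gamma([0,1])$ is a compact subset of the open set $U$, for $\varepsilon$ small enough $\tilde\gamma([0,1])\subset U$. The composition $\beta:=\rho\circ\tilde\gamma:[0,1]\to M$ is then a Nash map, and $\rho(p)=p$, $\rho(q)=q$ give $\beta(0)=p$, $\beta(1)=q$, producing the required Nash path.

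\emph{Main point.} The only delicate step is guaranteeing that $\tilde\gamma$ remains in the tubular neighborhood $U$ after the endpoint correction, which is achieved by choosing $\varepsilon$ less than one third of $\dist(\gamma([0,1]),\R^m\setminus U)$; the rest of the argument is routine.
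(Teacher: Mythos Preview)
Your argument is correct. The approach differs from the paper's: the paper extends the semialgebraic path $\alpha$ to a slightly larger open interval $(-\varepsilon,1+\varepsilon)$ and then invokes the relative Nash approximation theorem \cite[II.5.7]{sh}, which directly furnishes a Nash map $\beta:(-\varepsilon,1+\varepsilon)\to M$ agreeing with $\alpha$ at the two prescribed points $0$ and $1$. You instead build the endpoint-preserving approximation by hand: Weierstrass approximation on $[0,1]$, an explicit affine correction to pin down the endpoints, and then the Nash retraction $\rho$ of \ref{fact:retract} to push the resulting polynomial curve back into $M$. Your route is more elementary in that it avoids the relative approximation machinery of Shiota, at the cost of a small extra computation; the paper's route is shorter once that machinery is taken for granted. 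Two minor remarks: your displayed bound is actually $<2\varepsilon$ rather than $<3\varepsilon$ (since $(1-t)+t=1$), and you may wish to note that because $\tilde\gamma$ is continuous and $\tilde\gamma([0,1])\subset U$, some open interval around $[0,1]$ is also mapped into $U$, so $\rho\circ\tilde\gamma$ is Nash on an open semialgebraic neighborhood of $[0,1]$ and hence a Nash path in the sense used in the paper.
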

\begin{proof}
By \cite[Thm.2.4.5 \& Prop.2.5.13]{bcr} $M$ is semialgebraically path connected. Let $x,y\in M$ and let $\alpha:[0,1]\to M$ be a continuous semialgebraic path that connects $x$ and $y$. Let $\veps>0$ and let $\widehat{\alpha}:(-\veps,1+\veps)\to M$ be any (continuous) semialgebraic extension of $\alpha$ to the interval $(-\veps,1+\veps)$. By \cite[Cor.II.5.7]{sh} there exists a Nash approximation $\beta:(-\veps,1+\veps)\to M$ of $\alpha$ such that $\beta(0)=\alpha(0)=x$ and $\beta(1)=\alpha(1)=y$. Thus, $M$ is connected by Nash paths.
\end{proof}

\begin{cor}\label{imnm}
Let $M\subset\R^m$ be a connected Nash manifold and let $f:M\to\R^n$ be a Nash map. Then $\Ss:=f(M)$ is pure dimensional and connected by Nash paths.
\end{cor}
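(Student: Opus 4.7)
The two assertions split naturally and can be handled independently. For the Nash path-connectivity the plan is to lift paths: given $y_0,y_1\in\Ss$ I would pick preimages $x_i\in f^{-1}(y_i)\subset M$, invoke Lemma~\ref{mcnp} to produce a Nash path $\alpha\colon[0,1]\to M$ joining $x_0$ to $x_1$, and take $f\circ\alpha$ as the desired Nash path in $\Ss$ joining $y_0$ to $y_1$. This step is essentially free.

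For pure dimensionality I would work with the generic rank of $df$. Set $e:=\max_{x\in M}\mathrm{rk}(d_xf)$ and
$$
U:=\{x\in M:\mathrm{rk}(d_xf)=e\}.
$$
By upper semicontinuity of the rank (the complement is cut out locally by the vanishing of all $(e+1)\times(e+1)$ minors of the Jacobian, so it is a Nash subset) $U$ is a non-empty open semialgebraic subset of $M$. Since $M$ is a connected Nash manifold it is a connected real analytic manifold, hence irreducible in the analytic sense, so the proper Nash subset $M\setminus U$ is nowhere dense and $U$ is dense in $M$. Applying the constant rank theorem at each point of $U$, I would conclude that $f(U)$ is locally at each of its points an $e$-dimensional Nash submanifold of $\R^m$; in particular $f(U)$ is a pure dimensional semialgebraic set of dimension $e$. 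Density of $U$ in $M$ together with continuity of $f$ gives that $f(U)$ is dense in $\Ss$, which pins down $\dim\Ss=e$.

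With this in hand, pure dimensionality is immediate. For any $y\in\Ss$ and any open semialgebraic neighborhood $W$ of $y$ in $\Ss$, the fact that $y\in\mathrm{Cl}(f(U))$ forces $W\cap f(U)\neq\varnothing$; picking $z\in W\cap f(U)$ gives $\dim W\geq\dim(f(U))_z=e$. Since also $\dim W\leq\dim\Ss=e$, I obtain $\dim\Ss_y=e$ for every $y\in\Ss$, proving pure dimensionality.

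The only delicate point is the density of $U$ in $M$; once the connected Nash manifold $M$ is viewed as a (connected, hence irreducible) real analytic manifold, so that any proper Nash subset is nowhere dense, the rest reduces to a routine application of the constant rank theorem and semialgebraic dimension theory.
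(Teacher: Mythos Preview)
Your treatment of Nash path-connectivity matches the paper's exactly.

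For pure dimensionality your route diverges from the paper's, and it contains a genuine gap. The claim that $f(U)$ is ``locally at each of its points an $e$-dimensional Nash submanifold of $\R^m$'' is false: take $M=\R$ and $f(t)=(t^2-1,\,t^3-t)$; then $f'(t)\neq 0$ for all $t$, so $e=1$ and $U=\R$, yet $f(U)$ is a nodal cubic, which is not a manifold at the node. What the constant rank theorem actually gives is that for each $x\in U$ there is a neighborhood $V_x$ such that $f(V_x)$ is an $e$-dimensional submanifold near $f(x)$; this yields only the lower bound $\dim(f(U))_z\ge e$. For the matching upper bound $\dim f(U)\le e$ you still owe an extra step --- for instance, observing that every fiber of $f|_U$ is a submanifold of dimension $\dim M-e$ and invoking the semialgebraic fiber-dimension formula, or producing a \emph{finite} semialgebraic cover of $U$ by such $V_x$. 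Once that is supplied, the rest of your argument goes through. (Minor slip: $M\setminus U$ is cut out by the vanishing of the $e\times e$ minors, not the $(e{+}1)\times(e{+}1)$ ones.)

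By contrast, the paper bypasses all of this with a two-line identity-principle argument: if $\Ss$ were not pure dimensional, pick a ball $B$ with $\dim(B\cap\Ss)<\dim\Ss$, let $Y$ be the Zariski closure of $B\cap\Ss$ and $P$ a polynomial equation of $Y$; then $P\circ f$ is a Nash function vanishing on the nonempty open set $f^{-1}(B)$ of the connected Nash manifold $M$, hence identically, forcing $\Ss\subset Y$ --- contradicting $\dim Y<\dim\Ss$. This avoids any appeal to the constant rank theorem or dimension formulas.
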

\begin{proof}
Assume $\Ss$ is not pure dimensional. Let $B\subset\R^n$ be a small ball such that $\dim(B\cap\Ss)<\dim(\Ss)$. Let $Y$ be the Zariski closure of $B\cap\Ss$ and let $P\in\R[\x]$ be a polynomial equation of $Y$. As the Nash function $P\circ f$ vanish on the open set $f^{-1}(B)$ of the connected Nash manifold $M$, the composition $P\circ f$ is identically zero on $M$. Consequently, $\Ss\subset Y$, which is a contradiction. Thus, $\Ss$ is pure dimensional.

To prove that $\Ss$ is connected by Nash paths pick $x,y\in\Ss$ and let $a,b\in M$ be such that $f(a)=x$ and $f(b)=y$. As $M$ is connected by Nash paths, there exists a Nash path $\beta:[0,1]\to M$ connecting $a$ and $b$. Thus, $\alpha:=f\circ\beta$ is a Nash path connecting $x$ and $y$, as required.
\end{proof}
\begin{remark}
By Proposition \ref{doubleivc} and Corollary \ref{imnm} connected Nash manifolds with boundary are connected by Nash paths.
\end{remark}

\subsection{Managing semialgebraic triangulations}\setcounter{paragraph}{0}

The proof of Theorem \ref{mstone0} involves an inductive argument on the number of simplices of a suitable (semialgebraic) triangulation of the connected Nash manifold $M$. Let $\sigma\subset\R^n$ be a simplex of dimension $d$. The \em facets of $\sigma$ \em are the faces of $\sigma$ of dimension $d-1$. As usual, we denote the relative interior of $\sigma$ with $\sigma^0$ and we will say that $\sigma$ is a $d$-simplex. The first step of the inductive argument concerns the following statement: \em The interior of a simplex is a Nash image of an Euclidean space\em.

\begin{lem}\label{simnashim}
Let $\sigma\subset\R^n$ be a simplex of dimension $n$ and let $\sigma^0$ be its interior. Then $\sigma^0$ is Nash diffeomorphic to $\R^n$.
\end{lem}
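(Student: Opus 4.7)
The idea is to reduce the problem to a Nash diffeomorphism $\sigma^0\cong (0,1)^n$ and then use Example \ref{dilatacion}(i) coordinate-wise to obtain $(0,1)^n\cong\R^n$. The composition of Nash diffeomorphisms is Nash, so this will give the result.

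First, an affine bijection of $\R^n$ sends any $n$-simplex to the standard one
$$
\Delta_n:=\{x\in\R^n: x_i\ge 0\;\forall i,\;x_1+\cdots+x_n\le 1\},
$$
and since affine maps are polynomial, the restriction is a Nash diffeomorphism between the interiors. Thus, we may assume $\sigma^0$ equals the interior $\Delta_n^0$ of the standard simplex. Next, consider the rational map
$$
\phi:\Delta_n^0\to(0,1)^n,\quad \phi_i(x_1,\ldots,x_n):=\frac{x_i}{1-x_1-\cdots-x_{i-1}}\qquad(i=1,\ldots,n),
$$
with the convention that the denominator equals $1$ when $i=1$. On $\Delta_n^0$ one has $1-x_1-\cdots-x_{i-1}=x_i+\cdots+x_n>0$, so every denominator is strictly positive on a suitable open semialgebraic neighborhood of $\Delta_n^0$; hence $\phi$ is Nash. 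A direct recursion shows that its inverse is the polynomial map
$$
\psi:(0,1)^n\to\Delta_n^0,\quad \psi_i(y_1,\ldots,y_n):=y_i\prod_{j=1}^{i-1}(1-y_j),
$$
so $\phi$ is a Nash diffeomorphism between $\Delta_n^0$ and the open cube $(0,1)^n$.

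Finally, by Example \ref{dilatacion}(i) there is a Nash diffeomorphism $f:\R\to(0,1)$ whose inverse is also Nash; applying $f$ coordinate-wise gives a Nash diffeomorphism $\R^n\cong(0,1)^n$. Composing these Nash diffeomorphisms yields $\sigma^0\cong\R^n$, as required. No serious obstacle is expected: all the maps involved are explicit rational or polynomial formulas whose denominators do not vanish on the relevant open semialgebraic neighborhoods, and the verification that $\phi$ and $\psi$ are mutually inverse is a straightforward telescoping computation.
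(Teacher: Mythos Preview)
Your proof is correct and follows the same two-step strategy as the paper: first reduce by an affine map to the standard simplex, then exhibit a Nash diffeomorphism from its interior to a product of one-dimensional intervals and finish coordinate-wise. The only difference is the choice of intermediate target: the paper maps $\sigma^0$ to the open orthant $(0,+\infty)^n$ via the simpler formula $x\mapsto\big(\frac{x_1}{1-\sum x_j},\ldots,\frac{x_n}{1-\sum x_j}\big)$ (same denominator in every coordinate, with rational inverse $y\mapsto\frac{y}{1+\sum y_j}$), and then uses $t\mapsto t-\tfrac1t$ to identify $(0,+\infty)$ with $\R$. Your route through the cube $(0,1)^n$ via the triangular map $\phi_i=\frac{x_i}{1-x_1-\cdots-x_{i-1}}$ works just as well (the telescoping verification you describe is indeed straightforward) and lets you quote Example~\ref{dilatacion}(i) directly; the paper's orthant map is a touch more symmetric and avoids the recursive denominators. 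Either way the argument is complete.
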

\begin{proof}
It is enough to consider the simplex $\sigma:=\{\x_1\geq0,\ldots,\x_n\geq0,\x_1+\cdots+\x_n\leq1\}\subset\R^n$. Consider the open orthant $\Qq:=\{\y_1>0,\ldots,\y_n>0\}$ and the Nash diffeomorphism
$$
f:\sigma^0\to\Qq,\ (x_1,\ldots,x_n)\mapsto\Big(\frac{x_1}{1-\sum_{i=1}^nx_i},\ldots,\frac{x_n}{1-\sum_{i=1}^nx_i}\Big)
$$
whose inverse is the Nash map
$$
f^{-1}:\Qq\to\sigma^0,\ (y_1,\ldots,y_n)\mapsto\Big(\frac{y_1}{1+\sum_{i=1}^ny_i},\ldots,\frac{y_n}{1+\sum_{i=1}^ny_i}\Big).
$$
We are reduced to prove that $\Qq$ is Nash diffeomorphic to $\R^n$. To that end, we show that \em the open interval $(0,+\infty)$ is Nash diffeomorphic to $\R$\em. Consider
$$
h_1:(0,+\infty)\to\R,\ t\mapsto t-\frac{1}{t}\quad\text{and}\quad h_1^{-1}:\R\to(0,+\infty),\ t\mapsto\frac{t+\sqrt{t^2+4}}{2}.
$$
We are done.
\end{proof}

The following result is the clue to erase a simplex from a semialgebraic triangulation of a Nash manifold. 

\begin{lem}[Erasing simplices]\label{homeo}
Let $\sigma_1,\sigma_2\subset\R^n$ be two simplices of dimension $n$ that only share a facet $\tau$. Let $\Dd:=\sigma_1^0\cup(\sigma_2\setminus\partial\tau)=(\tau^0\cup\sigma_1^0)\cup(\sigma_2\setminus\tau)$ and $\ol{\Dd}:=\sigma_1\cup\sigma_2$. Then there exists a semialgebraic homeomorphism $\psi:\sigma_2\to\ol{\Dd}$ such that $\psi(\sigma_2\setminus\tau)=\Dd$ and $\psi|_{\partial\sigma_2\setminus\tau^0}=\id_{\partial\sigma_2\setminus\tau^0}$.
\end{lem}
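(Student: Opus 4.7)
The plan is to construct $\psi$ as a ``bent-ray stretching'' of $\sigma_2$ using compatible cone structures on $\tau$ and on $\partial\sigma_1\setminus\tau^0$. Let $v_2$ (resp.\ $v_1$) denote the vertex of $\sigma_2$ (resp.\ $\sigma_1$) opposite $\tau$, and let $c_\tau$ be the centroid of $\tau$. First I would build a semialgebraic homeomorphism $\phi:\tau\to\partial\sigma_1\setminus\tau^0$ satisfying $\phi|_{\partial\tau}=\id_{\partial\tau}$. The key point is the matching cone structure over $\partial\tau$: on the one hand $\tau=c_\tau*\partial\tau$; on the other hand $\partial\sigma_1\setminus\tau^0=v_1*\partial\tau$, because each facet of $\sigma_1$ other than $\tau$ is of the form $v_1*\tau_i''$ for a facet $\tau_i''$ of $\tau$. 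Writing each $y\in\tau$ uniquely as $y=(1-t)c_\tau+tz$ with $t\in[0,1]$ and $z\in\partial\tau$, set $\phi(y):=(1-t)v_1+tz$. I would also fix a continuous semialgebraic function $\mu:\tau\to(0,1]$ with $\mu\equiv 1$ on $\partial\tau$ and $\mu<1$ on $\tau^0$, for instance $\mu(y):=1-c\prod_i\beta_i(y)$ for a small constant $c>0$ and barycentric coordinates $\beta_i(y)$ of $y$ in $\tau$.

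Parametrizing each $x\in\sigma_2\setminus\{v_2\}$ uniquely as $x=(1-r)v_2+ry$ with $y\in\tau$ and $r\in(0,1]$, I would define $\psi(v_2):=v_2$ and
\begin{equation*}
\psi(x):=\begin{cases}
\bigl(1-\tfrac{r}{\mu(y)}\bigr)v_2+\tfrac{r}{\mu(y)}y&\text{if }r\leq\mu(y),\\[2pt]
(1-u)y+u\,\phi(y),\quad u:=\tfrac{r-\mu(y)}{1-\mu(y)},&\text{if }r>\mu(y).
\end{cases}
\end{equation*}
Geometrically, $\psi$ linearly stretches each segment $[v_2,y]\subset\sigma_2$ onto the bent path $[v_2,y]\cup[y,\phi(y)]\subset\sigma_2\cup\sigma_1$, placing $y$ itself at parameter $r=\mu(y)$. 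The second piece $[y,\phi(y)]$ lies in the convex set $\sigma_1$, so $\im(\psi)\subset\sigma_1\cup\sigma_2$.

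I would then verify the required properties. Continuity and semialgebraicity of $\psi$ are immediate from the formula, with the two branches agreeing at $r=\mu(y)$ and collapsing to $\psi(x)=x$ in the limit $y\to\partial\tau$ (where $\mu\to 1$ and $\phi\to\id$). The condition $\psi|_{\partial\sigma_2\setminus\tau^0}=\id$ holds because such $x$ has $y\in\partial\tau$, so $\mu(y)=1$ puts us in the first branch, which there reads $\psi(x)=x$ (and $\psi(v_2)=v_2$ by construction). Surjectivity onto $\sigma_1\cup\sigma_2$ reduces to covering $\sigma_1$ by the segments $[y,\phi(y)]$: given $p=\alpha_0v_1+\sum_i\alpha_iw_i\in\sigma_1$, the choice $t=1-\alpha_0-n\min_i\alpha_i$, $s=\alpha_0/(1-t)$, $\beta_i=(\alpha_i-(1-t-\alpha_0)/n)/t$ lies in the admissible ranges and produces $z\in\partial\tau$ and a unique $y\in\tau$ with $p\in[y,\phi(y)]$. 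A case analysis then yields $\psi(\sigma_2\setminus\tau)=\Dd$: the first branch restricted to $r<1$ produces exactly $\sigma_2\setminus\partial\tau$, while the second branch produces exactly $\sigma_1^0$, whose union is $\Dd$.

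The main obstacle is injectivity: distinct bent paths must share only the basepoint $v_2$. The segments $[v_2,y_1]$ and $[v_2,y_2]$ for $y_1\neq y_2\in\tau$ share only $v_2$, and $[v_2,y_1]\subset\sigma_2$ meets $\sigma_1$ only inside $\tau\subset\partial\sigma_1$, so cannot hit the open segment $(y_2,\phi(y_2))\subset\sigma_1^0$. The delicate remaining case is two open segments $(y_1,\phi(y_1))$ and $(y_2,\phi(y_2))$ inside $\sigma_1$ with $y_1\neq y_2\in\tau^0$, and here the explicit inversion formula above recovers $(t,s,z)$ uniquely from the barycentric coordinates in $\sigma_1$ of any point, forcing $y_1=y_2$. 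Once $\psi$ is established as a continuous semialgebraic bijection, compactness of $\sigma_2$ and Hausdorffness of $\sigma_1\cup\sigma_2$ upgrade it to the desired semialgebraic homeomorphism.
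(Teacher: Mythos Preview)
Your argument is correct and genuinely different from the paper's. The paper normalizes so that $\tau\subset\{x_n=0\}$ with barycenter at the origin, $v_1=-e_n$, $v_2=e_n$, then triangulates $\sigma_2$ by coning the barycenter $w_0$ of $\sigma_2$ over the facets of $\tau$ (simplices $\eta_{1i}=[0,w_0,\dots,\hat w_i,\dots]$ and $\eta_{2i}=[e_n,w_0,\dots,\hat w_i,\dots]$), triangulates $\ol{\Dd}$ the same way with $0$ replaced by $-e_n$, and defines $\psi$ as the obvious \emph{piecewise-affine} map fixing each $\eta_{2i}$ and sending $0\mapsto -e_n$ on each $\eta_{1i}$. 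Compatibility on overlaps and the boundary condition are then one-line checks.

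Your construction instead stretches each ray $[v_2,y]$ of the cone $\sigma_2=v_2*\tau$ onto the bent path $[v_2,y]\cup[y,\phi(y)]$, using the matching cone structures $\tau=c_\tau*\partial\tau$ and $\partial\sigma_1\setminus\tau^0=v_1*\partial\tau$ to build $\phi$. The price is that you must introduce the auxiliary speed function $\mu$ and verify injectivity by an explicit inversion (your recovery of $(t,u,z)$ from the barycentric coordinates of $p\in\sigma_1^0$, which indeed gives $\beta_i^y=\alpha_0/n+\alpha_i$ and hence a unique $y$). One point worth making explicit in your write-up is the continuity of the second branch as $y\to\partial\tau$ with $r\to 1$: the quotient $u=(r-\mu(y))/(1-\mu(y))$ is indeterminate there, but since $\phi(y)\to y$ on $\partial\tau$, the image $(1-u)y+u\phi(y)$ is squeezed to the correct limit regardless. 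The paper's piecewise-linear approach sidesteps these analytic verifications entirely; your approach, on the other hand, avoids any preliminary coordinate normalization and makes the ``push $\tau$ down to the far boundary of $\sigma_1$'' idea transparent.
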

\begin{proof}
The proof is conducted in several steps. Figure \ref{fig6} summarizes the followed strategy.

\paragraph{}\label{step1sim}Let $v_i$ be the vertex of $\sigma_i$ not contained in $\tau$. We may assume $\tau\subset\{\x_n=0\}$, its barycenter is the origin of $\R^n$, $v_2:=(v_{21},\ldots,v_{2n})\in\{\x_n>0\}$ and $v_1$ is $-{\tt e}_n:=(0,\ldots,0,-1)$. We claim: \em after a semialgebraic homeomorphism of $\R^n$ that keeps $\sigma_1$ invariant, we may assume $v_2={\tt e}_n:=(0,\ldots,0,1)$\em. 

Consider the semialgebraic homeomorphism 
$$
\gamma:\R^n\to\R^n,\ x:=(x_1,\ldots,x_n)\mapsto
\begin{cases} 
x-\frac{x_n}{v_{2n}}(v_{2,1},\ldots,v_{2,n-1},v_{2,n}-1)&\text{if $x_n>0$},\\
x&\text{if $x_n\leq0$},
\end{cases}
$$
whose inverse map is
$$
\gamma^{-1}:\R^n\to\R^n,\ y:=(y_1,\ldots,y_n)\mapsto
\begin{cases} 
y+y_n(v_{2,1},\ldots,v_{2,n-1},v_{2,n}-1)&\text{if $y_n>0$},\\
y&\text{if $y_n\leq0$}.
\end{cases}
$$
We have $\gamma(v_2)={\tt e}_n$ and $\gamma|_{\{\x_n\leq0\}}=\id_{\{\x_n\leq0\}}$. As $\tau\subset\{\x_n=0\}$, it holds $\gamma(\sigma_2)$ is the simplex whose vertices are those of $\tau$ and ${\tt e}_n$. 

\begin{figure}[!ht]
\begin{center}
\begin{tikzpicture}[scale=0.9]

\draw[fill=gray!60,opacity=0.75,draw=none] (0,2) -- (5,4.5) -- (3.5,2) -- (0,2);
\draw[fill=gray!30,opacity=0.75,draw=none] (0,2) -- (3.5,2) -- (1.75,0) -- (0,2);
\draw[line width=1.5pt] (0,2) -- (3.5,2);
\draw[line width=1.5pt] (0,2) -- (5,4.5);
\draw[line width=1.5pt] (3.5,2) -- (5,4.5);
\draw[line width=1.5pt,dashed] (0,2) -- (1.75,0);
\draw[line width=1.5pt,dashed] (1.75,0) -- (3.5,2);

\draw[fill=gray!90,opacity=0.75,draw=none] (7,2) -- (8.75,4) -- (10.5,2) -- (8.75,2.75) -- (7,2);
\draw[fill=gray!40,opacity=0.75,draw=none] (7,2) -- (8.75,0) -- (10.5,2) -- (8.75,2.75) -- (7,2);

\draw[line width=1.5pt,dashed] (7,2) -- (10.5,2);
\draw[line width=1.5pt] (8.75,2.75) -- (8.75,4);
\draw[line width=1.5pt] (8.75,2.75) -- (8.75,0);
\draw[line width=1.5pt] (8.75,2.75) -- (7,2);
\draw[line width=1.5pt] (8.75,2.75) -- (10.5,2);
\draw[line width=1.5pt] (8.75,4) -- (10.5,2);
\draw[line width=1.5pt] (7,2) -- (8.75,4);
\draw[line width=1.5pt] (8.75,4) -- (10.5,2);
\draw[line width=1.5pt,dashed] (10.5,2) -- (8.75,0); 
\draw[line width=1.5pt,dashed] (8.75,0) -- (7,2);

\draw[fill=gray!90,opacity=0.75,draw=none] (13.5,2) -- (15.25,4) -- (17,2) -- (15.25,2.75) -- (13.5,2);
\draw[fill=gray!40,opacity=0.75,draw=none] (13.5,2) -- (17,2) -- (15.25,2.75) -- (13.5,2);

\draw[line width=1.5pt,dashed] (13.5,2) -- (17,2);
\draw[line width=1.5pt] (13.5,2) -- (15.25,4);
\draw[line width=1.5pt] (15.25,4) -- (17,2);
\draw[line width=1.5pt] (15.25,2.75) -- (15.25,4);
\draw[line width=1.5pt] (15.25,2.75) -- (15.25,2);
\draw[line width=1.5pt] (15.25,2.75) -- (17,2);
\draw[line width=1.5pt] (15.25,2.75) -- (13.5,2);

\draw[fill=black] (5,4.5) circle (0.75mm);
\draw[fill=black] (8.75,4) circle (0.75mm);
\draw[fill=black] (15.25,4) circle (0.75mm);
\draw[fill=black] (15.25,2.75) circle (0.75mm);
\draw[fill=black] (8.75,2.75) circle (0.75mm);

\draw[fill=white,draw] (0,2) circle (0.75mm);
\draw[fill=white,draw] (3.5,2) circle (0.75mm);
\draw[fill=white,draw] (1.75,0) circle (0.75mm);
\draw[fill=white,draw] (7,2) circle (0.75mm);
\draw[fill=white,draw] (10.5,2) circle (0.75mm);
\draw[fill=white,draw] (8.75,0) circle (0.75mm);
\draw[fill=white,draw] (13.5,2) circle (0.75mm);
\draw[fill=white,draw] (15.25,2) circle (0.75mm);
\draw[fill=white,draw] (17,2) circle (0.75mm);

\draw (1.75,1.75) node{\small$\tau$};
\draw (15.25,1.75) node{\small$\tau$};
\draw (8.25,1.75) node{\small$\tau$};
\draw (1.75,1) node{\small$\sigma_1^0$};
\draw (1.5,3.5) node{\small$\Dd$};
\draw (7.5,0.75) node{\small$\sigma_1^0$};
\draw (3.25,3) node{\small$\sigma_2$};
\draw (6.75,3.25) node{\small$\gamma(\sigma_2)\leadsto\sigma_2$};
\draw (14.25,3.75) node{\small$\sigma_2\setminus\tau$};
\draw (5.25,4.75) node{\small$v_2$};
\draw (8.5,4.3) node{\small$\gamma(v_2)\leadsto v_2={\tt e}_n$};
\draw (15.25,4.25) node{\small${\tt e}_n$};
\draw (14.9,2.85) node{\small$w_0$};
\draw (2.75,0) node{\small$v_1=-{\tt e}_n$};
\draw (9.75,0) node{\small$v_1=-{\tt e}_n$};

\draw[->, line width=1pt] (4.5,2.25) -- (6.5,2.25);
\draw[->, line width=1pt] (6.5,1.75) -- (4.5,1.75);
\draw[<-, line width=1pt] (11,2.25) -- (13,2.25);
\draw[<-, line width=1pt] (13,1.75) -- (11,1.75);

\draw (5.5,2.5) node{\small$\gamma$};
\draw (5.5,1.5) node{\small$\gamma^{-1}$};
\draw (12,2.5) node{\small$\psi$};
\draw (12,1.5) node{\small$\psi^{-1}$};

\draw (9.25,3) node{\small$\eta_{2i}$};
\draw (9.25,1.5) node{\small$\epsilon_{1i}$};

\draw (15.75,3) node{\small$\eta_{2i}$};
\draw (15.75,2.25) node{\small$\eta_{1i}$};

\end{tikzpicture}
\end{center}
\caption{Construction of the homeomorphisms between $\Dd$ and $\sigma_2\setminus\tau$.\label{fig6}}
\end{figure}
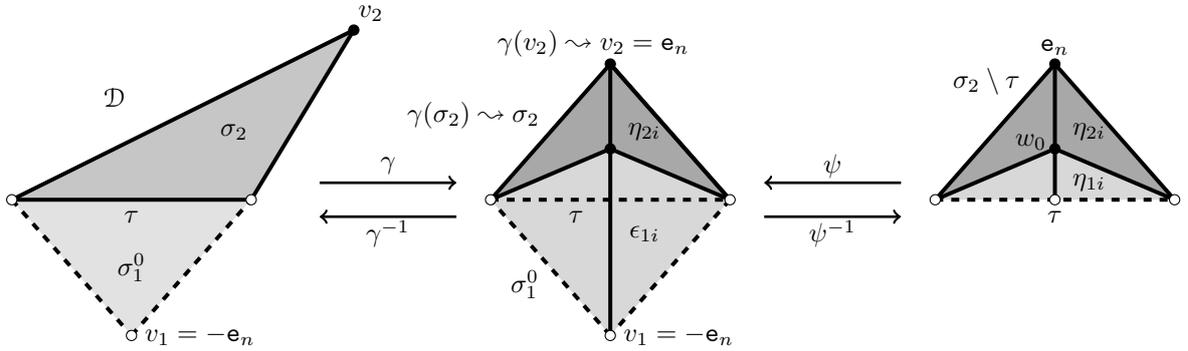

\paragraph{} Denote the simplex whose vertices are the affinely independent points $p_1,\ldots,p_k\in\R^n$ with $[p_1,\ldots,p_k]$. Let $w_1,\ldots,w_n$ be the vertices of the $(n-1)$-simplex $\tau$. The barycenter of $\tau$ is the origin, so the barycenter of $\sigma_2$ is $w_0:=\frac{1}{n+1}{\tt e}_n$. Consider the $n$-simplices
\begin{align*}
&\eta_{1i}:=[0,w_0,w_1,\ldots,w_{i-1},w_{i+1},\ldots,w_n]\quad i=1,\ldots,n,\\
&\eta_{2i}:=[{\tt e}_n,w_0,w_1,\ldots,w_{i-1},w_{i+1},\ldots,w_n]\quad i=1,\ldots,n.
\end{align*}
The family $\{\eta_{11},\ldots,\eta_{1n},\eta_{21},\ldots,\eta_{2n}\}$ provides a triangulation of $\sigma_2$. Consider also the $n$-simplices 
$$
\epsilon_{1i}:=[-{\tt e}_n,w_0,w_1,\ldots,w_{i-1},w_{i+1},\ldots,w_n]\quad i=1,\ldots,n.
$$
Our choices done in \ref{step1sim} assure that the collection $\{\epsilon_{11},\ldots,\epsilon_{1n},\eta_{21},\ldots,\eta_{2n}\}$ provides a triangulation of $\ol{\Dd}$. 

\paragraph{} Fix $i=1,\ldots,n$ and take barycentric coordinates in $\R^n$ with respect to the affine basis $\Bb_i:=\{0,w_0,w_1,\ldots,w_{i-1},w_{i+1},\ldots,w_n\}$ and consider the affine isomorphism of $\R^n$ given by
$$
\psi_i:\R^n\to\R^n,\ \Big(1-\sum_{k\neq i}\lambda_k\Big)0+\sum_{k\neq i}\lambda_kw_k\mapsto\Big(1-\sum_{k\neq i}\lambda_k\Big)(-{\tt e}_n)+\sum_{k\neq i}\lambda_kw_k. 
$$
Observe that $\psi_i(\eta_{1i})=\epsilon_{1i}$, $\psi_i|_{\eta_{1i}\cap\eta_{1j}}=\psi_j|_{\eta_{1i}\cap\eta_{1j}}$ and $\psi_i|_{\eta_{1i}\cap\eta_{2j}}=\id_{\eta_{1i}\cap\eta_{2j}}$ for $1\leq i,j\leq n$. Consequently, the semialgebraic map
$$
\psi:\sigma_2\to\ol{\Dd},\ x\mapsto
\begin{cases}
\psi_i(x)&\text{if $x\in\eta_{1i}$ for $i=1,\dots,n$,}\\
x&\text{if $x\in\eta_{2i}$ for $i=1,\dots,n$}
\end{cases}
$$
is a well-defined homeomorphism such that $\psi(\sigma_2\setminus\tau)=\Dd$ and $\psi|_{\partial\sigma_2\setminus\tau^0}=\id_{\partial\sigma_2\setminus\tau^0}$, as required.
\end{proof}

To take advantage of the full strength of Lemma \ref{homeo} we need the following result to subdivide simplices in the appropriate way, see Figure \ref{fig7}.

\begin{lem}\label{divide}
Let $\sigma\subset\R^n$ be a simplex of dimension $n$ and let $\tau_1,\ldots,\tau_k$ be facets of $\sigma$ for some $k=1,\ldots,n+1$. Let $\epsilon$ be either the intersection of the remaining facets $\tau_{k+1},\ldots,\tau_{n+1}$ if $k<n+1$ or $\sigma$ if $k=n+1$. Let $b$ be the barycenter of $\epsilon$ and let $\eta_i$ be the convex hull of $\tau_i\cup\{b\}$ for $i=1,\ldots,k$. Then the simplices $\eta_1,\ldots,\eta_k$ provide a triangulation of $\sigma$.
\end{lem}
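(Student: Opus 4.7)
\medskip

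\noindent\textbf{Proof plan.} The approach is a direct computation in barycentric coordinates. Let $v_1,\dots,v_{n+1}$ denote the vertices of $\sigma$, with the convention that $v_i\notin \tau_i$, so that $\tau_i$ is the convex hull of $\{v_j:j\neq i\}$. First I would identify the face $\epsilon$: since $\tau_j$ (for $j\ge k+1$) is exactly the set of points whose $v_j$-barycentric coordinate vanishes, the intersection $\bigcap_{j\ge k+1}\tau_j$ equals $\mathrm{conv}(v_1,\dots,v_k)$, and thus $b=\tfrac{1}{k}(v_1+\cdots+v_k)$. In particular the $\sigma$-barycentric coordinates of $b$ are $(1/k,\dots,1/k,0,\dots,0)$, which are not supported on $\{v_j:j\neq i\}$ for any $i\le k$. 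By the uniqueness of barycentric coordinates in $\sigma$ this shows $b$ is affinely independent from the vertices of $\tau_i$, so each $\eta_i$ is genuinely an $n$-simplex with vertex set $(\{v_j\}_{j\neq i})\cup\{b\}$.

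Next I would write the change of coordinates between $\sigma$ and each $\eta_i$. A point $x\in\eta_i$ is $x=\mu b+\sum_{j\neq i}\mu_j v_j$ with $\mu,\mu_j\ge 0$ and $\mu+\sum_{j\neq i}\mu_j=1$; substituting $b=\tfrac{1}{k}\sum_{\ell=1}^k v_\ell$ gives the $\sigma$-barycentric coordinates
\[
\lambda_i=\tfrac{\mu}{k},\qquad \lambda_j=\mu_j+\tfrac{\mu}{k}\ (1\le j\le k,\ j\neq i),\qquad \lambda_j=\mu_j\ (j>k).
\]
The key observation is therefore:
\[
x\in\eta_i\iff \lambda_i=\min\{\lambda_1,\dots,\lambda_k\},
\]
because the system above has a non-negative solution $(\mu,\mu_j)$ exactly when $\lambda_j\ge\lambda_i$ for all $j\le k$, $j\neq i$, and in that case the solution is uniquely $\mu=k\lambda_i$, $\mu_j=\lambda_j-\lambda_i$ for $j\le k$ and $\mu_j=\lambda_j$ for $j>k$.

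From this characterisation the three requirements for a triangulation follow immediately. Covering: every $x\in\sigma$ belongs to $\eta_i$ for any index $i\le k$ realising $\min\{\lambda_1,\dots,\lambda_k\}$, so $\sigma=\bigcup_{i=1}^k\eta_i$. Disjoint relative interiors: $x\in\eta_i^0$ forces $\mu>0$ and $\mu_j>0$ for $j\neq i$, i.e.\ $\lambda_i$ is the \emph{strict} minimum among $\lambda_1,\dots,\lambda_k$, which clearly fails simultaneously for two different indices. Face condition: for $i\neq j$, the identity
\[
\eta_i\cap\eta_j=\{x\in\sigma:\lambda_i=\lambda_j=\min\{\lambda_1,\dots,\lambda_k\}\}
\]
obtained from the criterion above equals the convex hull of the common vertices $(\{v_\ell\}_{\ell\neq i,j})\cup\{b\}$, which is simultaneously the face of $\eta_i$ opposite $v_j$ and the face of $\eta_j$ opposite $v_i$. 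An analogous argument handles intersections of more than two $\eta_i$'s. Since the argument is essentially bookkeeping in barycentric coordinates, no step presents a genuine obstacle; the only mildly delicate point is keeping straight the two special cases $k=n+1$ (where $\epsilon=\sigma$, $b$ is the barycentre of $\sigma$, and one recovers the classical stellar subdivision at the barycentre) and $k=1$ (where $\eta_1=\sigma$ and there is nothing to prove).
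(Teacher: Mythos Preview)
Your proof is correct and takes a genuinely different route from the paper's. The paper argues geometrically: it introduces the opposite face $\theta=\tau_1\cap\cdots\cap\tau_k$, the affine spans $W$ of $\epsilon$ and $L$ of $\theta$, and a projection $\pi:\R^n\setminus E\to W$ along $L$ to show $\sigma=\bigcup_i\eta_i$; the face-intersection condition is then handled by a case analysis on whether the faces in question contain $b$ or not. Your approach is purely algebraic, turning on the single criterion $x\in\eta_i\iff\lambda_i=\min\{\lambda_1,\dots,\lambda_k\}$ in $\sigma$-barycentric coordinates, from which covering, disjoint interiors, and the identification of $\eta_i\cap\eta_j$ with the common facet opposite $v_i$ and $v_j$ drop out at once. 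Your argument is shorter and gives a clean combinatorial picture of the subdivision (it is essentially the ``first-$k$-coordinates'' version of the barycentric-subdivision argument), while the paper's projection viewpoint makes the geometry of coning over $\epsilon$ from $\theta$ more visible. One small remark: the paper phrases the face condition as ``any face of $\eta_i$ meets any face of $\eta_j$ in a common face''; you only verify $\eta_i\cap\eta_j$ is a common face, but this suffices by the standard reduction (intersecting with the common face $\eta_i\cap\eta_j$ first), so there is no gap.
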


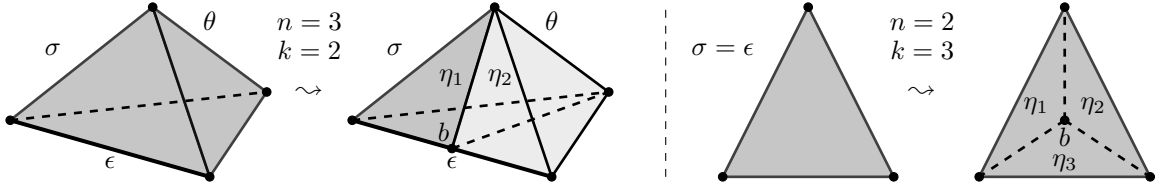
\begin{figure}[!ht]
\begin{center}
\begin{tikzpicture}[scale=0.75]

\draw[fill=gray!60,opacity=0.75,line width=1pt,draw] (0,1) -- (3.5,0) -- (2.5,3) -- (0,1);
\draw[fill=gray!60,opacity=0.75,line width=1pt,draw] (3.5,0) -- (2.5,3) -- (4.5,1.5) -- (3.5,0);
\draw[line width=1pt,dashed] (0,1) -- (4.5,1.5);
\draw[line width=1.5pt] (0,1) -- (3.5,0);

\draw[fill=black] (0,1) circle (0.75mm);
\draw[fill=black] (3.5,0) circle (0.75mm);
\draw[fill=black] (2.5,3) circle (0.75mm);
\draw[fill=black] (4.5,1.5) circle (0.75mm);
\draw (1.75,0.25) node{\small$\epsilon$};
\draw (0.75,2.25) node{\small$\sigma$};
\draw (3.5,2.75) node{\small$\theta$};

\draw[fill=gray!60,opacity=0.75,line width=1pt,draw] (6,1) -- (7.75,0.5) -- (8.5,3) -- (6,1);
\draw[fill=gray!30,opacity=0.5,line width=1pt,draw=none] (7.75,0.5) -- (8.5,3) -- (10.5,1.5) -- (9.5,0) -- (7.75,0.5);

\draw[line width=1pt,dashed] (6,1) -- (10.5,1.5);
\draw[line width=1.5pt] (6,1) -- (9.5,0);
\draw[line width=1pt] (7.75,0.5) -- (8.5,3);
\draw[line width=1pt] (9.5,0) -- (8.5,3) -- (10.5,1.5) -- (9.5,0);
\draw[line width=1pt,dashed] (7.75,0.5) -- (10.5,1.5);

\draw[fill=black] (6,1) circle (0.75mm);
\draw[fill=black] (9.5,0) circle (0.75mm);
\draw[fill=black] (8.5,3) circle (0.75mm);
\draw[fill=black] (7.75,0.5) circle (0.75mm);
\draw[fill=black] (10.5,1.5) circle (0.75mm);
\draw (7.75,0.25) node{\small$\epsilon$};
\draw (6.75,2.25) node{\small$\sigma$};
\draw (9.5,2.75) node{\small$\theta$};
\draw (5.25,1.5) node{\small$\leadsto$};
\draw (5.25,2.75) node{\small$n=3$};
\draw (5.25,2.25) node{\small$k=2$};
\draw (7.6,0.8) node{\small$b$};
\draw (7.75,1.75) node{\small$\eta_1$};
\draw (8.6,1.75) node{\small$\eta_2$};

\draw[dashed] (11.5,0) -- (11.5,3);

\draw[fill=gray!60,opacity=0.75,line width=1pt,draw] (12.5,0) -- (14,3) -- (15.5,0) -- (12.5,0);

\draw[fill=black] (12.5,0) circle (0.75mm);
\draw[fill=black] (14,3) circle (0.75mm);
\draw[fill=black] (15.5,0) circle (0.75mm);

\draw[fill=gray!60,opacity=0.75,line width=1pt,draw] (17,0) -- (18.5,3) -- (20,0) -- (17,0);

\draw[fill=black] (17,0) circle (0.75mm);
\draw[fill=black] (18.5,3) circle (0.75mm);
\draw[fill=black] (20,0) circle (0.75mm);
\draw[fill=black] (18.5,1) circle (0.75mm);
\draw[line width=1pt,dashed] (18.5,1) -- (17,0);
\draw[line width=1pt,dashed] (18.5,1) -- (18.5,3);
\draw[line width=1pt,dashed] (18.5,1) -- (20,0);

\draw (16,1.5) node{\small$\leadsto$};
\draw (16,2.75) node{\small$n=2$};
\draw (16,2.25) node{\small$k=3$};
\draw (12.5,2.25) node{\small$\sigma=\epsilon$};
\draw (18.5,0.7) node{\small$b$};
\draw (18,1.25) node{\small$\eta_1$};
\draw (19,1.25) node{\small$\eta_2$};
\draw (18.5,0.25) node{\small$\eta_3$};

\end{tikzpicture}
\end{center}
\caption{Triangulation of $\sigma$ induced by $\epsilon:=\tau_{k+1}\cap\cdots\cap\tau_{n+1}$.\label{fig7}}
\end{figure}

\begin{proof}
We have to prove: \em $\sigma=\bigcup_{i=1}^k\eta_i$ and if $\rho_1$ is a face of $\eta_i$ and $\rho_2$ is a face of $\eta_j$, then $\rho_1\cap\rho_2$ is either the empty-set or a common face of $\rho_1$ and $\rho_2$\em. 

\paragraph{} If $k=n+1$, then $\sigma=\epsilon$ and $b$ is the barycenter of $\sigma$, so $\eta_1,\ldots,\eta_{n+1}$ provide a triangulation of $\sigma$. We assume in the following $k<n+1$. Let $W$ be the affine subspace of $\R^n$ generated by $\epsilon$ and let $L$ be the affine subspace generated by $\theta:=\tau_1\cap\cdots\cap\tau_k$. We have $\dim(W)=k-1$, $\dim(L)=n-k$ and $W\cap L=\varnothing$. Let $E$ be the affine hyperplane of $\R^n$ that contains $L$ and is parallel to $W$. Consider the projection 
$$
\pi:\R^n\setminus E\to W,\ x\mapsto (\{x\}+L)\cap W,
$$
where $\{x\}+L$ denotes the affine subspace of $\R^n$ generated by $\{x\}\cup L$. We claim: $\pi(\sigma\setminus E)=\epsilon$. As $\pi|_W=\id_W$ and $\epsilon\subset W$, it is enough to check that $\pi(\sigma\setminus E)\subset\epsilon$. 

It holds: \em the vertices of $\sigma$ are either vertices of $\epsilon$ or $\theta$\em.

Both $\epsilon$ and $\theta$ are faces of $\sigma$. The $k$ vertices of $\epsilon$ (it is a simplex of dimension $k-1$) are vertices of $\sigma$ and the $n-k+1$ vertices of $\theta$ (it is a simplex of dimension $n-k$) are vertices of $\sigma$. We have $k+(n-k+1)=n+1$ vertices. As $\epsilon\cap\theta=\varnothing$, we have all the vertices of $\sigma$. 

Denote the vertices of $\epsilon$ with $v_1,\ldots,v_k$ and the vertices of $\theta$ with $v_{k+1},\ldots,v_{n+1}$. For each $x\in\sigma\setminus E$ there exist $\lambda_i\geq0$ such that $\sum_{i=1}^{n+1}\lambda_i=1$ and $x=\sum_{i=1}^{n+1}\lambda_iv_i$. As $\sigma\cap E=\theta$ and $x\in\sigma\setminus E$, we have $\mu:=\sum_{i=1}^k\lambda_i>0$. If $\mu=1$, then $x\in\epsilon$ and $\pi(x)=x\in\epsilon$. If $\mu<1$, consider the points
$$
p=\sum_{i=1}^k\tfrac{\lambda_i}{\mu}v_i\in\epsilon\subset W\quad\text{and}\quad q=\sum_{i=k+1}^{n+1}\tfrac{\lambda_i}{1-\mu}v_i\in\theta\subset L
$$ 
that satisfy $x=\mu p+(1-\mu)q$. Consequently, 
$$
p=\tfrac{1}{\mu}x+\tfrac{\mu-1}{\mu}q\in(\{x\}+L)\cap W\quad\text{and}\quad\pi(x)=p\in\epsilon.
$$

\paragraph{} We are ready to prove $\sigma=\bigcup_{i=1}^k\eta_i$. Let $x\in\sigma$. If $x\in\theta$, then $x\in\eta_1\cap\cdots\cap\eta_k$. So we assume $x\not\in\theta$. Consider the simplex $\rho$ of base $\theta$ and vertex $\pi(x)\in\epsilon$. Observe that $x\in\rho$ and we may write $x=\alpha\pi(x)+(1-\alpha)y$ for some $y\in\theta$ and $\alpha\in[0,1]$. The $k$ facets of $\epsilon$ are the intersections of $\epsilon$ with the facets $\tau_1,\ldots,\tau_k$ of $\sigma$. In addition, $\eta_i\cap\epsilon$ is the cone of base $\tau_i\cap\epsilon$ (a facet of $\epsilon$) and vertex $b$ (the barycenter of $\epsilon$). Consequently, $\epsilon=\bigcup_{i=1}^k(\eta_i\cap\epsilon)$ and we assume that $\pi(x)\in\eta_1\cap\epsilon$. As $\pi(x)\in\eta_1$ and $y\in\theta\subset\tau_1\subset\eta_1$, we conclude $x=\alpha\pi(x)+(1-\alpha)y\in\eta_1$.

\paragraph{}Let $\rho_1$ be a face of $\eta_i$ and $\rho_2$ a face of $\eta_j$. We have to prove that $\rho_1\cap\rho_2$ is either the empty-set or a common face of $\rho_1$ and $\rho_2$.

Observe that $\rho_i$ is either a face of $\sigma$, the vertex $b$ or the cone of basis a face of $\sigma$ and vertex $b$. 

(1) Suppose $\rho_1=\{b\}$. Then $\rho_1\cap\rho_2$ is either the empty-set or $\{b\}$.

(2) Suppose $\rho_1$ is a face of $\sigma$. Then $\rho_1\cap\rho_2=\rho_1\cap\rho_2\cap\sigma$ is the intersection of two faces of $\sigma$, so it is either the empty-set or a face of $\sigma$, which is a common face of $\rho_1$ and $\rho_2$.

(3) Suppose $\rho_1$ and $\rho_2$ are cones over faces $\delta_1$ and $\delta_2$ of $\sigma$ with vertex $\{b\}$. Consequently, $\rho_1\cap\rho_2$ is either $\{b\}$ or the cone of base $\delta_1\cap\delta_2$ and vertex $\{b\}$, which is a face of $\rho_1$ and $\rho_2$, as required.
\end{proof}

\subsection{Proof of Theorem \ref{mstone0}}\setcounter{paragraph}{0}
The proof is conducted in several steps. Figure \ref{fig8} summarizes the followed strategy.

\paragraph{}We may assume $M$ is bounded in $\R^m$. By \cite[Thm.9.2.1 \& Rmk.9.2.3]{bcr} there exist a finite simplicial complex $K$ and a semialgebraic homeomorphism $\Phi:|K|\to\cl(M)$ such that
\begin{itemize}
\item The semialgebraic sets $M$ and $\cl(M)\setminus M$ are finite unions of images $\Phi(\sigma^0)$ where $\sigma\in K$.
\item The restriction $\Phi|_{\sigma^0}:\sigma^0\to\cl(M)$ is a Nash embedding for each $\sigma\in K$.
\end{itemize}
Denote the simplices of dimension $d$ of $K$ with $\sigma_1,\ldots,\sigma_r$.

\paragraph{}\label{ordersim} Let $E$ be the union of the simplices of $K$ of dimension $\leq d-2$. Denote $\sigma_i':=(\Phi^{-1}(M)\setminus E)\cap\sigma_i$. Let us \em reorder the indices $1,\ldots,r$ in such a way that $\sigma_i'$ shares the interior of a face of dimension $d-1$ with $\bigcup_{j=1}^{i-1}\sigma_j'$ for $i=2,\ldots,r$\em. Consequently, \em $M\setminus\bigcup_{j=i+1}^r\Phi(\sigma_i)$ is connected for $i=1,\ldots,r-1$\em. 

Indeed, as $M$ is a connected Nash manifold, $\Phi^{-1}(M)\setminus E$ is connected. Fix $1\leq s<r$ and assume that we have ordered the simplices $\sigma_1,\ldots,\sigma_s$ in such a way that $\sigma_i'$ shares the interior of a face of dimension $d-1$ with $\bigcup_{j=1}^{i-1}\sigma_j'$ for $i=2,\ldots,s$. Let $C_1:=\bigcup_{j=1}^s\sigma_j'$ and $C_2:=\bigcup_{j=s+1}^r\sigma_j'$. As $\Phi^{-1}(M)\setminus E=C_1\cup C_2$ is connected and $C_1,C_2$ are closed in $\Phi^{-1}(M)\setminus E$, we may assume that $C_1\cap\sigma_{s+1}'\neq\varnothing$, so $\sigma_{s+1}'$ shares the interior of a face of dimension $d-1$ with $\bigcup_{j=1}^s\sigma_j'$. This is so because $K$ is a triangulation of $\cl(M)$ and $(\Phi^{-1}(M)\setminus E)\cap\eta=\varnothing$ for each face $\eta$ of dimension $\leq d-2$. Proceeding recursively \ref{ordersim} follows.

\paragraph{}We proceed by induction on $r$ to prove the statement. If $r=1$, then $M$ is Nash diffeomorphic to an open simplex, so by Lemma \ref{simnashim} $M$ is Nash diffeomorphic to $\R^d$. Assume the result true for $r-1$. As $M\setminus\Phi(\sigma_r)$ is connected, there exists a surjective Nash map $h_1:\R^d\to M\setminus\Phi(\sigma_r)$. Let us check that the statement is also true for $r$. To that end, it is enough to show that \em $M$ is the image under a surjective Nash map $h_0:M\setminus\Phi(\sigma_r)\to M$\em. Once this is done, $h:=h_0\circ h_1:\R^d\to M$ is the surjective Nash map we sought. 

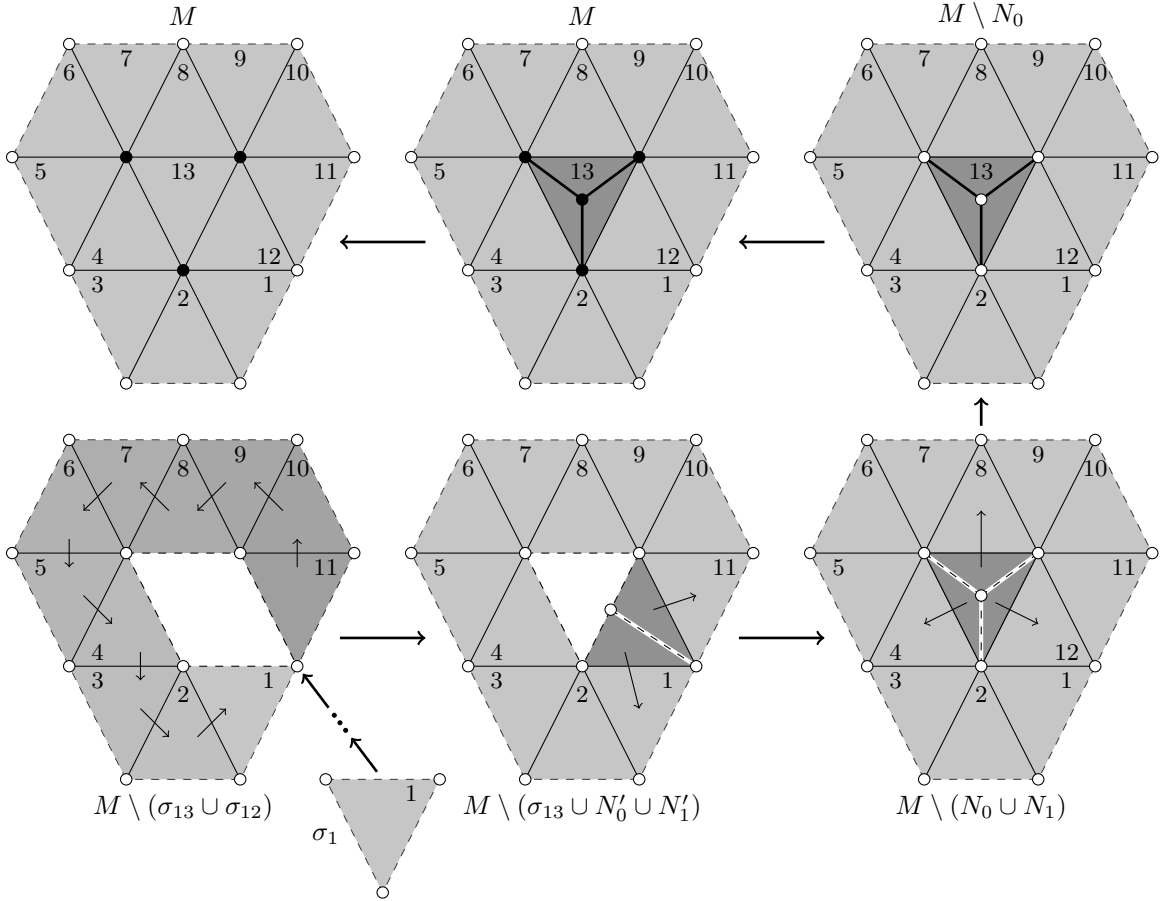
\begin{figure}[!ht]
\begin{center}
\begin{tikzpicture}[scale=0.75]


\draw[fill=gray!60,opacity=0.75,dashed] (2,9) -- (0,13) -- (1,15) -- (5,15) -- (6,13) -- (4,9) -- (2,9);
\draw (1,15) -- (4,9);
\draw (3,15) -- (5,11);
\draw (0,13) -- (6,13);
\draw (5,15) -- (2,9);
\draw (1,11) -- (5,11);
\draw (3,15) -- (1,11);

\draw[fill=white,draw] (2,9) circle (1mm);
\draw[fill=white,draw] (0,13) circle (1mm);
\draw[fill=white,draw] (1,15) circle (1mm);
\draw[fill=white,draw] (5,15) circle (1mm);
\draw[fill=white,draw] (6,13) circle (1mm);
\draw[fill=white,draw] (4,9) circle (1mm);
\draw[fill=white,draw] (3,15) circle (1mm);
\draw[fill=white,draw] (1,11) circle (1mm);
\draw[fill=white,draw] (5,11) circle (1mm);

\draw[fill=black,draw] (3,11) circle (1mm);
\draw[fill=black,draw] (2,13) circle (1mm);
\draw[fill=black,draw] (4,13) circle (1mm);

\draw (4.5,10.75) node{\footnotesize$1$};
\draw (3,10.5) node{\footnotesize$2$};
\draw (1.5,10.75) node{\footnotesize$3$};
\draw (1.5,11.25) node{\footnotesize$4$};
\draw (0.5,12.75) node{\footnotesize$5$};
\draw (1,14.5) node{\footnotesize$6$};
\draw (2,14.75) node{\footnotesize$7$};
\draw (3,14.5) node{\footnotesize$8$};
\draw (4,14.75) node{\footnotesize$9$};
\draw (5,14.5) node{\footnotesize$10$};
\draw (5.5,12.75) node{\footnotesize$11$};
\draw (4.5,11.25) node{\footnotesize$12$};
\draw (3,12.75) node{\footnotesize$13$};
\draw (3,15.5) node{\small$M$};


\draw[fill=gray!60,opacity=0.75,dashed] (9,9) -- (7,13) -- (8,15) -- (12,15) -- (13,13) -- (11,9) -- (9,9);
\draw (8,15) -- (11,9);
\draw (10,15) -- (12,11);
\draw (7,13) -- (13,13);
\draw (12,15) -- (9,9);
\draw (8,11) -- (12,11);
\draw (10,15) -- (8,11);

\draw[fill=white,draw] (9,9) circle (1mm);
\draw[fill=white,draw] (7,13) circle (1mm);
\draw[fill=white,draw] (8,15) circle (1mm);
\draw[fill=white,draw] (12,15) circle (1mm);
\draw[fill=white,draw] (13,13) circle (1mm);
\draw[fill=white,draw] (11,9) circle (1mm);
\draw[fill=white,draw] (10,15) circle (1mm);
\draw[fill=white,draw] (8,11) circle (1mm);
\draw[fill=white,draw] (12,11) circle (1mm);

\draw[fill=gray,opacity=0.75] (10,11) -- (9,13) -- (11,13) -- (10,11);

\draw[fill=black,draw] (10,11) circle (1mm);
\draw[fill=black,draw] (9,13) circle (1mm);
\draw[fill=black,draw] (11,13) circle (1mm);
\draw[fill=black,draw] (10,12.25) circle (1mm);

\draw[line width=1pt] (10,11) -- (10,12.25);
\draw[line width=1pt] (9,13) -- (10,12.25);
\draw[line width=1pt] (11,13) -- (10,12.25);

\draw (11.5,10.75) node{\footnotesize$1$};
\draw (10,10.5) node{\footnotesize$2$};
\draw (8.5,10.75) node{\footnotesize$3$};
\draw (8.5,11.25) node{\footnotesize$4$};
\draw (7.5,12.75) node{\footnotesize$5$};
\draw (8,14.5) node{\footnotesize$6$};
\draw (9,14.75) node{\footnotesize$7$};
\draw (10,14.5) node{\footnotesize$8$};
\draw (11,14.75) node{\footnotesize$9$};
\draw (12,14.5) node{\footnotesize$10$};
\draw (12.5,12.75) node{\footnotesize$11$};
\draw (11.5,11.25) node{\footnotesize$12$};
\draw (10,12.75) node{\footnotesize$13$};
\draw (10,15.5) node{\small$M$};


\draw[fill=gray!60,opacity=0.75,dashed] (16,9) -- (14,13) -- (15,15) -- (19,15) -- (20,13) -- (18,9) -- (16,9);
\draw (15,15) -- (18,9);
\draw (17,15) -- (19,11);
\draw (14,13) -- (20,13);
\draw (19,15) -- (16,9);
\draw (15,11) -- (19,11);
\draw (17,15) -- (15,11);

\draw[fill=white,draw] (16,9) circle (1mm);
\draw[fill=white,draw] (14,13) circle (1mm);
\draw[fill=white,draw] (15,15) circle (1mm);
\draw[fill=white,draw] (19,15) circle (1mm);
\draw[fill=white,draw] (20,13) circle (1mm);
\draw[fill=white,draw] (18,9) circle (1mm);
\draw[fill=white,draw] (17,15) circle (1mm);
\draw[fill=white,draw] (15,11) circle (1mm);
\draw[fill=white,draw] (19,11) circle (1mm);

\draw[fill=gray,opacity=0.75] (17,11) -- (16,13) -- (18,13) -- (17,11);

\draw[line width=1pt] (17,11) -- (17,12.25);
\draw[line width=1pt] (16,13) -- (17,12.25);
\draw[line width=1pt] (18,13) -- (17,12.25);

\draw[fill=white,draw] (17,11) circle (1mm);
\draw[fill=white,draw] (16,13) circle (1mm);
\draw[fill=white,draw] (18,13) circle (1mm);
\draw[fill=white,draw] (17,12.25) circle (1mm);

\draw (18.5,10.75) node{\footnotesize$1$};
\draw (17,10.5) node{\footnotesize$2$};
\draw (15.5,10.75) node{\footnotesize$3$};
\draw (15.5,11.25) node{\footnotesize$4$};
\draw (14.5,12.75) node{\footnotesize$5$};
\draw (15,14.5) node{\footnotesize$6$};
\draw (16,14.75) node{\footnotesize$7$};
\draw (17,14.5) node{\footnotesize$8$};
\draw (18,14.75) node{\footnotesize$9$};
\draw (19,14.5) node{\footnotesize$10$};
\draw (19.5,12.75) node{\footnotesize$11$};
\draw (18.5,11.25) node{\footnotesize$12$};
\draw (17,12.75) node{\footnotesize$13$};
\draw (17,15.5) node{\small$M\setminus N_0$};


\draw[dashed] (2,2) -- (0,6) -- (1,8) -- (5,8) -- (6,6) -- (4,2) -- (2,2);
\draw[fill=gray,opacity=0.75,draw=none] (5,4) -- (6,6) -- (4,6) -- (5,4);
\draw[fill=gray!96,opacity=0.75,draw=none] (6,6) -- (4,6) -- (5,8) -- (6,6);
\draw[fill=gray!92,opacity=0.75,draw=none] (4,6) -- (5,8) -- (3,8) -- (4,6);
\draw[fill=gray!88,opacity=0.75,draw=none] (3,8) -- (4,6) -- (2,6) -- (3,8);
\draw[fill=gray!84,opacity=0.75,draw=none] (2,6) -- (3,8) -- (1,8) -- (2,6);
\draw[fill=gray!80,opacity=0.75,draw=none] (1,8) -- (2,6) -- (0,6) -- (1,8);
\draw[fill=gray!76,opacity=0.75,draw=none] (2,6) -- (0,6) -- (1,4) -- (2,6);
\draw[fill=gray!72,opacity=0.75,draw=none] (1,4) -- (3,4) -- (2,6) -- (1,4);
\draw[fill=gray!68,opacity=0.75,draw=none] (1,4) -- (3,4) -- (2,2) -- (1,4);
\draw[fill=gray!64,opacity=0.75,draw=none] (2,2) -- (3,4) -- (4,2) -- (2,2);
\draw[fill=gray!60,opacity=0.75,draw=none] (3,4) -- (4,2) -- (5,4) -- (3,4);

\draw (1,8) -- (2,6);
\draw[dashed] (2,6) -- (3,4);
\draw (3,4) -- (4,2);
\draw (3,8) -- (4,6);
\draw[dashed] (4,6) -- (5,4);
\draw (0,6) -- (2,6);
\draw[dashed] (2,6) -- (4,6);
\draw (4,6) -- (6,6);
\draw (5,8) -- (4,6);
\draw (3,4) -- (2,2);
\draw (1,4) -- (3,4);
\draw[dashed] (3,4) -- (5,4);
\draw (3,8) -- (1,4);

\draw[fill=white,draw] (2,2) circle (1mm);
\draw[fill=white,draw] (0,6) circle (1mm);
\draw[fill=white,draw] (1,8) circle (1mm);
\draw[fill=white,draw] (5,8) circle (1mm);
\draw[fill=white,draw] (6,6) circle (1mm);
\draw[fill=white,draw] (4,2) circle (1mm);
\draw[fill=white,draw] (3,8) circle (1mm);
\draw[fill=white,draw] (1,4) circle (1mm);
\draw[fill=white,draw] (5,4) circle (1mm);

\draw[fill=white,draw] (3,4) circle (1mm);
\draw[fill=white,draw] (2,6) circle (1mm);
\draw[fill=white,draw] (4,6) circle (1mm);

\draw (4.5,3.75) node{\footnotesize$1$};
\draw (3,3.5) node{\footnotesize$2$};
\draw (1.5,3.75) node{\footnotesize$3$};
\draw (1.5,4.25) node{\footnotesize$4$};
\draw (0.5,5.75) node{\footnotesize$5$};
\draw (1,7.5) node{\footnotesize$6$};
\draw (2,7.75) node{\footnotesize$7$};
\draw (3,7.5) node{\footnotesize$8$};
\draw (4,7.75) node{\footnotesize$9$};
\draw (5,7.5) node{\footnotesize$10$};
\draw (5.5,5.75) node{\footnotesize$11$};
\draw (3,1.5) node{\small$M\setminus(\sigma_{13}\cup\sigma_{12})$};


\draw[->] (5,5.75) -- (5,6.25);
\draw[->] (4.75,6.75) -- (4.25,7.25);
\draw[->] (3.75,7.25) -- (3.25,6.75);
\draw[->] (2.75,6.75) -- (2.25,7.25);
\draw[->] (1.75,7.25) -- (1.25,6.75);
\draw[->] (1,6.25) -- (1,5.75);
\draw[->] (1.25,5.25) -- (1.75,4.75);
\draw[->] (2.25,4.25) -- (2.25,3.75);
\draw[->] (2.25,3.25) -- (2.75,2.75);
\draw[->] (3.25,2.75) -- (3.75,3.25);


\draw[fill=gray!60,opacity=0.75,dashed] (9,2) -- (7,6) -- (8,8) -- (12,8) -- (13,6) -- (11,2) -- (9,2);
\draw[fill=white,draw=none] (10,4) -- (9,6) -- (11,6) -- (10,4);
\draw[fill=gray,opacity=0.75,draw=none] (10,4) -- (12,4) -- (11,6) -- (10,4);

\draw (8,8) -- (9,6);
\draw[dashed] (9,6) -- (10,4);
\draw (10,4) -- (11,2);
\draw (10,8) -- (12,4);
\draw (7,6) -- (9,6);
\draw[dashed] (9,6) -- (11,6);
\draw (11,6) -- (13,6);
\draw (12,8) -- (11,6);
\draw[dashed] (11,6) -- (10,4);
\draw (10,4) -- (9,2);
\draw (8,4) -- (12,4);
\draw (10,8) -- (8,4);

\draw[line width=1.5pt,white] (10.5,5) -- (12,4);
\draw[dashed] (10.5,5) -- (12,4);

\draw[fill=white,draw] (9,2) circle (1mm);
\draw[fill=white,draw] (7,6) circle (1mm);
\draw[fill=white,draw] (8,8) circle (1mm);
\draw[fill=white,draw] (12,8) circle (1mm);
\draw[fill=white,draw] (13,6) circle (1mm);
\draw[fill=white,draw] (11,2) circle (1mm);
\draw[fill=white,draw] (10,8) circle (1mm);
\draw[fill=white,draw] (8,4) circle (1mm);
\draw[fill=white,draw] (12,4) circle (1mm);

\draw[fill=white,draw] (10,4) circle (1mm);
\draw[fill=white,draw] (9,6) circle (1mm);
\draw[fill=white,draw] (11,6) circle (1mm);

\draw[fill=white,draw] (10.5,5) circle (1mm);

\draw (11.5,3.75) node{\footnotesize$1$};
\draw (10,3.5) node{\footnotesize$2$};
\draw (8.5,3.75) node{\footnotesize$3$};
\draw (8.5,4.25) node{\footnotesize$4$};
\draw (7.5,5.75) node{\footnotesize$5$};
\draw (8,7.5) node{\footnotesize$6$};
\draw (9,7.75) node{\footnotesize$7$};
\draw (10,7.5) node{\footnotesize$8$};
\draw (11,7.75) node{\footnotesize$9$};
\draw (12,7.5) node{\footnotesize$10$};
\draw (12.5,5.75) node{\footnotesize$11$};
\draw (10,1.5) node{\small$M\setminus(\sigma_{13}\cup N_0'\cup N_1')$};

\draw[->] (11.25,5) -- (12,5.25);
\draw[->] (10.75,4.25) -- (11,3.25);


\draw[fill=gray!60,opacity=0.75,dashed] (16,2) -- (14,6) -- (15,8) -- (19,8) -- (20,6) -- (18,2) -- (16,2);
\draw (15,8) -- (18,2);
\draw (17,8) -- (19,4);
\draw (14,6) -- (20,6);
\draw (19,8) -- (16,2);
\draw (15,4) -- (19,4);
\draw (17,8) -- (15,4);

\draw[fill=white,draw] (16,2) circle (1mm);
\draw[fill=white,draw] (14,6) circle (1mm);
\draw[fill=white,draw] (15,8) circle (1mm);
\draw[fill=white,draw] (19,8) circle (1mm);
\draw[fill=white,draw] (20,6) circle (1mm);
\draw[fill=white,draw] (18,2) circle (1mm);
\draw[fill=white,draw] (17,8) circle (1mm);
\draw[fill=white,draw] (15,4) circle (1mm);
\draw[fill=white,draw] (19,4) circle (1mm);

\draw[fill=gray,opacity=0.75] (17,4) -- (16,6) -- (18,6) -- (17,4);

\draw[line width=1.5pt,white] (17,4) -- (17,5.25);
\draw[line width=1.5pt,white] (16,6) -- (17,5.25);
\draw[line width=1.5pt,white] (18,6) -- (17,5.25);

\draw[dashed] (17,4) -- (17,5.25);
\draw[dashed] (16,6) -- (17,5.25);
\draw[dashed] (18,6) -- (17,5.25);

\draw[fill=white,draw] (17,4) circle (1mm);
\draw[fill=white,draw] (16,6) circle (1mm);
\draw[fill=white,draw] (18,6) circle (1mm);
\draw[fill=white,draw] (17,5.25) circle (1mm);

\draw (18.5,3.75) node{\footnotesize$1$};
\draw (17,3.5) node{\footnotesize$2$};
\draw (15.5,3.75) node{\footnotesize$3$};
\draw (15.5,4.25) node{\footnotesize$4$};
\draw (14.5,5.75) node{\footnotesize$5$};
\draw (15,7.5) node{\footnotesize$6$};
\draw (16,7.75) node{\footnotesize$7$};
\draw (17,7.5) node{\footnotesize$8$};
\draw (18,7.75) node{\footnotesize$9$};
\draw (19,7.5) node{\footnotesize$10$};
\draw (19.5,5.75) node{\footnotesize$11$};
\draw (18.5,4.25) node{\footnotesize$12$};
\draw (17,1.5) node{\small$M\setminus(N_0\cup N_1)$};

\draw[->] (17,5.75) -- (17,6.75);
\draw[->] (17.25,5.125) -- (18,4.75);
\draw[->] (16.75,5.125) -- (16,4.75);


\draw[fill=gray!60,opacity=0.75,dashed] (5.5,2) -- (7.5,2) -- (6.5,0) -- (5.5,2);

\draw[fill=white,draw] (5.5,2) circle (1mm);
\draw[fill=white,draw] (7.5,2) circle (1mm);
\draw[fill=white,draw] (6.5,0) circle (1mm);
\draw (7,1.75) node{\footnotesize$1$};
\draw (5.5,1) node{\small$\sigma_1$};

\draw[->,line width=1pt] (5.75,4.5) -- (7.25,4.5);
\draw[->,line width=1pt] (12.75,4.5) -- (14.25,4.5);
\draw[->,line width=1pt] (17,8.25) -- (17,8.75);
\draw[->,line width=1pt] (14.25,11.5) -- (12.75,11.5);
\draw[->,line width=1pt] (7.25,11.5) -- (5.75,11.5);

\draw[->,line width=1pt] (6.4,2.1333333333) -- (5.95,2.7333333333);
\draw[->,line width=1pt] (5.55,3.2666666666) -- (5.1,3.8666666666);
\draw[fill=black,draw] (5.65,3.1333333333) circle (1pt);
\draw[fill=black,draw] (5.75,3) circle (1pt);
\draw[fill=black,draw] (5.85,2.8666666666) circle (1pt);

\end{tikzpicture}
\end{center}
\caption{Strategy to prove Theorem \ref{mstone0}.\label{fig8}}
\end{figure}

\paragraph{} Let $\tau_1,\ldots,\tau_s$ be the facets of $\sigma_r$ that are facets of another simplex $\sigma_k$ for some $k=1,\ldots,r-1$. Let $\epsilon$ be either the intersection of the remaining facets $\tau_{s+1},\ldots,\tau_{d+1}$ of $\sigma_r$ if $s<d+1$ or $\sigma_r$ if $s=d+1$. Thus, $\epsilon$ is a face of $\sigma_r$ of dimension $s-1$. The $s$ facets of $\epsilon$ are the intersections of $\epsilon$ with the facets $\tau_1,\ldots,\tau_s$ of $\sigma_r$. Let $b$ be the barycenter of $\epsilon$ and let $\eta_i$ be the convex hull of $\tau_i\cup\{b\}$ for $i=1,\ldots,s$. By Lemma \ref{divide} the simplices $\eta_1,\ldots,\eta_s$ provides a triangulation of $\sigma$.

Let ${\mathfrak H}_\ell$ be the collection of all the faces of $\eta_1,\ldots,\eta_s$ of dimension $\ell$ for $\ell=0,\ldots,d-2$ and let ${\mathfrak H}_{d-1}$ be the collection of all the facets of $\eta_1,\ldots,\eta_s$ different from $\tau_1,\ldots,\tau_s$. Define
$$
N_\ell:=\bigcup_{\sigma\in{\mathfrak H}_\ell}\Phi(\sigma^0)\cap M
$$
and observe that $N_\ell$ is a closed Nash submanifold of $M\setminus\bigsqcup_{j=0}^{\ell-1}N_j$, where $\bigsqcup_{j=0}^{\ell-1}N_j:=\varnothing$ if $\ell=0$. By Lemma \ref{eraseresidual} there exists a surjective Nash map $g_\ell:M\setminus\bigsqcup_{j=0}^{\ell}N_j\to M\setminus\bigsqcup_{j=0}^{\ell-1}N_j$. Thus,
$$
g:=g_{d-1}\circ\cdots\circ g_0:M\setminus\bigsqcup_{j=0}^{d-1}N_j\to M
$$
is a surjective Nash map.

\paragraph{}Observe that $M\setminus\bigsqcup_{j=0}^{d-1}N_j=(M\setminus\Phi(\sigma_r))\cup\bigcup_{j=1}^s\Phi(\tau_j^0\cup\eta_j^0)$. For each $j=1,\ldots,s$ let $\sigma_{i_j}$ be the simplex of $K$ such that $\sigma_{i_j}\cap\sigma_r=\tau_j$. By Lemma \ref{homeo} there exists a semialgebraic homeomorphism $f_j:\sigma_{i_j}\setminus\tau_j\to(\sigma_{i_j}\setminus\tau_j)\cup(\tau_j^0\cup\eta_j^0)$ such that $f_j|_{\partial\sigma_{i_j}\setminus\tau_j}=\id_{\partial\sigma_{i_j}\setminus\tau_j}$.

Consider the semialgebraic homeomorphism
$$
f:M\setminus\Phi(\sigma_r)\to(M\setminus\Phi(\sigma_r))\cup\bigcup_{j=1}^s\Phi(\tau_j^0\cup\eta_j^0),\ x\mapsto\begin{cases}
x&\text{if $x\not\in\bigcup_{j=1}^s\Phi(\sigma_{i_j})$,}\\
\Phi(f_j(\Phi^{-1}(x)))&\text{if $x\in\Phi(\sigma_{i_j})$.}
\end{cases}
$$
Let $f':M\setminus\Phi(\sigma_r)\to M\setminus\bigsqcup_{j=0}^{d-1}N_j$ be a close Nash approximation of $f$ (use \ref{shiota}). As $f$ is a semialgebraic homeomorphism, $f'$ is by Lemma \ref{surapprox} surjective. Consequently $h_0:=f'\circ g:M\setminus\Phi(\sigma_r)\to M$ is a surjective Nash map, as required.
\qed

\section{Main properties of well-welded semialgebraic sets}\label{s7}

In this section we describe the main properties of well-welded semialgebraic sets. Given a continuous semialgebraic path $\alpha:[0,1]\to\R^n$ we define $\eta(\alpha)$ as the image $\alpha(A)$ of the smallest (finite) subset $A\subset(0,1)$ such that $\alpha|_{(0,1)\setminus A}$ is a Nash map. Recall that a semialgebraic set $\Ss\subset\R^n$ is \em well-welded \em if $\Ss$ is pure dimensional and for each pair of points $x,y\in\Ss$ there exists a continuous semialgebraic path $\alpha:[0,1]\to\Ss$ such that $\alpha(0)=x$, $\alpha(1)=y$ and ${\eta}(\alpha)\subset\Reg(\Ss)$.

The following two results provide examples of well-welded semialgebraic sets. Once we prove Theorem \ref{main} we will conclude that there are no more well-welded semialgebraic sets.

\begin{lem}\label{npww}
Let $\Ss\subset\R^n$ be a semialgebraic set that is connected by Nash paths. Then $\Ss$ is well-welded.
\end{lem}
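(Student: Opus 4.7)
The plan is to verify the two clauses of well-weldedness separately: that $\Ss$ is pure dimensional, and that any two points $x,y\in\Ss$ are joined by a semialgebraic path $\alpha$ with $\eta(\alpha)\subset\Reg(\Ss)$. The second clause comes essentially for free. Given $x,y\in\Ss$, the hypothesis furnishes a Nash path $\alpha:[0,1]\to\Ss$ connecting them; then $\alpha|_{(0,1)}$ is itself Nash, so the smallest finite set $A\subset(0,1)$ with $\alpha|_{(0,1)\setminus A}$ Nash is $A=\varnothing$. Hence $\eta(\alpha)=\alpha(\varnothing)=\varnothing\subset\Reg(\Ss)$ trivially, regardless of what $\Reg(\Ss)$ looks like.

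The substantive work lies in the pure dimensionality, which I will establish by contradiction. Set $d:=\dim(\Ss)$ and suppose some $x\in\Ss$ has local dimension $e:=\dim(\Ss_x)<d$. Choose an open semialgebraic neighborhood $V\subset\R^n$ of $x$ with $\dim(\Ss\cap V)=e$, and let $Y\subset\R^n$ be the Zariski closure of $\Ss\cap V$, so that $\dim Y=e$. Since $\dim\Ss=d>e=\dim Y$, we have $\Ss\not\subset Y$, and we may pick $y\in\Ss\setminus Y$. By hypothesis there is a Nash path $\alpha:[0,1]\to\Ss$ with $\alpha(0)=x$ and $\alpha(1)=y$. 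Continuity supplies a $\delta>0$ such that $\alpha([0,\delta))\subset\Ss\cap V\subset Y$. For any polynomial $P\in I(Y)\subset\R[\x]$, the composition $P\circ\alpha:[0,1]\to\R$ is a Nash function vanishing on $[0,\delta)$; since Nash functions are real analytic and extend analytically across the endpoints of $[0,1]$, the identity principle forces $P\circ\alpha\equiv 0$ on the entire interval. Consequently $\alpha([0,1])\subset Y$, and in particular $y=\alpha(1)\in Y$, contradicting the choice of $y$.

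The only delicate step is this application of the identity theorem to $P\circ\alpha$; the remainder is an unpacking of definitions. I do not foresee any real obstacle beyond setting up the neighborhood $V$ and its Zariski closure $Y$ so that the dimension gap $\dim Y=e<d=\dim\Ss$ guarantees a target point $y\in\Ss$ lying outside $Y$. With pure dimensionality secured and the Nash paths themselves witnessing the well-welded condition with $\eta=\varnothing$, the lemma follows.
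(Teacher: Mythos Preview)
Your proof is correct and follows essentially the same approach as the paper: the Nash path itself witnesses the well-welded condition with $\eta(\alpha)=\varnothing$, and pure dimensionality is obtained by the same Zariski-closure-plus-identity-principle contradiction. The paper's write-up is terser but logically identical.
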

\begin{proof}
As $\Ss$ is connected by Nash paths, we only have to check that $\Ss$ is pure dimensional. Suppose by contradiction that $\Ss$ is not pure dimensional. Pick $x\in\Ss$ and an open semialgebraic neighborhood $U\subset\R^n$ of $x$ such that $\dim(\Ss\cap U)<\dim(\Ss)$. Let $Y$ be the Zariski closure of $\Ss\cap U$ in $\R^n$ and pick a point $y\in\Ss\setminus Y$. Let $\alpha:[0,1]\to\Ss$ be a Nash path such that $\alpha(0)=x$ and $\alpha(1)=y$. As $\alpha^{-1}(\Ss\cap Y)\subset[0,1]$ is a neighborhood of the origin and $Y$ is an algebraic set, we deduce by the identity principle that $y\in\im(\alpha)\subset Y$, which is a contradiction. Thus, $\Ss$ is pure dimensional, as required.
\end{proof}

\begin{lem}\label{imp1}
Let $\Ss\subset\R^n$ be a pure dimensional semialgebraic set. Assume that there exists a Nash path $\alpha:[0,1]\to\Ss$ whose image meets all the connected components of $\Reg(\Ss)$. Then $\Ss$ is well-welded.
\end{lem}
\begin{proof}
Let $M_1,\ldots,M_r$ be the connected components of $\Reg(\Ss)$. Let $x_i\in M_i\cap\im(\alpha)$ and let $t_i\in(0,1)$ be such that $\alpha(t_i)=x_i$. We may assume $t_1<\cdots<t_r$. As $\Ss$ is pure dimensional, $\Ss=\bigcup_{i=1}^r\cl(M_i)\cap\Ss$. Let $y_1,y_2\in\Ss$ and assume $y_1\in\cl(M_i)\cap\Ss$ and $y_2\in\cl(M_j)\cap\Ss$ with $i<j$. By the Nash curve selection lemma there exist Nash arcs $\alpha_k:(-1,1)\to\R^n$ such that $\alpha_1((-1,0))\subset M_i$, $\alpha_1((-1,0))\subset M_j$ and $\alpha_k(0)=y_k$ for $k=1,2$. As $M_i$ and $M_j$ are connected Nash manifolds, there exists Nash paths $\gamma_1:[0,1]\to M_i$ and $\gamma_2:[0,1]\to M_j$ such that
$$
\gamma_k(0)=z_k:=\alpha_k(-\tfrac{1}{2})\quad\text{and}\quad\gamma_k(1)=\begin{cases}
x_i&\text{if $k=1$},\\
x_j&\text{if $k=2$}.
\end{cases}
$$ 
Consider the continuous semialgebraic path
$$
\beta:=(\alpha_1|_{[-\frac{1}{2},0]})^{-1}*\gamma_1*\alpha|_{[t_i,t_j]}*\gamma_2^{-1}*\alpha_2|_{[-\frac{1}{2},0]}
$$ 
that connects the points $y_1,y_2$ and satisfies 
$$
\eta(\beta)\subset\{z_1,x_i,x_j,z_2\}\subset\Reg(\Ss),
$$
see Figure \ref{fig9}. Consequently, $\Ss$ is well-welded, as required.
\end{proof}

\begin{figure}[!ht]
\begin{center}
\begin{tikzpicture}[scale=0.75]

\draw[fill=gray!60,opacity=0.75,dashed] (0,0) -- (2,0) -- (2,2) -- (0,2) -- (0,0);
\draw[fill=gray!60,opacity=0.75,dashed] (2,2) -- (5,2) -- (5,4) -- (2,4) -- (2,2);
\draw[fill=gray!60,opacity=0.75,dashed] (5,2) -- (8,2) -- (8,0) -- (5,0) -- (5,2);
\draw[fill=gray!60,opacity=0.75,dashed] (8,2) -- (11,2) -- (11,4) -- (8,4) -- (8,2);
\draw[fill=gray!60,opacity=0.75,dashed] (11,2) -- (14,2) -- (14,0) -- (11,0) -- (11,2);
\draw[fill=gray!60,opacity=0.75,dashed] (14,2) -- (14,4) -- (17,4) -- (17,2) -- (14,2);
\draw[fill=gray!60,opacity=0.75,dashed] (17,2) -- (17,0) -- (19,0) -- (19,2) -- (17,2);

\draw[line width=1pt] (0.5,1) parabola bend (0.5,1) (2,2) parabola bend (3.5,3) (5,2) parabola bend (6.5,1) (8,2) parabola bend (9.5,3) (11,2) parabola bend (12.5,1) (14,2) parabola bend (15.5,3) (17,2) parabola bend (18.5,1) (17,2);

\draw[line width=1pt] (2,3.75) -- (3,3.75) -- (3.5,3);
\draw[line width=1pt] (15.5,3) -- (16,3.75) -- (17,3.75);

\draw[fill=black,draw] (2,2) circle (0.75mm);
\draw[fill=black,draw] (5,2) circle (0.75mm);
\draw[fill=black,draw] (8,2) circle (0.75mm);
\draw[fill=black,draw] (11,2) circle (0.75mm);
\draw[fill=black,draw] (14,2) circle (0.75mm);
\draw[fill=black,draw] (17,2) circle (0.75mm);

\draw[fill=black,draw] (0.5,1) circle (0.75mm);
\draw[fill=black,draw] (3.5,3) circle (0.75mm);
\draw[fill=black,draw] (6.5,1) circle (0.75mm);
\draw[fill=black,draw] (9.5,3) circle (0.75mm);
\draw[fill=black,draw] (12.5,1) circle (0.75mm);
\draw[fill=black,draw] (15.5,3) circle (0.75mm);
\draw[fill=black,draw] (18.5,1) circle (0.75mm);
\draw[fill=black,draw] (2,3.75) circle (0.75mm);
\draw[fill=black,draw] (17,3.75) circle (0.75mm);
\draw[fill=black,draw] (3,3.75) circle (0.75mm);
\draw[fill=black,draw] (16,3.75) circle (0.75mm);

\draw (4.7,3.75) node{\small$M_i$};
\draw (5.5,0.25) node{\small$M_{i+1}$};
\draw (13.5,0.25) node{\small$M_{j-1}$};
\draw (14.3,3.75) node{\small$M_j$};
\draw (3.5,2.75) node{\small$x_i$};
\draw (6.5,1.25) node{\small$x_{i+1}$};
\draw (12.5,1.25) node{\small$x_{j-1}$};
\draw (15.5,2.75) node{\small$x_j$};
\draw (1.7,3.75) node{\small$y_1$};
\draw (17.4,3.75) node{\small$y_2$};
\draw (9.5,2.5) node{\small${\rm Im}(\alpha)$};
\draw (3.4,3.75) node{\small$z_1$};
\draw (15.6,3.75) node{\small$z_2$};

\end{tikzpicture}
\end{center}
\caption{Sketch of proof of Lemma \ref{imp1}.\label{fig9}}
\end{figure}
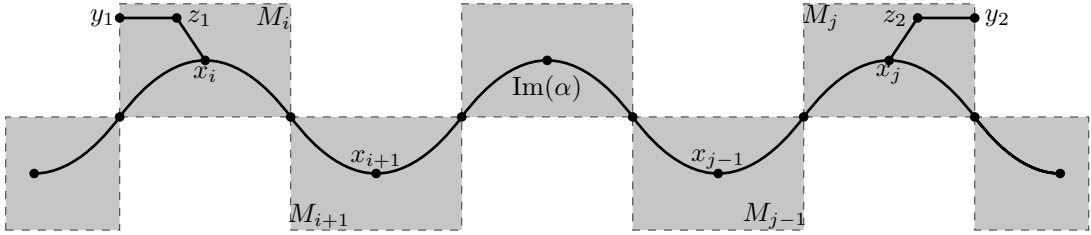

\subsection{Basic properties of well-welded semialgebraic sets I}
We show next that well-welded semialgebraic sets have irreducible Zariski closure and well-welded closure. We will prove later in \ref{bpii} that they are in fact irreducible and that the image of a well-welded semialgebraic set under a Nash map is again well-welded. 

\begin{lem}\label{zarirred}
Let $\Ss\subset\R^n$ be a well-welded semialgebraic set. Then $\ol{\Ss}^{\zar}$ is irreducible.
\end{lem}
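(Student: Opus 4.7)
The plan is to argue by contradiction: assume $X:=\ol{\Ss}^{\zar}$ is reducible and derive a contradiction with the well-welded property. The first task is to choose a useful decomposition $X=X_1\cup X_2$ into proper real algebraic subsets. Since $\Ss$ is pure-dimensional of dimension $d$, so is $X$, and every real-irreducible component of $X$ has dimension $d$; reducibility guarantees at least two such components, so I can group one as $X_1$ and the rest as $X_2$. The key property, which I will verify first, is that $X_1\cap X_2\subseteq\Sing(\widetilde{X})$: at any common point $p$ the complex germ $\widetilde{X}_p$ contains two distinct irreducible branches coming from the two components meeting at $p$, so $\widetilde{X}_p$ is reducible and $p\in\Sing(\widetilde{X})$. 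Consequently $(X_1\cap X_2)\cap\Reg(\Ss)=\emptyset$ because $\Reg(\Ss)\subseteq\Reg(X)=X\setminus\Sing(\widetilde{X})$.

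Next I exploit that $X$ is the smallest real algebraic set containing $\Ss$ to pick witness points $x\in\Ss\setminus X_2$ and $y\in\Ss\setminus X_1$. By the well-welded hypothesis there exists a semialgebraic path $\alpha:[0,1]\to\Ss$ with $\alpha(0)=x$, $\alpha(1)=y$, and $\eta(\alpha)\subseteq\Reg(\Ss)$. Writing the finite non-Nash locus as $A=\{t_1<\cdots<t_{m-1}\}\subset(0,1)$ and setting $t_0=0$, $t_m=1$, each restriction $\alpha_j:=\alpha|_{[t_j,t_{j+1}]}$ is a Nash arc, and $\alpha(t_j)\in\Reg(\Ss)$ for $j=1,\dots,m-1$. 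Since $\alpha_j$ is Nash and $X_1,X_2$ are algebraic, the semialgebraic preimage $\alpha_j^{-1}(X_k)$ is either all of $[t_j,t_{j+1}]$ or finite by the identity principle; thus each $\alpha_j$ lies wholly in $X_1$ or wholly in $X_2$ (possibly in both). From $\alpha_0(0)=x\notin X_2$ I get $\alpha_0\subseteq X_1$ with $\alpha_0\not\subseteq X_2$, and symmetrically $\alpha_{m-1}\subseteq X_2$ with $\alpha_{m-1}\not\subseteq X_1$.

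To close the argument I isolate a transition: let $j^*$ be the minimal index with $\alpha_{j^*}\not\subseteq X_1$; this exists (witnessed by $\alpha_{m-1}$) and satisfies $j^*\geq 1$. The dichotomy forces $\alpha_{j^*}\subseteq X_2$, while $\alpha_{j^*-1}\subseteq X_1$ by minimality. The breakpoint $p:=\alpha(t_{j^*})$ is the endpoint of $\alpha_{j^*-1}\subseteq X_1$ and the startpoint of $\alpha_{j^*}\subseteq X_2$, hence $p\in X_1\cap X_2$; and since $t_{j^*}\in A$, also $p\in\Reg(\Ss)$. But $X_1\cap X_2\subseteq\Sing(\widetilde{X})$ while $\Reg(\Ss)\cap\Sing(\widetilde{X})=\emptyset$, a contradiction. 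Therefore $\ol{\Ss}^{\zar}$ is irreducible.

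The main obstacle will be the verification in the first step that pairwise intersections of distinct real-irreducible components of the pure-dimensional closure $X$ are contained in $\Sing(\widetilde{X})$; the point is that at a complex-smooth point of $\widetilde{X}$ the analytic germ is irreducible, which rules out having two distinct sub-germs coming from different components meeting there. Once this inclusion is established, the combinatorial step—finding the first Nash piece not contained in $X_1$—is straightforward, and everything funnels into the single contradiction $p\in(X_1\cap X_2)\cap\Reg(\Ss)=\emptyset$.
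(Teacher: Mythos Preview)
Your proof is correct and follows essentially the same approach as the paper's: both argue by contradiction from a reducible Zariski closure, use that intersections of distinct irreducible components lie in $\Sing(\widetilde{X})$, take a well-welded path between points in different components, and derive a contradiction via the identity principle for Nash arcs combined with the condition $\eta(\alpha)\subset\Reg(\Ss)$. The only difference is organizational: you first confine each Nash piece to a single component (identity principle) and then locate the contradiction at a breakpoint in $A$, whereas the paper first locates the transition time $t_0=\inf\{t:\alpha(t)\notin X_1\}$, shows $\alpha(t_0)\in\Sing(\ol{\Ss}^{\zar})$ forces $t_0\notin A$, and then applies the identity principle near $t_0$.
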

\begin{proof}
Suppose by contradiction that $\ol{\Ss}^{\zar}$ is reducible. Let $X_1,\ldots,X_r$ be the irreducible components of $\ol{\Ss}^{\zar}$. We have 
$$
\Sing(\ol{\Ss}^{\zar})=\bigcup_{i=1}^r\Sing(X_i)\cup\bigcup_{i\neq j}(X_i\cap X_j).
$$
We claim: \em $\Ss\cap X_i\setminus\Sing(\ol{\Ss}^{\zar})\neq\varnothing$ for $i=1,\ldots,r$\em. 

Suppose that $\Ss\cap X_1\setminus\Sing(\ol{\Ss}^{\zar})=\varnothing$. Then $\Ss\subset\Sing(X_1)\cup\bigcup_{i=2}^r(\Ss\cap X_i)$, so 
$$
\bigcup_{i=1}^rX_i=\ol{\Ss}^{\zar}\subset\Sing(X_1)\cup\bigcup_{i=2}^rX_i, 
$$
which is a contradiction. 

Let $x_i\in\Ss\cap X_i\setminus\Sing(\ol{\Ss}^{\zar})$ for $i=1,2$ and let $f_1\in\R[\x]$ be a polynomial equation of $X_1$. Let $\alpha:[0,1]\to\Ss$ be a (continuous) semialgebraic map such that $\alpha(0)=x_1$, $\alpha(1)=x_2$ and $\eta(\alpha)\subset\Reg(\Ss)$. Let 
$$
t_0:=\inf\{t\in(0,1):\ \alpha(t)\not\in X_1\}.
$$
Note that $\alpha(t_0)\not\in\eta(\alpha)$ because $\alpha(t_0)\in X_1\cap X_i\subset\Sing(\ol{\Ss}^{\zar})$ for some $i\neq1$ and $\eta(\alpha)\subset\Reg(\Ss)$. Let $\veps>0$ be such that $\alpha|_{(t_0-\veps,t_0+\veps)}$ is a Nash map, $\alpha((t_0-\veps,t_0))\subset X_1$ and $\alpha((t_0,t_0+\veps))\cap X_1=\varnothing$. As $f_1\circ\alpha|_{(t_0-\veps,t_0+\veps)}$ is a Nash function such that $f_1\circ\alpha|_{(t_0-\veps,t_0)}\equiv0$, we have $f_1\circ\alpha|_{(t_0-\veps,t_0+\veps)}\equiv0$, which is a contradiction. Consequently, $\ol{\Ss}^{\zar}$ is irreducible, as required.
\end{proof}

\begin{lem}\label{chain}
Let $\Ss\subset\Tt\subset\R^n$ be semialgebraic sets such that $\Ss$ is well-welded and $\Tt\subset\cl(\Ss)$. Then $\Tt$ is well-welded.
\end{lem}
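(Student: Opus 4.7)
The plan is to reduce the well-weldedness of $\Tt$ to that of $\Ss$ via three observations: $\Tt$ has the same dimension and Zariski closure as $\Ss$; its regular locus contains $\Reg(\Ss)$; and each point of $\Tt$ can be connected to $\Reg(\Ss)$ by a Nash arc.

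First I would establish that $\Tt$ is pure dimensional of the same dimension $d$ as $\Ss$. For any $x\in\Tt\subset\cl(\Ss)$ and any small open neighborhood $U$ of $x$ in $\R^n$, the set $U\cap\Ss$ is nonempty; picking $y\in U\cap\Ss$ and using that $\dim(\Ss_y)=d$ (by pure dimensionality of $\Ss$), we get $\dim(U\cap\Ss)=d$. Since $U\cap\Ss\subset U\cap\Tt\subset U\cap\cl(\Ss)$ and $\dim(\cl(\Ss)_x)=d$, we conclude $\dim(\Tt_x)=d$. Next I observe that $\ol{\Tt}^{\zar}=\ol{\Ss}^{\zar}$ (since $\Ss\subset\Tt\subset\cl(\Ss)$), so writing $X:=\ol{\Ss}^{\zar}=\ol{\Tt}^{\zar}$, the definition in \ref{s2a} gives
$$
\Reg(\Ss)=\Int_{\Reg(X)}(\Ss\setminus\Sing(X))\subset\Int_{\Reg(X)}(\Tt\setminus\Sing(X))=\Reg(\Tt),
$$
by monotonicity of the interior operator with respect to inclusion.

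Second, since $\Ss$ is pure dimensional, $\Reg(\Ss)$ is dense in $\Ss$, and $\Ss$ is dense in $\Tt$ (because $\cl_{\Tt}(\Ss)=\cl(\Ss)\cap\Tt=\Tt$); hence $\Reg(\Ss)$ is dense in $\Tt$. Consequently, for every point $x\in\Tt$, the Nash curve selection lemma \cite[8.1.13]{bcr} provides a Nash arc $\alpha_x:[0,\veps)\to\R^n$ with $\alpha_x(0)=x$ and $\alpha_x((0,\veps))\subset\Reg(\Ss)\subset\Tt$.

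Finally, given $x,y\in\Tt$, I would construct the required path by concatenation: pick Nash arcs $\alpha_x,\alpha_y$ as above, landing at points $x':=\alpha_x(\veps/2)$ and $y':=\alpha_y(\veps/2)$ of $\Reg(\Ss)$. By well-weldedness of $\Ss$, there is a semialgebraic path $\beta:[0,1]\to\Ss$ with $\beta(0)=x'$, $\beta(1)=y'$ and $\eta(\beta)\subset\Reg(\Ss)$. The concatenation $\gamma:=\alpha_x|_{[0,\veps/2]}*\beta*(\alpha_y|_{[0,\veps/2]})^{-1}:[0,1]\to\Tt$ is a well-defined semialgebraic path from $x$ to $y$; since $\alpha_x,\alpha_y$ are Nash on their parameter intervals, the only new candidate points of non-Nashness are the two junctions, whose images $x',y'$ lie in $\Reg(\Ss)\subset\Reg(\Tt)$. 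Therefore $\eta(\gamma)\subset\eta(\beta)\cup\{x',y'\}\subset\Reg(\Tt)$, and $\Tt$ is well-welded. No step here looks delicate; the only verification requiring mild care is the inclusion $\Reg(\Ss)\subset\Reg(\Tt)$, which is why I would flag the equality of Zariski closures explicitly before invoking it.
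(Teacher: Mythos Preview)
Your proposal is correct and follows essentially the same approach as the paper: reach $\Reg(\Ss)$ from each endpoint via the Nash curve selection lemma, connect inside $\Ss$ by well-weldedness, and use the equality $\ol{\Ss}^{\zar}=\ol{\Tt}^{\zar}$ to get $\Reg(\Ss)\subset\Reg(\Tt)$. You are actually a bit more careful than the paper in explicitly verifying that $\Tt$ is pure dimensional, which the paper leaves implicit.
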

\begin{proof}
Let $x_1,x_2\in\Tt$. As $\Ss$ is pure dimensional, $\cl(\Reg(\Ss))=\cl(\Ss)=\cl(\Tt)$. By the Nash curve selection lemma there exist Nash arcs $\alpha_k:(-1,1)\to\R^n$ such that $\alpha_k((-1,0))\subset\Reg(\Ss)$, $\alpha_k(0)=x_k$ for $k=1,2$. Consider the points $y_k:=\alpha_k(-\frac{1}{2})$ for $k=1,2$. As $\Ss$ is well-welded, there exists a continuous semialgebraic path $\alpha_3:[0,1]\to\Ss$ such that $\alpha_3(0)=y_1$, $\alpha_3(1)=y_2$ and $\eta(\alpha_3)\subset\Reg(\Ss)$. As $\ol{\Ss}^{\zar}=\ol{\Tt}^{\zar}$, we have 
$$
\Reg(\Ss)=\Int_{\Reg(\ol{\Ss}^{\zar})}(\Ss\setminus\Sing(\ol{\Ss}^{\zar}))\subset\Int_{\Reg(\ol{\Ss}^{\zar})}(\Tt\setminus\Sing(\ol{\Ss}^{\zar}))=\Reg(\Tt).
$$
The continuous semialgebraic path $\alpha:=(\alpha_1|_{[-\frac{1}{2},0]})^{-1}*\alpha_3*\alpha_2|_{[-\frac{1}{2},0]}$ connects $x_1$ with $x_2$ and $\eta(\alpha)\subset\eta(\alpha_3)\cup\{y_1,y_2\}\subset\Reg(\Ss)\subset\Reg(\Tt)$. Consequently, $\Tt$ is well-welded.
\end{proof}

\subsection{Description of well-welding in terms of bridges}
It is difficult to handle the definition of well-welded semialgebraic set. Our purpose next is to provide a characterization of well-welding in terms of the existence of Nash arcs between the connected components of the set of regular points of a semialgebraic set.

\begin{define}
Let $M_1,M_2\subset\R^n$ be two Nash manifolds. A \em (Nash) bridge between $M_1$ and $M_2$ \em is the image $\Gamma$ of a Nash arc $\alpha:(-1,1)\to\R^n$ such that $\alpha((-1,0))\subset M_1$ and $\alpha((0,1))\subset M_2$.
\end{define}

To lighten the presentation we write bridge when referring to Nash bridge. As a straightforward consequence of Proposition \ref{charww0} below we have the following.

\begin{cor}\label{charww}
Let $\Ss\subset\R^n$ be a pure dimensional semialgebraic set and let $M_1,\ldots M_r$ be the connected components of $\Reg(\Ss)$. The following assertions are equivalent:
\begin{itemize}
\item[(i)] $\Ss$ is well-welded.
\item[(ii)] We can reorder the indices $i=1,\ldots,r$ in such a way that there exist bridges $\Gamma_i\subset\Ss$ between $M_i$ and $\bigsqcup_{j=1}^{i-1}M_j$ for $i=2,\ldots,r$.
\end{itemize}
\end{cor}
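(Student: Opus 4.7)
The plan is to prove the two implications directly, since (ii) supplies an explicit combinatorial structure out of which well-welding paths can be built, and (i) must conversely be shown to force such a structure on the components of $\Reg(\Ss)$.

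\medskip

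\noindent For (ii)$\Rightarrow$(i), fix $x,y\in\Ss$. Pure dimensionality gives $\Ss\subset\cl(\Reg(\Ss))$, so the Nash curve selection lemma provides Nash arcs from $x$ and $y$ into components $M_i$ and $M_j$ of $\Reg(\Ss)$ respectively. The bridges $\Gamma_2,\dots,\Gamma_r$ make $\{M_1,\dots,M_r\}$ into a tree rooted at $M_1$, so there is a finite chain of bridges joining $M_i$ to $M_j$. Since each connected Nash manifold $M_k$ is Nash path-connected by Lemma~\ref{mcnp}, one can link consecutive bridge endpoints inside each visited component by Nash paths. Concatenating these Nash paths with the bridge arcs and with the initial Nash arcs at $x$ and $y$ produces a continuous semialgebraic path $\alpha:[0,1]\to\Ss$ from $x$ to $y$. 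All switching points of $\alpha$ are chosen inside some $M_k\subset\Reg(\Ss)$, so $\eta(\alpha)\subset\Reg(\Ss)$, establishing well-weldedness.

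\medskip

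\noindent For (i)$\Rightarrow$(ii), I introduce the graph $G$ on vertex set $\{M_1,\dots,M_r\}$ whose edges record the existence of a bridge in $\Ss$ between the two endpoint components. Any breadth-first traversal of a spanning tree of $G$ yields a reordering of the components together with the prescribed bridges $\Gamma_2,\dots,\Gamma_r$, so the task reduces to showing that $G$ is connected. Given $M_i\neq M_j$, pick $x\in M_i$, $y\in M_j$ and a well-welding path $\alpha:[0,1]\to\Ss$. Decompose $\alpha$ into its maximal Nash pieces $\beta_1,\dots,\beta_m$ separated by switching points, which lie in $\eta(\alpha)\subset\Reg(\Ss)$. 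Applying the Nash identity principle to a polynomial vanishing on $\ol{\Sing(\Ss)}^{\zar}$, the preimage $\beta_\ell^{-1}(\ol{\Sing(\Ss)}^{\zar})$ is either finite or the whole interval. In the first case, $\beta_\ell$ visits a finite ordered list of components, and each change of component occurs at a singular time whose small neighbourhood (after reparametrisation) is a bridge, hence an edge of $G$; in the second, possibly degenerate, case the piece must be locally perturbed via Lemma~\ref{mod} to extract a genuine bridge in $\Ss$. Reading off the sequence of components visited as one traverses $\beta_1,\dots,\beta_m$ in order yields a walk in $G$ from $M_i$ to $M_j$, proving connectedness of $G$.

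\medskip

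\noindent The delicate step is the degenerate case where a Nash piece of $\alpha$ has image entirely inside $\ol{\Sing(\Ss)}^{\zar}$: here naive continuity does not directly yield a bridge, since the arc's singular portion has positive length, and one must use a local Nash perturbation (Lemma~\ref{mod}, or the sharper framework of Proposition~\ref{charww0}) to split such a passage through $\Sing(\Ss)$ into a bona fide bridge between two components of $\Reg(\Ss)$. Once this technical point is handled, both implications become routine assembly of Nash paths, bridges and switching points inside $\Reg(\Ss)$.
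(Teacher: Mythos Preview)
The paper does not prove this corollary separately; it is simply the case $Z=\varnothing$ of Proposition~\ref{charww0}, whose proof (by induction on the number $r$ of components) appears immediately afterward. Your argument for (ii)$\Rightarrow$(i) is correct and essentially reproduces that implication of Proposition~\ref{charww0}. Your graph formulation for (i)$\Rightarrow$(ii) is a natural repackaging of the paper's induction on~$r$, and you correctly isolate the only genuine difficulty: a Nash piece $\beta_\ell$ of $\alpha$ whose image lies entirely in $\ol{\Sing(\Ss)}^{\zar}$, so that the passage between two components of $\Reg(\Ss)$ may occur across a positive-length interval in $\Sing(\Ss)$ rather than at a single point.

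However, your proposed fix via Lemma~\ref{mod} does not apply. The hypothesis of Lemma~\ref{mod} requires an analytic arc $\alpha:(-1,1)\to M_1\cup M_2\cup\{0\}$ with $\alpha((-1,0))\subset M_2$ and $\alpha((0,1))\subset M_1$: the input must \emph{already} be a bridge inside an ambient Nash manifold $M$, and the lemma merely perturbs it to avoid a prescribed algebraic set $Y$. In your degenerate situation you have neither this bridge structure (there is a plateau in $\Sing(\Ss)$ between the two halves) nor an ambient Nash manifold in which to perturb (near the plateau $\Ss$ is not locally a manifold, so the chart-based perturbation behind Lemma~\ref{mod} cannot be carried out while keeping the arc inside $\Ss$). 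Your alternative, appealing to ``the sharper framework of Proposition~\ref{charww0}'', is exactly the paper's route, so the direct argument does not succeed in bypassing that proposition. In short: the strategy is sound and the difficulty is pinpointed correctly, but the degenerate case is not actually resolved by the tools you invoke.
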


When dealing with well-welded semialgebraic sets, we need often that a continuous semialgebraic path connecting two points avoids certain algebraic set. In this direction we present the following three results.

\begin{lem}\label{inclusion}
Let $\alpha:[0,1]\to\R^n$ be a continuous semialgebraic path and let $Y\subset\R^n$ be an algebraic set. Assume that $\im(\alpha)\not\subset Y$ and $\eta(\alpha)\cap Y=\varnothing$. Then $\alpha^{-1}(Y)$ is a finite set.
\end{lem}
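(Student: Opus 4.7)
The plan is to exploit the identity principle for Nash (hence real analytic) functions, applied on each maximal subinterval of $(0,1)$ where $\alpha$ is Nash. Since $\alpha$ is continuous and semialgebraic and $Y$ is closed, the preimage $\alpha^{-1}(Y)\subset[0,1]$ is closed and semialgebraic, so it decomposes as a finite union of points and closed intervals. The whole game is to rule out the possibility that $\alpha^{-1}(Y)$ contains some interval $[a,b]$ with $a<b$.

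So I would argue by contradiction, assuming $[a,b]\subset\alpha^{-1}(Y)$ with $a<b$. Since $A$ is finite, $(a,b)\setminus A$ contains some open subinterval $(c,d)$, and by construction this $(c,d)$ is contained in a maximal open interval $I$ of $(0,1)\setminus A$ on which $\alpha$ is Nash. Let $f_1,\dots,f_k\in\R[\x]$ be polynomial equations of $Y$. For each $j$, the composition $f_j\circ\alpha|_I$ is a real analytic function on $I$ that vanishes identically on $(c,d)$; by the identity principle it vanishes on all of $I$, so $\alpha(I)\subset Y$.

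Now I would inspect the endpoints of $I=(e,f)$, where $e\in\{0\}\cup A$ and $f\in\{1\}\cup A$. If $e=0$ and $f=1$, then $A=\varnothing$ and $\alpha((0,1))\subset Y$; by continuity $\im(\alpha)\subset Y$, contradicting the hypothesis. Otherwise at least one endpoint lies in $A$, say $e\in A$; then by continuity $\alpha(e)\in\cl(\alpha(I))\subset Y$ (using that $Y$ is closed), while $\alpha(e)\in\eta(\alpha)$ by definition, contradicting $\eta(\alpha)\cap Y=\varnothing$. The case $f\in A$ is symmetric. Either way we reach a contradiction, so $\alpha^{-1}(Y)$ contains no nondegenerate interval and is therefore finite.

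The argument is essentially routine once the observation is made that the finiteness of $A$ forces any supposed interval in $\alpha^{-1}(Y)$ to meet a Nash piece of $\alpha$, from which the identity principle propagates vanishing. The only minor point requiring care will be the bookkeeping of boundary cases for the endpoints of $I$ (whether they lie in $A$ or in $\{0,1\}$), in order to make sure that either $\eta(\alpha)\cap Y\neq\varnothing$ or $\im(\alpha)\subset Y$ in each scenario.
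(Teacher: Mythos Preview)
Your proof is correct and follows essentially the same approach as the paper's: both argue by contradiction that if $\alpha^{-1}(Y)$ had dimension $1$, the identity principle on a connected component $I$ of $(0,1)\setminus A$ forces $\alpha(\cl(I))\subset Y$, whence either $\im(\alpha)\subset Y$ or some point of $\eta(\alpha)$ lies in $Y$. Your endpoint bookkeeping is slightly more explicit than the paper's, but the argument is the same.
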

\begin{proof}
Suppose $\alpha^{-1}(Y)$ is not a finite set. Then $\alpha^{-1}(Y)$ is a closed semialgebraic subset of the interval $[0,1]$ of dimension $1$. Let $A\subset(0,1)$ be the smallest (finite) subset of $(0,1)$ such that $\alpha|_{(0,1)\setminus A}$ is a Nash map. Let $C$ be a connected component of $[0,1]\setminus(A\cup\{0,1\})$ such that $\dim(C\cap\alpha^{-1}(Y))=1$. As $\alpha|_{C}$ is a Nash map, $C\subset\alpha^{-1}(Y)$. As $\alpha^{-1}(Y)$ is closed, $\cl(C)\subset\alpha^{-1}(Y)$. Notice that $\cl(C)\setminus C\subset A\cup\{0,1\}$. If $\cl(C)\setminus C=\{0,1\}$, we have $\cl(C)=[0,1]$, so $\im(\alpha)\subset Y$, which is a contradiction. Consequently, $A\neq\varnothing$ and $(\cl(C)\setminus C)\cap A\neq\varnothing$, so $\eta(\alpha)\cap Y\neq\varnothing$, against the hypothesis. Thus, $\alpha^{-1}(Y)$ is a finite set, as required.
\end{proof}

\begin{prop}\label{charww0}
Let $\Ss\subset\R^n$ be a pure dimensional semialgebraic set and let $M_1,\ldots, M_r$ be the connected components of $\Reg(\Ss)$. Let $Z\subset\R^n$ be an algebraic set such that $M_i\not\subset Z$ for $i=1,\ldots,r$. The following assertions are equivalent:
\begin{itemize}
\item[(i)] For each pair of points $x_1,x_2\in\Ss$ there exists a continuous semialgebraic path $\alpha:[0,1]\to\Ss$ such that $\alpha(0)=x_1$, $\alpha(1)=x_2$ and ${\eta}(\alpha)\subset\Reg(\Ss)\setminus Z$.
\item[(ii)] The indices $i=1,\ldots,r$ can be reordered to have bridges $\Gamma_i\subset\Ss$ between $M_i\setminus Z$ and $\bigsqcup_{j=1}^{i-1}(M_j\setminus Z)$ for $i=2,\ldots,r$.
\end{itemize}
\end{prop}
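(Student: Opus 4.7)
The plan is to prove the equivalence by translating between the existence of a good semialgebraic path and the existence of a tree of bridges. The key observation is that $\eta(\alpha)$ records only the finite set of non-Nash times of a semialgebraic path $\alpha$, so Nash sub-arcs of $\alpha$ are free to pass through $Z$ or through $\Sing(\Ss)$ at isolated points without violating $\eta(\alpha)\subset\Reg(\Ss)\setminus Z$. I will introduce the \emph{bridge graph} $G$ on the vertex set $\{M_1,\ldots,M_r\}$ in which $M_p$ and $M_q$ are joined by an edge whenever $\Ss$ contains a bridge with halves in $M_p\setminus Z$ and $M_q\setminus Z$; once $G$ is shown to be connected, any spanning tree of $G$, traversed by breadth-first search, produces the ordering demanded in (ii).

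For (ii) $\Rightarrow$ (i), fix $x_1,x_2\in\Ss$. Since $\Ss$ is pure dimensional and each $M_i\not\subset Z$, every $x_k$ lies in $\cl(M_{j_k}\setminus Z)=\cl(M_{j_k})$ for some $j_k$ (using that $M_{j_k}\setminus Z$ is open and dense in $M_{j_k}$), and the Nash curve selection lemma produces Nash arcs $\gamma_k$ from $x_k$ into $M_{j_k}\setminus Z$. The given bridges form a spanning tree on $\{M_1,\ldots,M_r\}$, so I select a chain of bridges connecting $M_{j_1}$ to $M_{j_2}$ through intermediate components $M_{l_1},M_{l_2},\ldots$; within each intermediate component Lemma \ref{mcnp} provides Nash paths joining successive bridge endpoints, and such Nash paths have empty set of non-Nash times so they contribute nothing to $\eta$. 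Concatenating $\gamma_1$, the in-component Nash paths, the bridges (which are Nash arcs), and the reverse of $\gamma_2$ produces a semialgebraic path $\alpha$ from $x_1$ to $x_2$ whose non-Nash times are exactly the concatenation points, all of which lie in some $M_l\setminus Z\subset\Reg(\Ss)\setminus Z$ by construction.

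For (i) $\Rightarrow$ (ii), the task reduces to showing $G$ is connected. Fix $p\ne q$, choose $x_p\in M_p\setminus Z$, $x_q\in M_q\setminus Z$, and let $\alpha$ be the path supplied by (i). Set $U=\alpha^{-1}(\Reg(\Ss))$; this is open in $[0,1]$, contains $\{0,1\}\cup A$, and its complement is a finite union of closed intervals $[r_k,s_k]$ disjoint from $A$, so $\alpha$ is Nash on a neighborhood of each gap. Each connected component of $U$ is mapped into a single component of $\Reg(\Ss)$, producing a sequence of visited components $M_{c_1}=M_p,\ldots,M_{c_K}=M_q$ with $\alpha([r_k,s_k])\subset\Sing(\Ss)$. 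In the generic single-point case $r_k=s_k$, the Nash arc $\alpha|_{(r_k-\delta,r_k+\delta)}$ already has the structure of a bridge between $M_{c_k}$ and $M_{c_{k+1}}$; the endpoints of its ambient Nash interval in $\alpha$ lie in $\Reg(\Ss)\setminus Z$ (either at a point of $A$ or at $\{x_p,x_q\}$), so this Nash arc is not identically contained in $Z$, hence $\alpha^{-1}(Z)$ is finite on it and shrinking $\delta$ places the two halves in $M_{c_k}\setminus Z$ and $M_{c_{k+1}}\setminus Z$. This produces an edge of $G$ between $M_{c_k}$ and $M_{c_{k+1}}$, and the chain of such edges along $M_{c_1}\to\cdots\to M_{c_K}$ connects $M_p$ to $M_q$ in $G$.

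The hardest step will be the interval-gap case $r_k<s_k$: here the identity principle forces the entire ambient Nash interval of $\alpha$ to lie in $W=\ol{\Sing(\Ss)}^{\zar}$, so the tails of $\alpha$ on either side of the gap lie in proper Nash subsets of $M_{c_k}$ and $M_{c_{k+1}}$ and no pointlike transition in $\Ss$ is directly available from $\alpha$. To produce a bridge in this case I plan to apply Lemma \ref{mod} after passing to an embedded desingularization of $\ol{\Ss}^{\zar}$ (Theorem \ref{hi2}) and lifting the relevant analytic arcs using the strict-transform machinery to be recalled in the second Appendix: upstairs, where a single-point analytic transition between the preimages of $M_{c_k}\setminus Z$ and $M_{c_{k+1}}\setminus Z$ can be manufactured, Lemma \ref{mod} straightens it into a Nash arc whose two halves avoid the preimage of $Z$, and projection back down to $\Ss$ yields the required bridge.
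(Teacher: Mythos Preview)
Your argument for (ii) $\Rightarrow$ (i) is correct and matches the paper's: curve selection into $M_{j_k}\setminus Z$, Nash paths inside components via Lemma \ref{mcnp}, and concatenation with the given bridges, so that all junction points lie in $\Reg(\Ss)\setminus Z$.

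For (i) $\Rightarrow$ (ii) your route diverges from the paper's. The paper does \emph{not} analyse the topology of $\alpha^{-1}(\Reg(\Ss))$ at all; instead it argues by induction on $r$. One picks $x_i\in M_i\setminus Z$, concatenates the paths from (i) into a single $\alpha$ through all of them, invokes Lemma \ref{inclusion} \emph{only for} $Z$ to get $\alpha^{-1}(Z)$ finite, orders the components by their first entry time $t_i=\inf\alpha^{-1}(M_i)$, checks that $\Ss'=\bigcup_{i<r}\cl(M_i)\cap\Ss$ again satisfies (i), applies the induction hypothesis to $\Ss'$, and finally reads the bridge $\Gamma_r$ off $\alpha$ on a small interval $(t_r-\veps,t_r+\veps)$. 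No resolution of singularities and no analysis of $\alpha^{-1}(\Sing(\Ss))$ is needed for this proposition.

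Your proposed handling of the ``interval-gap'' case is where the proof breaks down. You claim that after passing to an embedded desingularization of $\ol{\Ss}^{\zar}$ ``a single-point analytic transition between the preimages of $M_{c_k}\setminus Z$ and $M_{c_{k+1}}\setminus Z$ can be manufactured'', but you give no mechanism: lifting $\alpha$ through a resolution does not collapse the interval $[r_k,s_k]$ on which $\alpha$ sits inside $\Sing(\Ss)$; the lifted arc still spends that whole interval in the preimage of $\Sing(\Ss)$, and Lemma \ref{mod} only reshapes arcs that \emph{already} have a single transition point. Moreover, projecting a modified arc back down need not land its midpoint in $\Ss$. Finally, the strict-transform machinery you invoke is precisely what the paper deploys \emph{after} this proposition (in Corollary \ref{clue}, which uses Proposition \ref{charww0} and Corollary \ref{charww}), so building it into the proof here risks circularity. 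The paper's inductive reduction sidesteps all of this and should replace your gap-analysis entirely.
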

\begin{proof}
(ii) $\Longrightarrow$ (i) Let $x_1,x_2\in\Ss$. As $\Ss$ is pure dimensional, $\Reg(\Ss)\setminus Z$ is dense in $\Ss$ and we may assume $x_1\in\cl(M_1\setminus Z)$ and $x_2\in\cl(M_\ell\setminus Z)$ for some $\ell=1,\ldots,r$. By the Nash curve selection lemma there exist Nash paths $\alpha_k:(-1,1)\to\R^n$ such that $\alpha_1((-1,0))\subset M_1\setminus Z$, $\alpha_2((-1,0))\subset M_\ell\setminus Z$ and $\alpha_k(0)=x_k$ for $k=1,2$. Consider the points $y_k:=\alpha_k(-\frac{1}{2})$ for $k=1,2$. We proceed by induction on $\ell$. 

If $\ell=1$, then $y_1,y_2\in M_1\setminus Z$. As $M_1$ is a connected Nash manifold, there exists a Nash path $\beta:[0,1]\to M_1$ such that $\beta(0)=y_1$ and $\beta(1)=y_2$. Consider the continuous semialgebraic path $\alpha:=(\alpha_1|_{[-\frac{1}{2},0]})^{-1}*\beta*\alpha_2|_{[-\frac{1}{2},0]}$ that connects the points $x_1$ and $x_2$ and satisfies $\eta(\alpha)\subset\{y_1,y_2\}\subset\Reg(\Ss)\setminus Z$ and $\im(\alpha)\subset M_1\cup\{x_1,x_2\}\subset\Ss$.

Assume that for each point $y\in M_i$ for $i=2,\ldots,\ell-1$ there exists a continuous semialgebraic path $\alpha:[0,1]\to\Ss$ such that $\alpha(0)=x_1$, $\alpha(1)=y$ and $\eta(\alpha)\subset\Reg(\Ss)\setminus Z$. Let $\gamma:(-1,1)\to\Ss$ be a Nash arc such that $\gamma((-1,1))=\Gamma_\ell$, $\gamma((-1,0))\subset\bigsqcup_{j=1}^{\ell-1}(M_j\setminus Z)$ and $\gamma((0,1))\subset M_\ell\setminus Z$. Let $u:=\gamma(-\frac{1}{2})\in\bigsqcup_{j=1}^{\ell-1}(M_j\setminus Z)$ and $v:=\gamma(\frac{1}{2})\in M_\ell\setminus Z$. By induction hypothesis there exists a continuous semialgebraic path $\rho_1:[0,1]\to\Ss$ such that $\rho_1(0)=x_1$, $\rho_1(1)=u$ and $\eta(\rho_1)\subset\Reg(\Ss)\setminus Z$. As $M_\ell$ is a connected Nash manifold, there exists a Nash path $\rho_2:[0,1]\to M_\ell$ such that $\rho_2(0)=v$ and $\rho_2(1)=y_2$. Consider the continuous semialgebraic path $\alpha:=\rho_1*\gamma|_{[-\frac{1}{2},\frac{1}{2}]}*\rho_2*\alpha_2|_{[-\frac{1}{2},0]}$ that connects the points $x_1$ and $x_2$ and satisfies $\eta(\alpha)\subset\eta(\rho_1)\cup\{u,v,y_2\}\subset\Reg(\Ss)\setminus Z$ and $\im(\alpha)\subset\Ss$, see Figure \ref{fig10}.

\begin{figure}[!ht]
\begin{center}
\begin{tikzpicture}[scale=0.85]

\draw[fill=gray!60,opacity=0.75,dashed] (2,2) -- (5,2) -- (5,4) -- (2,4) -- (2,2);
\draw[fill=gray!60,opacity=0.75,dashed] (5,2) -- (8,2) -- (8,0) -- (5,0) -- (5,2);
\draw[fill=gray!60,opacity=0.75,dashed] (8,2) -- (11,2) -- (11,4) -- (8,4) -- (8,2);
\draw[fill=gray!60,opacity=0.75,dashed] (11,2) -- (14,2) -- (14,0) -- (11,0) -- (11,2);
\draw[fill=gray!60,opacity=0.75,dashed] (14,2) -- (14,4) -- (17,4) -- (17,2) -- (14,2);

\draw[line width=1pt] (0.5,0.5) parabola bend (0.5,0.5) (2,2) parabola bend (3.5,3.5) (5,2) parabola bend (6.5,0.5) (8,2) parabola bend (9.5,3.5) (11,2) parabola bend (12.5,0.5) (14,2) parabola bend (15.5,3.5) (17,2) parabola bend (18.5,0.5) (17,2);

\draw[line width=1pt] (2,3.75) -- (3,3.75) -- (3.5,2.75);
\draw[line width=1pt] (15.5,2.75) -- (16,3.75) -- (17,3.75);
\draw[line width=1pt] (3.5,2.75) -- (6.5,1.25) -- (9.5,2.75) -- (12.5,1.25) -- (15.5,2.75);

\draw[fill=black,draw] (2,2) circle (0.75mm);
\draw[fill=black,draw] (5,2) circle (0.75mm);
\draw[fill=black,draw] (8,2) circle (0.75mm);
\draw[fill=black,draw] (11,2) circle (0.75mm);
\draw[fill=black,draw] (14,2) circle (0.75mm);
\draw[fill=black,draw] (17,2) circle (0.75mm);

\draw[fill=black,draw] (3.5,2.75) circle (0.75mm);
\draw[fill=black,draw] (6.5,1.25) circle (0.75mm);
\draw[fill=black,draw] (9.5,2.75) circle (0.75mm);
\draw[fill=black,draw] (12.5,1.25) circle (0.75mm);
\draw[fill=black,draw] (15.5,2.75) circle (0.75mm);
\draw[fill=black,draw] (2,3.75) circle (0.75mm);
\draw[fill=black,draw] (17,3.75) circle (0.75mm);
\draw[fill=black,draw] (3,3.75) circle (0.75mm);
\draw[fill=black,draw] (16,3.75) circle (0.75mm);

\draw (4.7,3.75) node{\small$M_1$};
\draw (5.5,0.25) node{\small$M_2$};
\draw (13.5,0.25) node{\small$M_{\ell-1}$};
\draw (14.3,3.75) node{\small$M_\ell$};
\draw (4.5,1.5) node{\small$\Gamma_2$};
\draw (14.5,1.5) node{\small$\Gamma_\ell$};
\draw (1.7,3.75) node{\small$x_1$};
\draw (17.4,3.75) node{\small$x_2$};
\draw (9.5,3.75) node{\small$Z$};
\draw (3.4,3.75) node{\small$y_1$};
\draw (15.6,3.75) node{\small$y_2$};
\draw (12.5,1) node{\small$u$};
\draw (15.5,2.5) node{\small$v$};

\end{tikzpicture}
\end{center}
\caption{Sketch of proof of implication (ii) $\Longrightarrow$ (i) in Proposition \ref{charww0}.\label{fig10}}
\end{figure}

(i) $\Longrightarrow$ (ii) If $\Reg(\Ss)$ is connected, there is nothing to prove. Assume that the result is true if the number of connected components of $\Reg(\Ss)$ is $<r$ and let us check that the result is also true if the number of connected components of $\Reg(\Ss)$ is $r$.

Let $x_i\in M_i\setminus Z$ for $i=1,\ldots,r$. By hypothesis one can construct a continuous semialgebraic path $\alpha:[0,1]\to\Ss$ such that $\alpha(0)=x_1$, $x_i\in\im(\alpha)$ for $i=2,\ldots,r$ and $\eta(\alpha)\subset\Reg(\Ss)\setminus Z$. By Lemma \ref{inclusion} $\im(\alpha)\cap Z$ is a finite set. 

We may reorder the connected components $M_i$ in such a way that if $i<j$, then 
$$
t_i:=\inf\{t\in(0,1):\ \alpha(t)\in M_i\}<\inf\{t\in(0,1):\ \alpha(t)\in M_j\}=:t_j.
$$ 
Let us check: \em $\Ss':=\cl(\Ss\setminus\cl(M_r))\cap\Ss$ is pure dimensional, satisfies the hypothesis of \em (i) \em and $\Reg(\Ss')=\bigsqcup_{i=1}^{r-1}M_i$\em. It is enough to show: 
\begin{itemize}
\item[(a)] \em $\Reg(\Ss')=\bigsqcup_{i=1}^{r-1}M_i$ is dense in $\Ss'$\em. Consequently, $\Ss'$ is pure dimensional.
\item[(b)] \em For each point $x\in\Ss'$ there exists a continuous semialgebraic path $\beta:[0,1]\to\Ss'$ such that $\beta(0)=x_1$, $\beta(1)=x$ and $\eta(\beta)\subset\Reg(\Ss')\setminus Z$\em.
\end{itemize}

Let us prove first (a). By Lemma \ref{zarirred} $\ol{\Ss}^{\zar}$ is irreducible because $\Ss$ is well-welded. As $\bigsqcup_{i=1}^{r-1}M_i$ is a non-empty open subset of $\Ss$, we conclude $\ol{\Ss'}^{\zar}=\ol{\Ss}^{\zar}$. Thus,
$$
\Reg(\Ss')=\Int_{\Reg(\ol{\Ss}^{\zar})}(\Ss'\setminus\Sing(\ol{\Ss}^{\zar}))\subset\Reg(\Ss)=\bigsqcup_{i=1}^rM_i.
$$
We deduce
\begin{multline*}
\bigsqcup_{i=1}^{r-1}M_i\subset\Reg(\Ss')\subset\Ss'\cap\Reg(\Ss)=\cl(\Reg(\Ss)\setminus\cl(M_r))\cap\Reg(\Ss)\\
=\cl\Big(\bigsqcup_{i=1}^rM_i\setminus\cl(M_r)\Big)\cap\bigsqcup_{i=1}^rM_i\subset\cl\Big(\bigsqcup_{i=1}^{r-1}M_i\Big)\cap\bigsqcup_{i=1}^rM_i=\bigsqcup_{i=1}^{r-1}M_i,
\end{multline*}
so $\Reg(\Ss')=\bigsqcup_{i=1}^{r-1}M_i$.

As $M_1,\ldots,M_r$ are the connected components of $\Reg(\Ss)$, we have $M_i\subset(\cl(M_i)\setminus\cl(M_r))\cap\Ss$, so $\cl((\cl(M_i)\setminus\cl(M_r))\cap\Ss)=\cl(M_i)$ for $i=1,\ldots,r-1$. As $\Ss=\bigcup_{i=1}^r\cl(M_i)\cap\Ss$,
\begin{equation*}
\begin{split}
\Ss'&=\cl\Big(\bigcup_{i=1}^r\cl(M_i)\cap\Ss\setminus\cl(M_r)\Big)\cap\Ss=\cl\Big(\bigcup_{i=1}^{r-1}\cl(M_i)\cap\Ss\setminus\cl(M_r)\Big)\cap\Ss\\
&=\bigcup_{i=1}^{r-1}\cl((\cl(M_i)\setminus\cl(M_r))\cap\Ss)\cap\Ss=\bigcup_{i=1}^{r-1}\cl(M_i)\cap\Ss=\cl(\Reg(\Ss'))\cap\Ss',
\end{split}
\end{equation*}
so $\Reg(\Ss')$ is dense in $\Ss'$. 

Let us show next (b). As $x\in\Ss'$, there exists an index $1\leq i\leq r-1$ such that $x\in\cl(M_i\setminus Z)$. By the Nash curve selection lemma there exists a Nash arc $\gamma_1:(-1,1)\to\R^n$ such that $\gamma_1((-1,0))\subset M_i\setminus Z$ and $\gamma_1(0)=x$. Let $s_i\in(0,t_{i+1})$ be such that $\alpha(s_i)\in M_i\setminus Z$. As $M_i$ is a connected Nash manifold, there exists a Nash path $\gamma_2:[0,1]\to M_i$ such that $\gamma_2(0)=\alpha(s_i)$ and $\gamma_2(1)=\gamma_1(-\frac{1}{2})$. The continuous semialgebraic path $\beta:=\alpha|_{[0,s_i]}*\gamma_2*\gamma_1|_{[-\frac{1}{2},0]}$ connects $x_1$ with $x$ and satisfies $\im(\beta)\subset\Ss'$ and
$$
\eta(\beta)\subset\eta(\alpha|_{[0,s_i]})\cup\{\alpha(s_i),\gamma_1(-\tfrac{1}{2})\}\subset\Reg(\Ss')\setminus Z.
$$
Consequently, $\Ss'$ satisfies the desired conditions.

By induction hypothesis we may assume that there exist bridges $\Gamma_i\subset\Ss$ between $M_i\setminus Z$ and $\bigsqcup_{j=1}^{i-1}(M_j\setminus Z)$ for $i=2,\ldots,r-1$. Recall that $\im(\alpha)\cap Z$ is a finite set. Let $\veps>0$ be such that the restriction $\alpha|_{(t_r-\veps,t_r+\veps)}$ is a Nash map, $\alpha((t_r,t_r+\veps))\subset M_r\setminus Z$ and $\alpha((t_r-\veps,t_r))\subset M_i\setminus Z$ for some $i=1,\ldots,r-1$. It holds (after reparameterizing) that $\Gamma_r:=\alpha((t_r-\veps,t_r+\veps))$ is a bridge between $\bigsqcup_{j=1}^{r-1}M_j\setminus Z$ and $M_r\setminus Z$, as required.
\end{proof}

\begin{cor}\label{clue}
Let $\Ss\subset\R^n$ be a well-welded semialgebraic set and let $Z\subset\R^n$ be an algebraic set that does not contain $\Ss$. Then for each pair of points $x,y\in\Ss$ there exists a continuous semialgebraic path $\alpha:[0,1]\to\Ss$ such that $\alpha(0)=x$, $\alpha(1)=y$ and $\eta(\alpha)\subset\Reg(\Ss)\setminus Z$.
\end{cor}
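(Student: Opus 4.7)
The plan is to verify condition~(ii) of Proposition~\ref{charww0} for the pair $(\Ss,Z)$, whereupon the implication (ii)$\Rightarrow$(i) of that proposition yields exactly the statement of the corollary. Preliminarily, by Lemma~\ref{zarirred} the Zariski closure $X:=\ol{\Ss}^{\zar}$ is irreducible, and since $\Ss\not\subset Z$ we have $X\not\subset Z$, so $X\cap Z$ is an algebraic subset of $X$ of dimension $<\dim X=\dim\Ss$. Consequently no connected component $M_k$ of $\Reg(\Ss)$ is contained in $Z$, and the standing hypothesis of Proposition~\ref{charww0} is met.

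Since $\Ss$ is well-welded, Corollary~\ref{charww} reorders the connected components $M_1,\dots,M_r$ of $\Reg(\Ss)$ and provides, for each $i\ge 2$, a bridge $\Gamma_i\subset\Ss$ between $M_i$ and $\bigsqcup_{j<i}M_j$, realised by a Nash arc $\gamma_i:(-1,1)\to\Ss$ with $\gamma_i((-1,0))\subset M_{k_i}$ for some $k_i<i$, $\gamma_i((0,1))\subset M_i$ and centre $p_i:=\gamma_i(0)\in\cl(M_{k_i})\cap\cl(M_i)\cap\Ss$. The main task is to produce, for each~$i$, a Nash arc $\beta_i$ with the same two half-targets but whose image avoids $Z$ except possibly at its centre; such $\beta_i$ will witness condition~(ii) of Proposition~\ref{charww0} for $(\Ss,Z)$.

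The natural tool is Lemma~\ref{mod}. When $p_i\in\Reg(X)$ I would take $M$ to be the connected component of $\Reg(X)$ containing $p_i$, set $M_1:=M\cap M_i$ and $M_2:=M\cap M_{k_i}$ (which are open in $M$ since $\Reg(\Ss)$ is open in $\Reg(X)$) and $Y:=Z$; after translating so $p_i$ becomes the origin, Lemma~\ref{mod} directly supplies~$\beta_i$. When $p_i\in\Sing(X)$ no such ambient Nash manifold exists, and I would bypass this by desingularizing. Apply Theorem~\ref{hi1} to obtain a proper surjective regular map $f:X'\to X$ with $X'$ non-singular and $f$ a Nash diffeomorphism over $\Reg(X)$. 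By the strict-transform results for Nash paths under finite chains of blow-ups recalled in Appendix~B, the set-theoretic lift $f^{-1}\circ\gamma_i$ defined on $(-1,0)\cup(0,1)$ extends to a Nash arc $\widetilde\gamma_i:(-1,1)\to X'$ with centre $\widetilde p_i:=\widetilde\gamma_i(0)$ a smooth point of $X'$. Now take $M$ to be an open semialgebraic neighbourhood of $\widetilde p_i$ in $X'$, set $M_1:=f^{-1}(M_i)\cap M$, $M_2:=f^{-1}(M_{k_i})\cap M$ and $Y:=f^{-1}(Z)$; note $M\not\subset Y$ because $\dim f^{-1}(Z)<\dim X'$. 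Applying Lemma~\ref{mod} upstairs gives a Nash arc $\widetilde\beta_i:(-\veps,\veps)\to M_1\cup M_2\cup\{\widetilde p_i\}$ whose image meets $f^{-1}(Z)$ only at $\widetilde p_i$, and pushing it down by $f$ yields the required Nash arc $\beta_i:=f\circ\widetilde\beta_i$ in $\Ss$.

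By construction each $\beta_i$ defines a bridge between $M_i\setminus Z$ and $\bigsqcup_{j<i}(M_j\setminus Z)$, and the collection $\{\beta_i\}_{i=2}^{r}$ establishes condition~(ii) of Proposition~\ref{charww0} for $(\Ss,Z)$; the implication (ii)$\Rightarrow$(i) of that proposition then produces, for every $x,y\in\Ss$, a semialgebraic path $\alpha$ with $\alpha(0)=x$, $\alpha(1)=y$ and $\eta(\alpha)\subset\Reg(\Ss)\setminus Z$, which is the claim. The principal technical obstacle is the singular-centre case: one must guarantee that the lift $\widetilde\gamma_i$ exists as a genuine Nash arc through the smooth point $\widetilde p_i\in X'$ so that Lemma~\ref{mod} can be legally invoked upstairs, and this is exactly what the strict-transform machinery of Appendix~B delivers.
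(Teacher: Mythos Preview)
Your proposal is correct and follows essentially the same route as the paper: obtain bridges from Corollary~\ref{charww}, lift them through a desingularization of $X=\ol{\Ss}^{\zar}$ via the strict-transform lemma in Appendix~\ref{ap} (Lemma~\ref{irredgamma}), perturb upstairs with Lemma~\ref{mod} to avoid $f^{-1}(Z)$, push down, and invoke Proposition~\ref{charww0}. The only difference is organizational: you split into the cases $p_i\in\Reg(X)$ and $p_i\in\Sing(X)$, handling the first directly in $\Reg(X)$; the paper simply desingularizes once for all $i$ and works entirely in $X'$, which subsumes your regular case and avoids the split.
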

\begin{proof}
Let $M_1,\ldots,M_r$ be the connected components of $\Reg(\Ss)$. As $\Ss$ is well-welded, we can reorder the indices $i=1,\ldots,r$ in such a way that there exist bridges $\Gamma_i\subset\Ss$ between $M_i$ and $\bigsqcup_{j=1}^{i-1}M_j$ for $i=2,\ldots,r$ (see Corollary \ref{charww}). Denote $X=\ol{\Ss}^{\zar}$, which is an irreducible algebraic set not contained in $Z$. By Theorem \ref{hi1} there exist a non-singular algebraic set $X_1$ and a proper regular map $f:X_1\to X$ such that $\Ss\subset f(X_1)$ and
$$
f|_{X_1\setminus f^{-1}(\Sing(X))}:X_1\setminus f^{-1}(\Sing(X))\to X\setminus\Sing(X)
$$
is a diffeomorphism whose inverse map is also regular. 

Denote $\Lambda_i:=\cl(f^{-1}(\Gamma_i\setminus\Sing(X)))\cap f^{-1}(\Gamma_i)$ and $N_i:=f^{-1}(M_i)$ for $i=1,\ldots,r$. As $M_i\subset X\setminus\Sing(X)$, we have that $N_i\subset X_1\setminus f^{-1}(\Sing(X))$ is a Nash manifold. After shrinking $\Gamma_i$ if necessary, we may assume by Lemma \ref{irredgamma} that $\Lambda_i$ is a bridge between $N_i$ and $\bigsqcup_{j=1}^{i-1}N_j$ such that $\Lambda_i\setminus\bigsqcup_{j=1}^rN_j=\{q_i\}$ and $f(q_i)\in\Ss$ for some $q_i\in X_1$.

Consider the algebraic set $Z':=f^{-1}(Z\cap X)\subset X_1$. As $X$ is irreducible, also $X_1$ is irreducible. If $Z'$ contains $N_i$, then $Z'$ contains $X_1$, so $Z$ contains $X$, which is a contradiction. Consequently, $Z'$ contains no $N_i$. As $X_1$ is a Nash manifold, there exists by Lemma \ref{mod} a bridge $\Lambda_i'$ between $N_i\setminus Z'$ and $\bigsqcup_{j=1}^{i-1}(N_j\setminus Z')$ such that $\Lambda_i'\setminus\bigsqcup_{j=1}^rN_j=\{q_i\}$. Consequently, $\Gamma_i':=f(\Lambda_i')$ is a bridge between $M_i\setminus Z$ and $\bigsqcup_{j=1}^{i-1}(M_j\setminus Z)$ such that $\Gamma_i'\setminus\Reg(\Ss)=\{f(q_i)\}\subset\Ss$. By Proposition \ref{charww0} the result follows.
\end{proof}

\subsection{Basic properties of well-welded semialgebraic sets II}\label{bpii}
We prove next the remaining announced properties of well-welded semialgebraic sets.

\begin{cor}
Let $\Ss\subset\R^n$ be a well-welded semialgebraic set. Then $\Ss$ is irreducible.
\end{cor}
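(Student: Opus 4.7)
The plan is to assume $f,g\in{\mathcal N}(\Ss)$ satisfy $fg=0$ with both $f\not\equiv 0$ and $g\not\equiv 0$ on $\Ss$, and derive a contradiction. I would choose Nash representatives $F,G$ of $f,g$ on an open semialgebraic neighborhood $U$ of $\Ss$, set $Y_F:=\{F=0\}\cap\Ss$, $Y_G:=\{G=0\}\cap\Ss$, and consider the algebraic set $Z:=\ol{Y_F\cap Y_G}^{\zar}$. On each connected component $M_k$ of $\Reg(\Ss)$ the ring ${\mathcal N}(M_k)$ is an integral domain, so either $F|_{M_k}\equiv 0$ or $G|_{M_k}\equiv 0$; the hypotheses $f\not\equiv 0$ and $g\not\equiv 0$ guarantee that components of each type occur.

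The heart of the argument applies Corollary \ref{clue} with this $Z$, provided $Z\subsetneq\ol{\Ss}^{\zar}$. Pick $p,q\in\Ss$ with $F(p)\neq 0$ and $G(q)\neq 0$ (so that $G(p)=F(q)=0$) and, by Corollary \ref{clue}, a semialgebraic path $\alpha\colon[0,1]\to\Ss$ with $\alpha(0)=p$, $\alpha(1)=q$ and $\eta(\alpha)\subset\Reg(\Ss)\setminus Z$. On each connected component $J$ of $[0,1]\setminus\alpha^{-1}(\eta(\alpha))$ the restriction $\alpha|_J$ is Nash, so $F\circ\alpha$ and $G\circ\alpha$ are Nash on $J$ with vanishing product; since ${\mathcal N}(J)$ is an integral domain, either $F\circ\alpha\equiv 0$ on $J$ (an \emph{F-piece}) or $G\circ\alpha\equiv 0$ on $J$ (a \emph{G-piece}). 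The initial piece is a G-piece (since $F(p)\neq 0$) and the terminal piece is an F-piece (since $G(q)\neq 0$), so there must exist a transition point $t_*\in\alpha^{-1}(\eta(\alpha))$ at which a G-piece on the left abuts an F-piece on the right. By continuity $F(\alpha(t_*))=G(\alpha(t_*))=0$, whence $\alpha(t_*)\in Y_F\cap Y_G\subset Z$, contradicting $\alpha(t_*)\in\eta(\alpha)\subset\Reg(\Ss)\setminus Z$.

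The main obstacle is therefore verifying $Z\subsetneq\ol{\Ss}^{\zar}$. Away from any component $M_{k^*}$ on which both $F$ and $G$ vanish identically, each intersection $Y_F\cap Y_G\cap M_k$ lies in a proper Nash subset of $M_k$, so has dimension at most $d-1$, which combined with Lemma \ref{zarirred} gives $\dim Z\leq d-1<d=\dim\ol{\Ss}^{\zar}$. To rule out the existence of such an $M_{k^*}$ I would invoke the irreducibility of the complexification of $\ol{\Ss}^{\zar}$ — which follows from Zariski irreducibility of $\ol{\Ss}^{\zar}$ together with the presence of regular real points of full dimension $d$ in $\Reg(\Ss)$ — and the algebraicity of the Nash function $F$: the holomorphic extension of $F$ would vanish on $M_{k^*}$, a totally real submanifold of maximal real dimension inside the connected complex manifold obtained by removing the singular locus from the complexification, forcing, by analytic continuation along the identically-zero branch of the algebraic equation $P(z,F(z))=0$ defining $F$, that $F$ vanish on all of $\Reg(\ol{\Ss}^{\zar})\supset\Reg(\Ss)$; then density of $\Reg(\Ss)$ in the pure-dimensional $\Ss$ together with continuity of $F$ would give $f\equiv 0$ on $\Ss$, contrary to hypothesis.
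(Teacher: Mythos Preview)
Your transition argument along a path furnished by Corollary \ref{clue} is exactly the mechanism the paper uses, and that part of your proposal is fine. The gap is in your treatment of the ``both $F$ and $G$ vanish identically on some $M_{k^*}$'' case. Your analytic–continuation argument for ruling it out uses only the Zariski irreducibility of $\ol{\Ss}^{\zar}$ (Lemma \ref{zarirred}), not the well-welded hypothesis, and in that generality the conclusion is false. Take for instance the smooth irreducible real plane curve $X=\{y^2+(x^2-1)(x^2-4)=0\}$, whose regular locus has two compact components $C_1,C_2$; choosing disjoint open neighborhoods $U_1\supset C_1$, $U_2\supset C_2$ and setting $F\equiv 0$ on $U_1$, $F\equiv 1$ on $U_2$ gives a Nash function on $U_1\cup U_2\supset X$ with $F|_{C_1}\equiv 0$ but $F|_{C_2}\not\equiv 0$. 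The holomorphic extension of $F$ lives only on a complex tube around $U_1\cup U_2$, whose intersection with $\widetilde X\setminus\Sing(\widetilde X)$ is \emph{disconnected}; so connectedness of the latter does not let you propagate the vanishing, and ``analytic continuation along the zero branch of $P(z,t)=0$'' is exactly the statement whose justification requires the well-welded structure you have not yet used.

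The paper avoids the issue altogether by invoking the criterion \cite[Lem.~3.6]{fg3}: $\Ss$ is irreducible iff every Nash function on $\Ss$ whose zero set has dimension $d$ is identically zero. With a single function $f$ one simply partitions the components $M_1,\dots,M_r$ of $\Reg(\Ss)$ into those where $f\equiv 0$ and those where $f\not\equiv 0$; the Zariski closure $Y$ of $\{f=0\}$ restricted to the latter components automatically has dimension $<d$, so Corollary \ref{clue} applies and your transition argument finishes the job. In effect, the single-function statement you tried to prove by analytic continuation \emph{is} the heart of the matter, and it is proved by the very path/bridge argument you wrote down for the pair $(F,G)$; once you have it, the two-function setup becomes superfluous. (A minor side remark: decompose $[0,1]$ using the finite set $A$ of non-Nash points of $\alpha$, not $\alpha^{-1}(\eta(\alpha))$, which need not be finite.)
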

\begin{proof}
Denote $d:=\dim(\Ss)$. By \cite[Lem.3.6]{fg3} it is enough to prove that if $f$ is a Nash function on $\Ss$ whose zero-set has dimension $d$, then $f$ is identically zero.

Let $f$ be a Nash function on $\Ss$ whose zero-set has dimension $d$. Let $M_1,\ldots,M_r$ be the connected components of $\Reg(\Ss)$. We may assume that $f$ is identically zero only on $M_1,\ldots,M_k$. Observe that $k\geq1$ and assume by contradiction that $k<r$. Let $\Tt:=\{f=0\}\cap\bigsqcup_{j=k+1}^rM_j$, which is a semialgebraic set of dimension $<d$. Let $Y$ be the Zariski closure of $\Tt$, which has dimension $<d$. 

Pick points $x_1\in\bigsqcup_{i=1}^k(M_i\setminus Y)$ and $x_2\in\bigsqcup_{i=k+1}^r(M_i\setminus Y)$. By Corollary \ref{clue} there exists a continuous semialgebraic path $\alpha:[0,1]\to\Ss$ such that $\alpha(0)=x_1$, $\alpha(1)=x_2$ and $\eta(\alpha)\subset\Reg(\Ss)\setminus Y$. By Lemma \ref{inclusion} the inverse image $\alpha^{-1}(Y\cap\Ss)$ is a finite set, so there exist $\veps>0$ and $t_0\in(0,1)$ such that $\alpha|_{(t_0-\veps,t_0+\veps)}$ is a Nash map,
$$
\alpha((t_0-\veps,t_0))\subset\bigsqcup_{i=1}^k(M_i\setminus Y)\quad\text{and}\quad\alpha((t_0,t_0+\veps))\subset\bigsqcup_{i=k+1}^r(M_i\setminus Y).
$$
As $f\circ\alpha|_{(t_0-\veps,t_0)}$ is identically zero, $f\circ\alpha|_{(t_0-\veps,t_0+\veps)}$ is identically zero, so $\alpha((t_0,t_0+\veps))\subset\Tt\subset Y$, which is a contradiction. Consequently, $k=r$ and $f$ is identically zero, as required. 
\end{proof}

\begin{cor}\label{imageww}
Let $\Ss_1\subset\R^m$ and $\Ss_2\subset\R^n$ be semialgebraic sets. Assume that there exists a surjective Nash map $f:\Ss_1\to\Ss_2$ and that $\Ss_1$ is well-welded. Then $\Ss_2$ is well-welded. 
\end{cor}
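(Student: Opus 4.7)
The plan is to transfer well-welding from $\Ss_1$ to $\Ss_2$ by applying Corollary \ref{clue} inside $\Ss_1$ with an algebraic obstacle in $\R^m$ that captures $\Sing(\Ss_2)$. First I would extend $f$ to a Nash map $\tilde{f}:U\to\R^n$ on an open semialgebraic neighborhood $U\subset\R^m$ of $\Ss_1$, and write $d_i:=\dim(\Ss_i)$. Two things are to be verified: that $\Ss_2$ is pure dimensional, and that any pair of points of $\Ss_2$ can be joined by a continuous semialgebraic path whose non-Nash parameters map into $\Reg(\Ss_2)$.

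For the path condition, I would pick a polynomial equation $P\in\R[\x]$ of $Z_2:=\ol{\Sing(\Ss_2)}^{\zar}\subset\R^n$, which has dimension strictly less than $d_2$ (by the bound on $\dim\Sing(\Ss_2)$ recalled in the preliminaries). The Nash function $h:=P\circ\tilde{f}$ cannot vanish identically on $\Ss_1$, because otherwise $\Ss_2=f(\Ss_1)$ would be contained in the proper algebraic set $\{P=0\}$. Since $\Ss_1$ is well-welded, it is irreducible by the preceding corollary, so by \cite[Lem.3.6]{fg3} the zero set $\{h=0\}\cap\Ss_1$ has dimension $<d_1$, and hence its Zariski closure $\tilde{Z}_1\subset\R^m$ has dimension $<d_1$ and does not contain $\Ss_1$. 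Given $y_1,y_2\in\Ss_2$, I pick preimages $x_i\in f^{-1}(y_i)$ and invoke Corollary \ref{clue} to obtain a continuous semialgebraic path $\alpha:[0,1]\to\Ss_1$ with $\alpha(0)=x_1$, $\alpha(1)=x_2$ and $\eta(\alpha)\subset\Reg(\Ss_1)\setminus\tilde{Z}_1$. The composition $\beta:=f\circ\alpha:[0,1]\to\Ss_2$ joins $y_1$ and $y_2$, and because $\beta$ is Nash on every subinterval where $\alpha$ is Nash, $\eta(\beta)\subset f(\eta(\alpha))$. For $p\in\eta(\alpha)$ the condition $p\notin\tilde{Z}_1\supset\{h=0\}\cap\Ss_1$ yields $P(f(p))\neq 0$, so $f(p)\notin Z_2\supset\Sing(\Ss_2)$, and hence $f(p)\in\Reg(\Ss_2)$; thus $\eta(\beta)\subset\Reg(\Ss_2)$.

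Pure dimensionality would be proved by contradiction along the same lines. Suppose some $y_0\in\Ss_2$ has local dimension $<d_2$; pick an open ball $B\subset\R^n$ around $y_0$ with $\dim(B\cap\Ss_2)<d_2$, set $Y:=\ol{B\cap\Ss_2}^{\zar}$ (of dimension $<d_2$), pick $y_2\in\Ss_2\setminus Y$ together with preimages $x_0,x_2$, and build exactly as above a path $\alpha:[0,1]\to\Ss_1$ from $x_0$ to $x_2$ whose $\eta(\alpha)$ avoids the Zariski closure $\tilde{Y}$ of $\{Q\circ\tilde{f}=0\}\cap\Ss_1$, where $Q$ is a polynomial equation of $Y$. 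Let $A$ be the (finite) set of non-Nash parameters of $\alpha$ and $t_1:=\min(A\cup\{1\})$. On $(0,t_1)$ the composition $g:=f\circ\alpha$ is Nash, and by continuity $g(t)\in B\cap\Ss_2\subset Y$ for small $t>0$, so the identity principle gives $Q\circ g\equiv 0$ on $(0,t_1)$, whence $g(t_1)\in Y$ by continuity. If $t_1=1$ this contradicts $y_2\notin Y$; otherwise $t_1\in A$, so $\alpha(t_1)\in\eta(\alpha)\subset\Reg(\Ss_1)\setminus\tilde{Y}$, and consequently $f(\alpha(t_1))\notin Y$, again a contradiction.

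The main obstacle is guaranteeing that the algebraic sets $\tilde{Z}_1$ and $\tilde{Y}$ in $\R^m$ do not swallow $\Ss_1$: the sets $\tilde{f}^{-1}(Z_2)$ and $\tilde{f}^{-1}(Y)$ are only Nash subsets of $U$, and their Zariski closures could a priori be too large. This is exactly where irreducibility of $\Ss_1$, inherited from well-weldedness, becomes essential, entering through \cite[Lem.3.6]{fg3} to convert non-vanishing of the Nash functions $P\circ\tilde{f}|_{\Ss_1}$ and $Q\circ\tilde{f}|_{\Ss_1}$ into the required dimension bound on their zero sets. The remainder is a routine combination of Corollary \ref{clue} with the identity principle for Nash functions on an interval.
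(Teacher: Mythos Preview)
Your argument is correct and follows essentially the same route as the paper: control the preimage of $\Sing(\Ss_2)$ via irreducibility of $\Ss_1$, then invoke Corollary~\ref{clue} and push the path forward by $f$. You are in fact more careful than the paper, which does not explicitly verify pure dimensionality of $\Ss_2$; your identity-principle argument fills that gap cleanly.
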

\begin{proof}
Let $y_1,y_2\in\Ss_2$ and $x_1,x_2\in\Ss_1$ be such that $f(x_i)=y_i$. Let $Y$ be the Zariski closure of $f^{-1}(\Sing(\Ss_2))$, which has dimension $<\dim(\Ss_1)$ because $\Ss_1$ is irreducible. By Corollary \ref{clue} there exists a continuous semialgebraic path $\alpha:[0,1]\to\Ss_1$ such that $\alpha(0)=x_1$, $\alpha(1)=x_2$ and $\eta(\alpha)\subset\Reg(\Ss_1)\setminus Y$. The (continuous) semialgebraic map $\beta:=f\circ\alpha$ satisfies $\beta(0)=y_1$, $\beta(1)=y_2$ and $\eta(\beta)\subset f(f^{-1}(\Ss_2\setminus\Sing(\Ss_2)))=\Reg(\Ss_2)$. Consequently, $\Ss_2$ is well-welded.
\end{proof}

\begin{example}
There exist pure dimensional, irreducible semialgebraic sets that are not well-welded. Let 
$$
\Ss:=\{(4x^2-y^2)(4y^2-x^2)>0,y>0\}\cup\{(0,0)\}\subset\R^2,
$$ 
which is a pure dimensional irreducible semialgebraic set, see Figure \ref{fig11}. Let us check that it is not well-welded. 

\begin{figure}[!ht]
\begin{center}
\begin{tikzpicture}[scale=0.75]

\draw[fill=gray!60,opacity=0.75,dashed,draw] (4.5,1) -- (0.5,3) arc (153.43494882292201:116.56505117707798:4.47213595499958cm) -- (4.5,1) -- (8.5,3) arc (26.56505117707799:63.43494882292202:4.47213595499958cm) -- (4.5,1);

\draw[->] (4.5,0) -- (4.5,6);
\draw[->] (0,1) -- (9,1);

\draw[fill=black] (4.5,1) circle (0.75mm);
\draw[fill=black] (5.5,2) circle (0.75mm);
\draw[fill=black] (3.5,2) circle (0.75mm);

\draw (6.1,2.25) node{\footnotesize$(1,1)$};
\draw (2.9,2.25) node{\footnotesize$(-1,1)$};

\draw (7,4.25) node{\small$\Aa_1$};
\draw (2,4.25) node{\small$\Aa_2$};
\draw (5,5) node{\small$\Ss$};

\draw[fill=gray!60,opacity=0.75,dashed,draw] (15.5,1) -- (11.5,3) arc (153.43494882292201:116.56505117707798:4.47213595499958cm) -- (15.5,1) -- (19.5,3) arc (26.56505117707799:63.43494882292202:4.47213595499958cm) -- (15.5,1);

\draw[->] (15.5,0) -- (15.5,6);
\draw[->] (11,1) -- (20,1);

\draw[fill=white,draw] (15.5,1) circle (1mm);
\draw[fill=black] (16.5,2) circle (0.75mm);
\draw[fill=black] (14.5,2) circle (0.75mm);

\draw (17.1,2.25) node{\footnotesize$(1,1)$};
\draw (13.9,2.25) node{\footnotesize$(-1,1)$};

\draw (18,4.25) node{\small$\Aa_1$};
\draw (13,4.25) node{\small$\Aa_2$};
\draw (16.5,5) node{\small${\rm Reg}(\Ss)$};

\end{tikzpicture}
\end{center}
\caption{Semialgebraic set $\Ss:=\{(4x^2-y^2)(4y^2-x^2)>0,y>0\}\cup\{(0,0)\}\subset\R^2$ and its set of regular points.\label{fig11}}
\end{figure}
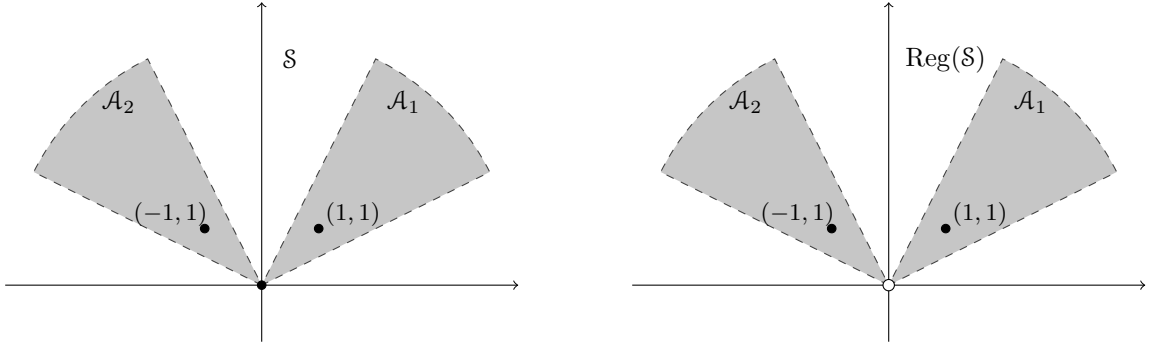

Observe first that $\Sing(\Ss)=\{(0,0)\}$. Pick the points $x:=(1,1),y:=(-1,1)\in\Ss$ and assume that $\Ss$ is well-welded. There exists a continuous semialgebraic path $\alpha:[0,1]\to\Ss$ such that $\alpha(0)=x$, $\alpha(1)=y$ and $\eta(\alpha)\subset\Ss\setminus\{(0,0)\}$. Consider the open semialgebraic sets $\Aa_1:=\Reg(\Ss)\cap\{x>0\}$ and $\Aa_2:=\Reg(\Ss)\cap\{x<0\}$, which satisfy $\Ss=\Aa_1\cup \Aa_2\cup\{(0,0)\}$. Let $t_0:=\inf(\alpha^{-1}(\Aa_2))>0$. Note that $\alpha(t_0)=(0,0)$ and there exists $\veps>0$ such that 
\begin{itemize}
\item[(i)] $\alpha((t_0-\veps,t_0))\subset \Aa_1$, $\alpha((t_0,t_0+\veps))\subset \Aa_2$,
\item[(ii)] $\alpha'(t)\neq0$ and $\alpha(t)\neq\alpha(t_0)$ for all $t\in(t_0-\veps,t_0+\veps)\setminus\{t_0\}$. 
\end{itemize}

The tangent line to $\im(\alpha|_{(t_0-\veps,t_0+\veps)})$ at $\alpha(t_0)$ is the line generated by the vector 
$$
w=\lim_{t\to t_0}\frac{\alpha(t)-\alpha(t_0)}{(t-t_0)^k}
$$
where $2k$ is the order of the series $\|\alpha\|^2$ at $t_0$. Note that
\begin{align*}
&w=\lim_{t\to t_0^{+}}\frac{\alpha(t)-\alpha(t_0)}{(t-t_0)^k}\in\cl(\Aa_1)\setminus\{(0,0)\},\\
&w=\lim_{t\to t_0^{-}}\frac{\alpha(t)-\alpha(t_0)}{(t-t_0)^k}\in\cl(\Aa_2\cup-\Aa_2)\setminus\{(0,0)\},
\end{align*}
which is a contradiction because $\cl(\Aa_1)\cap\cl(\Aa_2\cup-\Aa_2)=\{(0,0)\}$. Thus, $\Ss$ is not well-welded.\qed
\end{example}

\subsection{Alternative description of well-welded semialgebraic sets}
We describe next well-welded semialgebraic sets using piecewise analytic paths instead of continuous semialgebraic paths. We say that a continuous path $\alpha:[0,1]\to\R^n$ is \em piecewise analytic \em if there exists a finite set $A'\subset(0,1)$ such that $\alpha|_{(0,1)\setminus A'}$ is an analytic map. Let $A$ be the smallest set with the previous property and define $\eta(\alpha)=\alpha(A)$.

\begin{lem}
Let $\Ss\subset\R^n$ be a semialgebraic set. Then the following assertions are equivalent:
\begin{itemize}
\item[(i)] $\Ss$ is well-welded.
\item[(ii)] For each pair of points $x,y\in\Ss$ there exists a piecewise analytic path $\alpha:[0,1]\to\Ss$ such that $\alpha(0)=x$, $\alpha(1)=y$ and $\eta(\alpha)\subset\Reg(\Ss)$.
\end{itemize}
\end{lem}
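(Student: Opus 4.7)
The implication (i) $\Rightarrow$ (ii) is immediate: a semialgebraic continuous path $\alpha$ is Nash—hence analytic—on the complement of the finite set defining $\eta(\alpha)$, so $\alpha$ is piecewise analytic and the set of non-analytic parameters is contained in that defining set, giving $\eta_{\rm pa}(\alpha)\subset\eta(\alpha)\subset\Reg(\Ss)$.

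For (ii) $\Rightarrow$ (i), my idea is to verify the bridge characterization of Corollary \ref{charww}: if $M_1,\ldots,M_r$ are the connected components of $\Reg(\Ss)$, to produce, after reordering, a Nash bridge between $M_i$ and $\bigsqcup_{j<i}M_j$ for each $i$. Given regular points $x\in M_i$, $y\in M_j$ and a piecewise analytic path $\alpha:[0,1]\to\Ss$ with $\eta(\alpha)=\{p_1,\dots,p_k\}\subset\Reg(\Ss)$, I would first adapt the proof of Lemma \ref{zarirred} to show that $\cl(\Ss)^{\zar}$ is irreducible: the identity principle applied to each analytic piece with a polynomial equation of a would-be proper Zariski component produces the same contradiction as in the semialgebraic case. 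Then, for each analytic piece $\alpha|_{[t_{\ell-1},t_\ell]}$ (with $t_0=0$, $t_{k+1}=1$), the preimage $\alpha^{-1}(\Sing(\Ss))$ is finite (otherwise the entire piece would lie in $\Sing(\Ss)$, contradicting that its endpoints belong to $\Reg(\Ss)\cup\{x,y\}$), so the piece realises a finite chain of transitions between components of $\Reg(\Ss)$ separated by crossings through $\Sing(\Ss)$.

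At each crossing $q\in\Sing(\Ss)$, I would lift the relevant analytic piece through an embedded desingularisation $f:X'\to\cl(\Ss)^{\zar}$ (Theorem \ref{hi1}) to a Nash manifold; the two local components of $\Reg(\Ss)$ meeting at $q$ pull back to open semialgebraic subsets of $X'$, and the lifted analytic arc crosses between them at a single point $q'\in f^{-1}(q)$. Lemma \ref{mod}, applied with $Y\subset X'$ the Zariski closure of $f^{-1}(\Sing(\Ss))$, furnishes a Nash arc through $q'$ transitioning between the same lifted components; its $f$-image is a Nash bridge in $\Ss$ between the corresponding components of $\Reg(\Ss)$. An entirely analogous application of Lemma \ref{mod} inside the Nash manifold $\Reg(\Ss)$ at each break point $p_\ell$ straightens the two abutting analytic arcs into a single Nash arc. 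Concatenating these local Nash arcs with Nash paths inside each component of $\Reg(\Ss)$ (which is Nash-path-connected by Lemma \ref{mcnp}) furnishes the required bridges, and Corollary \ref{charww} then delivers the well-weldedness of $\Ss$. Pure-dimensionality of $\Ss$ comes as a by-product of the same scheme applied to the first analytic piece of $\alpha$: the Nash arc it produces witnesses $x\in\cl(\Reg(\Ss))$, and as $x\in\Ss$ was arbitrary, $\Ss=\cl(\Reg(\Ss))\cap\Ss$ is pure dimensional.

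The main obstacle I anticipate is the global bookkeeping across a chain of consecutive crossings within a single analytic piece: while Lemma \ref{mod} gives a local Nash replacement at each isolated crossing or break point, gluing these replacements into one semialgebraic path whose break points remain in $\Reg(\Ss)$ requires matching the component of $\Reg(\Ss)$ arrived at one crossing with the component departed from at the next, and then joining consecutive replacements through Nash paths inside the intermediate components. Tracking this combinatorial transition data faithfully through the desingularisation (so that Proposition \ref{charww0} can be invoked to assemble the final semialgebraic path) is where most of the technical effort lies.
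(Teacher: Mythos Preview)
Your plan is correct in its essentials and uses the same toolkit as the paper (Corollary \ref{charww}, Lemma \ref{mod}, resolution of singularities), but it is considerably more elaborate than necessary, and the ``main obstacle'' you flag is self-inflicted.

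The paper's proof is much shorter because it decouples two steps that you have intertwined. First, it introduces the notion of an \emph{analytic bridge} (image of an analytic arc crossing from one $M_i$ to another through a single point of $\Ss$) and reruns the (i)$\Rightarrow$(ii) argument of Proposition \ref{charww0} verbatim with ``analytic'' in place of ``Nash'': from the hypothesis (ii) one obtains, after reordering, exactly one analytic bridge $\Lambda_i\subset\Ss$ between $M_i$ and $\bigsqcup_{j<i}M_j$ for each $i\ge 2$. Second, it converts each $\Lambda_i$ into a Nash bridge via Lemma \ref{mod}, and Corollary \ref{charww} finishes. The point is that you never need to reconstruct a full semialgebraic path from $x$ to $y$, nor glue local Nash replacements across a chain of crossings: Corollary \ref{charww} asks only for $r-1$ separate bridges, and these can be produced and Nash-ified one at a time. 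Once you adopt this viewpoint, your ``global bookkeeping'' worry disappears entirely.

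Two smaller remarks on your outline. The ``straightening at break points $p_\ell$'' step is unnecessary: since $p_\ell\in\Reg(\Ss)$, both abutting analytic pieces lie, for parameters near $t_\ell$, in the \emph{same} connected component of $\Reg(\Ss)$; no bridge is produced or needed there. All genuine transitions between components of $\Reg(\Ss)$ occur inside analytic pieces, at points of $\Sing(\Ss)$, exactly where your desingularisation argument applies. Second, your instinct to pass to a resolution before invoking Lemma \ref{mod} is well placed: Lemma \ref{mod} is stated for an ambient Nash manifold, and the bridge point may lie in $\Sing(\ol{\Ss}^{\zar})$; the paper's terse ``By Lemma \ref{mod}'' is implicitly using the same lift-apply-push-down manoeuvre you describe (compare how Lemma \ref{mod} is invoked in Corollary \ref{clue}).
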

\begin{proof}
The implication (i) $\Longrightarrow$ (ii) is immediate. For the converse, we proceed as follows. 

Let $M_1,\ldots,M_r$ be the connected components of $\Reg(\Ss)$. An \em analytic bridge \em between $M_i$ and $M_j$ is the image $\Lambda$ of an analytic arc $\alpha:(-1,1)\to\R^n$ such that $\alpha((-1,0))\subset M_i$ and $\alpha((0,1))\subset M_j$. Proceeding as in the proof of Proposition \ref{charww0} we can reorder the indices $i=1,\ldots,r$ in such a way that there exist analytic bridges $\Lambda_i\subset\Ss$ between $M_i$ and $\bigsqcup_{j=1}^{i-1}M_j$ for $i=2,\ldots,r$. By Lemma \ref{mod} we can substitute the analytic bridges $\Lambda_i\subset\Ss$ by (Nash) bridges $\Gamma_i\subset\Ss$ between $M_i$ and $\bigsqcup_{j=1}^{i-1}M_j$ for $i=2,\ldots,r$. By Corollary \ref{charww} $\Ss$ is well-welded.
\end{proof}

The following two results are the counterpart of Lemmas \ref{npww} and \ref{imp1} for analytic paths. As the proofs are pretty similar to those of Lemmas \ref{npww} and \ref{imp1}, we leave the details to the reader. 

\begin{lem}\label{apww}
Let $\Ss\subset\R^n$ be a semialgebraic set that is connected by analytic paths. Then $\Ss$ is well-welded.
\end{lem}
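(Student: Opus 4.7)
The plan is to imitate the proof of Lemma \ref{npww} with analytic paths in place of Nash paths, relying on the identity principle for real analytic functions rather than the one for Nash functions. By the preceding lemma characterizing well-welded semialgebraic sets through piecewise analytic paths, it is enough to verify two things: that $\Ss$ is pure dimensional, and that for each pair $x,y\in\Ss$ one can join them by a piecewise analytic path $\alpha:[0,1]\to\Ss$ with $\eta(\alpha)\subset\Reg(\Ss)$. The second requirement is immediate from the hypothesis, since an analytic path $\alpha:[0,1]\to\Ss$ is by definition a piecewise analytic path with $\eta(\alpha)=\varnothing\subset\Reg(\Ss)$. So the only real content is pure dimensionality.

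To prove pure dimensionality, I argue by contradiction exactly as in Lemma \ref{npww}. Suppose $\Ss$ is not pure dimensional. Then there exists a point $x\in\Ss$ and an open semialgebraic neighborhood $U\subset\R^n$ of $x$ such that $\dim(\Ss\cap U)<\dim(\Ss)$. Let $Y$ denote the Zariski closure of $\Ss\cap U$ in $\R^n$, so that $\dim(Y)<\dim(\Ss)$ and $\Ss\not\subset Y$; pick a point $y\in\Ss\setminus Y$. By hypothesis there is an analytic path $\alpha:[0,1]\to\Ss$ with $\alpha(0)=x$ and $\alpha(1)=y$, understood to extend analytically to an open neighborhood of $[0,1]$ in $\R$.

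The key step is to apply the identity principle. Since $U$ is open and $\alpha(0)=x\in U$, the preimage $\alpha^{-1}(U)$ contains an open neighborhood of $0$ in $[0,1]$, on which $\alpha$ takes values in $\Ss\cap U\subset Y$. Let $P\in\R[\x]$ be a polynomial equation of $Y$. Then $P\circ\alpha$ is a real analytic function on a connected open neighborhood of $[0,1]$ that vanishes on an open subset; by the identity principle for real analytic functions it vanishes identically, so $y=\alpha(1)\in Y$, contradicting the choice of $y$. Hence $\Ss$ is pure dimensional, and combined with the previous observation $\Ss$ is well-welded.

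No serious obstacle is anticipated here; the statement is essentially the analytic counterpart of Lemma \ref{npww}, and the only adjustment is to replace the identity principle for Nash arcs by the one for analytic functions, which applies verbatim because the composition of an analytic path with a polynomial remains analytic.
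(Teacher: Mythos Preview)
Your proof is correct and follows precisely the route the paper intends: it explicitly says the argument is ``pretty similar'' to that of Lemma~\ref{npww} and leaves the details to the reader, and you have supplied exactly those details by replacing the identity principle for Nash functions with the one for real analytic functions and then invoking the piecewise-analytic characterization of well-weldedness.
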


\begin{lem}\label{imp2}
Let $\Ss\subset\R^n$ be a pure dimensional semialgebraic set. Assume that there exists an analytic path $\alpha:[0,1]\to\Ss$ whose image meets all the connected components of $\Reg(\Ss)$. Then $\Ss$ is well-welded.
\end{lem}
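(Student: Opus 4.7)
The strategy mirrors that of Lemma~\ref{imp1}, replacing its use of a Nash sub-arc of $\alpha$ by Nash bridges extracted from analytic sub-arcs via Lemma~\ref{mod}. By Corollary~\ref{charww}, it suffices to reorder the connected components $M_1,\ldots,M_r$ of $\Reg(\Ss)$ and exhibit, for each $i=2,\ldots,r$, a Nash bridge $\Gamma_i\subset\Ss$ between $M_i$ and $\bigsqcup_{j<i}M_j$; I will extract these bridges from the analytic path $\alpha$.

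By o-minimality of the structure $\R_{\rm an,s.a.}$ of globally subanalytic and semialgebraic sets, the preimage $\alpha^{-1}(\bigsqcup_\ell M_\ell)$ is a finite disjoint union of open intervals, each contained in $\alpha^{-1}(M_\ell)$ for a single index $\ell$. Reorder the $M_\ell$ so that the first-visit times $\tau_1<\tau_2<\cdots<\tau_r$ satisfy $\alpha((\tau_i,\tau_i+\epsilon))\subset M_i$ for every small enough $\epsilon>0$. For each $i\geq 2$, a left-neighborhood of $\tau_i$ is mapped by $\alpha$ into $\Sing(\Ss)\cup\bigsqcup_{j<i}M_j$ (no $M_k$ with $k\geq i$ has been visited yet), while a right-neighborhood is mapped into $M_i$. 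If that left-neighborhood lies in some single $M_{a(i)}$ with $a(i)<i$, then Lemma~\ref{mod} applied in a Nash chart near $\alpha(\tau_i)$ (lifted through a desingularization $\pi:X'\to\ol{\Ss}^{\zar}$ of Theorem~\ref{hi1} when $\alpha(\tau_i)\in\Sing(\ol{\Ss}^{\zar})$), with $M_1:=M_i$, $M_2:=M_{a(i)}$ and $Y:=\ol{\Sing(\Ss)}^{\zar}$, yields the desired Nash bridge $\Gamma_i\subset\Ss$ between $M_{a(i)}$ and $M_i$. If instead the left-neighborhood lies inside $\Sing(\Ss)$, lifting to $X'$ reduces matters to the previous case: the preimage of $\Sing(\Ss)$ becomes a normal-crossings divisor on $X'$ across which two components of $X'\setminus\pi^{-1}(\Sing(\Ss))$, projecting respectively onto some $M_{a(i)}$ with $a(i)<i$ and onto $M_i$, meet; Lemma~\ref{mod} on $X'$ then produces a Nash arc whose projection under $\pi$ is the sought bridge $\Gamma_i\subset\Ss$.

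Having constructed $\Gamma_2,\ldots,\Gamma_r$, Corollary~\ref{charww} yields that $\Ss$ is well-welded. The main obstacle is the second case treated above: one must verify that on the desingularization $X'$ the lifted analytic arc indeed passes from one component of $X'\setminus\pi^{-1}(\Sing(\Ss))$ to another across a regular point of the normal-crossings divisor, so that Lemma~\ref{mod} applies cleanly and the resulting Nash bridge descends via $\pi$ to a Nash bridge inside $\Ss$ linking $M_{a(i)}$ to $M_i$. This is where the analytic and o-minimal properties of $\alpha$ must be leveraged carefully in tandem with the resolution of singularities.
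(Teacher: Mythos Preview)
Your approach has a genuine gap in the second case, which you yourself flag as ``the main obstacle''. Two concrete problems:

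\begin{itemize}
\item Theorem~\ref{hi1} does \emph{not} make the preimage of $\Sing(\Ss)$ a normal-crossings divisor. That conclusion would require an embedded resolution (Theorem~\ref{hi2}) of an \emph{algebraic} subset, and $\Sing(\Ss)$ is only semialgebraic. So your sketch ``the preimage of $\Sing(\Ss)$ becomes a normal-crossings divisor on $X'$'' is not justified.
\item Even granting a suitable resolution, if $\alpha$ dwells in $\Sing(\Ss)$ on an interval $(\sigma_i,\tau_i)$ (with $\alpha((\sigma_i-\epsilon,\sigma_i))\subset M_{a(i)}$ and $\alpha((\tau_i,\tau_i+\epsilon))\subset M_i$), the lifted arc can equally well dwell inside the exceptional locus on an interval. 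There is no reason it should cross from one complementary component to another at a single regular point of a divisor, so Lemma~\ref{mod} cannot be invoked ``cleanly'' as you hope. Your claim that ``two components \ldots\ meet'' across such a point is unsupported.
\end{itemize}

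The paper's intended argument is much simpler and sidesteps all of this. One repeats the proof of Lemma~\ref{imp1} \emph{verbatim}: pick $t_i\in(0,1)$ with $\alpha(t_i)=x_i\in M_i$, and for $y_1\in\cl(M_i)\cap\Ss$, $y_2\in\cl(M_j)\cap\Ss$ build
\[
\beta:=(\alpha_1|_{[-\frac12,0]})^{-1}*\gamma_1*\alpha|_{[t_i,t_j]}*\gamma_2^{-1}*\alpha_2|_{[-\frac12,0]},
\]
exactly as there. The only difference is that the middle piece $\alpha|_{[t_i,t_j]}$ is now merely analytic, so $\beta$ is a \emph{piecewise analytic} path with $\eta(\beta)\subset\{z_1,x_i,x_j,z_2\}\subset\Reg(\Ss)$. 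One then concludes by the alternative (piecewise-analytic) characterization of well-welded established in the lemma immediately preceding Lemma~\ref{apww}. No bridges, no first-visit times, no desingularization are needed.
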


\subsection{Strict transforms of well-welded semialgebraic sets}\setcounter{paragraph}{0}
We prove next that as it happens with irreducible arc-analytic sets \cite[Thm.2.6]{k} the strict transform of a well-welded semialgebraic set under a sequence of blow-ups is a well-welded semialgebraic set of its same dimension.

\begin{lem}\label{pww}
Let $X\subset\R^m$ and $Z\subset Y\subset\R^n$ be algebraic sets. Let $f:X\to Y$ be a proper regular map such that the restriction $f|_{X\setminus f^{-1}(Z)}:X\setminus f^{-1}(Z)\to Y\setminus Z$ is bijective. Let $\Ss\subset Y$ be a well-welded semialgebraic set of dimension $d$ such that $\Ss\not\subset Z$. Then the strict transform $\Ss_1:=\cl(f^{-1}(\Ss\setminus Z))\cap f^{-1}(\Ss)$ of $\Ss$ under $f$ is a well-welded semialgebraic set of dimension $d$. 
\end{lem}
\begin{proof}
The proof is conducted in several steps:

\paragraph{}\em We may assume $Y=\ol{\Ss}^{\zar}$ and $X=\ol{X\setminus f^{-1}(Z)}^{\zar}$.\em

Let $Y':=\ol{\Ss}^{\zar}$, $Z'=\ol{\Ss}^{\zar}\cap Z$ and $X'=\ol{f^{-1}(Y'\setminus Z')}^{\zar}$. Consider the proper regular map $f':=f|_{X'}:X'\to Y'$. The restriction $f'|_{X'\setminus {f'}^{-1}(Z')}:X'\setminus f'^{-1}(Z')\to Y'\setminus Z'$ is bijective because $X'\setminus f'^{-1}(Z')=f^{-1}(Y'\setminus Z')$. 

\paragraph{} We claim: \em $\Ss_1$ is pure dimensional\em. As the restriction $f|_{X\setminus Z}:X\setminus f^{-1}(Z)\to Y\setminus Z$ is proper and bijective, it is a semialgebraic homeomorphism. As $\Ss\setminus Z$ is pure dimensional of dimension $d$, also $f^{-1}(\Ss\setminus Z)$ is pure dimensional of dimension $d$. As $f^{-1}(\Ss\setminus Z)\subset\Ss_1\subset\cl(f^{-1}(\Ss\setminus Z))$, we conclude that $\Ss_1$ is pure dimensional of dimension $d$ as well.

\paragraph{} Observe that $\dim(X)=\dim(Y)=d$ and $\dim(Z)<d$. The algebraic set 
$$
Z_1:=\ol{f(\Sing(X)\cup\Sing(\Ss_1))\cup\Sing(Y)}^{\zar}\cup Z
$$ 
has by \cite[Thm.2.8.8]{bcr} dimension $<d$. We claim: \em $f^{-1}(Z_1)$ has dimension $<d$\em. 

As the restriction $f|_{X\setminus Z}:X\setminus f^{-1}(Z)\to Y\setminus Z$ is bijective, $\dim (f^{-1}(Z_1\setminus Z))=\dim(Z_1\setminus Z)<d$ by \cite[Thm.2.8.8]{bcr}. If $f^{-1}(Z)$ has dimension $d$, it contains an irreducible component of $X$, which is a contradiction because $X=\ol{X\setminus f^{-1}(Z)}^{\zar}$. Thus, $\dim(f^{-1}(Z))<d$, so $\dim(f^{-1}(Z_1))<d$. 

\paragraph{} The restriction $f|_{X\setminus f^{-1}(Z_1)}:X\setminus f^{-1}(Z_1)\to Y\setminus Z_1$ is a bijective proper regular map between the Nash manifolds $X\setminus f^{-1}(Z_1)$ and $Y\setminus Z_1$. Let $C$ be the set of critical points of $f|_{X\setminus f^{-1}(Z_1)}$, which is by \cite[Thm.9.6.2 \& Lem.9.6.3]{bcr} a semialgebraic set of dimension $<d$. Let $Z_2:=Z_1\cup\ol{f(C)}^{\zar}$, which is an algebraic set of dimension $<d$. Again $f^{-1}(Z_2)=f^{-1}(Z_2\setminus Z)\cup f^{-1}(Z)$ has dimension $<d$. Consequently, the restriction map $f|_{X\setminus f^{-1}(Z_2)}:X\setminus f^{-1}(Z_2)\to Y\setminus Z_2$ is a Nash diffeomorphism between the Nash manifolds $X\setminus f^{-1}(Z_2)$ and $Y\setminus Z_2$. 

\paragraph{} As $\Ss_1$ has dimension $d$, $\Sing(\Ss_1)$ has dimension $<d$ and $\ol{f(\Sing(\Ss_1))}^{\zar}$ has dimension $<d$. Define $Z_3:=\ol{f(\Sing(\Ss_1))}^{\zar}\cup Z_2$ and observe that $f^{-1}(Z_3)=f^{-1}(Z_3\setminus Z)\cup f^{-1}(Z)$ is an algebraic set of dimension $<d$.

\paragraph{}\label{coser0} Let $M_1,\ldots,M_r$ be the connected components of $\Reg(\Ss)$. By Proposition \ref{charww0} and Corollary \ref{clue}, we may reorder the indices $1,\ldots,r$ in such a way that for each $j=2,\ldots,r$ there exists a bridge $\Gamma_j\subset\Ss$ between $M_j\setminus Z_3$ and $\bigsqcup_{k=1}^{j-1}(M_k\setminus Z_3)$. We have $\Gamma_j\setminus\Reg(\Ss)=\{p_j\}$ for $j=2,\ldots,r$. Shrinking each bridge $\Gamma_j$, we may assume that $\Gamma_j\cap Z_3\subset\{p_j\}$ for $j=2,\ldots,r$. 

\paragraph{}\label{coser} Let $C_{i1},\ldots,C_{is_i}$ be the connected components of $M_i\setminus Z_3$. As $\Gamma_i\cap Z_3\subset\{p_i\}$, we may assume that $\Gamma_i$ is a bridge between $C_{i1}$ and $\bigsqcup_{k=1}^{i-1}\bigsqcup_{j=1}^{s_k}C_{kj}$. As $M_i$ is a connected Nash manifold, we may construct using Lemmas \ref{mod} and \ref{mcnp} bridges $\Gamma_{ij}\subset M_i$ between $C_{ij}$ and $\bigsqcup_{k=1}^{j-1}C_{ik}$.

\paragraph{} Denote the connected components of $\Reg(\Ss)\setminus Z_3$ with $N_1,\ldots,N_s$. By \ref{coser0} and \ref{coser} we may assume that there exist bridges $\Lambda_j\subset\Ss$ between $N_j$ and $\bigsqcup_{k=1}^{j-1}N_k$ such that the intersection $\Lambda_j\cap Z_3$ is either the empty-set or a singleton for $j=2,\ldots,s$.

Denote $\Lambda_j':=\cl(f^{-1}(\Lambda_j\setminus Z))\cap f^{-1}(\Lambda_j)\subset\Ss_1$. By Lemma \ref{irredgamma} $\Lambda_j'\cap f^{-1}(Z)$ is either the empty-set or a singleton $\{z_j\}$ and the curve germ $\Lambda'_{j,z_j}$ is irreducible. Denote $N_j':=f^{-1}(N_j)$. As $\Ss\setminus Z_3$ is Nash diffeomorphic to $f^{-1}(\Ss\setminus Z_3)$, it holds that $N_1',\ldots,N_s'$ are the connected components of the Nash manifold $f^{-1}(\Ss\setminus Z_3)$. Shrinking $\Lambda_j'$, we may assume that it is a bridge between $N_j'$ and $\bigsqcup_{i=1}^{j-1}N_i'$ for $j=2,\ldots,s$. 

\paragraph{} As $\Sing(\Ss_1)\subset f^{-1}(Z_3)$, we have $\Ss_1\setminus f^{-1}(Z_3)\subset\Reg(\Ss_1)$. Note that 
$$
\Ss_1\setminus f^{-1}(Z_3)=f^{-1}(\Ss\setminus Z_3)=\bigsqcup_{j=1}^sN_j'
$$ 
because $Z\subset Z_3$, $f^{-1}(\Ss\setminus Z)\subset\Ss_1\subset f^{-1}(\Ss)$ and
$$
f^{-1}(\Ss\setminus Z_3)=f^{-1}(\Ss\setminus Z)\setminus f^{-1}(Z_3)\subset\Ss_1\setminus f^{-1}(Z_3)\subset f^{-1}(\Ss\setminus Z_3).
$$

Let $M_1',\ldots,M_\ell'$ be the connected components of $\Reg(\Ss_1)$. As $\Sing(\Ss_1)\subset f^{-1}(Z_3)$,
$$
\bigsqcup_{j=1}^sN_j'=\Ss_1\setminus f^{-1}(Z_3)\subset\Reg(\Ss_1)=\bigsqcup_{k=1}^\ell M_k',
$$
so each $N_j'$ is contained in some $M_k'$. As $\dim(f^{-1}(Z_3))<d$, we have $\Reg(\Ss_1)\subset\cl(\bigsqcup_{j=1}^sN_j')$. Thus, for each $k=1,\ldots,\ell$ there exists $1\leq j\leq s$ such that $N_j'\subset M_k'$. Define 
$$
j(k):=\min\{j=1,\ldots,s:\ N_j'\subset M_k'\}
$$ 
and note that $j(k_1)\neq j(k_2)$ if $k_1\neq k_2$. We may assume $j(k_1)<j(k_2)$ if $k_1<k_2$. Observe that $\Lambda_{j(k)}'\subset\Ss_1$ is a bridge between $M_k'$ and $\bigsqcup_{i=1}^{k-1}M_i'$. We conclude by Corollary \ref{charww} that $\Ss_1$ is well-welded, as required.
\end{proof}

\begin{cor}\label{weldedbu}
Let $Y\subset X\subset\R^n$ be algebraic sets and let $\Ss\subset X$ be a well-welded semialgebraic set of dimension $d$ such that $\Ss\not\subset Y$. Let $(B(X,Y),\pi)$ be the blow-up of $X$ with center $Y$ and let $\Ss_1:=\cl(\pi^{-1}(\Ss\setminus Y))\cap\pi^{-1}(\Ss)$ be the strict transform of $\Ss$ under $\pi$. Then $\Ss_1$ is a well-welded semialgebraic set of dimension $d$.
\end{cor}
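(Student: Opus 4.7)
The plan is to deduce this corollary as an immediate consequence of Proposition \ref{pww}, once we have realized the blow-up $B(X,Y)$ as an affine algebraic set and verified that the projection $\pi$ satisfies the hypotheses of that proposition with $Z:=Y$.

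First I would recall that $\R\PP^{m-1}$ admits a closed embedding into some Euclidean space as a non-singular real algebraic set (via symmetric idempotent matrices of rank $1$, for instance), so the classical blow-up $B(X,Y)\subset X\times\R\PP^{m-1}$ is a real algebraic subset of some $\R^N$, and the projection $\pi:B(X,Y)\to X$ is a regular map between algebraic sets. The map $\pi$ is proper because it factors through the projection $X\times\R\PP^{m-1}\to X$, whose fibers are compact. Finally, by the classical properties of the blow-up (recalled implicitly in \ref{dbme} and the accompanying remark), the restriction
$$
\pi|_{B(X,Y)\setminus\pi^{-1}(Y)}:B(X,Y)\setminus\pi^{-1}(Y)\to X\setminus Y
$$
is a regular isomorphism, and in particular a bijection. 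Thus the triple $(B(X,Y),X,Y)$ with the map $\pi$ fits exactly the hypotheses of Proposition \ref{pww} (playing the roles of $(X,Y,Z)$ and $f$ in that statement).

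Since by hypothesis $\Ss\subset X$ is well-welded of dimension $d$ and $\Ss\not\subset Y$, Proposition \ref{pww} applied to $\pi$ and $\Ss$ yields that the strict transform
$$
\Ss_1=\cl(\pi^{-1}(\Ss\setminus Y))\cap\pi^{-1}(\Ss)
$$
is a well-welded semialgebraic set of dimension $d$, which is precisely the statement of the corollary.

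There is no real obstacle here beyond bookkeeping: the only point deserving care is to make sure $B(X,Y)$ is treated as an affine algebraic set so that Proposition \ref{pww} applies verbatim, and to note that the bijectivity of $\pi$ away from the center of the blow-up (which is all that Proposition \ref{pww} requires, not being an isomorphism) is well-known and in fact holds in the stronger form of a regular isomorphism.
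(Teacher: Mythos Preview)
Your proposal is correct and matches the paper's approach: the corollary is stated in the paper without proof, as an immediate consequence of Proposition \ref{pww}, and your argument simply spells out why the blow-up $\pi:B(X,Y)\to X$ satisfies the hypotheses of that proposition (proper regular map, bijective away from the center).
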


\section{Well-welded semialgebraic sets as Nash images of Euclidean spaces}\label{s8}

In this section we prove Theorem \ref{main}. The most difficult part, implication (vii) $\Longrightarrow$ (i), is approached in two steps. We prove first that each well-welded semialgebraic set is the image under a Nash map of a `checkerboard set' of its same dimension. Afterwards we show that a checkerboard set of dimension $d$ is the image under a Nash map of its set of regular points, which is a connected Nash manifold of dimension $d$ and consequently a Nash image of $\R^d$ by Theorem \ref{mstone0}. We define the boundary of a semialgebraic set $\Ss\subset\R^n$ as $\partial\Ss:=\cl(\Ss)\setminus\Reg(\Ss)$. It holds $\Sing(\Ss)\subset\partial\Ss$.

\subsection{Checkerboard sets}\label{chess}
A pure dimensional semialgebraic set $\Ss\subset\R^n$ is a \em checkerboard set \em (see Figure \ref{fig12}) if it satisfies the following properties:
\begin{itemize}
\item $\ol{\Ss}^{\zar}$ is a non-singular real algebraic set.
\item $\ol{\partial\Ss}^{\zar}$ is a normal-crossings divisor of $\ol{\Ss}^{\zar}$. 
\item $\Reg(\Ss)$ is connected.
\end{itemize} 

\begin{figure}[!ht]
\begin{center}
\begin{tikzpicture}[scale=0.75]

\draw[fill=gray!100,opacity=0.75,draw=none] (0,0) -- (0,5) -- (5,5) -- (5,0) -- (0,0);
\draw[fill=white,draw=none] (1,2.5) -- (1,4) -- (2.5,4) -- (2.5,2.5) -- (1,2.5);
\draw[fill=white,draw=none] (2.5,1) -- (4,1) -- (4,2.5) -- (2.5,2.5) -- (2.5,1);

\draw[line width=1pt] (0,0) -- (1.5,0);
\draw[line width=1pt,dashed] (0,0) -- (0,1.5);
\draw[line width=1pt] (0,1.5) -- (0,5);
\draw[line width=1pt] (3.5,5) -- (0,5);
\draw[line width=1pt,dashed] (3.5,5) -- (5,5);
\draw[line width=1pt] (5,0) -- (5,5);
\draw[line width=1pt,dashed] (1.5,0) -- (5,0);
\draw[line width=1pt] (1,2.5) -- (1,4) -- (2.5,4) -- (2.5,2.5) -- (1,2.5);
\draw[line width=1pt] (4,1) -- (2.5,1) -- (2.5,2.5) -- (4,2.5);
\draw[line width=1pt,dashed] (4,1) -- (4,2.5);

\draw[fill=white,draw] (0,0) circle (0.75mm);
\draw[fill=white,draw] (0,5) circle (0.75mm);
\draw[fill=black,draw] (5,5) circle (0.75mm);
\draw[fill=white,draw] (5,0) circle (0.75mm);
\draw[fill=black,draw] (3.5,5) circle (0.75mm);
\draw[fill=black,draw] (1.5,0) circle (0.75mm);
\draw[fill=white,draw] (0,1.5) circle (0.75mm);

\draw[fill=black,draw] (1,2.5) circle (0.75mm);
\draw[fill=black,draw] (1,4) circle (0.75mm);
\draw[fill=black,draw] (2.5,4) circle (0.75mm);
\draw[fill=black,draw] (2.5,2.5) circle (0.75mm);
\draw[fill=black,draw] (2.5,1) circle (0.75mm);
\draw[fill=white,draw] (4,1) circle (0.75mm);
\draw[fill=white,draw] (4,2.5) circle (0.75mm);

\draw[fill=gray!40,draw=none] (7,0) -- (7,5) arc (180:90:1cm)-- (13,6) arc (90:0:1cm)-- (14,0) arc (0:-90:1cm) -- (13,-1) -- (8,-1) arc (270:-180:1cm);
\draw[fill=gray!100,opacity=0.75,draw=none] (8,0) -- (8,5) -- (13,5) -- (13,0) -- (8,0);
\draw[fill=gray!40,draw=none] (9,2.5) -- (9,4) -- (10.5,4) -- (10.5,2.5) -- (9,2.5);
\draw[fill=gray!40,draw=none] (10.5,1) -- (12,1) -- (12,2.5) -- (10.5,2.5) -- (10.5,1);

\draw[line width=1pt] (8,0) -- (8,5) -- (13,5) -- (13,0) -- (8,0);
\draw[line width=1pt] (9,2.5) -- (9,4) -- (10.5,4) -- (10.5,2.5) -- (9,2.5);
\draw[line width=1pt] (10.5,1) -- (12,1) -- (12,2.5) -- (10.5,2.5) -- (10.5,1);

\draw (8,-1) -- (8,6);
\draw (9,-1) -- (9,6);
\draw (10.5,-1) -- (10.5,6);
\draw (12,-1) -- (12,6);
\draw (13,-1) -- (13,6);
\draw (7,0) -- (14,0);
\draw (7,1) -- (14,1);
\draw (7,2.5) -- (14,2.5);
\draw (7,4) -- (14,4);
\draw (7,5) -- (14,5);

\draw (3.75,4.25) node{\small$\Ss$};
\draw (3.75,3.75) node{\scriptsize checkerboard};
\draw (3.75,3.25) node{\scriptsize set};

\draw (6,2.5) node{\small$\leadsto$};

\draw (11.25,3.25) node{\small${\rm Cl}(\Ss)$};
\draw (9.75,4.25) node{\small$\partial\Ss$};
\draw (9.65,5.5) node{\small$\overline{\partial\Ss}^{\rm zar}$};
\draw (14,5.75) node{\small$\overline{\Ss}^{\rm zar}$};

\end{tikzpicture}
\end{center}
\caption{Checkerboard set $\Ss$, its closure $\cl(\Ss)$, its Zariski closure $\ol{\Ss}^{\zar}$, $\partial\Ss$ and the Zariski closure of $\partial\Ss$.\label{fig12}}
\end{figure}

\begin{remark}\label{diff}
The difference $\Ss\setminus\ol{\partial\Ss}^{\zar}$ is a union of connected components of $\ol{\Ss}^{\zar}\setminus\ol{\partial\Ss}^{\zar}$.

Observe that $\Ss\setminus\ol{\partial\Ss}^{\zar}=\cl(\Ss)\setminus\ol{\partial\Ss}^{\zar}=\Reg(\Ss)\setminus\ol{\partial\Ss}^{\zar}$. Consequently, $\Ss\setminus\ol{\partial\Ss}^{\zar}$ is an open and closed subset of $\ol{\Ss}^{\zar}\setminus\ol{\partial\Ss}^{\zar}$, so it is a union of connected components of $\ol{\Ss}^{\zar}\setminus\ol{\partial\Ss}^{\zar}$.
\end{remark}

Each checkerboard set is a well-welded semialgebraic set by the following result.

\begin{lem}\label{ncww}
Let $X\subset\R^n$ be a non-singular algebraic set and let $Y\subset X$ be a normal-crossings divisor. Let $\Cc\subset X\setminus Y$ be a union of connected components of $X\setminus Y$ and let $\Ss\subset X$ be a semialgebraic set such that $\Cc\subset\Ss\subset\cl(\Cc)$. Then $\Ss$ is well-welded if and only if $\Ss$ is connected. 
\end{lem}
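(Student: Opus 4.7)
The direction well-welded $\Rightarrow$ connected is immediate, since a well-welded semialgebraic set is connected by semialgebraic paths. I focus on the converse and assume $\Ss$ is connected. First I reduce to the case $X=\ol{\Ss}^{\zar}$ by replacing $X$ with the union of those irreducible components of $X$ that meet $\Cc$ (still non-singular, and still carrying $Y$ restricted to it as a normal-crossings divisor). Then $\Sing(\widetilde{X})\cap X=\varnothing$ and $\Reg(\Ss)=\Int_X(\Ss)\supset\Cc$. Because $\Cc$ is open and pure of dimension $d:=\dim X$ and $\Cc\subset\Ss\subset\cl(\Cc)$, the set $\Ss$ is pure dimensional of dimension $d$.

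Let $N_1,\dots,N_s$ denote the finitely many connected components of $\Cc$; each is a connected component of the Nash manifold $X\setminus Y$. My first task is to build a Nash bridge in $\Ss$ between $N_k$ and $N_l$ whenever $k\ne l$ and $\cl(N_k)\cap\cl(N_l)\cap\Ss\ne\varnothing$. Given such a point $p$ in the intersection, I apply \ref{fnashnc} to obtain local Nash coordinates $u:=(u_1,\dots,u_d):U\to\R^d$ at $p$ with $u(p)=0$ and $U\cap Y=\{u_1\cdots u_m=0\}$. I then pick local sign-quadrants $Q_k\subset N_k$ and $Q_l\subset N_l$ with $p\in\cl(Q_k)\cap\cl(Q_l)$, encoded by sign tuples $\epsilon^k,\epsilon^l\in\{\pm1\}^m$ (necessarily distinct, since $N_k\cap N_l=\varnothing$), and define the polynomial arc $\beta:(-\veps,\veps)\to\R^d$ by
\[
\beta_r(t):=\begin{cases}
\epsilon_r^l\,t&\text{if }r\le m\text{ and }\epsilon_r^k\ne\epsilon_r^l,\\
\epsilon_r^l\,t^2&\text{if }r\le m\text{ and }\epsilon_r^k=\epsilon_r^l,\\
0&\text{if }r>m,
\end{cases}
\]
and set $\alpha:=u^{-1}\circ\beta$. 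A direct sign inspection gives $\alpha((-\veps,0))\subset Q_k$ and $\alpha((0,\veps))\subset Q_l$ for $\veps>0$ small, so $\im(\alpha)\subset Q_k\cup\{p\}\cup Q_l\subset\Ss$ and $\alpha$ is the desired Nash bridge.

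Next I form the finite graph $G$ with vertex set $\{N_1,\dots,N_s\}$ and an edge $\{N_k,N_l\}$ whenever $\cl(N_k)\cap\cl(N_l)\cap\Ss\ne\varnothing$, and I show $G$ is connected. If $G$ admitted a non-trivial partition $A\sqcup B$ of its vertex set with no cross-edges, then the sets $\Ss_A:=\bigcup_{N\in A}\cl(N)\cap\Ss$ and $\Ss_B:=\bigcup_{N\in B}\cl(N)\cap\Ss$ would be non-empty and closed in $\Ss$, would cover $\Ss$ (because $\Ss\subset\cl(\Cc)=\bigcup_k\cl(N_k)$), and would be disjoint (a common point would yield a forbidden edge), contradicting connectedness of $\Ss$.

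Finally I extract a spanning tree of $G$ and reorder the $N_k$'s so that for every $i\ge 2$ there is an edge from $N_i$ to some $N_j$ with $j<i$, which by the bridge construction furnishes a Nash bridge in $\Ss$ between $N_i$ and $\bigsqcup_{j<i}N_j$. Because each connected component of $\Reg(\Ss)$ is open in $X$ and therefore $d$-dimensional while $\Reg(\Ss)\setminus\Cc\subset Y$ has dimension $<d$, each such component meets $\Cc$ in a non-empty open-closed subset of $\Cc$, hence in a union of whole $N_k$'s. The ordering of the $N_k$'s thus induces an ordering of the components $M_1,\dots,M_r$ of $\Reg(\Ss)$ satisfying condition (ii) of Corollary \ref{charww}, and $\Ss$ is well-welded.

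The main obstacle is the local Nash-bridge construction: the polynomial arc $\alpha$ must remain inside $\Ss$, i.e.\ it must not stray into a local quadrant of $X\setminus Y$ that is not part of $\Cc$. The sign bookkeeping in the display above is precisely what confines $\alpha$ to $Q_k\cup\{p\}\cup Q_l$ and thereby to $\Ss$.
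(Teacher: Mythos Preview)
Your proof is correct and follows essentially the same approach as the paper: the same local Nash-bridge construction through a normal-crossings chart (your sign-based polynomial arc $\beta$ is the paper's arc verbatim), combined with Corollary~\ref{charww}. The only organizational differences are that you package the connectedness argument as a graph/spanning-tree argument rather than the paper's inductive path argument, and you apply Corollary~\ref{charww} directly to $\Ss$ (after relating the $N_k$'s to the components of $\Reg(\Ss)$) whereas the paper applies it to the smaller set $\Ss_0=\Cc\cup\{p_2,\dots,p_r\}$ and then invokes Lemma~\ref{chain}.
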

\begin{proof}
The `only if' implication is straightforward. We prove next the `if' implication.

Let $\Cc_1,\ldots,\Cc_r$ be the connected components of $\Cc$. We claim: \em there exist points $p_2,\ldots,p_r\in Y\cap\Ss$ such that, after reordering indices, $p_i\in\Ss\cap\cl(\Cc_i)\cap\bigcup_{j=1}^{i-1}\cl(\Cc_j)$ for $i=2,\ldots,r$\em.

We proceed by induction on $r$. If $r=1$, the claim is clear. Assume the result true if the number of connected components of $\Cc$ is $<r$ and let us see that it is also true when it is equal to $r$. As $\Ss$ is connected, there exist a continuous semialgebraic path $\alpha:[0,1]\to\Ss$ whose image meets all the connected components $\Cc_i$. We may reorder the indices $1,\ldots,r$ in such a way that if $i<j$, then 
$$
t_i:=\inf\{t\in(0,1):\ \alpha(t)\in\cl(\Cc_i)\cap\Ss\}\leq\inf\{t\in(0,1):\ \alpha(t)\in\cl(\Cc_j)\cap\Ss\}=:t_j.
$$ 
Consider the semialgebraic set $\Ss':=\cl(\Ss\setminus\cl(\Cc_r))\cap\Ss$. It holds: \em $\Ss'$ is connected\em. 

As $\Cc_1,\ldots,\Cc_r$ are the connected components of $\Cc$, we have $\cl(\Cc_i)=\cl(\cl(\Cc_i)\setminus\cl(\Cc_r))$ for $i\neq r$. Consequently,
\begin{equation}\label{intclear}
\Ss'=\cl(\Ss\setminus\cl(\Cc_r))\cap\Ss=\bigcup_{i=1}^{r-1}\cl(\cl(\Cc_i)\setminus\cl(\Cc_r))\cap\Ss=\bigcup_{i=1}^{r-1}\cl(\Cc_i)\cap\Ss
\end{equation}

Consider the connected semialgebraic set $\Tt:=\alpha([0,t_r])$. Observe that $\Tt\subset\Ss'$ and $\Tt\cap\cl(\Cc_i)\neq\varnothing$ for $i=1,\ldots,r-1$. Consequently, $\Ss'$ is connected. 

As $\Ss'\setminus Y=\bigsqcup_{i=1}^{r-1}\Cc_i$, there exist by induction hypothesis points $p_2,\ldots,p_{r-1}\in Y\cap\Ss'$ such that, after reordering indices, $p_i\in\Ss\cap\cl(\Cc_i)\cap\bigcup_{j=1}^{i-1}\cl(\Cc_j)$ for $i=2,\ldots,r-1$. As $\Ss=\Ss'\cup(\cl(\Cc_r)\cap\Ss)$ is connected and $\Ss'$ and $\cl(\Cc_r)\cap\Ss$ are closed in $\Ss$, there exists $p_r\in\Ss'\cap\cl(\Cc_r)$. By \eqref{intclear} there exists $1\leq i\leq r-1$ such that $p_r\in\cl(\Cc_i)\cap\cl(\Cc_r)\cap\Ss$. Notice that $\cl(\Cc_i)\cap\cl(\Cc_r)\subset Y$ because $\Cc$ is a union of connected components of $X\setminus Y$. Thus, the claim follows.

Next we prove: \em $\Ss_0:=\Cc\cup\{p_2,\ldots,p_r\}$ is a well-welded semialgebraic set\em. 

Fix $i=2,\ldots,r$ and let $1\leq j\leq i-1$ be such that $p_i\in\cl(\Cc_j)\cap\cl(\Cc_i)\subset Y$. As $Y$ is a normal-crossings divisor of $X$, there exists an open semialgebraic neighborhood $U\subset X$ of $p_i$ and a Nash diffeomorphism $\psi:U\to\R^d$ such that $\psi(0)=p_i$ and $\psi(U\cap Y)=\{x_1\cdots x_s=0\}$. We may assume 
$$
\{x_1>0,\ldots,x_s>0\}\subset\psi(\Cc_i\cap U)\quad\text{and}\quad\{\veps_1x_1>0,\ldots,\veps_sx_s>0\}\subset\psi(\Cc_j\cap U)
$$
where $\veps_k=\pm1$ for $k=1,\ldots,s$. Consider the Nash curve $\beta:=(\beta_1,\ldots,\beta_d):(-1,1)\to\R^d$ where
$$
\beta_k(t)=\begin{cases}
t&\text{if $\veps_k=-1$}\\
t^2&\text{if $\veps_k=1$}
\end{cases}
$$
for $k=1,\ldots,s$ and $\beta_k(t)=0$ for $k=s+1,\ldots,d$. Observe that 
$$
\beta((-1,0))\subset\{\veps_1x_1>0,\ldots,\veps_sx_s>0\}\quad\text{and}\quad
\beta((0,1))\subset\{x_1>0,\ldots,x_s>0\}.
$$
Consider the Nash curve $\gamma:=\psi^{-1}\circ\beta:(-1,1)\to\Cc_i\cup\Cc_j\cup\{p_i\}$ that satisfies $\gamma((-1,0))\subset\Cc_j$ and $\gamma((0,1))\subset\Cc_i$. Thus, $\Gamma_i:=\gamma((-1,1))\subset\Ss$ is a bridge between $\Cc_j$ and $\Cc_i$. By Corollary \ref{charww} $\Ss_0$ is well-welded.

As $\Ss_0\subset\Ss\subset\cl(\Ss_0)$, we conclude by Lemma \ref{chain} that $\Ss$ is well-welded, as required.
\end{proof}

The following result will allow us to lighten the presentation of the proof of implication (vii) $\Longrightarrow$ (i) of Theorem \ref{main}.

\begin{lem}\label{lighten}
Let $\Ss\subset\R^n$ be a pure dimensional semialgebraic set of dimension $d$. Suppose that ${\tt Sth}(\Ss)$ is connected, there exists a Nash manifold $M$ of dimension $d$ that contains $\Ss$ and the smallest Nash subset $Y$ of $M$ that contains $(\cl(\Ss)\cap M)\setminus{\tt Sth}(\Ss)$ is a Nash normal crossings divisor of $M$. Then there exists a Nash embedding $\varphi:M\hookrightarrow\R^m$ for some $m\geq1$ such that $\varphi(\Ss)$ is a checkerboard subset of $\R^m$, $\Reg(\varphi(\Ss))=\varphi({\tt Sth}(\Ss))$ and $\Sing(\varphi(\Ss))=\varphi({\tt NSth}(\Ss))$.
\end{lem}
\begin{proof}
As $\Ss$ is connected, we may assume that $M$ is connected. By Lemma \ref{retoque} there exist a Nash embedding $\varphi:M\hookrightarrow\R^m$ such that 
\begin{itemize}
\item[(i)] $\varphi(M)$ is a connected component of its Zariski closure $V$ in $\R^m$ that is a non-singular real algebraic set of dimension $d$. 
\item[(ii)] The Zariski closure $X$ of $\varphi(Y)$ in $\R^m$ is a normal crossing divisor of $V$ such that $\varphi(M)\cap X=\varphi(Y)$.
\end{itemize}
Observe that $\ol{\varphi(S)}^{\rm zar}=\ol{\varphi(M)}^{\rm zar}=V$. By Remark \ref{regsmooth} and Lemma \ref{sth} we deduce $\varphi({\tt Sth}(\Ss))={\tt Sth}(\varphi(\Ss))=\Reg(\varphi(\Ss))$ and $\partial\varphi(\Ss)=\cl(\varphi(\Ss))\setminus\Reg(\varphi(\Ss))=\varphi((\cl(\Ss)\cap M)\setminus{\tt Sth}(\Ss))$. Thus, $\varphi(Y)$ is the smallest Nash subset of $\varphi(M)$ that contains $\partial\varphi(\Ss)$, so $\ol{\partial\varphi(\Ss)}^{\rm zar}$ is $X$, which is a normal crossing divisor of $V=\ol{\varphi(S)}^{\rm zar}$. Consequently, $\varphi(\Ss)$ is a checkerboard subset of $\R^m$, $\Reg(\varphi(\Ss))=\varphi({\tt Sth}(\Ss))$ and $\Sing(\varphi(\Ss))=\varphi({\tt NSth}(\Ss))$, as required.
\end{proof}

\subsection{Well-welded semialgebraic sets as Nash images of checkerboard sets}\setcounter{paragraph}{0}
Our purpose next is to prove that a well-welded semialgebraic set is the image under a proper surjective regular map of a checkerboard set of its same dimension.

\begin{thm}\label{clave}
Let $\Ss\subset\R^n$ be a well-welded semialgebraic set of dimension $d$ and denote $X:=\ol{\Ss}^{\zar}$. Then there exist a checkerboard set $\Tt\subset\R^m$ of dimension $d$ and a proper regular map $f:Y:=\ol{\Tt}^{\zar}\to X$ such that $f(\Tt)=\Ss$.
\end{thm}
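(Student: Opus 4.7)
The plan is to realize $\Tt$ as the strict transform of $\Ss$ under a composition of Hironaka's desingularization with a finite chain of additional blow-ups designed to merge the connected components of the regular locus.

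First I would apply Theorem \ref{hi1} to $X$ and then Theorem \ref{hi2} to (the pullback of) $\ol{\partial\Ss}^{\zar}$ inside the resulting non-singular ambient, producing a non-singular real algebraic set $Y_0$, a proper surjective regular map $f_0:Y_0\to X$, and a normal-crossings divisor $D_0\subset Y_0$ containing $f_0^{-1}(\ol{\partial\Ss}^{\zar}\cup\Sing(X))$. By Lemma \ref{zarirred}, $X$ is irreducible, so $Y_0$ is irreducible as a real algebraic set. Set $Z:=\ol{\partial\Ss}^{\zar}\cup\Sing(X)$ and $\Ss_0:=\cl(f_0^{-1}(\Ss\setminus Z))\cap f_0^{-1}(\Ss)$. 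Proposition \ref{pww} yields that $\Ss_0$ is well-welded of dimension $d$, and a dimension count together with the irreducibility of $Y_0$ forces $\ol{\Ss_0}^{\zar}=Y_0$. Since $f_0$ restricts to a diffeomorphism outside $f_0^{-1}(Z)$, one obtains $\Ss_0\setminus D_0=f_0^{-1}(\Reg(\Ss))\subset\Reg(\Ss_0)$, whence $\partial\Ss_0\subset D_0$ and therefore $\ol{\partial\Ss_0}^{\zar}$ is itself a normal-crossings divisor of $Y_0$. Thus $\Ss_0$ already satisfies the first two defining conditions of a checkerboard set.

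To secure the remaining condition, connectedness of the regular locus, I would argue by induction on the number $r$ of connected components $\Cc_1,\dots,\Cc_r$ of $\Reg(\Ss_0)$. Well-weldedness and Corollary \ref{charww} allow reordering so that for each $i\geq 2$ a bridge $\Gamma_i\subset\Ss_0$ links $\Cc_i$ with $\bigsqcup_{j<i}\Cc_j$, with $\Gamma_i\cap\ol{\partial\Ss_0}^{\zar}=\{p_i\}$. Near $p_r$, choose local Nash coordinates $(x_1,\dots,x_d)$ in which $\ol{\partial\Ss_0}^{\zar}=\{x_1\cdots x_s=0\}$, so that $\Cc_r$ contains the orthant $\{x_1>0,\dots,x_s>0\}$ and some $\Cc_j$ with $j<r$ contains the orthant $\{\varepsilon_1 x_1>0,\dots,\varepsilon_s x_s>0\}$, where the signs $\varepsilon_k\in\{\pm 1\}$ are prescribed by the tangent direction of the bridge. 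I would then blow up $Y_0$ successively along smooth codimension-two centers, taken as intersections of pairs of divisor components of $D_0$ through $p_r$, indexed by the pairs $(k,\ell)$ with $\varepsilon_k=\varepsilon_\ell=-1$. Each such blow-up is a morphism between non-singular ambients whose exceptional divisor together with the strict transform of $D_0$ is again normal-crossings, and, via the two-dimensional model in which blowing up the origin of $\R^2$ joins diagonally opposite quadrants into a single connected component of the complement of the strict transforms of the axes, it fuses the two orthants across one further pair of coordinate hyperplanes. After finitely many such steps the orthants of $\Cc_r$ and of $\Cc_j$ sit in a common connected component of the complement of the new normal-crossings divisor. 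Corollary \ref{weldedbu} ensures that the strict transform of $\Ss_0$ remains well-welded of dimension $d$, and by construction it has strictly fewer connected components of its regular locus. Iterating produces the desired checkerboard set $\Tt$, and the proper polynomial map $f:Y\to X$ is obtained as the composition of $f_0$ with all the subsequent blow-ups, after re-embedding $Y$ into a suitable affine space via its graph to convert regular into polynomial morphisms; the equality $f(\Tt)=\Ss$ then follows at each stage from the definition of strict transform together with the density of $\Ss\setminus Z$ in $\Ss$ guaranteed by pure dimensionality.

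The main obstacle is the combinatorial choice of the blow-up centers at each bridge point $p_i$. In dimension larger than two, the sign pattern $(\varepsilon_1,\dots,\varepsilon_s)$ encoding how a given bridge links two orthants may involve several negative entries, so a single point blow-up does not in general suffice to fuse them; one must instead orchestrate a small sequence of codimension-two blow-ups along intersections of divisor components, arranged so that at every step the ambient remains non-singular, the boundary divisor normal-crossings, and well-weldedness of the strict transform is preserved through Corollary \ref{weldedbu}. Controlling this bookkeeping---a real-algebraic analogue of Hironaka's original combinatorial devices---is the delicate technical heart of the proof.
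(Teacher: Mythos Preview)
Your initial reduction---resolve $X$, turn the pullback of $\ol{\partial\Ss}^{\zar}$ into a normal-crossings divisor, take the strict transform and invoke Proposition~\ref{pww}---is correct and is exactly how the paper opens the proof of Theorem~\ref{clave}. The substance is the subsequent Proposition~\ref{clave0}, and there the paper's mechanism for merging the connected components of $\Reg(\Ss_0)$ is different from yours. The paper does \emph{not} work bridge by bridge. In the closed case it forms the global locus $\Rr=\bigcup_{i\neq j}\cl(M_i)\cap\cl(M_j)$ and proves two structural facts: every irreducible component $A_1$ of $\ol{\Rr}^{\zar}$ is an irreducible component of some intersection $Z_{i_1}\cap\cdots\cap Z_{i_\ell}$ (hence smooth of codimension $\ell\geq 2$, not necessarily $2$), and near a generic point of $A_1$ \emph{exactly two} of the $M_i$ meet, occupying \emph{opposite} orthants $\Qq_\epsilon$ and $\Qq_{-\epsilon}$. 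One blow-up along the full $A_1$ then merges them in a single stroke, since in the standard chart the preimage of $\Qq_\epsilon\cup\Qq_{-\epsilon}$ is $\{x_1\neq 0,\,x_2>0,\ldots,x_\ell>0\}$, whose closure has connected regular locus. The non-closed case is treated separately: one first runs the closed argument on $\cl(\Ss)$ and then performs additional \emph{point} blow-ups to recover $\Ss$ itself. Your proposal does not distinguish these two cases.

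Your codimension-two scheme has a real gap exactly where you flag the obstacle. The instruction ``blow up along intersections of pairs of divisor components of $D_0$ through $p_r$, indexed by the pairs $(k,\ell)$ with $\varepsilon_k=\varepsilon_\ell=-1$'' is not well-posed past the first step: after one blow-up you are in $Y_1$, the exceptional divisor is a new component not coming from $D_0$, the strict transforms of the two components you just blew up are now disjoint, and the sign vector $\varepsilon$ depends on which point of the exceptional $\PP^1$ over $p_r$ you read it at (in one chart one sign flips, in the other a different one does). Connecting two orthants that differ in $k$ signs actually needs an ordered chain of $k-1$ blow-ups, each centered in a stratum involving the exceptional divisor of the \emph{previous} step, not $\binom{k}{2}$ blow-ups along intersections of the original components; you have not defined this chain, nor identified at which point of each successive fiber the lifted bridge lands, nor verified that the resulting global centers still give a strict drop in the number of components of the regular locus. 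The paper's device---blowing up the whole codimension-$\ell$ stratum $A_1$ at once, after proving the ``opposite orthants'' lemma that guarantees only two components meet generically along it---is precisely what replaces this bookkeeping.
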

\begin{remark}
As $f$ is proper, if $\Ss$ is in addition bounded, then $\Tt$ is also bounded.
\end{remark}

The proof of Theorem \ref{clave} is quite involved and requires some preliminary work.

\begin{prop}\label{clave0}
Let $X\subset\R^n$ be a non-singular algebraic set of dimension $d$ and let $Z\subset X$ be a normal-crossings divisor. Let $\Ss\subset X$ be a connected semialgebraic set such that $\Cc:=\Ss\setminus Z$ is a union of connected components of $X\setminus Z$ and $\Cc\subset\Ss\subset\cl(\Cc)$. Then there exist a checkerboard set $\Tt\subset\R^m$ of dimension $d$ and a proper surjective regular map $f:Y:=\ol{\Tt}^{\zar}\to X$ such that the restriction $f|_{Y\setminus f^{-1}(Z)}:Y\setminus f^{-1}(Z)\to X\setminus Z$ is a regular diffeomorphism and $f(\Tt)=\Ss$.
\end{prop}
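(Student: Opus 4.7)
The plan is to prove this by induction on the number $r$ of connected components of $\Reg(\Ss)$, which is finite since $\Ss$ is semialgebraic. For the base case $r = 1$, I take $Y := \ol{\Cc}^{\zar}$ — a non-singular algebraic subset of $X$ as a union of some of its irreducible components — with $f: Y \hookrightarrow X$ the inclusion and $\Tt := \Ss$. Then $\ol{\Tt}^{\zar} = Y$ is non-singular, $\ol{\partial\Tt}^{\zar}$ is a union of irreducible components of $Z \cap Y$ (hence a normal-crossings divisor in $Y$), and $\Reg(\Tt) = \Reg(\Ss)$ is connected, so $\Tt$ is a checkerboard set and $f$ a proper polynomial map (the inclusion), a regular diffeomorphism outside $Z$ with $f(\Tt) = \Ss$.

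For $r > 1$, the strategy is to reduce the count by one via a well-chosen blow-up. Since $\Ss$ is connected and $\Ss \subset \cl(\Cc)$, two components $R_i, R_j$ of $\Reg(\Ss)$ satisfy $\cl(R_i) \cap \cl(R_j) \cap \Ss \neq \varnothing$; pick $p$ there and take components $\Cc_a \subset R_i$, $\Cc_b \subset R_j$ of $\Cc$ with $p \in \cl(\Cc_a) \cap \cl(\Cc_b)$. By~\ref{fnashnc}, in local Nash coordinates $(u_1, \ldots, u_d)$ at $p$ we have $Z = \{u_1 \cdots u_k = 0\}$, and $\Cc_a, \Cc_b$ correspond near $p$ to sign vectors $\sigma_a, \sigma_b \in \{+,-\}^k$; let $I \subset \{1, \ldots, k\}$ be the indices on which they differ. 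I take $C$ to be the connected component through $p$ of the intersection of the irreducible components of $Z$ indexed by $I$ — a smooth subvariety of codimension $|I|$ with normal crossings to $Z$. The blow-up $\pi: X' \to X$ along $C$ is then proper and regular, with $X'$ non-singular, $Z' := \pi^{-1}(Z)$ a normal-crossings divisor, and $\pi$ a regular diffeomorphism outside $\pi^{-1}(C) \subset Z'$.

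Defining $\Ss' := \cl(\pi^{-1}(\Cc)) \cap \pi^{-1}(\Ss)$ (the strict transform of $\Ss$) and $\Cc' := \Ss' \setminus Z' = \pi^{-1}(\Cc)$, standard properties of blow-ups together with the hypothesis $\Ss \subset \cl(\Cc)$ give $(X', Z', \Ss')$ satisfying the proposition's hypotheses: $\Ss'$ is connected, $\Cc'$ is a union of connected components of $X' \setminus Z'$ in bijection with those of $\Cc$, $\Cc' \subset \Ss' \subset \cl(\Cc')$, and $\pi(\Ss') = \Ss$ (using $\pi(\cl(\pi^{-1}(\Cc))) = \cl(\Cc) \supseteq \Ss$ by properness of $\pi$). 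Assuming the count reduction claim below, the inductive hypothesis applied to $(X', Z', \Ss')$ yields a checkerboard set $\Tt$ and a proper polynomial map $f': Y := \ol{\Tt}^{\zar} \to X'$ with $f'(\Tt) = \Ss'$; then $f := \pi \circ f' : Y \to X$ is the desired map — proper by composition, polynomial via the standard embedding of the blow-up $X'$ into $X \times \R\PP^{N-1}$ realized polynomially by Veronese-type coordinates, a regular diffeomorphism outside $Z$, with $f(\Tt) = \pi(\Ss') = \Ss$.

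The main obstacle will be proving the strict reduction claim: $\Reg(\Ss')$ has strictly fewer connected components than $\Reg(\Ss)$. The key local observation is that the exceptional fiber $E_p := \pi^{-1}(p) \cong \R\PP^{|I|-1}$ contains a non-empty open semialgebraic subset of interior points of $\Ss'$ in $X'$ — the ``diagonal'' directions corresponding to $\sigma_a|_I$ (equivalently $-\sigma_b|_I$ under the projective involution $v \mapsto -v$ on the normal directions). At such points the local sector lifts of $\Cc_a$ and $\Cc_b$ approach $E_p$ from opposite sides and together fill a full neighborhood, so $\pi^{-1}(\Cc_a)$ and $\pi^{-1}(\Cc_b)$ merge through this diagonal into a single component of $\Reg(\Ss')$ — glueing the lifts of $R_i$ and $R_j$ — while the remaining components of $\Reg(\Ss)$ lift to distinct, unaffected components, yielding the required strict reduction.
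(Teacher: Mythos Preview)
Your inductive blow-up scheme matches the paper's for \emph{closed} $\Ss$ (with one correction: the center must be an \emph{irreducible} component of $\bigcap_{i\in I}Z_i$, not a connected one, since connected components of real algebraic sets need not be algebraic and the map must remain polynomial). The genuine gap is in the non-closed case: your key claim --- that $E_p=\pi^{-1}(p)$ contains interior points of $\Ss'$ in $X'$ --- fails whenever $C$ has positive dimension and $\Ss\cap C$ is thin near $p$. Take $X=\R^3$, $Z=\{x_1x_2=0\}$, $\Cc=\{x_1x_2>0\}$, $\Ss=\Cc\cup\{0\}$; then $p=0$, $I=\{1,2\}$, and $C$ is the $x_3$-axis. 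In the chart $(u,v,w)\mapsto(u,uv,w)$ of the blow-up along $C$ one finds $\Cc'=\{u\neq0,\,v>0\}$ and $\pi^{-1}(\Ss)=\Cc'\cup\{u=0,\,w=0\}$, hence
\[
\Ss'=\cl(\Cc')\cap\pi^{-1}(\Ss)=\{u\neq0,\,v>0\}\cup\{(0,v,0):v\ge0\}.
\]
Near any diagonal point $(0,v_0,0)$ with $v_0>0$ this is $\{u\neq0\}\cup\{u=0,\,w=0\}$, which is \emph{not} a neighborhood: the nearby points $(0,v_0,t)$ with $t\neq0$ lie on the exceptional divisor over $C\setminus\{p\}\not\subset\Ss$ and are therefore excluded from $\pi^{-1}(\Ss)$. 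Consequently $\Reg(\Ss')=\Cc'$ still has two components, and the count does not drop.

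The paper avoids this by splitting the proof. It first treats the closed case $\Ss=\cl(\Cc)$, where $\Ss'=\cl(\Cc')$ and the merging along the exceptional divisor is genuine; a key preliminary there is that every irreducible component of $\ol{\bigcup_{i\neq j}\cl(M_i)\cap\cl(M_j)}^{\zar}$ is already an irreducible component of some $Z_{i_1}\cap\cdots\cap Z_{i_\ell}$, and that locally the two colliding pieces are \emph{exactly opposite} orthants. For general $\Ss$ the paper then applies the closed result to $\cl(\Ss)$ and performs an additional, more delicate sequence of \emph{point} blow-ups on the resulting strict transform to merge the remaining components of $\Reg$. (A minor separate issue: in your base case, $Y=\ol{\Cc}^{\zar}$ with $f$ the inclusion need not be surjective onto $X$ nor give a diffeomorphism $Y\setminus f^{-1}(Z)\to X\setminus Z$ as the statement demands; in the paper's applications one always has $\ol{\Ss}^{\zar}=X$, so this is harmless in context but should be noted.)
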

\begin{proof}[Proof of Proposition \em \ref{clave0} \em when $\Ss$ is closed]
Let $M_1,\ldots,M_s$ be the connected components of the Nash manifold $\Reg(\Ss)$ and define $\Rr:=\bigcup_{i\neq j}\cl(M_i)\cap\cl(M_j)$. Notice that
$$
\Rr=\{x\in\Ss:\ \Reg(\Ss)_x\not\subset M_{i,x}\ \forall i=1,\ldots,s\}\subset\{x\in\Ss:\ \Reg(\Ss)_x\ \text{is not connected}\}.
$$
The irreducible components of $Z$ are non-singular. Denote them with $Z_1,\ldots,Z_r$. 

\paragraph{}\label{zi}We claim: \em each irreducible component of $A:=\ol{\Rr}^{\zar}$ is an irreducible component of some intersection $Z_{i_1}\cap\cdots\cap Z_{i_\ell}$\em. 

As $\Rr\subset Z$, also $A\subset Z$. Let $A_1$ be an irreducible component of $A$ and let $\Rr_1:=\Rr\cap A_1$. Let $A_2,\ldots,A_r$ be the remaining irreducible components of $A$. Observe that 
$$
\Rr_1\setminus\bigcup_{j=2}^rA_j=\Rr\setminus\bigcup_{j=2}^rA_j\neq\varnothing\quad\text{and}\quad\dim(A_1)=\dim\Big(\Rr_1\setminus\bigcup_{j=2}^rA_j\Big)=\dim(\Rr_1)
$$
because $A$ is the Zariski closure of $\Rr$. In addition, we have $A_1\cap\Reg(A)=\Reg(A_1)\cap\Reg(A)$ and $\Rr_1\cap\Reg(\Rr)=A_1\cap\Reg(\Rr)=\Reg(\Rr_1)\cap\Reg(\Rr)$.

Assume that $A_1\subset Z_k$ exactly for $k=1,\ldots,\ell$. Note that $\ell\leq d$. Pick a point 
$$
x\in\Rr_1\cap\Reg(\Rr)\setminus\bigcup_{j=\ell+1}^rZ_j\subset\Reg(A). 
$$
As $Z$ is a normal-crossings divisor of $X$, the intersection $Z_1\cap\cdots\cap Z_\ell$ is a non-singular algebraic set and there exists an open semialgebraic neighborhood $U\subset X\setminus\bigcup_{j=\ell+1}^rZ_j$ of $x$ equipped with a Nash diffeomorphism $u:U\to\R^d$ such that $u(x)=0$ and $u(Z\cap U)=\{x_1\cdots x_\ell=0\}$. We may assume in addition: 
\begin{itemize}\enlargethispage{5mm}
\item $\Rr\cap U=A\cap U=A_1\cap U=\Rr_1\cap U$ is a connected closed submanifold of $U$. 
\item $u(Z_i\cap U)=\{x_i=0\}$ for $i=1,\ldots,\ell$.
\item $\Ss_x\cap M_{1,x}\neq\varnothing$ and $\Ss_x\cap M_{2,x}\neq\varnothing$.
\end{itemize}
As $\Cc$ is a union of connected components of $X\setminus Z$, we have that $u(\Cc\cap U)$ is a union of sets of the type $\{\veps_1x_1>0,\ldots,\veps_\ell x_\ell>0\}$ where $\veps_i=\pm1$. Consider the projection $\pi:\R^\ell\times\R^{d-\ell}\to\R^\ell,\ (x,y)\mapsto x$ and observe that $u(\Cc\cap U)=\pi(u(\Cc\cap U))\times\R^{d-\ell}$. Consequently, $\Ss=\cl(\Cc)$ satisfies $u(\Ss\cap U)=\pi(u(\Ss\cap U))\times\R^{d-\ell}$. As $\Reg(\Ss)_x$ is not connected, $\Reg(\pi(u(\Ss\cap U)))_0$ is not connected. Consequently, for each $y\in(Z_1\cap\cdots\cap Z_\ell)\cap U$ the germ $\Reg(\Ss)_y$ is not connected. As $\Ss_x\cap M_{1,x}\neq\varnothing$ and $\Ss_x\cap M_{2,x}\neq\varnothing$, we deduce $\Ss_y\cap M_{1,y}\neq\varnothing$ and $\Ss_y\cap M_{2,y}\neq\varnothing$ for each $y\in(Z_1\cap\cdots\cap Z_\ell)\cap U$. Thus,
$$
A_1\cap U\subset(Z_1\cap\cdots\cap Z_\ell)\cap U\subset\Rr\cap U=A_1\cap U.
$$
As $Z_1\cap\cdots\cap Z_\ell$ is pure dimensional, we conclude that $\dim(A_1)=\dim(Z_1\cap\cdots\cap Z_\ell)$. As $A_1$ is irreducible, it is an irreducible component of $Z_1\cap\cdots\cap Z_\ell$.

\paragraph{}Next, we prove: $\dim(\Rr)\leq\dim(X)-2$. Assume by contradiction that $\dim(\Rr)=\dim(X)-1$. Let $x\in\Reg(Z)\cap\Reg(\Rr)$. There exists an open semialgebraic neighborhood $U\subset X$ of $x$ such that $U\cap Z$ is Nash diffeomorphic to $\{x_1=0\}$, so $\Cc_x$ is either Nash equivalent to $\{x_1>0\}$ or to $\{x_1>0\}\cup\{x_1<0\}$. Consequently, $\Ss_x$ is either Nash equivalent to $\{x_1\geq0\}$ or to $\R^d$. But in both cases $\Reg(\Ss)_x$ is connected, a contradiction.

\paragraph{}\label{cluestep} Let $A_1$ be an irreducible component of $A$ and let $(Y_1,f_1)$ be the blow-up of $X$ with center $A_1$. Denote $\Tt_1:=\cl(f_1^{-1}(\Ss\setminus A_1))$ and observe that $f_1(\Tt_1)=\Ss$ because $f$ is proper and surjective and $\Ss\setminus A_1$ is dense in $\Ss$ because $\Ss$ is pure dimensional. Let us prove:\em 
\begin{itemize}
\item $f_1^{-1}(Z)$ is a normal-crossings divisor of $Y_1$.
\item $\Tt_1\setminus f_1^{-1}(Z)$ is a union of connected components of $Y_1\setminus f_1^{-1}(Z)$.
\item $\Tt_1$ is connected.
\item $\Reg(\Tt_1)$ has at most $s-1$ connected components.
\end{itemize}\em
Assume that $A_1\subset Z_k$ exactly for $k=1,\ldots,\ell$. As $A_1$ is an irreducible component of $Z_1\cap\cdots\cap Z_\ell$, the inverse image $f_1^{-1}(Z)$ is a normal-crossings divisor of $Y_1$. As $X\setminus A_1$ and $Y_1\setminus f_1^{-1}(A_1)$ are Nash diffeomorphic, also $X\setminus Z$ and $Y_1\setminus f_1^{-1}(Z)$ are Nash diffeomorphic. As $\Cc$ is a union of connected components of $X\setminus Z$, the inverse image $f_1^{-1}(\Cc)$ is a union of connected components of $Y_1\setminus f_1^{-1}(Z)$. Consequently, $f_1^{-1}(\Cc)$ is closed in $Y_1\setminus f_1^{-1}(Z)$ and $f_1^{-1}(\Cc)=\cl(f_1^{-1}(\Cc))\setminus f_1^{-1}(Z)$. Let us check: $\Tt_1=\cl(f_1^{-1}(\Cc))$.

We have
$$
\Tt_1=\cl(f_1^{-1}(\Ss\setminus A_1))=\cl(f_1^{-1}(\Cc))\cup\cl(f_1^{-1}((\Ss\setminus\Cc)\setminus A_1)).
$$ 
As $\cl(f_1^{-1}((\Ss\setminus\Cc)\setminus A_1))\subset f_1^{-1}(Z)$, it holds 
$$
\Tt_1\setminus f_1^{-1}(Z)=\cl(f_1^{-1}(\Cc))\setminus f_1^{-1}(Z)=f_1^{-1}(\Cc).
$$
As $f_1^{-1}(\Ss\setminus A_1)$ and $\Ss\setminus A_1$ are Nash diffeomorphic and $\Ss\setminus A_1$ is pure dimensional, also $f_1^{-1}(\Ss\setminus A_1)$ is pure dimensional, so $\Tt_1=\cl(f_1^{-1}(\Ss\setminus A_1))$ is pure dimensional. As $\dim(f_1^{-1}(Z))=d-1=\dim(\Tt_1)-1$, we deduce $\Tt_1=\cl(\Tt_1\setminus f_1^{-1}(Z))=\cl(f_1^{-1}(\Cc))$.

By Lemma \ref{ncww} $\Ss$ is well-welded, so $\Tt_1$ is by Lemma \ref{pww} well-welded and therefore connected. It only remains to check that $\Reg(\Tt_1)$ has at most $s-1$ connected components. 

As $\dim(A_1)\leq\dim(M_i)-2$, the differences $M_i\setminus A_1$ are connected Nash manifolds, so the same happens with the sets $f_1^{-1}(M_i\setminus A_1)$. Observe that
\begin{multline}\label{s11}
\bigcup_{i=1}^sf_1^{-1}(M_i\setminus A_1)\subset\Reg(\Tt_1)\subset\Tt_1=\cl(f_1^{-1}(\Cc))\\
=\cl(f_1^{-1}(\Reg(\Ss)\setminus A_1))=\cl\Big(\bigcup_{i=1}^sf_1^{-1}(M_i\setminus A_1)\Big)=\bigcup_{i=1}^s\cl(f_1^{-1}(M_i\setminus A_1)),
\end{multline}
so $\Reg(\Tt_1)$ has at most $s$ connected components. Let us check that in fact it has at most $s-1$. This follows from equality \eqref{s11} if we prove that $\cl(f_1^{-1}(M_1\cup M_2)\setminus A_1)$ is connected.

Recall that $A_1$ is an irreducible component of $Z_1\cap\cdots\cap Z_\ell$. Pick a point 
$$
x\in\Reg(\Rr)\cap A_1\setminus\bigcup_{j=\ell+1}^rZ_j\subset\Reg(A). 
$$
We may assume $x\in\cl(M_1)\cap\cl(M_2)$. Let $U\subset X\setminus\bigcup_{j=s+1}^rZ_j$ be an open semialgebraic neighborhood of $x$ such that $\Rr\cap U=A\cap U=A_1\cap U$ that is equipped with a Nash diffeomorphism $u:U\to\R^d$ such that $u(A_1\cap U)=\{x_1=0,\ldots,x_\ell=0\}$ and $u(Z\cap U)=\{x_1\cdots x_\ell=0\}\subset\R^d$. In particular, 
$$
\cl(M_1)\cap\cl(M_2)\cap U\subset\Rr\cap U=A_1\cap U.
$$

Observe that $\Cc\cap U$ is a union of sets of the type $\Qq_\epsilon:=\{\epsilon_1x_1>0,\ldots,\epsilon_\ell x_\ell>0\}$ where $\epsilon:=(\epsilon_1,\ldots,\epsilon_\ell)\in\{-1,+1\}^\ell$, that is, there exists ${\mathfrak F}\subset\{-1,+1\}^\ell$ such that 
$$
\Cc\cap U=\bigcup_{\epsilon\in{\mathfrak F}}\Qq_\epsilon. 
$$
Denote $\ol{\Qq}_\epsilon:=\{\epsilon_1x_1\geq0,\ldots,\epsilon_\ell x_\ell\geq0\}$. Observe that $\Reg(\Ss)\cap U$ is not connected because it has at least two connected components $E_1:=u(M_1\cap U)$ and $E_2:=u(M_2\cap U)$. Let $\Qq_\epsilon\subset E_1$ and $\Qq_{\epsilon'}\subset E_2$. We have
$$
\ol{\Qq}_\epsilon\cap\ol{\Qq}_{\epsilon'}\subset\cl(E_1)\cap\cl(E_2)\subset u(\Rr\cap U)=u(A_1\cap U).
$$
As $\dim(A_1\cap U)=d-\ell$, we deduce $\epsilon'=-\epsilon$. This means in addition that $u(M_1\cap U)=\Qq_\epsilon$ and $u(M_2\cap U)=\Qq_{-\epsilon}$ and we assume $\epsilon=(1,\ldots,1)$. In fact, $\Ss\cap U=(\cl(M_1)\cup\cl(M_2))\cap U$ and
$$
u(\Ss\cap U)=\{x_1\geq0,\ldots,x_\ell\geq0\}\cup\{-x_1\geq0,\ldots,-x_\ell\geq0\}.
$$
This is so because if $x\in\cl(M_j)$ for $j\neq1$, we have $u(M_j\cap U)=\Qq_{\epsilon''}$ for some $\epsilon''\in\{-1,1\}^s$. As we have seen $\epsilon''=-\epsilon$, so $M_j=M_2$.

Let $y\in f_1^{-1}(x)$. There exists an open semialgebraic neighborhood $V\subset Y_1$ of $y$ and a Nash diffeomorphism $v:V\to\R^d$ such that $v(y)=0$ and $u\circ f_1\circ v^{-1}:\R^d\to\R^d$ is given by
$$
(x_1,\ldots,x_d)\mapsto(x_1,x_1x_2,\ldots,x_1x_\ell,x_{s+1},\ldots,x_d).
$$
Consequently, 
\begin{equation*}
\begin{split}
(u\circ f_1\circ v^{-1})^{-1}(\Cc\cap U)&=\{x_1>0,x_1x_2>0,\ldots,x_1x_\ell>0\}\cup\{x_1<0,x_1x_2<0,\ldots,x_1x_\ell<0\}\\
&=\{x_1>0,x_2>0,\ldots,x_\ell>0\}\cup\{x_1<0,x_2>0,\ldots,x_\ell>0\}\\
&=\{x_1\neq0,x_2>0,\ldots,x_\ell>0\},
\end{split}
\end{equation*} 
see Figure \ref{fig13}. Therefore $\Tt_1$ contains $\cl(v(\{x_1\neq0,x_2>0,\ldots,x_\ell>0\}))=\cl(v(\{x_2>0,\ldots,x_\ell>0\}))$ and
\begin{align*}
f_1(v(\{x_1>0,x_2>0,\ldots,x_\ell>0\})=(M_1\setminus A_1)\cap U,\\
f_1(v(\{x_1<0,x_2>0,\ldots,x_\ell>0\})=(M_2\setminus A_1)\cap U.
\end{align*}
Thus, $\Reg(\cl(f_1^{-1}((M_1\cup M_2)\setminus A_1)))\subset\Reg(\Tt_1)$ is connected, so $\Reg(\Tt_1)$ has by \eqref{s11} at most $s-1$ connected components.

\paragraph{}We repeat recursively the previous process until we obtain $\Tt$ and $f$ satisfying the conditions in the statement, as required.
\end{proof}

\begin{figure}[!ht]
\begin{center}
\begin{tikzpicture}[scale=0.75]

\draw[line width=1pt,dashed] (9.5,0) -- (9.5,4);

\draw[fill=gray!20,opacity=0.75,draw=none](0,1) -- (3,1) -- (3,4) -- (0,4) -- (0,1);
\draw[fill=gray!20,opacity=0.75,draw=none](5,1) -- (8,1) -- (8,4) -- (5,4) -- (5,1);

\draw[fill=gray!20,opacity=0.75,draw=none](11,1) -- (14,1) -- (14,4) -- (11,4) -- (11,1);
\draw[fill=gray!20,opacity=0.75,draw=none](16,1) -- (19,1) -- (19,4) -- (16,4) -- (16,1);

\draw[fill=gray!70,opacity=0.75,draw=none](1.5,2.5) -- (3,2.5) -- (3,4) -- (1.5,4) -- (1.5,2.5);
\draw[fill=gray!70,opacity=0.75,draw=none](0,1) -- (1.5,1) -- (1.5,2.5) -- (0,2.5) -- (0,1);
\draw[fill=gray!70,opacity=0.75,draw=none](6.5,2.5) -- (8,2.5) -- (8,4) -- (6.5,4) -- (6.5,2.5);
\draw[fill=gray!70,opacity=0.75,draw=none](5,2.5) -- (6.5,2.5) -- (6.5,4) -- (5,4) -- (5,2.5);
\draw[fill=gray!70,opacity=0.75,draw=none](12.5,2.5) -- (14,2.5) -- (14,4) -- (12.5,4) -- (12.5,2.5);
\draw[fill=gray!70,opacity=0.75,draw=none](11,2.5) -- (12.5,2.5) -- (12.5,4) -- (11,4) -- (11,2.5);
\draw[fill=gray!70,opacity=0.75,draw=none](17.5,2.5) -- (19,2.5) -- (19,4) -- (17.5,4) -- (17.5,2.5);
\draw[fill=gray!70,opacity=0.75,draw=none](16,1) -- (17.5,1) -- (17.5,2.5) -- (16,2.5) -- (16,1);

\draw[dashed] (0,2.5) -- (3,2.5);
\draw[dashed] (1.5,1) -- (1.5,4);
\draw[dashed] (5,2.5) -- (8,2.5);
\draw[dashed] (6.5,1) -- (6.5,4);
\draw[dashed] (11,2.5) -- (14,2.5);
\draw[line width=1pt] (12.5,1) -- (12.5,4);
\draw[dashed] (16,2.5) -- (19,2.5);
\draw[dashed] (17.5,1) -- (17.5,4);

\draw (0,1) -- (3,4);
\draw (0.75,1) -- (2.25,4);
\draw (0,1.75) -- (3,3.25);

\draw (5,4) parabola bend (6.5,2.5) (8,4);
\draw (5,3.25) parabola bend (6.5,2.5) (8,3.25);
\draw (5.707106781186548,4) parabola bend (6.5,2.5) (7.29289321881345,4);

\draw (11,2.75) -- (14,2.75);
\draw (11,3) -- (14,3);
\draw (11,3.5) -- (14,3.5);

\draw (16,1) -- (19,4);
\draw (16.75,1) -- (18.25,4);
\draw (16,1.75) -- (19,3.25);

\draw[fill=black,draw] (1.5,2.5) circle (0.75mm);
\draw[fill=black,draw] (6.5,2.5) circle (0.75mm);
\draw[fill=black,draw] (12.5,2.5) circle (0.75mm);
\draw[fill=black,draw] (17.5,2.5) circle (0.75mm);

\draw[line width=1pt,->] (3.5,2.5) -- (4.5,2.5);
\draw[line width=1pt,->] (14.5,2.5) -- (15.5,2.5);
\draw[line width=1pt,->] (2.8,0.25) -- (4.4,0.25);
\draw[line width=1pt] (2.8,0.15) -- (2.8,0.35);
\draw[line width=1pt,->] (13.8,0.25) -- (15.4,0.25);
\draw[line width=1pt] (13.8,0.15) -- (13.8,0.35);

\draw (1.5,0.25) node{\small$(x_1,\ldots,x_d)$};
\draw (6.5,0.25) node{\small$(x_1,x_1x_2,\ldots,x_1x_d)$};
\draw (12.5,0.25) node{\small$(x_1,\ldots,x_d)$};
\draw (17.5,0.25) node{\small$(x_1,x_1x_2,\ldots,x_1x_d)$};

\end{tikzpicture}
\end{center}
\caption{Behavior of unions of two quadrants under blow-up.\label{fig13}}
\end{figure}
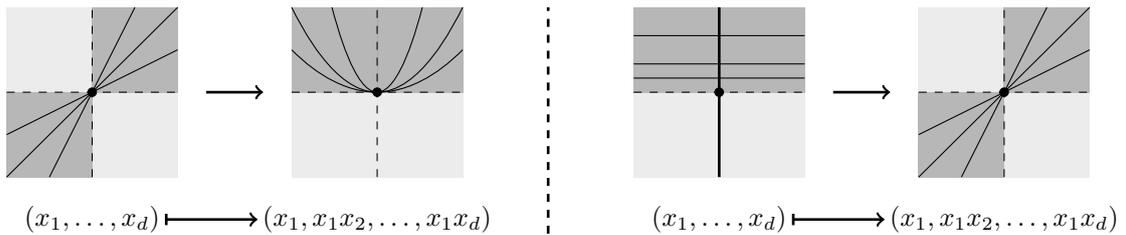

\begin{proof}[Proof of Proposition \em \ref{clave0} \em for the general case]
Let $\Ss_0:=\cl(\Ss)=\cl(\Cc)$. By Proposition \ref{clave0} for the closed case there exist a checkerboard set $\Tt_0$ and a proper surjective regular map $f_0:Y_0:=\ol{\Tt_0}^{\zar}\to X$ such that $Z_0:=f_0^{-1}(Z)$ is a normal-crossings divisor of $Y_0$, the restriction $f_0|_{Y_0\setminus Z_0}:Y_0\setminus Z_0\to X\setminus Z$ is a regular diffeomorphism, $\Tt_0=\cl(f_0^{-1}(\Cc))$ and $f_0(\Tt_0)=\Ss_0$. Let $\Tt_1:=f_0^{-1}(\Ss)\cap\Tt_0$ be the strict transform of $\Ss$ under $f_0$. As $\Ss$ is by Lemma \ref{ncww} well-welded, also $\Tt_1$ is by Lemma \ref{pww} well-welded, so $\Tt_1$ is connected. If $\Reg(\Tt_1)$ is connected, we are done, so we assume that $\Reg(\Tt_1)$ is not connected. Observe that $f_0^{-1}(\Cc)$ is dense in $\Tt_1$. Let $N_1,\ldots,N_s$ be the connected components of $\Reg(\Tt_1)$. As $\Tt_1$ is connected, we suppose $\cl(N_1)\cap\cl(N_2)\cap\Tt_1\neq\varnothing$. 

\paragraph{}\label{redcr} We may assume: \em there exist $q\in\cl(N_1)\cap\cl(N_2)\cap\Tt_1$ and an open semialgebraic neighborhood $U\subset Y_0$ of $q$ equipped with a Nash diffeomorphism $u:U\to\R^d$ such that $u(q)=0$,
$$
\{x_1>0,\ldots,x_\ell>0\}\subset u(N_1\cap U)\quad\text{and}\quad\{x_1<0,\ldots,x_\ell<0\}\subset u(N_2\cap U).
$$
\em 

Pick a point $p\in\cl(N_1)\cap\cl(N_2)\cap\Tt_1$ and let $e:=\dim(\cl(N_1)_p\cap\cl(N_2)_p)$. We distinguish two situations depending on the value of $e$:

\noindent{\em Case \em 1.} $e<d-1$. Let $A_1$ be an irreducible component of the Zariski closure $A$ of $\cl(N_1)\cap\cl(N_2)$ of maximal dimension passing through $p$. Let $Z_{01},\ldots,Z_{0\ell}$ be all the irreducible components of $Z_0$ that contain $A_1$. Proceeding similarly to the proof \ref{zi} one shows that $A_1$ is an irreducible component of $Z_{01}\cap\cdots\cap Z_{0\ell}$. Note that $e=d-\ell$. Consider the blow-up $(Y_0',f_1)$ of $Y_0$ with center $A_1$. Define $\Tt'_0:=\cl(f_1^{-1}(\Tt_0\setminus A_1))$. Proceeding similarly to the proof of \ref{cluestep} one shows: 
\begin{itemize}\em
\item[(i)] $f_1^{-1}(Z_0)$ is a normal-crossings divisor of $Y_0'$.
\item[(ii)] $\Tt'_0\setminus f_1^{-1}(Z_0)$ is a union of connected components of $Y_0'\setminus f_1^{-1}(Z_0)$.
\item[(iii)] $\Tt'_0$ is connected.\em
\end{itemize}
In addition, it holds:
\begin{itemize}
\item[(iv)] \em For each point $z\in f_1^{-1}(p)\cap\Tt'_0$, we have $\dim(\cl(f_1^{-1}(N_1))_z\cap\cl(f_1^{-1}(N_2))_z)=d-1$\em.
\end{itemize}
Denote the union of the irreducible components of $Z_0$ that do not contain $A_1$ with $Z_0'$. To prove (iv) pick 
$$
x\in\Reg(\cl(N_1)\cap\cl(N_2))\cap A_1\setminus Z_0'\subset\Reg(A)
$$ 
close to $p$. Let $U\subset Y_0$ be an open semialgebraic neighborhood of $x$ equipped with a Nash diffeomorphism $u:U\to\R^d$ such that $u(x)=0$, $u(A\cap U)=\{x_1=0,\ldots,x_\ell=0\}$ and $u(Z_0\cap U)=\{x_1\cdots x_\ell=0\}$. We may assume in addition: 
\begin{itemize}
\item $\cl(N_1)\cap\cl(N_2)\cap U=A\cap U$ is a connected closed submanifold of $U$. 
\item $u(Z_{0i}\cap U)=\{x_i=0\}$ for $i=1,\ldots,\ell$.
\end{itemize}
Assume $\Qq_1:=\{x_1>0,\ldots,x_\ell>0\}\subset u(N_1\cap U)$ and $\Qq_{\epsilon}:=\{\epsilon_1x_1>0,\ldots,\epsilon_\ell x_\ell>0\}\subset u(N_2\cap U)$ for some $\epsilon:=(\epsilon_1,\ldots,\epsilon_\ell)\in\{-1,+1\}^\ell$. As 
$$
\cl(\Qq_1)\cap\cl(\Qq_{\epsilon})\subset u(\cl(N_1)\cap\cl(N_2)\cap U)=u(A\cap U)=\{x_1=0,\ldots,x_\ell=0\}, 
$$
we have $\epsilon=(-1,\ldots,-1)$, $\Qq_1=u(N_1\cap U)$ and $\Qq_{\epsilon}=u(N_2\cap U)$.

Let $y\in Y_0'$ be a point close to $z$ such that $f_1(y)=x$. There exists an open semialgebraic neighborhood $V\subset Y_0'$ of $y$ and a Nash diffeomorphism $v:V\to\R^d$ such that $u\circ f_1\circ v^{-1}:\R^d\to\R^d$ is given by
$$
(x_1,\ldots,x_d)\mapsto(x_1,x_1x_2,\ldots,x_1x_\ell,x_{\ell+1},\ldots,x_d).
$$
We have
\begin{equation*}
\begin{split}
(u\circ f_1\circ v^{-1})^{-1}(\Qq_1\cup\Qq_{\epsilon})&=\{x_1>0,x_1x_2>0,\ldots,x_1x_\ell>0\}\\
&\cup\{x_1<0,x_1x_2<0,\ldots,x_1x_\ell<0\}\\
&=\{x_1>0,x_2>0,\ldots,x_\ell>0\}\cup\{x_1<0,x_2>0,\ldots,x_\ell>0\}\\
&=\{x_1\neq0,x_2>0,\ldots,x_\ell>0\}.
\end{split}
\end{equation*}
So $\cl(f_1^{-1}(N_1))\cap\cl(f_1^{-1}(N_2))\cap V$ contains $v^{-1}(\{x_1=0\})$, which has dimension $d-1$. Consequently, $\dim(\cl(f_1^{-1}(N_1))_z\cap\cl(f_1^{-1}(N_2))_z)=d-1$.

We assume in what follows $e=d-1$.

\noindent{\em Case \em 2.} $e=d-1$. Consider the blow-up $(Y_0',f_1)$ of $Y_0$ with center $\{p\}\subset\Tt_1$. Consider the strict transform $\Tt_1':=\cl(f_1^{-1}(\Tt_1\setminus\{p\}))\cap f_1^{-1}(\Tt_1)$. We have: 
\begin{itemize}\em
\item $f_1^{-1}(Z_0)$ is a normal-crossings divisor of $Y_0'$.
\item $\Tt_1'\setminus f_1^{-1}(Z_0)$ is a union of connected components of $Y_0'\setminus f_1^{-1}(Z_0)$.
\item $\Tt_1'$ is connected \em (use the argument described above involving Lemmas \ref{ncww} and \ref{pww}).
\end{itemize}
As $\dim(\cl(N_1)_p\cap\cl(N_2)_p)=d-1$, we assume there exists an open semialgebraic neighborhood $U$ of $p$ in $Y_0$ and a Nash diffeomorphism $u:U\to\R^d$ such that $u(p)=0$, $u(Z_0\cap U)=\{x_1\cdots x_\ell=0\}$ and 
\begin{align}
\Rr_1:=\{x_1>0,x_2>0\ldots,x_\ell>0\}\subset u(N_1\cap U)\label{Rr1},\\
\Rr_2:=\{x_1<0,x_2>0\ldots,x_\ell>0\}\subset u(N_2\cap U)\label{Rr2}.
\end{align}

Consider the Nash path germ $\alpha(t):=u^{-1}(t,t^2,\cdots,t^2)\subset N_1\cap U$ and observe that 
\begin{equation}\label{pick}
q:=\lim_{t\to0^+}f_1^{-1}(\alpha(t))\in f_1^{-1}(p)\cap\cl(f_1^{-1}(\Tt_1\setminus\{p\}))\subset\cl(f_1^{-1}(\Tt_1\setminus\{p\}))\cap f_1^{-1}(\Tt_1)=\Tt_1'.
\end{equation} 
Thus, there exists an open semialgebraic neighborhood $V\subset Y_0'$ of $q$ and a Nash diffeomorphism $v:V\to\R^d$ such that $v(q)=0$ and $u\circ f_1\circ v^{-1}:\R^d\to\R^d$ is given by
$$
(x_1,\ldots,x_d)\mapsto(x_1,x_1x_2,\ldots,x_1x_d).
$$
We have
\begin{equation*}
\begin{split}
(u\circ f_1&\circ v^{-1})^{-1}(\Rr_1\cup\Rr_2\cup\{0\})\\
&=\{x_1>0,x_1x_2>0,\ldots,x_1x_\ell>0\}\cup\{x_1<0,x_1x_2>0,\ldots,x_1x_\ell>0\}\cup\{x_1=0\}\\
&=\{x_1>0,\ldots,x_\ell>0\}\cup\{x_1<0,\ldots,x_\ell<0\}\cup\{x_1=0\},
\end{split}
\end{equation*}
see Figure \ref{fig13}. Consequently, \ref{redcr} holds true.

\paragraph{} Observe that $q\not\in N_i$ for $i=1,\ldots,s$. Consider the blow-up $(Y_0',f_1)$ of $Y_0$ with center $\{q\}\subset\Tt_1$ and the strict transform $\Tt_1':=\cl(f_1^{-1}(\Tt_1\setminus\{q\}))\cap f_1^{-1}(\Tt_1)$. We have: 
\begin{itemize}
\item $f_1^{-1}(Z_0)$ is a normal-crossings divisor of $Y_0'$.
\item $\Tt_1'\setminus f_1^{-1}(Z_0)$ is a union of connected components of $Y_0'\setminus f_1^{-1}(Z_0)$.
\item $\Tt_1'$ is connected by Lemmas \ref{pww} and \ref{ncww}.
\end{itemize}
Let us check that $\Reg(\Tt_1')$ has at most $s-1$ connected components. It holds
\begin{multline}\label{s12}
\bigcup_{i=1}^sf_1^{-1}(N_i)\subset\Reg(\Tt_1')\subset\Tt_1'=\cl(f_1^{-1}(\Reg(\Tt_1)\setminus\{q\}))\cap f_1^{-1}(\Tt_1)\\
=\cl\Big(\bigcup_{i=1}^sf_1^{-1}(N_i)\Big)\cap f_1^{-1}(\Tt_1)=\bigcup_{i=1}^s\cl(f_1^{-1}(N_i))\cap f_1^{-1}(\Tt_1),
\end{multline}
so $\Reg(\Tt_1')$ has at most $s$ connected components. Let us check that in fact it has at most $s-1$. 

Proceeding similarly to \eqref{pick} we find a point 
$$
y\in f_1^{-1}(q)\cap\cl(f_1^{-1}(\Tt_1\setminus\{q\}))\subset\cl(f_1^{-1}(\Tt_1\setminus\{p\}))\cap f_1^{-1}(\Tt_1)=\Tt_1'.
$$
Thus, there exist an open semialgebraic neighborhood $V\subset Y_0'$ of $y$ and a Nash diffeomorphism $v:V\to\R^d$ such that $v(y)=0$ and $u\circ f_1\circ v^{-1}:\R^d\to\R^d$ is given by
$$
(x_1,\ldots,x_d)\mapsto(x_1,x_1x_2,\ldots,x_1x_d).
$$
Consequently, if $\Rr_1$ is defined as in \eqref{Rr1}, then $-\Rr_1=\{x_1<0,x_2<0,\ldots,x_\ell<0\}$ and
\begin{equation*}
\begin{split}
(u\circ f_1\circ v^{-1})^{-1}((\Rr_1\cup-\Rr_1\cup\{0\})&=\{x_1>0,x_1x_2>0,\ldots,x_1x_\ell>0\}\\
&\cup\{x_1<0,x_1x_2<0\ldots,x_1x_\ell<0\}\cup\{x_1=0\}\\
&=\{x_2>0,\ldots,x_\ell>0\}\cup\{x_1=0\},
\end{split}
\end{equation*} 
see Figure \ref{fig13}. So $\Tt_1'$ contains $v(\{x_2>0,\ldots,x_\ell>0\})$ and
\begin{align*}
f_1( v(\{x_1>0,x_2>0\ldots,x_\ell>0\})\subset N_1\cap U,\\
f_1( v(\{x_1<0,x_2>0\ldots,x_\ell>0\})\subset N_2\cap U.
\end{align*}
Thus, $\Reg(\cl(f_1^{-1}(N_1\cup N_2))\cap f_1^{-1}(\Tt_1))\subset\Reg(\Tt_1')$ is connected, so $\Reg(\Tt_1')$ has by \eqref{s12} at most $s-1$ connected components.

\paragraph{} We repeat recursively the previous process until we obtain $\Tt$ and $f$ satisfying the conditions in the statement, as required.
\end{proof}

We are ready to prove Theorem \ref{clave}.

\begin{proof}[Proof of Theorem \em \ref{clave}]
Let $X$ be the Zariski closure of $\Ss$. By Theorem \ref{hi1} there exist a non-singular algebraic set $X'$ and a proper regular map $f:X'\to X$ such that the restriction $f|_{X'\setminus f^{-1}(\Sing(X))}:X'\setminus f^{-1}(\Sing(X))\to X\setminus\Sing(X)$ is a biregular diffeomorphism. The strict transform $\Ss':=\cl(f^{-1}(\Ss\setminus\Sing(X)))\cap f^{-1}(\Ss)$ is by Lemma \ref{pww} well-welded and $f(\Ss')=\Ss$. So we may assume from the beginning that $X$ is non-singular.

Let $Z$ be the Zariski closure of $\Rr:=\cl(\Ss)\setminus\Reg(\Ss)$. By Theorem \ref{hi2} there exist a non-singular algebraic set $X'$ and a proper surjective regular map $f:X'\to X$ such that $f^{-1}(Z)$ is a normal-crossings divisor of $X'$ and the restriction $f|_{X'\setminus f^{-1}(Z)}:X'\setminus f^{-1}(Z)\to X\setminus Z$ is a biregular diffeomorphism. The strict transform $\Ss':=f^{-1}(\Ss)\cap\cl(f^{-1}(\Ss\setminus Z))$ is by Lemma \ref{pww} well-welded and $f(\Ss')=\Ss$. Even more $\Ss\setminus Z=\cl(\Ss)\setminus Z=\Reg(\Ss)\setminus Z$ is a closed and open subset of $X\setminus Z$, so $\Cc:=f^{-1}(\Ss\setminus Z)$ is a closed and open subset of $X'\setminus f^{-1}(Z)$. Consequently, $\Cc$ is a union of connected components of $X'\setminus f^{-1}(Z)$. 

Thus, we may assume from the beginning:
\begin{itemize}
\item The Zariski closure of $\Ss$ is non-singular.
\item The Zariski closure of $\cl(\Ss)\setminus\Reg(\Ss)$ is contained in a normal-crossings divisor $Z$.
\item $\Ss\setminus Z$ is a union of connected components of $X\setminus Z$.
\item $\Ss$ is connected (and so well-welded by Lemma \ref{ncww}).
\end{itemize}

By Proposition \ref{clave0} there exist a checkerboard set $\Tt\subset\R^m$ and a proper surjective regular map $f:Y:=\ol{\Tt}^{\zar}\to X$ such that $f(\Tt)=\Ss$, as required.
\end{proof}

\subsection{Proof of Theorem \ref{main}}\setcounter{paragraph}{0}
The implications (i) $\Longrightarrow$ (ii) $\Longrightarrow$ (iii) $\Longrightarrow$ (iv) and (i) $\Longrightarrow$ (ii) $\Longrightarrow$ (v) $\Longrightarrow$ (vi) are straightforward. As quoted in the Introduction, only the proof of the non-completely trivial implication (ii) $\Longrightarrow$ `$\Ss$ is pure dimensional' requires a comment and it is shown in Corollary \ref{imnm}. The implication (iii) $\Longrightarrow$ (vii) is proved in Lemma \ref{npww} whereas (v) $\Longrightarrow$ (vii) follows from Lemma \ref{imp1}. In addition, (iv) $\Longrightarrow$ (vii) is shown in Lemma \ref{apww} and (vi) $\Longrightarrow$ (vii) in Lemma \ref{imp2}. To finish we prove (vii) $\Longrightarrow$ (i), that is, \em a well-welded semialgebraic set of dimension $d$ is a Nash image of $\R^d$\em. By Theorem \ref{clave} we may assume: \em $\Ss$ is a checkerboard set\em. Let $M_0:=\ol{\Ss}^{\zar}$, which is a Nash manifold, and let $Z$ be the smallest Nash subset of $M_0$ that contains $\partial\Ss$, which is a Nash normal crossings divisor of $M_0$ (because its irreducible components as a Nash set are connected components of the irreducible components of $\ol{\partial\Ss}^{\zar}$ as an algebraic set). By Remark \ref{diff} the difference $\Ss\setminus\ol{\partial\Ss}^{\zar}=\Reg(\Ss)\setminus\ol{\partial\Ss}^{\zar}$ is a union of connected components of $M_0\setminus\ol{\partial\Ss}^{\zar}$. As $\ol{\partial\Ss}^{\zar}$ is a normal crossings divisor of $M_0$, we conclude that $\partial\Ss=\cl(\Ss)\setminus\Reg(\Ss)$ is a pure dimensional semialgebraic set of dimension $d-1$. To get a general idea on how this proof works see Figure \ref{fig15}. The proof is conducted in several steps. 

\paragraph{}\label{pdet}
{\em Step $1$. Initial preparation.} 
Let $Z_1,\ldots,Z_s$ be the irreducible components of $Z$ as a Nash subset of $M_0$. Let $\Bb\subset\partial\Ss\subset Z$ be a semialgebraic set. For each $i=1,\ldots,s$ let $\Bb_i$ be the closure of the set of points of dimension $<\dim(\Bb)$ of the intersection $\Bb\cap Z_i$. Denote $\Bb^*:=\Bb\setminus\bigcup_{i=1}^s\Bb_i$ and observe that $\Bb^*$ is pure dimensional open semialgebraic subset of $\Bb$ and $\dim(\Bb\setminus{\tt Sth}(\Bb^*))<\dim(\Bb)$. We claim: \em if $\Aa$ is a connected component of ${\tt Sth}(\Bb^*)$, then for each $i=1,\ldots,s$ either $\Aa\cap Z_i=\varnothing$ or $\Aa\subset Z_i$\em.

Fix $i=1,\ldots,s$ such that $\Aa\cap Z_i\neq\varnothing$ and pick a point $p\in\Aa\cap Z_i$. Then $p\in(\Bb\cap Z_i)\setminus\Bb_i$, so $p$ is a point of dimension $\dim(\Bb)$ of $\Bb\cap Z_i$. As $\dim(\Bb\setminus{\tt Sth}(\Bb^*))<\dim(\Bb)$, we deduce $p\in{\tt Sth}(\Bb^*)$ is a point of dimension $\dim({\tt Sth}(\Bb^*))=\dim(\Bb)$ of ${\tt Sth}(\Bb^*)\cap Z_i$. As $\Aa$ is the connected component of ${\tt Sth}(\Bb^*)$ that contains $p$, we have $\dim(\Aa\cap Z_i)=\dim({\tt Sth}(\Bb^*)\cap Z_i)=\dim({\tt Sth}(\Bb^*))=\dim(\Aa)$. As $\Aa$ is a connected Nash manifold and $Z_i$ a Nash subset of $M_0$, we conclude by the identity principle $\Aa\subset Z_i$.

\paragraph{}\label{rktk} 
{\em Step $2$. Construction of the bad subset $\Tt$ of $\Ss$ and a suitable partition of $\Tt$ into Nash manifolds of different dimensions.}
As $\ol{\Ss}^{\rm zar}$ is a non-singular real algebraic set and $\Ss$ is pure dimensional, $\Reg(\Ss)={\tt Sth}(\Ss)$ and $\Sing(\Ss)={\tt NSth}(\Ss)$ by Remark \ref{regsmooth}. In particular, $\partial\Ss=\cl(\Ss)\setminus{\tt Sth}(\Ss)$. Let $\Gg$ be the set of points of $\Sing(\Ss)$ of local dimension $<d-1$. Define the \em bad subset \em of $\Ss$ as:
$$
\Tt:={\tt NSth}({\tt NSth}(\Ss))\cup(\cl(\Gg)\cap\Ss)\subset Z,
$$
which is a semialgebraic set of dimension $<d-1$. Define $\Rr_0:=\Tt$, $\Rr_k:=\Rr_{k-1}\setminus\Tt_k$ and $\Tt_k:={\tt Sth}(\Rr_{k-1}^*)\subset\Rr_{k-1}$ for $k\geq1$ (see \ref{pdet} for the definition of $\Rr_{k-1}^*$ from $\Rr_{k-1}$). Each semialgebraic set $\Tt_k$ is a Nash manifold (and an open subset of $\Rr_{k-1}$) and each semialgebraic set $\Rr_k$ is a closed subset of $\Tt$. In addition, if $1\leq k<j$, 
\begin{equation}\label{singsing}
\cl(\Tt_j)\cap\Tt_k=\cl(\Tt_j)\cap\Tt\cap\Tt_k\subset\Rr_{j-1}\cap\Tt_k\subset\Rr_k\cap\Tt_k=\varnothing. 
\end{equation}
Observe that $\dim(\Rr_{k+1})<\dim(\Rr_k)$ for $k\geq0$ (if $\Rr_k\neq\varnothing$), so $\Rr_{d-1}=\varnothing$. Consequently,
\begin{align*}
&\Tt=\Tt_1\sqcup\Rr_1=\Tt_1\sqcup\Tt_2\sqcup\Rr_2=\cdots=\bigsqcup_{k=1}^{d-1}\Tt_k,\\
&\Ss={\tt Sth}(\Ss)\sqcup({\tt NSth}(\Ss)\setminus\Tt)\sqcup\Tt.
\end{align*}

\paragraph{}\label{l0} 
\em Step $3$. The difference $\Ss\setminus\Tt$ is a Nash manifold with boundary the difference ${\tt NSth}(\Ss)\setminus\Tt$ and interior ${\tt Sth}(\Ss)$\em. 

It holds that ${\tt NSth}(\Ss)\setminus\Tt$ is either empty or a Nash manifold of dimension $d-1$. Assume ${\tt NSth}(\Ss)\setminus\Tt\neq\varnothing$ and pick a point $x\in{\tt NSth}(\Ss)\setminus\Tt$. As ${\tt NSth}(\Ss)\subset Z$ and $Z$ is a normal-crossings divisor, there exists an open semialgebraic neighborhood $U$ of $x$ in $M_0$ equipped with a Nash diffeomorphism $u:U\to\R^d$ such that $u(x)=0$ and $u(Z\cap U)=\{x_1\cdots x_r=0\}$ for some $1\leq r\leq d$. As ${\tt NSth}(\Ss)\setminus\Tt$ is a Nash manifold of dimension $d-1$, we may assume $u(({\tt NSth}(\Ss)\setminus\Tt)\cap U)=\{x_1=0\}$. As ${\tt Sth}(\Ss)\setminus Z$ is a union of connected components of $M_0\setminus Z$, the Nash manifold ${\tt Sth}(\Ss)$ is connected and $\Ss$ is pure dimensional, we may assume $u(\Ss\cap U)=\{x_1\geq0\}$. Consequently, $\Ss\setminus\Tt$ is a Nash manifold with boundary ${\tt NSth}(\Ss)\setminus\Tt$ and interior ${\tt Sth}(\Ss)$.

\paragraph{}\label{sate} {\em Step $4$.} Define ${\mathfrak G}:=\{1\leq k\leq d-1:\ \Tt_k\neq\varnothing\}$ and $\ell(\Ss):=\#{\mathfrak G}$. We prove next by induction on $\ell(\Ss)$ the following\em: There exist a connected Nash manifold $H$ with boundary $\partial H$ and a surjective Nash map $f:H\to\Ss$ such that $\Int(H)$ is Nash diffeomorphic to ${\tt Sth}(\Ss)$\em. Once this will be done, recall that $H$ is by Theorem \ref{mstone} a Nash image of $\R^d$, so also $\Ss$ will be a Nash image of $\R^d$ and the proof will be finished.

If $\ell(\Ss)=0$ (or equivalently $\Tt=\varnothing$), then $\Ss=\Ss\setminus\Tt$ is by \ref{l0} a Nash manifold with boundary and it is enough to take $H:=\Ss$ and $f:=\id_\Ss$. Assume statement \ref{sate} true for $\ell(\Ss)-1$ and let us check that it is also true for $\ell(\Ss)$. 

\paragraph{} Write $\ell:=\ell(\Ss)$ and observe that $\Tt_k=\varnothing$ if and only if $k\geq\ell+1$. As $\Tt_\ell$ is locally compact (because it is a Nash manifold), the semialgebraic set $\Cc:=\cl(\Tt_\ell)\setminus\Tt_\ell$ is closed. By \eqref{singsing} $\Cc$ does not meet $\Tt_j$ for $1\leq j\leq\ell$, so $\Cc\cap\Tt=\varnothing$. As $\Tt$ is a closed subset of $\Ss$ and $\Cc\subset\cl(\Tt)\setminus\Tt$, we have $\Ss\cap\Cc=\varnothing$. Let $M:=M_0\setminus\Cc$, which is a Nash manifold of dimension $d$ that contains $\Ss$. Observe that $\Tt_\ell$ is a closed Nash submanifold of $M$ of dimension $e<d$. Let $(\widetilde{M}_+,\pi_+)$ be the drilling blow-up of $M$ with center $\Tt_\ell$ and let $(\widehat{M},\widehat{\pi})$ be the twisted Nash double of $(\widetilde{M}_+,\pi_+)$. By \ref{bigstepa2g} $\pi_+^{-1}(\Tt_\ell)$ is a closed non-singular Nash hypersurface of the Nash manifold $\widehat{M}$. Denote $\Ee:=\cl(\pi_+^{-1}(Z\setminus\Tt_\ell))\cap\widetilde{M}_+$ and $\Kk:=\pi_+^{-1}(\Tt_\ell)\cap\Ee$, which are closed semialgebraic subsets of $\widehat{M}$. Observe that $\Ee=\pi_+^{-1}(Z\setminus\Tt_\ell)\sqcup\Kk$. Define 
$$
\Ss_1':=\pi_+^{-1}(\Ss)\cap\cl(\pi_+^{-1}(\Ss\setminus\Tt_\ell))\quad\text{and}\quad\Ss_1:=\Ss_1'\setminus\Kk.
$$
It holds $\Ss_1\setminus\pi_+^{-1}(Z)=\pi_+^{-1}(\Ss\setminus Z)$ and $(\Ss_1\setminus\Ee)\cap\pi_+^{-1}(\Tt_\ell)=\Ss_1\cap\pi_+^{-1}(\Tt_\ell)$. Denote $M':=\widehat{M}\setminus\Kk$, which is a Nash manifold, and $Z':=(\Ee\setminus\Kk)\sqcup(\pi_+^{-1}(\Tt_\ell)\cap\Ss_1)$. 

\paragraph{} We claim: \em $Z'$ is the smallest Nash subset of $M'$ that contains the semialgebraic set $\partial\Ss_1:=(\cl(\Ss_1)\cap M')\setminus{\rm NSth}(\Ss_1)$ and it is a Nash normal crossings divisor of $M'$\em. In addition, \em ${\tt Sth}(\Ss_1)$ is a connected Nash manifold of dimension $d$\em. Once this is proved, we deduce by Lemma \ref{lighten} that, up to a suitable Nash embedding of $M'$, the semialgebraic set $\Ss_1$ is a checkerboard set such that $\Reg(\Ss_1)={\tt Sth}(\Ss_1)$ and $\Sing(\Ss_1)={\tt NSth}(\Ss_1)$.

As $\pi_+|_{\widetilde{M}_+\setminus\pi_+^{-1}(\Tt_\ell)}:\widetilde{M}_+\setminus\pi_+^{-1}(\Tt_\ell)\to M\setminus\Tt_\ell$ is a Nash diffeomorphism, we have:
\begin{itemize}
\item $\Ee\setminus\Kk=\pi_+^{-1}(Z\setminus\Tt_\ell)$ is the smallest Nash subset of $\widetilde{M}_+\setminus\pi_+^{-1}(\Tt_\ell)$ that contains the semialgebraic set $\partial\Ss_1\setminus\pi_+^{-1}(\Tt_\ell)=\pi_+^{-1}(\partial\Ss\setminus\Tt_\ell)$, because $Z\setminus\Tt_\ell$ is the smallest Nash subset of $M\setminus\Tt_\ell$ that contains $\partial\Ss\setminus\Tt_\ell$. To prove this last fact recall that $\partial\Ss$ is pure dimensional of dimension $d-1$ and $\dim(\Tt_\ell)<d-1$.
\item $\Ee\setminus\Kk=\pi_+^{-1}(Z\setminus\Tt_\ell)$ is a Nash normal crossings divisor of $\widetilde{M}_+\setminus\pi_+^{-1}(\Tt_\ell)$ because $Z\setminus\Tt_\ell$ is a Nash normal crossings divisor of $M\setminus\Tt_\ell$. 
\end{itemize}

As $\widetilde{M}_+\setminus\pi_+^{-1}(\Tt_\ell)$ is an open and closed semialgebraic subset of $M'\setminus\pi_+^{-1}(\Tt_\ell)$, we deduce that $\Ee\setminus\Kk$ is the smallest Nash subset of $M'\setminus\pi_+^{-1}(\Tt_\ell)$ that contains the semialgebraic set $\partial\Ss_1\setminus\pi_+^{-1}(\Tt_\ell)$ and it is a Nash normal crossings divisor of $M'\setminus\pi_+^{-1}(\Tt_\ell)$. As $\Ee\setminus\Kk$ is a closed semialgebraic subset of $M'$, we conclude by \cite[Prop.II.5.3]{sh} that $\Ee\setminus\Kk$ is the smallest Nash subset of $M'$ that contains the semialgebraic set $\partial\Ss_1\setminus\pi_+^{-1}(\Tt_\ell)$ and a Nash normal crossings divisor of $M'$ .

\paragraph{} Let us check: \em $(\pi_+^{-1}(\Tt_\ell)\cap\Ss_1)$ is a closed non-singular Nash hypersurface of $M'$\em. As $(\pi_+^{-1}(\Tt_\ell)\cap\Ss_1)\cap(\Ee\setminus\Kk)=\varnothing$, this will show that $Z'$ is the smallest Nash subset of $M'$ that contains $\partial\Ss_1=\pi_+^{-1}(\partial\Ss\setminus\Tt_\ell)\sqcup(\pi_+^{-1}(\Tt_\ell)\cap\Ss_1)$ and it is a Nash normal crossings divisor of $M'$.

We have ${\tt Sth}(\Ss)\subset\Ss\setminus\Tt_\ell$, so ${\tt Sth}(\Ss)={\tt Sth}(\Ss\setminus\Tt_\ell)$. As $\pi_+^{-1}(\Tt_\ell)=\partial\widetilde{M}_+$ and $\dim(\Ss_1)=\dim(\widetilde{M}_+)$, we have $\Ss_1\cap\pi_+^{-1}(\Tt_\ell)\subset{\tt NSth}(\Ss_1)$ and ${\tt Sth}(\Ss_1)={\tt Sth}(\Ss_1\setminus\pi_+^{-1}(\Tt_\ell))$. As $\pi_+|_{\widetilde{M}_+\setminus\pi_+^{-1}(\Tt_\ell)}$ is a Nash diffeomorphism, 
$$
{\tt Sth}(\Ss_1)={\tt Sth}(\pi_+^{-1}(\Ss\setminus\Tt_\ell))=\pi_+^{-1}({\tt Sth}(\Ss\setminus\Tt_\ell))=\pi_+^{-1}({\tt Sth}(\Ss)),
$$
which is a connected Nash manifold of dimension $d$. Consequently, ${\tt NSth}(\Ss_1)=\pi_+^{-1}({\tt NSth}(\Ss)\setminus\Tt_\ell)\cup(\Ss_1\cap\pi_+^{-1}(\Tt_\ell))$ and $\partial\Ss_1=\pi_+^{-1}(\partial\Ss\setminus\Tt_\ell)\sqcup(\pi_+^{-1}(\Tt_\ell)\cap\Ss_1)$. As $\widetilde{M}_+$ is a Nash manifold with boundary $\pi_+^{-1}(\Tt_\ell)$, the difference $\widetilde{M}_+\setminus\Ee$ is a Nash manifold with boundary $\pi_+^{-1}(\Tt_\ell)\setminus\Ee=\pi_+^{-1}(\Tt_\ell)\setminus\Kk$.

Observe that $\Ss\setminus Z=\cl(\Ss)\setminus Z={\tt Sth}(\Ss)\setminus Z$ is a closed an open subset of $M_0\setminus Z=M\setminus Z$ (see Remark \ref{diff}). As $\pi_+^{-1}(\Tt_\ell)\cap{\tt Sth}(\Ss_1)=\varnothing$,
$$
\pi_+^{-1}(\Ss\setminus Z)=\pi_+^{-1}({\tt Sth}(\Ss)\setminus Z)={\tt Sth}(\Ss_1)\setminus\pi_+^{-1}(Z)={\tt Sth}(\Ss_1)\setminus\Ee
$$
is a union of connected components of $\pi_+^{-1}(M\setminus Z)=(\widetilde{M}_+\setminus\Ee)\setminus\pi_+^{-1}(\Tt_\ell)$, that is, ${\tt Sth}(\Ss_1)\setminus\Ee$ is a union of connected components of the interior $\Int(\widetilde{M}_+\setminus\Ee)$ of the Nash manifold with boundary $\widetilde{M}_+\setminus\Ee$. Thus, the closure $\Dd$ of ${\tt Sth}(\Ss_1)\setminus\Ee$ in $\widetilde{M}_+\setminus\Ee$ is a Nash manifold with boundary $\Dd\cap(\pi_+^{-1}(\Tt_\ell)\setminus\Kk)$. 

\paragraph{} We claim: \em $\Ss_1\setminus\Ee$ is the closure of ${\tt Sth}(\Ss_1)\setminus\Ee$ in $\widetilde{M}_+\setminus\Ee$\em.

As $\Ss\setminus Z=\cl(\Ss)\setminus Z$ and $\Tt_\ell\subset Z\cap\Ss$, we have $\cl(\Ss)\setminus(Z\setminus\Tt_\ell)=\Ss\setminus(Z\setminus\Tt_\ell)=(\Ss\setminus Z)\cup\Tt_\ell$. As $\pi_+^{-1}(\cl(\Ss)\setminus(Z\setminus\Tt_\ell))\setminus\Kk$ is a closed subset of $\widetilde{M}_+\setminus\Ee=(\pi_+^{-1}(M\setminus(Z\setminus\Tt_\ell))\setminus\Kk$ that contains $\pi_+^{-1}(\Ss\setminus Z)$, we deduce
\begin{multline*}
\Ss_1\setminus\Ee=\Ss_1'\setminus\Ee=(\pi_+^{-1}(\Ss\setminus(Z\setminus\Tt_\ell))\cap\cl(\pi_+^{-1}(\Ss\setminus\Tt_\ell)))\setminus\Kk\\
=(\pi_+^{-1}(\cl(\Ss)\setminus(Z\setminus\Tt_\ell))\cap\cl(\pi_+^{-1}(\Ss\setminus Z)))\setminus\Kk\subset(\widetilde{M}_+\setminus\Ee)\cap\cl(\pi_+^{-1}(\Ss\setminus Z))
\end{multline*}
is the closure of $\pi_+^{-1}(\Ss\setminus Z)$ in $\widetilde{M}_+\setminus\Ee$. 

\paragraph{} Thus, $\Ss_1\setminus\Ee$ is a Nash manifold with boundary $\Ss_1\cap\pi_+^{-1}(\Tt_\ell)$. As $\Ss_1\setminus\Ee$ is a closed subset of $\widetilde{M}_+\setminus\Ee$, the intersection $(\Ss_1\setminus\Ee)\cap\pi_+^{-1}(\Tt_\ell)=\Ss_1\cap\pi_+^{-1}(\Tt_\ell)$ is a closed subset of $(\widetilde{M}_+\setminus\Ee)\cap\pi_+^{-1}(\Tt_\ell)=\pi_+^{-1}(\Tt_\ell)\setminus\Kk$, which is itself a closed subset of $M'=\widehat{M}\setminus\Kk$. Consequently, $\Ss_1\cap\pi_+^{-1}(\Tt_\ell)$ is a closed non-singular Nash hypersurface of $M'$.

\paragraph{} We check next: $\ell(\Ss_1)=\ell(\Ss)-1$. 

As ${\tt NSth}(\Ss_1)=\pi_+^{-1}({\tt NSth}(\Ss)\setminus\Tt_\ell)\cup(\Ss_1\cap\pi_+^{-1}(\Tt_\ell))$, the restriction map $\pi_+|_{\widetilde{M}_+\setminus\pi_+^{-1}(\Tt_\ell)}:\widetilde{M}_+\setminus\pi_+^{-1}(\Tt_\ell)\to M\setminus\Tt_\ell$ is a Nash diffeomorphism and $\Ss_1\cap\pi_+^{-1}(\Tt_\ell)$ is a Nash manifold of dimension $d-1$, we deduce ${\tt NSth}({\tt NSth}(\Ss_1))=\pi_+^{-1}({\tt NSth}({\tt NSth}(\Ss))\setminus\Tt_\ell)$ and the set $\Gg'$ of points of ${\tt NSth}(\Ss_1)$ of dimension $<d-1$ is $\pi_+^{-1}(\Gg\setminus\Tt_\ell)$. We claim: \em the bad subset $\Tt':={\tt NSth}({\tt NSth}(\Ss_1))\cup(\cl(\Gg')\cap\Ss_1)$ of $\Ss_1$ equals $\pi_+^{-1}(\Tt\setminus\Tt_\ell)=\bigcup_{k=1}^{s-1}\pi_+^{-1}(\Tt_k)$\em. We have to prove: $\cl(\Gg')\cap\Ss_1=\pi_+^{-1}((\cl(\Gg)\cap\Ss)\setminus\Tt_\ell)$. 

As $\Gg'=\pi_+^{-1}(\Gg\setminus\Tt_\ell)\subset\cl(\pi_+^{-1}(Z\setminus\Tt_\ell))\cap\widetilde{M}_+=\Ee$, also $\cl(\Gg')\cap\widetilde{M}_+\subset\Ee$. Thus,
\begin{multline*}
\cl(\Gg')\cap\Ss_1=(\cl(\Gg')\cap\pi_+^{-1}(\Ss)\cap\cl(\pi_+^{-1}(\Ss\setminus\Tt_\ell)))\setminus(\pi_+^{-1}(\Tt_\ell)\cap\Ee)\\
=\cl(\Gg')\cap\Ee\cap(\pi_+^{-1}(\Ss)\setminus\pi_+^{-1}(\Tt_\ell))=\cl(\Gg')\cap\pi_+^{-1}(\Ss\setminus\Tt_\ell).
\end{multline*}
As $\pi_+|_{\widetilde{M}_+\setminus\pi_+^{-1}(\Tt_\ell)}$ is a Nash diffeomorphism and $\Gg'=\pi_+^{-1}(\Gg\setminus\Tt_\ell)$, it holds 
$$
\cl(\Gg')\cap\Ss_1=\cl(\Gg')\cap\pi_+^{-1}(\Ss\setminus\Tt_\ell)=\pi_+^{-1}(\cl(\Gg)\cap\Ss\setminus\Tt_\ell), 
$$
as claimed. Therefore, $\Tt'=\pi_+^{-1}(\Tt\setminus\Tt_\ell)$.

Decompose $\Tt'=\bigsqcup_{k=1}^{d-1}\Tt_k'$ following the algorithm proposed in \ref{rktk}. As $\pi_+|_{M\setminus\Tt_\ell}$ is a Nash diffeomorphism and $\Tt_k\cap\Tt_\ell=\varnothing$ if $k\neq\ell$, we deduce $\Tt_k'=\pi_+^{-1}(\Tt_k)$ for $k=1,\ldots,\ell-1$ and $\Tt_k'=\varnothing$ for $k=\ell,\ldots,d-1$. As $\Tt'_{\ell-1}\neq\varnothing$, we conclude
$$
\ell(\Ss_1)=\#\{1\leq k\leq d-1:\ \Tt_k'\neq\varnothing\}=\ell(\Ss)-1.
$$

\paragraph{} We claim: $\pi_+(\Ss_1)=\Ss$, see Figure \ref{fig14}. 

\begin{figure}[!ht]
\centering
\begin{tikzpicture}

\draw (0,4) -- (1.5,2.5) -- (3,2.5) -- (1.5,4) -- (0,4);
\draw[fill=black!20!white,opacity=0.75] (0,4) -- (1.5,2.5) -- (3,2.5) -- (1.5,4) -- (0,4);

\draw (6.5,2.5) -- (5,4) -- (3.5,4) -- (5,2.5) -- (6.5,2.5);
\draw[fill=black!20!white,opacity=0.75] (6.5,2.5) -- (5,4) -- (3.5,4) -- (5,2.5) -- (6.5,2.5);

\draw (0,4) -- (0,1.5) -- (1.5,0) -- (1.5,2.5) -- (0,4);
\draw[fill=black!20!white,opacity=0.75] (0,4) -- (0,1.5) -- (1.5,0) -- (1.5,2.5) -- (0,4);

\draw (1.5,2.5) -- (1.5,0) -- (6.5,0) -- (6.5,2.5) -- (5,2.5) arc (360:180:1cm) -- (1.5,2.5);
\draw[fill=black!20!white,opacity=0.75] (1.5,2.5) -- (1.5,0) -- (6.5,0) -- (6.5,2.5) -- (5,2.5) arc (360:180:1cm) -- (1.5,2.5);

\draw[dashed] (2,2.5) arc (180:360:2cm) -- (4.5,4) -- (3.5,4) -- (5,2.5) arc (360:180:1cm) -- (1.5,4) -- (0.5,4) -- (2,2.5);
\draw[fill=black!40!white,opacity=0.75,dashed] (2,2.5) arc (180:360:2cm) -- (4.5,4) -- (3.5,4) -- (5,2.5) arc (360:180:1cm) -- (1.5,4) -- (0.5,4) -- (2,2.5);

\draw (1.5,2.5) -- (1.5,0) -- (6.5,0) -- (6.5,2.5) -- (5,2.5) arc (360:180:1cm) -- (1.5,2.5);

\draw (9,1.5) -- (10.5,0) -- (10.5,2.5) -- (9,4) -- (9,1.5);
\draw[fill=black!20!white,opacity=0.75] (9,1.5) -- (10.5,0) -- (10.5,2.5) -- (9,4) -- (9,1.5);

\draw (10.5,0) -- (15.5,0) -- (15.5,2.5) -- (10.5,2.5) -- (10.5,0);
\draw[fill=black!20!white,opacity=0.75] (10.5,0) -- (15.5,0) -- (15.5,2.5) -- (10.5,2.5) -- (10.5,0);

\draw (10.5,2.5) -- (15.5,2.5) -- (14,4) -- (9,4) -- (10.5,2.5);
\draw[fill=black!20!white,opacity=0.75] (10.5,2.5) -- (15.5,2.5) -- (14,4) -- (9,4) -- (10.5,2.5);

\draw[dashed] (11,2.5) arc (180:360:2cm) -- (13.5,4) -- (9.5,4) -- (11,2.5);
\draw[fill=black!40!white,opacity=0.75,dashed] (11,2.5) arc (180:360:2cm) -- (13.5,4) -- (9.5,4) -- (11,2.5);
\draw[fill=black!60!white,opacity=0.75,dashed] (10.5,4) -- (12,2.5) arc (180:360:1cm) -- (14,2.5) -- (12.5,4) -- (10.5,4);

\draw (10.5,2.5) -- (15.5,2.5) -- (14,4) -- (9,4) -- (10.5,2.5);

\draw (3,2.5) arc (180:360:1cm);

\draw (2.5,3) arc (270:360:1cm);

\draw[fill=black!70!white,opacity=0.75] (2.5,3) -- (3,2.5) arc (180:360:1cm) -- (5,2.5) -- (3.5,4) arc (360:270:1cm);

\draw[line width=1.5pt,dashed] (2.5,4) -- (4,2.5);

\draw[white,line width=2pt] (1.5,4) -- (3,2.5);
\draw (1.45,4) -- (2.95,2.5);
\draw (1.55,4) -- (3.05,2.5);

\draw[white,line width=2pt] (3.5,4) -- (5,2.5);
\draw (3.45,4) -- (4.95,2.5);
\draw (3.55,4) -- (5.05,2.5);

\draw[white,line width=2pt] (4.5,1.625) -- (3,3.125);
\draw (4.45,1.625) -- (2.95,3.125);
\draw (4.55,1.625) -- (3.05,3.125);

\draw[line width=1.5pt] (11.5,4) -- (13,2.5);

\draw[->,line width=1pt] (6.5,3.5) -- (8.5,3.5);

\draw (7.5,3.8) node{$\pi_+|_{\Ss_1}$};

\draw[fill=white,draw] (4,2.5) circle (0.75mm);
\draw[fill=white,draw] (2.5,4) circle (0.75mm);
\draw[fill=white,draw] (1.5,4) circle (0.75mm);
\draw[fill=white,draw] (3.5,4) circle (0.75mm);
\draw[fill=white,draw] (3,2.5) circle (0.75mm);
\draw[fill=white,draw] (5,2.5) circle (0.75mm);
\draw[fill=white,draw] (13,2.5) circle (0.75mm);
\draw[fill=white,draw] (11.5,4) circle (0.75mm);
\draw[fill=white,draw] (4.5,1.625) circle (0.75mm);
\draw[fill=white,draw] (3,3.125) circle (0.75mm);

\draw (4,1.2) node{$\pi_+^{-1}(\Tt_\ell)\cap\Ss_1$};

\draw (2.5,0.25) node{$\Ss_1$};
\draw (11.5,0.25) node{$\Ss$};
\draw (12,4.35) node{$\Tt_\ell$};

\end{tikzpicture}
\caption{Behavior of the surjective Nash map $\pi_+|_{\Ss_1}:\Ss_1\to\Ss$.\label{fig14}}
\end{figure}

As $\Ss_1=(\pi_+^{-1}(\Ss)\cap\cl(\pi_+^{-1}(\Ss\setminus\Tt_\ell)))\setminus\Kk$ and $\Kk=\pi_+^{-1}(\Tt_\ell)\cap\Ee$, we have $\Ss\setminus\Tt_\ell\subset\pi_+(\Ss_1)\subset\Ss$. Thus, to prove $\pi_+(\Ss_1)=\Ss$, it is enough to show $\Tt_\ell\subset\pi_+(\Ss_1)$. Pick a point $a\in\Tt_\ell$. As $Z\setminus\Cc$ is a Nash normal crossing divisor of $M$, there exists an open semialgebraic neighborhood $U\subset M$ of $a$ equipped with a Nash diffeomorphism $u:U\to\R^d$ such that $u(a)=0$ and $u(U\cap Z)=\{x_t\cdots x_d=0\}$ for some $t\leq d$. By \ref{pdet} and the definition of $\Tt_\ell$ (see \ref{rktk}) the connected component $\Tt_\ell^a$ of $\Tt_\ell$ that contains $a$ is contained in the irreducible components of $Z$ that contain $a$. Thus, $u(\Tt_\ell^a\cap U)\subset\{x_t=0,\ldots,x_d=0\}$. Shrinking $U$ is necessary, we may assume $\Tt_\ell^a\cap U=\Tt_\ell\cap U$ is a closed subset of $U$ and by \ref{fnashnc2} we may modify $u$ in order to have $u(\Tt_\ell\cap U)=\{x_{e+1}=0,\ldots,x_d=0\}$ for some $1\leq e+1\leq t$. Consider coordinates $(x_{e+1},\ldots,x_d)$ in $\R^{d-e}$. By \ref{bigstepa5} there exists a Nash diffeomorphism $\Phi:\R^e\times[0,+\infty)\times\sph^{d-e-1}\to V:=\pi_+^{-1}(U)$ such that
$$
u\circ\pi_+\circ\Phi:\R^e\times[0,+\infty)\times\sph^{d-e-1}\to\R^d,\ (y,\rho,w)\mapsto(y,\rho w).
$$
Observe that $\Phi^{-1}(\Ee\cap V)=\R^e\times[0,+\infty)\times(\sph^{d-e-1}\cap\{x_t\cdots x_d=0\})$. We may assume
$$
\{x_t>0,\ldots,x_d>0\}\subset u((\Ss\setminus Z)\cap U). 
$$
Let $w_0\in\{x_t>0,\ldots,x_d>0\}\subset\R^{d-e}$ be a unitary vector. We have $(u\circ\pi_+\circ\Phi)(0,\rho,w_0)=\rho w_0\in u((\Ss\setminus Z)\cap U)$ for each $\rho\geq0$, so in particular $(u\circ\pi_+\circ\Phi)(0,0,w)=0$. Thus, 
$$
\Phi(0,0,w)\in(\pi^{-1}_+((\Ss\setminus(Z\setminus\Tt_\ell))\cap U)\cap\cl(\pi^{-1}_+((\Ss\setminus Z)\cap U)))\setminus\Ee\subset\Ss_1,
$$
so $a\in\pi_+(\Ss_1)$. Consequently, $\Tt_\ell\subset\pi_+(\Ss_1)$.

\paragraph{}
We have proved that $\Ss_1$ is (up to a suitable Nash embedding of $M'$) a checkerboard set such that $\ell(\Ss_1)=\ell(\Ss)-1$ and ${\tt Sth}(\Ss_1)$ is Nash diffeomorphic to ${\tt Sth}(\Ss)$ via $\pi_+$. By induction hypothesis there exist a connected Nash manifold $H$ with boundary and a surjective Nash map $f_1:H\to\Ss_1$ such that $\Int(H)$ is Nash diffeomorphic to ${\tt Sth}(\Ss_1)$, which is itself Nash diffeomorphic to ${\tt Sth}(\Ss)$ via $\pi_+$. Thus, $f:=\pi_+\circ f_1:H\to\Ss$ is a surjective Nash map and $\Int(H)$ is Nash diffeomorphic to ${\tt Sth}(\Ss)$, as required.
\qed

\begin{figure}[!ht]
\begin{center}
\begin{tikzpicture}[scale=0.8]


\draw[fill=gray!100,opacity=0.75,draw=none] (0,6.5) -- (0,11.5) -- (5,11.5) -- (5,6.5) -- (0,6.5);
\draw[fill=white,draw=none] (1,9) -- (1,10.5) -- (2.5,10.5) -- (2.5,9) -- (1,9);
\draw[fill=white,draw=none] (2.5,7.5) -- (4,7.5) -- (4,9) -- (2.5,9) -- (2.5,7.5);

\draw[line width=1pt] (0,6.5) -- (1.5,6.5);
\draw[line width=1pt,dashed] (0,6.5) -- (0,8);
\draw[line width=1pt] (0,8) -- (0,11.5);
\draw[line width=1pt] (3.5,11.5) -- (0,11.5);
\draw[line width=1pt,dashed] (3.5,11.5) -- (5,11.5);
\draw[line width=1pt] (5,6.5) -- (5,11.5);
\draw[line width=1pt,dashed] (1.5,6.5) -- (5,6.5);
\draw[line width=1pt] (1,9) -- (1,10.5) -- (2.5,10.5) -- (2.5,9) -- (1,9);
\draw[line width=1pt] (4,7.5) -- (2.5,7.5) -- (2.5,9) -- (4,9);
\draw[line width=1pt,dashed] (4,7.5) -- (4,9);

\draw[fill=white,draw] (0,6.5) circle (0.75mm);
\draw[fill=white,draw] (0,11.5) circle (0.75mm);
\draw[fill=black,draw] (5,11.5) circle (0.75mm);
\draw[fill=white,draw] (5,6.5) circle (0.75mm);
\draw[fill=black,draw] (3.5,11.5) circle (0.75mm);
\draw[fill=black,draw] (1.5,6.5) circle (0.75mm);
\draw[fill=white,draw] (0,8) circle (0.75mm);

\draw[fill=black,draw] (1,9) circle (0.75mm);
\draw[fill=black,draw] (1,10.5) circle (0.75mm);
\draw[fill=black,draw] (2.5,10.5) circle (0.75mm);
\draw[fill=black,draw] (2.5,9) circle (0.75mm);
\draw[fill=black,draw] (2.5,7.5) circle (0.75mm);
\draw[fill=white,draw] (4,7.5) circle (0.75mm);
\draw[fill=white,draw] (4,9) circle (0.75mm);

\draw (3.75,10.75) node{\small$\Ss$};
\draw (3.75,10.25) node{\footnotesize checkerboard};
\draw (3.75,9.75) node{\footnotesize set};


\draw[fill=gray!100,opacity=0.75,draw=none] (7,6.5) -- (7,11.5) -- (10,11.5) arc (180:360:0.5cm) -- (11.5,11.5) arc (180:270:0.5cm) -- (12,6.5) -- (9,6.5) arc (0:180:0.5cm) -- (7,6.5);
\draw[fill=white,draw=none] (8.5,9) arc (0:-270:0.5cm) -- (8,10) arc (270:0:0.5cm) -- (9,10.5) arc (180:-90:0.5cm) -- (9.5,9.5) arc (90:0:0.5cm) -- (11,9) -- (11,7.5) -- (10,7.5) arc (0:-270:0.5cm) -- (9.5,8) -- (9.5,8.5) arc (270:180:0.5cm);

\draw[line width=1pt] (8.5,9) arc (0:-270:0.5cm) -- (8,10) arc (270:0:0.5cm) -- (9,10.5) arc (180:-90:0.5cm) -- (9.5,9.5) arc (90:0:0.5cm) -- (11,9);

\draw[line width=1pt,dashed] (11,9) -- (11,7.5);

\draw[line width=1pt] (11,7.5) -- (10,7.5) arc (0:-270:0.5cm) -- (9.5,8) -- (9.5,8.5) arc (270:180:0.5cm) -- (8.5,9);

\draw[line width=1pt] (7,6.5) -- (8,6.5);
\draw[line width=1pt,dashed] (7,6.5) -- (7,8);
\draw[line width=1pt] (7,8) -- (7,11.5);
\draw[line width=1pt] (10,11.5) -- (7,11.5);
\draw[line width=1pt] (10,11.5) arc (180:360:0.5cm);
\draw[line width=1pt,dashed] (11,11.5) -- (11.5,11.5);
\draw[line width=1pt] (11.5,11.5) arc (180:270:0.5cm);
\draw[line width=1pt] (12,6.5) -- (12,11);
\draw[line width=1pt,dashed] (9,6.5) -- (12,6.5);
\draw[line width=1pt] (9,6.5) arc (0:180:0.5cm);

\draw[fill=white,draw] (7,6.5) circle (0.75mm);
\draw[fill=white,draw] (8.5,7) circle (0.75mm);
\draw[fill=white,draw] (7,11.5) circle (0.75mm);
\draw[fill=white,draw] (11,11.5) circle (0.75mm);
\draw[fill=white,draw] (10.5,11) circle (0.75mm);
\draw[fill=white,draw] (11.5,11.5) circle (0.75mm);
\draw[fill=white,draw] (12,11) circle (0.75mm);
\draw[fill=white,draw] (12,6.5) circle (0.75mm);
\draw[fill=white,draw] (10,11.5) circle (0.75mm);
\draw[fill=white,draw] (8,6.5) circle (0.75mm);
\draw[fill=white,draw] (9,6.5) circle (0.75mm);
\draw[fill=white,draw] (7,8) circle (0.75mm);

\draw[fill=white,draw] (8.5,9) circle (0.75mm);
\draw[fill=white,draw] (8,8.5) circle (0.75mm);
\draw[fill=white,draw] (7.5,9) circle (0.75mm);
\draw[fill=white,draw] (7.5,10.5) circle (0.75mm);
\draw[fill=white,draw] (8,11) circle (0.75mm);
\draw[fill=white,draw] (9.5,11) circle (0.75mm);
\draw[fill=white,draw] (10,10.5) circle (0.75mm);
\draw[fill=white,draw] (9,7.5) circle (0.75mm);
\draw[fill=white,draw] (9.5,7) circle (0.75mm);
\draw[fill=white,draw] (9,9) circle (0.75mm);
\draw[fill=white,draw] (8,9.5) circle (0.75mm);
\draw[fill=white,draw] (8,10) circle (0.75mm);
\draw[fill=white,draw] (8.5,10.5) circle (0.75mm);
\draw[fill=white,draw] (9,10.5) circle (0.75mm);
\draw[fill=white,draw] (9.5,9.5) circle (0.75mm);
\draw[fill=white,draw] (9.5,10) circle (0.75mm);
\draw[fill=white,draw] (10,9) circle (0.75mm);
\draw[fill=white,draw] (9.5,8.5) circle (0.75mm);
\draw[fill=white,draw] (9.5,8) circle (0.75mm);
\draw[fill=white,draw] (10,7.5) circle (0.75mm);
\draw[fill=white,draw] (11,7.5) circle (0.75mm);
\draw[fill=white,draw] (11,9) circle (0.75mm);

\draw (10.75,10.5) node{\small$H$};
\draw (10.85,9.9) node{\tiny Nash manifold};
\draw (10.85,9.55) node{\tiny with boundary};


\draw[fill=gray!100,opacity=0.75,draw=none] (14,6.5) -- (14,11.5) -- (17,11.5) arc (180:360:0.5cm) -- (18.5,11.5) arc (180:270:0.5cm) -- (19,6.5) -- (16,6.5) arc (0:180:0.5cm) -- (14,6.5);
\draw[fill=white,draw=none] (15.5,9) arc (0:-270:0.5cm) -- (15,10) arc (270:0:0.5cm) -- (16,10.5) arc (180:-90:0.5cm) -- (16.5,9.5) arc (90:0:0.5cm) -- (18,9) -- (18,7.5) -- (17,7.5) arc (0:-270:0.5cm) -- (16.5,8) -- (16.5,8.5) arc (270:180:0.5cm);

\draw[line width=1pt,dashed] (14,6.5) -- (14,11.5) -- (17,11.5) arc (180:360:0.5cm) -- (18.5,11.5) arc (180:270:0.5cm) -- (19,6.5) -- (16,6.5) arc (0:180:0.5cm) -- (14,6.5);
\draw[line width=1pt,dashed] (15.5,9) arc (0:-270:0.5cm) -- (15,10) arc (270:0:0.5cm) -- (16,10.5) arc (180:-90:0.5cm) -- (16.5,9.5) arc (90:0:0.5cm) -- (18,9) -- (18,7.5) -- (17,7.5) arc (0:-270:0.5cm) -- (16.5,8) -- (16.5,8.5) arc (270:180:0.5cm);

\draw[fill=white,draw] (14,6.5) circle (0.75mm);
\draw[fill=white,draw] (14,11.5) circle (0.75mm);
\draw[fill=white,draw] (18,11.5) circle (0.75mm);
\draw[fill=white,draw] (18.5,11.5) circle (0.75mm);
\draw[fill=white,draw] (19,11) circle (0.75mm);
\draw[fill=white,draw] (19,6.5) circle (0.75mm);
\draw[fill=white,draw] (17,11.5) circle (0.75mm);
\draw[fill=white,draw] (15,6.5) circle (0.75mm);
\draw[fill=white,draw] (16,6.5) circle (0.75mm);
\draw[fill=white,draw] (15.5,9) circle (0.75mm);
\draw[fill=white,draw] (16,9) circle (0.75mm);
\draw[fill=white,draw] (15,9.5) circle (0.75mm);
\draw[fill=white,draw] (15,10) circle (0.75mm);
\draw[fill=white,draw] (15.5,10.5) circle (0.75mm);
\draw[fill=white,draw] (16,10.5) circle (0.75mm);
\draw[fill=white,draw] (16.5,9.5) circle (0.75mm);
\draw[fill=white,draw] (16.5,10) circle (0.75mm);
\draw[fill=white,draw] (17,9) circle (0.75mm);
\draw[fill=white,draw] (16.5,8.5) circle (0.75mm);
\draw[fill=white,draw] (16.5,8) circle (0.75mm);
\draw[fill=white,draw] (17,7.5) circle (0.75mm);
\draw[fill=white,draw] (18,7.5) circle (0.75mm);
\draw[fill=white,draw] (18,9) circle (0.75mm);

\draw (18,10.25) node{\small${\rm Int}(H)$};
\draw (17.8,9.75) node{\tiny Nash manifold};


\draw[fill=gray!100,opacity=0.75,draw=none] (10.5,0) -- (10.5,5) -- (15.5,5) -- (15.5,0) -- (10.5,0);
\draw[fill=white,draw=none] (11.5,2.5) -- (11.5,4) -- (13,4) -- (13,2.5) -- (11.5,2.5);
\draw[fill=white,draw=none] (13,1) -- (14.5,1) -- (14.5,2.5) -- (13,2.5) -- (13,1);

\draw[line width=1pt,dashed] (10.5,0) -- (10.5,5) -- (15.5,5) -- (15.5,0) -- (10.5,0);
\draw[line width=1pt,dashed] (11.5,2.5) -- (11.5,4) -- (13,4) -- (13,2.5) -- (11.5,2.5);
\draw[line width=1pt,dashed] (13,1) -- (14.5,1) -- (14.5,2.5) -- (13,2.5) -- (13,1);

\draw[fill=white,draw] (10.5,0) circle (0.75mm);
\draw[fill=white,draw] (10.5,5) circle (0.75mm);
\draw[fill=white,draw] (15.5,5) circle (0.75mm);
\draw[fill=white,draw] (15.5,0) circle (0.75mm);

\draw[fill=white,draw] (11.5,2.5) circle (0.75mm);
\draw[fill=white,draw] (11.5,4) circle (0.75mm);
\draw[fill=white,draw] (13,4) circle (0.75mm);
\draw[fill=white,draw] (13,2.5) circle (0.75mm);
\draw[fill=white,draw] (13,1) circle (0.75mm);
\draw[fill=white,draw] (14.5,1) circle (0.75mm);
\draw[fill=white,draw] (14.5,2.5) circle (0.75mm);

\draw[fill=gray!60,opacity=0.75,draw,dashed] (6,2.5) circle (2.5cm);

\draw (14.25,3.75) node{\small${\rm Int}(\Ss)$};
\draw (14.25,3.25) node{\tiny Nash manifold};
\draw (7.25,3.75) node{\small$\R^d$};

\draw[line width=1pt,<-] (5.5,9) -- (6.5,9);
\draw(6,9.3) node{\small$f_1$};
\draw(6,8.7) node{\scriptsize surjective};
\draw[line width=1pt,<-] (12.5,9) -- (13.5,9);
\draw(13,9.3) node{\small$f'$};
\draw(13,8.7) node{\scriptsize surjective};
\draw(13,8.2) node{\scriptsize Prop. \ref{doubleivc}};
\draw[line width=1pt,->] (15.75,2.5) -- (16.5,6.25);
\draw(15.875,4.5) node{\small$g$};
\draw(16.5,4.25) node{\small$\cong$};
\draw[line width=1pt,->] (9,2.5) -- (10,2.5);
\draw(9.5,2.8) node{\small$h$};
\draw(9.5,2.2) node{\scriptsize surjective};
\draw(9.5,1.7) node{\scriptsize Thm. \ref{mstone0}};
\draw[line width=1pt,->] (8,4.5) -- (9.5,6);
\draw(8.5,5.5) node{\small$h'$};
\draw(9.4,4.9) node{\scriptsize surjective};
\draw(9.4,4.4) node{\scriptsize Thm. \ref{mstone}};

\end{tikzpicture}
\end{center}
\caption{Sketch of proof of the implication (vii) $\Longrightarrow$ (i) of Theorem \ref{main}.\label{fig15}}
\end{figure}

\section{Nash path-connected components of a semialgebraic set}\label{s9}

To take advantage of the full strength of Theorem \ref{main} applied to an arbitrary semialgebraic set $\Ss$ we introduce the Nash path-components of a semialgebraic set. Recall that by Theorem \ref{main} Nash path-connected and well-welded semialgebraic sets coincide.

\begin{define}\label{compww}
A semialgebraic set $\Ss\subset\R^n$ admits a decomposition into \em Nash path-connected components \em if there exist semialgebraic sets $\Ss_1,\ldots,\Ss_r\subset\Ss$ such that:
\begin{itemize}
\item[(1)] Each $\Ss_i$ is Nash path-connected.
\item[(2)] If $\Tt\subset\Ss$ is a Nash path-connected semialgebraic set that contains $\Ss_i$, then $\Ss_i=\Tt$.
\item[(3)] $\Ss_i\not\subset\bigcup_{j\neq i}\Ss_j$.
\item[(4)] $\Ss=\bigcup_{i=1}^r\Ss_i$.
\end{itemize}
\end{define}

\begin{thm}\label{decompww}
Let $\Ss\subset\R^n$ be a semialgebraic set. Then $\Ss$ admits a decomposition into Nash path-connected components and this decomposition is unique. In addition, the Nash path-connected components of a semialgebraic set are closed in $\Ss$.
\end{thm}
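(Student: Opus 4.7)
Plan: The statement packages existence of the decomposition, its uniqueness, and closedness of each component in $\Ss$. Closedness is an immediate corollary of existence: if $\Ss_i$ is a maximal Nash path-connected semialgebraic subset, then Lemma \ref{chain} applied to $\Ss_i\subseteq\cl(\Ss_i)\cap\Ss\subseteq\cl(\Ss_i)$ shows that $\cl(\Ss_i)\cap\Ss$ is well-welded, and maximality (via Theorem \ref{main}) forces $\Ss_i=\cl(\Ss_i)\cap\Ss$, so $\Ss_i$ is closed in $\Ss$. Uniqueness follows formally from the maximality clause (2) and the irredundancy clause (3) of Definition \ref{compww}: any other decomposition consists of maximal Nash path-connected subsets, so its members coincide with some of the $\Ss_i$, and (3) pins down the indexed collection.

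The real work is producing a \emph{finite} collection of maximal Nash path-connected subsets whose union is $\Ss$. My approach is to discretize the problem via a semialgebraic triangulation. By \cite[9.2.1--3]{bcr}, fix a finite simplicial complex $K$, a semialgebraic homeomorphism $\Phi:|K|\to\cl(\Ss)$ with $\Phi|_{\sigma^0}$ a Nash embedding for every $\sigma\in K$, and a subset $\Sigma\subseteq K$ such that $\Ss=\bigsqcup_{\sigma\in\Sigma}\Phi(\sigma^0)$. Each cell $\Phi(\sigma^0)$ is Nash diffeomorphic to $\R^{\dim\sigma}$ by Lemma \ref{simnashim}, hence Nash path-connected by Theorem \ref{main}. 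To each Nash path-connected semialgebraic subset $\Tt\subseteq\Ss$ I associate the finite invariant
\[
\Sigma(\Tt):=\{\sigma\in\Sigma:\Phi(\sigma^0)\subseteq\cl(\Tt)\cap\Ss\}\in 2^{\Sigma}.
\]
Because $2^\Sigma$ is finite, for every $x\in\Ss$ the family $\{\Sigma(\Tt):x\in\Tt,\,\Tt\text{ Nash path-connected, semialgebraic}\}$ admits maximal elements with respect to inclusion. Picking such a maximal $\Lambda_x$ and a corresponding realizer $\Tt$, I define $\Ss_x:=\cl(\Tt)\cap\Ss$; by Lemma \ref{chain}, $\Ss_x$ is well-welded (hence Nash path-connected by Theorem \ref{main}), and its maximality among Nash path-connected semialgebraic subsets of $\Ss$ containing $x$ follows from maximality of $\Lambda_x$. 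Only finitely many distinct $\Ss_x$ arise as $x$ ranges over $\Ss$; after discarding any that are contained in unions of the others, the surviving list is the sought decomposition, satisfying (1)--(4).

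The main obstacle is verifying that the invariant $\Sigma(\Tt)$ genuinely determines $\cl(\Tt)\cap\Ss$ (up to Nash path-equivalence), and that two different realizers $\Tt_1,\Tt_2\ni x$ with $\Sigma(\Tt_1)=\Sigma(\Tt_2)=\Lambda_x$ give rise to the same maximal component $\Ss_x$. The example following Corollary \ref{imageww} warns that the union of two well-welded semialgebraic sets sharing a point can fail to be well-welded in general, so naive union-taking does not suffice. The rigidity I need comes from the cell structure, together with Lemma \ref{zarirred} on irreducibility of the Zariski closure of a well-welded set (which severely constrains how a Nash path-connected $\Tt$ can meet an individual cell $\Phi(\sigma^0)$ with $\dim\sigma=\dim\Tt$) and Corollary \ref{clue} (which lets one reroute analytic arcs around prescribed algebraic obstructions when patching paths through common cells on the boundary of $\Ss_x$). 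Making this cell-by-cell bookkeeping precise is the technical heart of the proof; the finite combinatorics on $2^\Sigma$ then take over and deliver finiteness, existence, and uniqueness simultaneously.
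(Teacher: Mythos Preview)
Your closedness argument via Lemma~\ref{chain} and maximality is correct, and the paper argues the same way. But both the existence and uniqueness parts of your proposal have a genuine gap, and it is the same one: you never prove (or invoke) the merging statement that the paper isolates as Lemma~\ref{union}, namely that if two well-welded semialgebraic sets of the same dimension~$d$ intersect in a set of dimension~$d$, then their union is well-welded. Without this you cannot complete either half.

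For uniqueness: your claim that it ``follows formally'' from (2) and (3) is not right. Condition~(2) says each $\Ss_i$ is maximal Nash path-connected, but a point of $\Ss$ can lie in several distinct maximal Nash path-connected subsets (think of the corner point in the paper's example $\{(4x^2-y^2)(4y^2-x^2)>0,\,y>0\}\cup\{(0,0)\}$), so two families $\{\Ss_i\}$ and $\{\Rr_j\}$ satisfying (1)--(4) need not coincide for purely formal reasons. The paper's uniqueness argument picks $x\in\Rr_1\setminus\bigcup_{j\ge2}\Rr_j$, finds some $\Ss_i$ with $\dim(\Ss_i\cap\Rr_1)=\dim(\Ss_i)=\dim(\Rr_1)$ at $x$, and then applies Lemma~\ref{union} to conclude $\Ss_i\cup\Rr_1$ is well-welded, whence $\Ss_i=\Rr_1$ by maximality. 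That step is not formal.

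For existence: the triangulation idea is appealing, but the invariant $\Sigma(\Tt)$ only records which \emph{entire} cells lie in $\cl(\Tt)\cap\Ss$, and there is no reason a maximal Nash path-connected subset is a union of cells of a triangulation fixed in advance (the triangulation was chosen compatible with $\Ss$, not with the unknown components). So ``finitely many values in $2^\Sigma$'' does not by itself bound the number of distinct $\Ss_x$'s, and you acknowledge that showing $\Sigma$ determines the component is ``the technical heart'' without carrying it out. To make this work you would need to show that if a maximal $\Ss_x$ meets a cell $\Phi(\sigma^0)$ with $\dim\sigma=\dim\Ss_x$ in a set of top dimension, then $\Phi(\sigma^0)\subset\Ss_x$; and the natural tool for that is precisely Lemma~\ref{union} applied to $\Ss_x$ and $\cl(\Phi(\sigma^0))\cap\Ss$. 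The paper sidesteps the triangulation altogether and proceeds by induction on $\dim\Ss$: it groups the connected components $M_1,\dots,M_r$ of $\Reg(\Ss_0)$ (where $\Ss_0$ is the top-dimensional locus) into maximal clusters $I_k$ with $\bigcup_{i\in I_k}\cl(M_i)\cap\Ss$ well-welded, using Lemma~\ref{union} to merge, then handles the lower-dimensional part $\Tt=\Ss\cap\overline{\Ss\setminus\Ss_0}^{\zar}$ by induction.
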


Before proving Theorem \ref{decompww} we need a preliminary result.

\begin{lem}\label{union}
Let $\Ss_1,\Ss_2\subset\R^n$ be two well-welded semialgebraic sets of dimension $d$ such that $\dim (\Ss_1\cap\Ss_2)=d$. Then $\Ss:=\Ss_1\cup\Ss_2$ is well-welded.
\end{lem}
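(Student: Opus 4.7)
The plan is to reduce the problem to exhibiting, for any pair $x,y\in\Ss$, a single ``bridge'' point $p\in\Reg(\Ss_1)\cap\Reg(\Ss_2)$ through which we can concatenate the two paths produced by well-weldedness of $\Ss_1$ and $\Ss_2$. Pure dimensionality of $\Ss=\Ss_1\cup\Ss_2$ is immediate, so the whole issue is the path condition with $\eta(\alpha)\subset\Reg(\Ss)$.

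First I would identify the Zariski closures. Set $X_i:=\ol{\Ss_i}^{\zar}$; by Lemma \ref{zarirred} each $X_i$ is irreducible of dimension $d$. Since $\Ss_1\cap\Ss_2\subset X_1\cap X_2$ has dimension $d$ and $X_1$ is irreducible, $\ol{\Ss_1\cap\Ss_2}^{\zar}=X_1$, and symmetrically $=X_2$; hence $X_1=X_2=:X=\ol{\Ss}^{\zar}$. Consequently
\[
\Reg(\Ss_i)=\Int_{\Reg(X)}(\Ss_i\setminus\Sing(X))\subset\Int_{\Reg(X)}(\Ss\setminus\Sing(X))=\Reg(\Ss),\qquad i=1,2,
\]
which is the key inclusion that lets anything lying in $\Reg(\Ss_1)\cup\Reg(\Ss_2)$ be used as an ``$\eta$-admissible'' point for $\Ss$.

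Next I would produce the bridge point. Because each $\Ss_i$ is pure dimensional, $\Sing(\Ss_i)$ has dimension $<d$, while $\dim(\Ss_1\cap\Ss_2)=d$; therefore
\[
(\Ss_1\cap\Ss_2)\setminus(\Sing(\Ss_1)\cup\Sing(\Ss_2))=\Reg(\Ss_1)\cap\Reg(\Ss_2)
\]
is nonempty (in fact of dimension $d$). Pick any $p$ in this set.

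Finally, to connect arbitrary $x,y\in\Ss$, assume without loss of generality $x\in\Ss_1$ and $y\in\Ss_2$ (the other cases are only easier). Apply well-weldedness of $\Ss_1$ to obtain a semialgebraic path $\alpha_1\colon[0,1]\to\Ss_1$ from $x$ to $p$ with $\eta(\alpha_1)\subset\Reg(\Ss_1)$, and likewise $\alpha_2\colon[0,1]\to\Ss_2$ from $p$ to $y$ with $\eta(\alpha_2)\subset\Reg(\Ss_2)$. Their concatenation $\alpha:=\alpha_1*\alpha_2\colon[0,1]\to\Ss$ connects $x$ to $y$, and
\[
\eta(\alpha)\subset\eta(\alpha_1)\cup\eta(\alpha_2)\cup\{p\}\subset\Reg(\Ss_1)\cup\Reg(\Ss_2)\subset\Reg(\Ss),
\]
the join point being harmless precisely because $p\in\Reg(\Ss_1)\cap\Reg(\Ss_2)$. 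The only delicate point in the whole argument — and the one I would double-check carefully — is the dimension/irreducibility step forcing $X_1=X_2$, since without it the regular set of $\Ss$ could be strictly smaller than the union of the regular sets of the pieces and the $\eta$-condition would fail at the join.
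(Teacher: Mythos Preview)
Your proof is correct and, in fact, slightly more elementary than the paper's. Both arguments pick a bridge point in $\Ss_1\cap\Ss_2$ lying in $\Reg(\Ss)$ and concatenate two paths, but they justify the inclusion $\eta(\alpha)\subset\Reg(\Ss)$ differently. The paper does not first establish $X_1=X_2$; instead it invokes Corollary~\ref{clue} (which ultimately rests on Hironaka desingularization) to force $\eta(\alpha_k)\subset\Reg(\Ss_k)\setminus\ol{\Sing(\Ss)}^{\zar}$, and then observes that anything in $\Ss\setminus\ol{\Sing(\Ss)}^{\zar}$ automatically lies in $\Reg(\Ss)$. Your route---proving $\ol{\Ss_1}^{\zar}=\ol{\Ss_2}^{\zar}$ via irreducibility and the dimension hypothesis, and deducing $\Reg(\Ss_i)\subset\Reg(\Ss)$ directly---bypasses Corollary~\ref{clue} entirely and needs only the basic definition of well-weldedness. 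The paper's version is terser but uses a heavier tool; yours isolates the structural reason (equal Zariski closures) that makes the join point harmless.
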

\begin{proof}
Let $x_k\in\Ss_k$ for $k=1,2$ and let $y\in(\Ss_1\cap\Ss_2)\setminus\Sing(\Ss)$. As each $\Ss_k$ is well-welded, there exist by Corollary \ref{clue} continuous semialgebraic paths $\alpha_k:[0,1]\to\Ss_k$ such that $\alpha_k(0)=x_k,\alpha_k(1)=y$ and $\eta(\alpha_k)\subset\Reg(\Ss_k)\setminus\ol{\Sing(\Ss)}^{\zar}$. The continuous semialgebraic path $\alpha:=\alpha_1*\alpha_2^{-1}$ connects the points $x_1$ and $x_2$ and satisfies 
$$
\eta(\alpha)\subset\eta(\alpha_1)\cup\eta(\alpha_2)\cup\{y\}\subset\Reg(\Ss).
$$
Consequently, $\Ss$ is well-welded, as required.
\end{proof}

\begin{proof}[Proof of Theorem \em \ref{decompww}]
We divide the proof in two parts:

\noindent{\em Existence.} We proceed by induction on the dimension $d$ of $\Ss$. If $d=0$, the Nash path-connected components of $\Ss$ coincide with its connected components. Suppose the result true for dimension $\leq d-1$ and let us see that it is also true for dimension $d$.

Let $\Ss_0$ be the (semialgebraic) set of points of dimension $d$ of $\Ss$. Write $\Ss=\Ss_0\cup\Tt$ where $\Tt:=\Ss\cap\ol{\Ss\setminus\Ss_0}^{\zar}$. Observe that $\Ss_0$ and $\Tt$ are closed subsets of $\Ss$ and $\dim(\Tt)\leq d-1$. In addition, $\ol{\Tt}^{\zar}=\ol{\Ss\setminus\Ss_0}^{\zar}$. Let $M_1,\ldots,M_r$ be the connected components of $\Reg(\Ss_0)$. Observe that $\cl(M_i)\cap\Ss$ is Nash path-connected for $i=1,\ldots,r$. \renewcommand{\theparagraph}{\thesection.2.\arabic{paragraph}}\setcounter{paragraph}{0}

\paragraph{}\label{dimd} We claim: \em There exists a partition $I_1,\ldots,I_\ell$ of the set $\{1,\ldots,r\}$ such that 
$$
\Ss_k:=\bigcup_{i\in I_k}\cl(M_i)\cap\Ss
$$ 
is Nash path-connected for $k=1,\ldots,\ell$ and for each $J\subset\{1,\ldots,\ell\}$ of cardinal $\geq2$ the semialgebraic set $\bigcup_{j\in J}\Ss_j$ is not Nash path-connected\em.

Define 
$$
{\mathfrak F}_1:=\Big\{I\subset\{1,\ldots,r\}:\ 1\in I\quad\text{and}\quad \bigcup_{i\in I}\cl(M_i)\cap\Ss\quad\text{is Nash path-connected}\Big\}.
$$
Observe that ${\mathfrak F}_1\neq\varnothing$. If $J_1,J_2\subset{\mathfrak F}_1$, then by Lemma \ref{union} $J_1\cup J_2\in{\mathfrak F}_1$. Let $I_1$ be the maximum of ${\mathfrak F}_1$ ordered with respect to the inclusion. Denote $K:=\{1,\ldots,r\}\setminus I_1$. By induction there exists a partition $I_2,\ldots,I_\ell$ of $K$ such that $\Ss_k:=\bigcup_{i\in I_k}\cl(M_i)\cap\Ss$ is Nash path-connected for $k=2,\ldots,\ell$ and for each $J\subset\{2,\ldots,\ell\}$ of cardinal $\geq2$ the semialgebraic set $\bigcup_{j\in J}\Ss_j$ is not Nash path-connected. It can be checked that the sets $I_1,\ldots,I_\ell$ satisfy the required properties.

\paragraph{}Observe that $\Ss_0=\bigcup_{i=1}^\ell\Ss_i$ and by Lemma \ref{union} each intersection $\Ss_i\cap\Ss_j$ has dimension $\leq d-1$ if $i\neq j$. 

\paragraph{}By induction hypothesis there exist a family $\Tt_1,\ldots,\Tt_p$ of Nash path-connected components of $\Tt$. We may assume, after eliminating redundant $\Tt_j$, that $\Tt_1,\ldots,\Tt_m$ satisfy the following: $\Ss=\Ss_0\cup\bigcup_{j=1}^m\Tt_j$ and $\Tt_j\not\subset\Ss_0\cup\bigcup_{k\neq j}\Tt_k$ for $j=1,\ldots,m$. 

\paragraph{}Let us check that $\Ss_1,\ldots,\Ss_\ell,\Tt_1,\ldots,\Tt_m$ are a family of Nash path-connected components of $\Ss$. By construction they satisfy conditions (1), (3) and (4) of Definition \ref{compww}. Let us check that they also satisfy condition (2). In particular, by Lemma \ref{chain} a Nash path-connected component of $\Ss$ is a closed subset of $\Ss$. We distinguish two possibilities accordingly to the different dimensions of the semialgebraic sets $\Ss_i$ and $\Tt_j$.

\paragraph{}Let $\Rr\subset\Ss$ be a Nash path-connected semialgebraic set that contains $\Ss_1$. As $\Ss_1$ has dimension $d$, also $\Rr$ has dimension $d$. As $\Rr$ is pure dimensional, it is contained in $\Ss_0=\bigcup_{i=1}^\ell\Ss_i$. We may assume that $\dim(\Rr\cap\Ss_i)=d$ exactly for $1\leq i\leq s\leq\ell$. We claim: $s=1$.

Otherwise, each union $\Ss_j\cup\Rr$ is Nash path-connected by Lemma \ref{union} for $j=1,\ldots,s$. Consequently, by Lemma \ref{union} $\Rr':=\Rr\cup\bigcup_{j=1}^s\Ss_j$ is Nash path-connected. On the other hand, $\dim(\Rr\cap\Ss_j)<d$ for $j=s+1,\ldots,\ell$, so the semialgebraic set $\Cc':=\bigcup_{j=s+1}^\ell\Rr\cap\Ss_j$ has dimension $<d-1$ and satisfies 
$$
\Rr'=\Rr\cup\bigcup_{j=1}^s\Ss_j=\bigcup_{j=1}^s\Ss_j\cup\Cc'
$$
because $\Rr\subset\Ss_0=\bigcup_{i=1}^\ell\Ss_i$. As $\Rr'$ is pure dimensional,
$$
\cl(\Rr')\cap\Ss=\cl(\Rr'\setminus\Cc')\cap\Ss=\cl\Big(\bigcup_{j=1}^s\Ss_j\Big)\cap\Ss=\bigcup_{j=1}^s\Ss_j.
$$
Consequently, $\bigcup_{j=1}^s\Ss_j$ is Nash path-connected, which contradicts \ref{dimd}.

As $s=1$, it holds $\dim(\Rr\cap\Ss_j)<d$ for $2\leq j\leq \ell$. Thus, the semialgebraic set $\Cc:=\bigcup_{j=2}^\ell\Rr\cap\Ss_j$ has dimension $<d$ and satisfies $\Rr=\Ss_1\cup\Cc$. As $\Rr$ is pure dimensional, 
$$
\Ss_1\subset\Rr\subset\cl(\Rr)\cap\Ss=\cl(\Rr\setminus\Cc)\cap\Ss=\cl(\Ss_1\setminus\Cc)\cap\Ss=\Ss_1. 
$$
Consequently, $\Ss_1=\Rr$.

\paragraph{}Let $\Rr\subset\Ss$ be a Nash path-connected semialgebraic set that contains $\Tt_j$. We claim: $\Rr\subset\Tt$. Assume this proved for a while. As $\Tt_j$ is a Nash path-connected component of $\Tt$, we have $\Rr=\Tt_j$. Thus, the semialgebraic sets $\Ss_1,\ldots,\Ss_\ell$, $\Tt_1,\ldots,\Tt_m$ satisfy condition (2) of Definition \ref{compww}. Consequently, it is enough to prove: $\Rr\subset\Tt$.

As $\Rr$ is Nash path-connected, it is pure dimensional. As $\Rr\not\subset\Ss_0$ (because $\Tt_j\not\subset\Ss_0$) and $\Ss_0$ is a closed subset of $\Ss$,
$$
\dim(\Rr)=\dim(\Rr\setminus\Ss_0)=\dim(\Rr\cap(\Ss\setminus\Ss_0))\leq\dim(\Rr\cap\Tt)\leq\dim(\Rr).
$$
As $\Rr$ is Nash path-connected, it is irreducible. As $\dim(\Rr\cap\Tt)=\dim(\Rr)$, we have $\ol{\Rr}^{\zar}=\ol{\Rr\cap\Tt}^{\zar}\subset\ol{\Tt}^{\zar}=\ol{\Ss\setminus\Ss_0}^{\zar}$, so
$$
\Rr\subset\Ss\cap\ol{\Rr}^{\zar}\subset\Ss\cap\ol{\Tt}^{\zar}=\Ss\cap\ol{\Ss\setminus\Ss_0}^{\zar}=:\Tt. 
$$

\noindent{\em Uniqueness.} Let $\{\Ss_i\}_{i=1}^r$ and $\{\Rr_j\}_{j=1}^s$ be two families of semialgebraic sets satisfying the conditions of Definition \ref{compww}. Assume $s\leq r$.

Let $x\in\Rr_1\not\subset\bigcup_{j=2}^s\Rr_j$. As each $\Rr_j$ is a closed subset of $\Ss$, the difference $\Ss\setminus\bigcup_{j\neq 1}\Rr_j=\Rr_1\setminus \bigcup_{j\neq 1}\Rr_j$ is an open neighborhood of $x$ in $\Ss$, so $\Ss_x=\Rr_{1,x}$. In particular, $\Ss$ and $\Rr_1$ have the same dimension at $x$. As $\Ss_x=\Rr_{1,x}$ and $\dim(\Ss_x)=\max\{\dim(\Ss_{i,x}):\ i=1,\ldots,r\}$, we assume $\dim(\Ss_{1,x})=\dim(\Ss_x)=\dim(\Rr_{1,x})$. As $\Ss_{1,x}=\Ss_{1,x}\cap\Ss_x=\Ss_{1,x}\cap\Rr_{1,x}$, we have $\dim(\Ss_{1,x})=\dim(\Rr_{1,x})=\dim(\Ss_{1,x}\cap\Rr_{1,x})$. As $\Ss_1,\Rr_1$ are pure dimensional, $\dim(\Ss_1)=\dim(\Rr_1)=\dim(\Rr_1\cap\Ss_1)$. By Lemma \ref{union} $\Ss_1\cup\Rr_1$ is Nash path-connected, so by condition (2) $\Ss_1=\Ss_1\cup\Rr_1=\Rr_1$. Proceeding similarly with $\Rr_2,\ldots,\Rr_s$, we assume $\Rr_i=\Ss_i$ for $i=1,\ldots,s$. As $\Ss=\bigcup_{i=1}^s\Rr_i=\bigcup_{i=1}^s\Ss_i$, we deduce by condition (3) that $r=s$. Consequently, the families $\{\Ss_i\}_{i=1}^r$ and $\{\Rr_j\}_{j=1}^s$ coincide, as required.
\end{proof}

\begin{examples}
(i) Let $\Ss:=\{x^2z-y^2=0\}\subset\R^3$. The Nash path-connected components of $\Ss$ are $\Ss_1:=\Ss\cap\{z\geq 0\}$ and $\Ss_2:=\{x=0,y=0\}$.

\vspace{2mm}
(ii) Let $\Ss:=\Ss_1\cup\Ss_2\cup\Ss_3\subset\R^3$, where
\begin{multline*}
\Ss_1:=[-1,1]\times[-2,2]\times\{0\},\quad\Ss_2:=[-2,-1]\times\{-1,1\}\times[-1,1],\\
\&\quad\Ss_3:=[1,2]\times\{-1,1\}\times[-1,1].
\end{multline*}
The Nash path-connected components of $\Ss$ are $\Ss_1$, $\Ss_2\cap\{y=1\}$, $\Ss_2\cap\{y=-1\}$, $\Ss_3\cap\{y=1\}$ and $\Ss_3\cap\{y=-1\}$. In contrast, $X$ has three irreducible components \cite[Rmk.4.4(iii)]{fg3}, which are $\Ss\cap\{z=0\}$, $\Ss\cap\{y=1\}$ and $\Ss\cap\{y=-1\}$.
\end{examples}

\section{Two relevant consequences of Main Theorem \ref{main}}\label{s10}

In this section we prove Corollaries \ref{consq1} and \ref{consq2}.

\subsection{Proof of Corollary \ref{consq1}}\setcounter{paragraph}{0}
By Theorem \ref{main} it is enough to prove that $\Ss$ is well-welded. The proof is conducted in several steps.

\paragraph{}As $\Ss$ is an irreducible semialgebraic set, $\cl(\Ss)$ is an irreducible arc-symmetric semialgebraic set. Let $X$ be the Zariski closure of $\Ss$ and let $\pi:\widetilde{X}\to X$ be a resolution of the singularities of $X$ (Theorem \ref{hi1}). By \cite[Thm.2.6]{k} applied to the irreducible arc-symmetric set $\cl(\Ss)$ there exists a connected component $E$ of $\widetilde{X}$ such that $\pi(E)=\cl(\Reg(\cl(\Ss)))=\cl(\Ss)$ (recall that $\cl(\Ss)$ is pure dimensional). As $\pi|_{\widetilde{X}\setminus\pi^{-1}(\Sing(X))}:\widetilde{X}\setminus\pi^{-1}(\Sing(X))\to X\setminus\Sing(X)$ is a regular diffeomorphism and $\Ss\setminus\Sing(X)$ is dense in $\cl(\Ss)$, we have $\cl(\pi^{-1}(\Ss\setminus\Sing(X))=E$. Let $\widetilde{\Ss}:=\pi^{-1}(\Ss)\cap\cl(\pi^{-1}(\Ss\setminus\Sing(X)))=\pi^{-1}(\Ss)\cap E$ be the strict transform of $\Ss$ under $\pi$. We claim: \em $\widetilde{\Ss}$ is connected\em.

Otherwise, there exists a closed semialgebraic subset $\Cc\subset E$ such that $\widetilde{\Ss}\subset E\setminus\Cc$ and $E\setminus\Cc$ is not connected. As $\pi$ is proper, $\Cc':=\pi(\Cc)$ is a closed subset of $X$. Observe that $\Ss\cap\Cc'=\varnothing$. Let $Y$ be the smallest Nash subset of $\R^n\setminus\Cc'$ that contains $\Ss$. As $\Ss$ is irreducible, also $Y$ is irreducible. Observe that $Y$ is an irreducible component of the Nash subset $X\setminus\Cc'$ of $\R^n\setminus\Cc'$ and $(\widetilde{Y}:=\pi^{-1}(Y)\cap E,\pi|_{\widetilde{Y}})$ is a resolution of the singularities of $Y$. As $Y$ is irreducible, also $\widetilde{Y}$ is irreducible, so $\widetilde{Y}\subset E\setminus\Cc$ is connected. As $\widetilde{\Ss}$ is dense in $E$, we conclude that $\widetilde{Y}$ meets all the connected components of $E\setminus\Cc$, which is a contradiction because $\widetilde{Y}$ is connected but $E\setminus\Cc$ is not. Thus, $\widetilde{\Ss}$ is connected.

\paragraph{}We prove next: \em $\widetilde{\Ss}$ is well-welded\em. Once this is done, $\Ss=\pi(\widetilde{\Ss})$ is by Lemma \ref{imageww} well-welded. As ${\widetilde\Ss}$ is a connected dense semialgebraic subset of the Nash manifold $E$, it is enough to check: \em If $\Tt$ is a connected dense semialgebraic subset of a Nash manifold $M$, then $\Tt$ is well-welded\em.

Let $p\in\Tt\setminus\Int_M(\Tt)$, where $\Int_M(\Tt)$ denotes the interior of $\Tt$ in $M$. Let $\Cc_1,\ldots,\Cc_r$ be the connected components of $\Int_M(\Tt)$ whose closures contain $p$. Let $L\subset M$ be a compact semialgebraic neighborhood of $p$. By \cite[Thm.9.2.1 \& Rmk.9.2.3]{bcr} there exists a finite simplicial complex $K$ and a semialgebraic homeomorphism $\Phi:|K|\to L$ such that $\{p\}$ and each semialgebraic set $\Cc_k\cap L$ is a finite union of images $\Phi(\sigma^0)$ where $\sigma\in K$ and the restriction $\Phi|_{\sigma^0}:\sigma^0\to L\subset M$ is a Nash embedding for each $\sigma\in K$.

It holds that given two connected components $\Cc_i$ and $\Cc_j$ there exists finitely many connected components $\Cc_{i_1},\ldots,\Cc_{i_s}$ such that $\Cc_i=\Cc_{i_1}$, $\Cc_j=\Cc_{i_s}$ and $\cl(\Cc_{i_k})\cap\cl(\Cc_{i_{k+1}})\cap L$ contains the image under $\Phi$ of a simplex of dimension $d-1$ that contains $\{p\}$ as a vertex. 

\paragraph{}To prove that $\Tt$ is well-welded, it is enough by Corollary \ref{charww} to show the following: \em Let $\Cc_1$ and $\Cc_2$ be two connected components of $\Int_M(\Tt)$ that contain $p$ in their closure and $\cl(\Cc_1)\cap\cl(\Cc_2)\cap L$ contains the image under $\Phi$ of a simplex $\sigma$ of dimension $\dim(\Ss)-1$ that contains $\{p\}$ as a vertex. Then there exists an analytic path $\alpha:[-1,1]\to\Cc_1\cup\Cc_2\cup\{p\}$ such that $\alpha((-1,0))\subset\Cc_1$, $\alpha((0,1))\subset\Cc_2$ and $\alpha(0)=\{p\}$\em.

Denote $\Aa_0:=\Phi(\sigma^0)$. By the Nash curve selection lemma there exists a Nash path $\beta:(-1,1)\to M$ such that $\beta(0)=p$ and $\beta((0,1))\subset\Aa_0$. After a change of coordinates, we may assume by \cite[Cor.9.3.10]{bcr} that the projection $\rho:\R^d:=\R^{d-1}\times\R\to\R^{d-1}\times\{0\}$ induces a Nash diffeomorphism $\rho|_{\Aa}:\Aa\to\rho(\Aa)$ where $\Aa$ is an open semialgebraic subset of $\Aa_0$ that contains $p$ in its closure. Let $\phi:\rho(\Aa)\to\R$ be a Nash function such that $\Aa={\rm graph}(\phi)$. The Nash map 
$$
\varphi:\Omega:=\rho(\Aa)\times\R\to\rho(\Aa)\times\R,\ (x',x_d)\mapsto(x',x_d-\phi(x'))
$$ 
induces a Nash diffeomorphism that maps $\Aa$ onto $\rho(\Aa)\times\{0\}$. In addition, we may assume that $\varphi(\Cc_1\cap\Int(L)\cap\Omega)$ is an open semialgebraic subset of $\rho(\Aa)\times(-\infty,0)$ whose boundary contains $\rho(\Aa)\times\{0\}$ and $\varphi(\Cc_2\cap\Int(L)\cap\Omega)$ is an open semialgebraic subset of $\rho(\Aa)\times(0,+\infty)$ whose boundary contains $\rho(\Aa)\times\{0\}$. For each $\ell\geq1$ consider the Nash path 
$$
\gamma_\ell:(-1,1)\to\R^d,\ t\mapsto((\rho\circ\beta)(t^2),t^{2\ell+1}). 
$$
Observe that for $\ell\geq1$ large enough, we may assume $\gamma_\ell((-1,0))\subset\varphi(\Cc_1\cap\Int(L)\cap\Omega)$, $\gamma_\ell(0)=(\rho(p),0)$ and $\gamma_\ell((0,1))\subset\varphi(\Cc_2\cap\Int(L)\cap\Omega)$. Thus, for $\ell\geq1$ large enough the Nash path 
$$
\alpha_\ell:(-1,1)\to\R^d,\ t\mapsto\beta(t^2)+(0,\ldots,0,t^{2\ell+1})
$$
satisfies $\alpha_\ell((-1,0))\subset\Cc_1$, $\alpha_\ell(0)=p$ and $\alpha_\ell((0,1))\subset\Cc_2$ because $\alpha_\ell|_{(-1,1)\setminus\{0\}}=\varphi^{-1}\circ\gamma_\ell|_{(-1,1)\setminus\{0\}}$, as required.
\qed

\subsection{Proof of Corollary \ref{consq2}}
(i) By Theorem \ref{main} there exists a Nash map $f:\R^d\to\R^n$ whose image is $\Ss$. By Artin-Mazur's description \cite[Thm.8.4.4]{bcr} of Nash maps there exist $s\geq1$ and a non-singular irreducible algebraic set $Z\subset\R^{d+n+s}$ of dimension $d$, a connected component $M$ of $Z$ and a Nash diffeomorphism $g:\R^d\to M$ such that the following diagram is commutative.
$$
\xymatrix{
Z\ar@{^{(}->}[rr]&&\R^d\times\R^n\times\R^s\equiv\R^m\ar[dd]_{\pi_2}\ar[ddll]_{\pi_1}\\
M\ar@{^{(}->}[u]&&\\
\R^d\ar@{<->}[u]_{\cong}^g\ar[rr]^{f}\ar[rd]^{f}&&\R^n\\
&\Ss\ar@{^{(}->}[ru]&
}
$$
We denote the projection of $\R^d\times\R^n\times\R^s$ onto the first space $\R^d$ with $\pi_1$ and the projection of $\R^d\times\R^n\times\R^s$ onto the second space $\R^n$ with $\pi_2$. Write $m:=d+n+s$. By \cite{m} applied to $M$ and the union of the remaining connected components of $Z$ there exist finitely many polynomials $P_1,\ldots,P_\ell,Q_1,\ldots,Q_\ell\in\R[\x]:=\R[\x_1,\ldots,\x_m]$ such that each $Q_j$ is strictly positive on $\R^m$ and 
$$
M=Z\cap\Big\{\sum_{j=1}^\ell P_j\sqrt{Q_j}>0\Big\}.
$$
Observe that $M$ is the projection of the algebraic set
$$
Y:=\Big\{(x,y,t)\in Z\times\R^\ell\times\R:\ \Big(\sum_{j=1}^\ell P_jy_j^2\Big)t^2-1=0,\ y_j^4-Q_j=0\ \text{for $j=1,\ldots,\ell$}\Big\}
$$
under $\pi:\R^m\times\R^\ell\times\R\to\R^m,\ (x,y,t)\mapsto x$. 

Fix $\epsilon:=(\epsilon_1,\ldots,\epsilon_\ell,\epsilon_{\ell+1})\in\{-1,1\}^{\ell+1}$ and let $M_\epsilon:=Y\cap\{\epsilon_1y_1>0,\ldots,\epsilon_\ell y_\ell>0,\epsilon_{\ell+1}t>0\}$. Consider the Nash diffeomorphism 
$$
\varphi_\epsilon:M\to M_\epsilon,\ x\mapsto\Big(x,\epsilon_1\sqrt[4]{Q_1(x)},\ldots,\epsilon_\ell\sqrt[4]{Q_\ell(x)},\epsilon_{\ell+1}\frac{1}{\sqrt{\sum_{j=1}^\ell P_j(x)\sqrt{Q_j(x)}}}\Big)
$$
whose inverse map is the restriction to $M_\epsilon$ of the projection $\pi$.

Observe that $\{M_\epsilon\}_{\epsilon\in\{-1,1\}^{\ell+1}}$ is the collection of the connected components of $Y$. As $\pi(M_\epsilon)=M$ and using the diagram above, we deduce
$$
(\pi_2\circ\pi)(M_{\epsilon})=\pi_2(M)=(f\circ\pi_1)(M)=f(\R^d)=\Ss.
$$
In addition, each $M_\epsilon$ is Nash diffeomorphic to $\R^d$ and for $\veps\neq\veps'$ the polynomial map 
$$
\phi_{\epsilon,\epsilon'}:\R^m\times\R^\ell\times\R\to\R^m\times\R^\ell\times\R,\ (x,y,t)\mapsto(x,(\epsilon\cdot\epsilon')\cdot(y,t))
$$
induces an involution of $Y$ such that $\phi_{\epsilon,\epsilon'}(M_\epsilon)=M_{\epsilon'}$. We have denoted
$$
(\epsilon\cdot\epsilon')\cdot(y,t):=(\epsilon_1\epsilon_1'y_1,\ldots,\epsilon_\ell\epsilon_\ell'y_\ell,\epsilon_{\ell+1}\epsilon_{\ell+1}'t).
$$
As $Z$ is non-singular, also $Y$ is non-singular. Let $X$ be the irreducible component of $Y$ that contains $M_{(1,\ldots,1)}$. Then $k:=m+\ell+1-n$ and the non-singular algebraic set $X$ satisfy the requirements in the statement.

In addition, each connected component of $X$ is Nash diffeomorphic to $\R^d$ and it has finitely many. Thus, $X$ is Nash diffeomorphic to $\R^d\times\{1,\ldots,s\}$, where $s$ is the number of connected components of $X$.

(ii) Let $\Ss_1,\ldots,\Ss_r$ be the Nash path-components of $\Ss$, which satisfy $\Ss=\bigcup_{i=1}^r\Ss_i$. By (i) there exist $m\geq1$ and for each $i=1,\ldots,r$ a non-singular algebraic set $X_i\subset\R^m$ that is Nash diffeomorphic to a disjoint union of affine subspaces of $\R^{d+1}$ (all of them affinely equivalent to $\R^{d_i}$ where $d_i:=\dim(\Ss_i)\leq d=\dim(\Ss)$) and satisfies $\pi(X_i)=\Ss_i$, where $\pi:\R^n\times\R^{m-n}\to\R^n,\ (x,y)\mapsto x$ is the projection onto the first $n$ coordinates. Consider the pairwise disjoint union $X:=\bigsqcup_{i=1}^rX_i\times\{i\}\subset\R^{m+1}$ and the projection $\pi':\R^n\times\R^{m+1-n}\times\R\to\R^n,\ (x,y,t)\mapsto x$. Then $X$ is a non-singular algebraic set, which is Nash diffeomorphic to a pairwise disjoint union of affine subspaces of $\R^{d+1}$ and satisfies $\pi(X)=\Ss$, as required.
\qed

The following example shows that Corollary \ref{consq2} is somehow sharp.

\begin{example}\label{nint}
Let $X\subset\R^n$ be a real algebraic curve Nash diffeomorphic to $\R$. Let $\pi:\R^n\to\R$ be a linear projection. Then $\pi(X)$ is not a proper open interval of $\R$.

Notice that $Y:=\cl_{\R\PP^n}(X)=X\cup\{p_{\infty}\}$ where $p_{\infty}$ is a certain point of the hyperplane of infinity of $\R\PP^n$. Observe that $\pi$ is the restriction to $\R^n$ of a central projection $\Pi:\R\PP^n\dasharrow\R\PP^1$ with center a projective subspace $L$ of $H_{\infty}(\R)$ of dimension $n-2$. 

If $p_{\infty}\not\in L$, then $\Pi(Y)$ is a compact subset of $\R\PP^1$ and $\Pi(p_{\infty})$ is the point at infinity of $\R\PP^1$. Thus, $\pi(X)$ is a closed semialgebraic subset of $\R$.

If $p_{\infty}\in L$, we assume by contradiction that $\pi(X)$ is a proper open interval of $\R$. Then $Y$ has at least two different tangents at $p_{\infty}$. However, as $X$ is Nash diffeomorphic to $\R$, the analytic germ $Y_{p_{\infty}}$ has only one branch, which is a contradiction. Thus, $\pi(X)$ is not a proper open interval of $\R$.
\end{example}

\begin{remark}\label{nint2}
Let $\Ss:=(0,1)\subset\R$. By Corollary \ref{consq2} there exist $n>0$ and an algebraic set $X\subset\R^{n+1}$ whose connected components are Nash diffeomorphic to $\R$ and a projection $\pi:\R^{n+1}\to\R$ such that $\pi(X)=(0,1)$. By Example \ref{nint} we know that $X$ is not connected.
\end{remark}

\appendix

\section{Miscellanea of ${\mathcal C}^2$ semialgebraic diffeomorphisms between intervals}\label{A}

We present examples of ${\mathcal S}^2$ diffeomorphisms between intervals required in Section \ref{s4} and \ref{s5}.

\begin{examples}\label{s2diffeo}
(i) The function $f_1:[\frac{1}{4},1)\to[0,1)$ given by
$$
f_1(x):=\begin{cases}
\frac{5}{12}(4x-1)&\text{ if $\frac{1}{4}\leq x\leq\frac{1}{2}$,}\\
\frac{1}{3}(64x^4-160x^3+144x^2)-17x+\frac{9}{4}&\text{ if $\frac{1}{2}\leq x\leq\frac{3}{4}$,}\\
x&\text{ if $\frac{3}{4}\leq x\leq1$}
\end{cases}
$$
is an ${\mathcal S}^2$ diffeomorphism such that $f_1|_{[\frac{3}{4},1)}=\id_{[\frac{3}{4},1)}$, see Figure \ref{fig16}.

\begin{center}
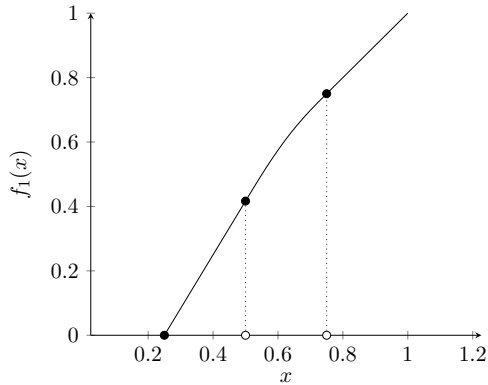

\begin{tikzpicture}[scale=0.75]
\begin{axis}[unit vector ratio=1 1 1,axis x line=bottom,
	axis y line=left,
	xlabel=$x$,ylabel=$f_1(x)$]
\addplot[domain=0.5:0.75,black] {(1/3)*(64*x^4-160*x^3+144*x^2)-17*x+9/4};
\addplot[domain=0.25:0.5,black] {5/12*(4*x-1)};
\addplot[domain=0.75:1,black] {x};
\draw[dotted] (axis cs:0.5,0) -- (axis cs:0.5,0.41666666);
\draw[dotted] (axis cs:0.75,0) -- (axis cs:0.75,0.75);
\addplot[holdot] coordinates{(0.5,0)(0.75,0)};
\addplot[soldot] coordinates{(0.25,0)(0.5,0.41666666)(0.75,0.75)};
\end{axis}
\end{tikzpicture}
\captionof{figure}{Graph of $f_1$.\label{fig16}}
\end{center}

(ii) The function $f_2:[\frac{1}{2},1)\to[0,1)$ given by 
$$
f_2(x):=\begin{cases}
\frac{11}{6}(2x-1)&\text{ if $\frac{1}{2}\leq x\leq\frac{5}{8}$,}\\
2048(\frac{x^4}{3}-\frac{11x^3}{12}+\frac{15x^2}{16})-863x+144&\text{ if $\frac{5}{8}\leq x\leq\frac{3}{4}$,}\\
x&\text{ if $\frac{3}{4}\leq x\leq1$}
\end{cases}
$$
is an ${\mathcal S}^2$ diffeomorphism such that $f_2|_{[\frac{3}{4},1)}=\id_{[\frac{3}{4},1)}$, see Figure \ref{fig17}.

\begin{center}
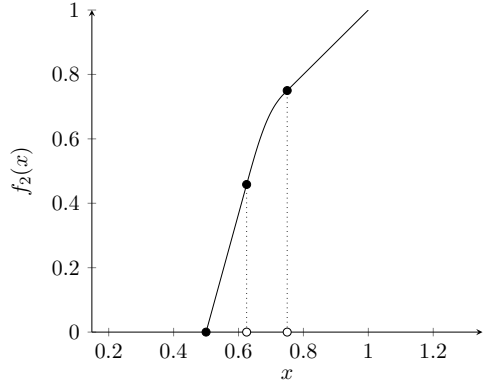

\begin{tikzpicture}[scale=0.75]
\begin{axis}[unit vector ratio=1 1 1,axis x line=bottom,
	axis y line=left,
	xlabel=$x$,ylabel=$f_2(x)$]
\addplot[domain=0.625:0.75,black] {682.66666*x^4-1877.33333*x^3+1920*x^2-863*x+144};
\addplot[domain=0.5:0.625,black] {11/6*(2*x-1)};
\addplot[domain=0.75:1,black] {x};
\draw[dotted] (axis cs:0.625,0) -- (axis cs:0.625,0.458333);
\draw[dotted] (axis cs:0.75,0) -- (axis cs:0.75,0.75);
\addplot[holdot] coordinates{(0.625,0)(0.75,0)};
\addplot[soldot] coordinates{(0.5,0)(0.625,0.458333)(0.75,0.75)};
\end{axis}
\end{tikzpicture}
\captionof{figure}{Graph of $f_2$.\label{fig17}}
\end{center}

(iii) The function $f_3:(-1,1)\to(-1,0)$ given by 
$$
f_3(x):=\begin{cases}
x&\text{ if $-1<x\leq-\frac{1}{2}$,}\\
\frac{1}{5}(16x^4+16x^3+x-1)&\text{ if $-\frac{1}{2}\leq x\leq0$,}\\
\frac{1}{5}(x-1)&\text{ if $0\leq x<1$}
\end{cases}
$$
is an ${\mathcal S}^2$ diffeomorphism such that $f_3|_{(-1,-\frac{1}{2}]}=\id_{(-1,-\frac{1}{2}]}$, see Figure \ref{fig18}.

\begin{center}
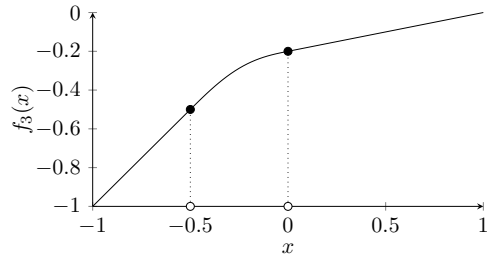

\begin{tikzpicture}[scale=0.75]
\begin{axis}[unit vector ratio*=1 1 1,axis x line=bottom,
	axis y line=left,
	xlabel=$x$,ylabel=$f_3(x)$]
\addplot[domain=-0.5:0,black] {-1/5+1/5*x+16/5*x^3+16/5*x^4};
\addplot[domain=-1:-0.5,black] {x};
\addplot[domain=0:1,black] {1/5*x-1/5};
\draw[dotted] (axis cs:-0.5,-1) -- (axis cs:-0.5,-0.5);
\draw[dotted] (axis cs:0,-1) -- (axis cs:0,-0.20);
\addplot[holdot] coordinates{(-0.5,-1)(0,-1)};
\addplot[soldot] coordinates{(0,-0.2)(-0.5,-0.5)};
\end{axis}
\end{tikzpicture}
\captionof{figure}{Graph of $f_3$.\label{fig18}}
\end{center}
\end{examples}

\section{Strict transform of Nash arcs under blow-up}\label{ap}

We recall here that the strict transform of an irreducible Nash curve germ under a sequence of blow-ups is again an irreducible Nash curve germ. We analyze first the images under Nash parameterization germs.

\begin{lem}\label{lac}
Let $g:=(g_1,\ldots,g_n):\R_0\to\R^n_0$ be an analytic map germ such that the germs $g(\{t>0\}_0)$ and $g(\{t<0\}_0)$ are different. Then $\im(g)$ is an irreducible analytic curve germ. In addition, if $g$ is Nash, then $\im(g)$ is an irreducible Nash curve germ.
\end{lem}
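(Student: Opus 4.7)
Plan. The strategy is to identify $\im(g)$ with the analytic set germ $C:=V(I)$, where $I:=\ker(g^*)\subset\mathcal{O}_{\R^n,0}$ is the ideal of analytic germs vanishing along $g$. The hypothesis forces $g$ to be non-constant, so $I$ is a prime ideal strictly contained in the maximal ideal, as the kernel of a nonzero homomorphism into the domain $\mathcal{O}_{\R,0}$. Since $\mathcal{O}_{\R^n,0}/I$ embeds into the discrete valuation ring $\mathcal{O}_{\R,0}$, its Krull dimension equals $1$, so $C$ is an irreducible analytic curve germ and $\im(g)\subset C$.

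Next I would factor $g$ through a primitive parameterization. Let $k:=\operatorname{ord}_0 g$. After a linear change of coordinates in $\R^n$ sending the leading vector to $e_1$, we have $g_1(t)=t^k u(t)$ with $u(0)>0$ and $g_j\in(t^{k+1})$ for $j\geq 2$; the analytic reparametrization $t\mapsto t\cdot u(t)^{1/k}$ then yields $g_1(t)=t^k$. Let $d$ be the largest positive integer such that each $g_i$ is an analytic function of $t^d$, so $d\mid k$ and $g=G\circ\varphi$ with $\varphi(t)=t^d$ and $G:\R_0\to\R^n_0$ primitive. The hypothesis rules out $d$ even, since otherwise $\varphi(\{t>0\}_0)=\varphi(\{t<0\}_0)$ would force $g(\{t>0\}_0)=g(\{t<0\}_0)$. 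Hence $d$ is odd, $\varphi$ is a bijective analytic germ, and $\im(g)=\im(G)$.

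The hard part will be the final identification $\im(G)=C$, i.e., the Puiseux-type parameterization theorem for irreducible real analytic curve germs. A first key point is that the complexification $C_{\C}$ is itself an irreducible complex curve germ: otherwise $I\cdot\mathcal{O}_{\C^n,0}$ would split as a product of two conjugate prime ideals, and the real points of $C$ would lie in the intersection of two distinct conjugate complex curve germs, hence be $0$-dimensional, contradicting $\dim\im(g)=1$. Once $C_{\C}$ is irreducible, its complex normalization $\pi:(\C,0)\to C_{\C}$ intertwines with complex conjugation (the alternative $z\mapsto -\bar z$ is excluded by the same dimension argument), so the fixed set $(\R,0)$ parametrizes $C$ via an analytic map $\nu:\R_0\to C$. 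Primitivity of $G$ together with the universal property of normalization identifies $G$ with $\nu$ up to an analytic local diffeomorphism of $\R_0$, giving $\im(G)=\nu(\R_0)=C$.

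For the Nash statement, each step above transposes to the Nash category: positive Nash units admit Nash $k$-th roots, Nash prime ideals define Nash germs, and an irreducible Nash curve germ admits a Nash normalization. Hence when $g$ is Nash, $\im(g)=C$ is an irreducible Nash curve germ.
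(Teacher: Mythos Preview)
Your argument is correct and ultimately converges with the paper's proof at the normalization step, but the packaging is different and one step is underjustified.

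The paper complexifies $g$ to $G:\C_0\to\C^n_0$, invokes Remmert's proper mapping theorem to see that $Z_0:=G(\C_0)$ is an irreducible analytic curve germ, sets $X_0:=Z_0\cap\R^n_0$, chooses a conjugation-invariant normalization $\theta:\C_0\to Z_0$, and factors $G=\theta\circ F$. The hypothesis forces $\operatorname{ord} F$ odd, so $F|_{\R_0}$ is surjective and $g(\R_0)=\theta(\R_0)=X_0$. That is the whole proof. Your approach replaces Remmert by the algebraic definition $C:=V(\ker g^*)$ and adds an intermediate reduction to a primitive parametrization $G$; these two curves coincide, so nothing is lost, and you avoid citing Remmert. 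On the other hand the primitive step is a genuine detour: the paper applies the parity argument once, directly to the normalization factor $F$, whereas you use parity to get $d$ odd and then need a second, separate argument (primitivity $\Rightarrow\psi$ is a diffeomorphism) for the factor through $\nu$.

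That second argument is the place that needs work. As stated, ``primitivity of $G$ identifies $G$ with $\nu$ up to a local diffeomorphism'' is not immediate: primitivity in your sense (no $e>1$ with each $G_i$ a function of $t^e$) is a coordinate-dependent condition, and the factorization $G=\nu\circ\psi$ comes from the normalization, not from your $t\mapsto t^e$ ansatz. What makes it true is that you have already arranged $G_1(s)=s^{k/d}$: over $\C$ one can write $\nu_{1,\C}=\rho^m$ with $\rho$ a local biholomorphism, and then $\rho(\Psi(s))^m=s^{k/d}$ forces $\Psi(s)=\rho^{-1}(\omega s^{p})$ with $p=\operatorname{ord}\Psi$; hence $\Psi$, and therefore every $G_i$, is an analytic function of $s^p$, so primitivity gives $p=1$. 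You should spell this out. Alternatively, you can drop the primitive reduction entirely and argue as the paper does: factor $g_\C=\pi\circ F$ directly, and use $g(\{t>0\}_0)\neq g(\{t<0\}_0)$ to conclude $\operatorname{ord} F$ is odd. A small aside: your parenthetical about excluding $z\mapsto -\bar z$ is misplaced; that involution is holomorphically conjugate to $z\mapsto\bar z$ (via $z\mapsto iz$), so there is nothing to exclude.
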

\begin{proof}
Let $G:\C_0\to\C^n_0$ be the complex analytic map germ induced by $g$. As $G$ is not identically zero, $G^{-1}(0)=\{0\}$. By \cite[Thm.3.4.24]{jp} $G$ is a finite analytic map germ. Using a finite representative of $G:\C_0\to\C^n_0$ and Remmert's Theorem \cite[Thm.VII.2.2]{na} we deduce that $Z_0:=G(\C_0)$ is a $1$-dimensional analytic germ. Consequently, $X_0:=Z_0\cap\R^n_0$ is an irreducible analytic curve germ. Let $\theta:\C_0\to Z_0$ be a normalization of $Z_0$ that is invariant under conjugation. As $X_0$ is coherent, $\theta(\R_0)=X_0$. There exists an analytic map germ $F:\C_0\to\C_0$ such that $G=\theta\circ F$. As $G$ and $\theta$ are invariant under conjugation, also $F$ is invariant under conjugation. Then $F$ is a univariate analytic function germ of order $k\geq1$ with real coefficients. As $g(\{t>0\}_0)$ and $g(\{t<0\}_0)$ are different, $k$ is odd and $g(\R_0)=X_0$ is an irreducible analytic curve germ. The Nash case is now straightforward.
\end{proof}

\begin{lem}\label{irredgamma}
Let $\Gamma_0\subset\R^n_0$ be an irreducible Nash curve germ and let $\Gamma$ be a representative. Let $f:M\to N$ be a Nash map between Nash manifolds and let $Z\subset M$ and $Y\subset N\subset\R^n$ be Nash subsets such that $0\in Y$. Assume that $\Gamma_0\subset N$, $\Gamma_0\not\subset Y$, the restriction $f|_{f^{-1}(\Gamma)\cap Z}:f^{-1}(\Gamma)\cap Z\to\Gamma$ is proper and the restriction $f|_{f^{-1}(\Gamma\setminus Y)\cap Z}:f^{-1}(\Gamma\setminus Y)\cap Z\to\Gamma\setminus Y$ is bijective. Denote $\Lambda:=\cl(f^{-1}(\Gamma\setminus Y)\cap Z)\cap f^{-1}(\Gamma)\cap Z$. Then there exists a point $p\in f^{-1}(0)\cap Z$ such that $\Lambda_p$ is an irreducible Nash curve germ, $\Lambda_p\cap f^{-1}(Y)=\{p\}$ and $f(\Lambda_p)=\Gamma_0$.
\end{lem}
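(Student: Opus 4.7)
My plan is to construct $\Lambda_p$ explicitly as the image of a Nash arc built by inverting $f$ along a parameterization of $\Gamma_0$. Since $\Gamma_0$ is an irreducible Nash curve germ, its complexification admits a conjugation-invariant normalization $\theta:\C_0\to\widetilde{\Gamma}_0$, so $\gamma:=\theta|_{\R_0}:\R_0\to\Gamma_0$ is a bijective Nash parameterization with $\gamma(0)=0$; after shrinking a representative I may assume $\gamma^{-1}(Y)=\{0\}$. For $t\in(-\varepsilon,\varepsilon)\setminus\{0\}$ the bijectivity hypothesis supplies a unique $\beta(t)\in f^{-1}(\gamma(t))\cap Z\cap f^{-1}(\Gamma\setminus Y)$, and $\beta$ is continuous and semialgebraic. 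Properness of $f|_{f^{-1}(\Gamma)\cap Z}$ confines $\beta$ near $0$ to a compact subset of $Z$, so the one-sided restrictions $\beta^+:=\beta|_{(0,\varepsilon)}$ and $\beta^-:=\beta|_{(-\varepsilon,0)}$ have unique (semialgebraic) limits $p^+,p^-\in f^{-1}(0)\cap Z$.

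The heart of the proof is to show $p^+=p^-$ by combining a Puiseux expansion with bijectivity. Let $k\geq 1$ be the minimal integer for which the reparameterization $\widetilde\beta^+(s):=\beta^+(s^k)$ extends to a Nash map $(-\delta,\delta)\to M$; such a $k$ exists because $\beta^+$ is a semialgebraic arc continuous at $0$. The Nash equations defining $Z$ and the identity $f\circ\widetilde\beta^+=\gamma\circ(s\mapsto s^k)$ both hold for $s>0$, so analytic continuation propagates them to all of $(-\delta,\delta)$; hence $\widetilde\beta^+(s)\in Z$ and $f(\widetilde\beta^+(s))=\gamma(s^k)$ for every $s$. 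Suppose $k$ were even: then $s^k>0$ for every $s\neq 0$, $\gamma(s^k)$ lies in the positive half of $\Gamma$, and bijectivity forces $\widetilde\beta^+(s)=\beta^+(s^k)=\widetilde\beta^+(-s)$, so $\widetilde\beta^+$ is an even Nash function of $s$. Its power series then factors through $s^2$, making $\beta^+(t)$ Puiseux-expandable with denominator at most $k/2$, contradicting minimality of $k$. Therefore $k$ is odd, and for $s<0$ we have $s^k<0$; bijectivity gives $\widetilde\beta^+(s)=\beta^-(s^k)$, and taking $s\to 0^-$ yields $p^+=\widetilde\beta^+(0)=p^-$. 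Call this common point $p$.

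To conclude, set $\widetilde\beta:=\widetilde\beta^+:(-\delta,\delta)\to M$. This is a Nash map germ at $0$ sending $0$ to $p$, and $\widetilde\beta(\{s>0\})$, $\widetilde\beta(\{s<0\})$ project injectively under $f$ to the disjoint halves of $\Gamma_0$, hence are distinct. Lemma~\ref{lac} (applied after translating the target to $\R^m_0$) now yields that $\im(\widetilde\beta)$ is an irreducible Nash curve germ at $p$, and this germ coincides with $\Lambda_p$ because $f^{-1}(\Gamma\setminus Y)\cap Z$ is exactly $\widetilde\beta((-\delta,\delta)\setminus\{0\})$ near $p$. Finally $f(\Lambda_p)=\gamma(\{s^k:s\in(-\delta,\delta)\})=\Gamma_0$, and $\Lambda_p\cap f^{-1}(Y)=\{p\}$ follows from $\gamma^{-1}(Y)=\{0\}$. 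The principal obstacle is the parity argument for $k$: it is what upgrades the purely topological bound $\Lambda\cap f^{-1}(0)\subset\{p^+,p^-\}$ to the single point needed for one Nash arc to cover all of $\Gamma_0$.
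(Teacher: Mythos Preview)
Your argument is correct. The route differs from the paper's, though both rest on the same bijectivity constraint.

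The paper does not build $\widetilde\beta$ explicitly. Instead, having picked any accumulation point $p$ of the lift of one half-branch (say of $g^{-1}(\gamma((0,1)))$), it lets $X_{1,p}$ be the \emph{smallest Nash germ} at $p$ containing that half; this is automatically an irreducible Nash curve germ. Composing a normalization of $X_{1,p}$ with $f$ gives a Nash map germ $\R_0\to\Gamma_0$, and Lemma~\ref{lac} applied \emph{downstairs} (with bijectivity ensuring the two images are distinct) shows that $f(X_{1,p})$ is itself a Nash germ inside $\Gamma_0$, hence all of $\Gamma_0$. This forces the second half of $X_{1,p}$ to cover the negative branch of $\Gamma_0$, so $p$ also lies in the closure of $g^{-1}(\gamma((-1,0)))$; uniqueness of $p$ and $\Lambda_p=X_{1,p}$ follow.

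Your approach instead works \emph{upstairs}: you take the Puiseux denominator $k$ of the one-sided lift $\beta^+$, extend $\widetilde\beta^+(s)=\beta^+(s^k)$ across $0$, and use bijectivity to rule out even $k$ (which would make $\widetilde\beta^+$ even and contradict minimality). With $k$ odd, the extended arc automatically parameterizes the negative lift on $s<0$, giving $p^+=p^-$ directly; Lemma~\ref{lac} is then invoked only at the end, on $\widetilde\beta$ itself. Your version is more constructive and makes the role of parity explicit; the paper's is shorter and more structural, exploiting that the Nash-Zariski closure of a half-arc already ``knows'' about the other half. Both are valid.
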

\begin{proof}
We may assume $\Gamma\cap Y=\{0\}$ and $\Gamma=\im(\gamma)$ where $\gamma:(-1,1)\to N$ is a Nash arc such that $\gamma(0)=0$ and $\gamma$ is a homeomorphism onto its image. We claim: \em $\Lambda\cap f^{-1}(Y)\cap Z$ is a singleton $\{p\}$ and $\Lambda_p$ is an irreducible Nash curve germ\em.

The map $g:=f|_{\Lambda}:\Lambda\to\Gamma$ is proper. Thus, the restriction $g|_{\Lambda\setminus g^{-1}(0)}:\Lambda\setminus g^{-1}(0)\to\Gamma\setminus\{0\}$ is proper and bijective, so it is a semialgebraic homeomorphism. As $g$ is proper, $g^{-1}(0)\neq\varnothing$. Let $p\in g^{-1}(0)$ and observe that $p\in\cl(g^{-1}(\gamma((0,1))))\cup\cl(g^{-1}(\gamma((-1,0))))$.

Assume $p\in\Tt:=\cl(g^{-1}(\gamma((0,1))))$. Let $X_{1,p}$ be the smallest Nash germ that contains $\Tt_p$. Observe that $X_{1,p}\cap f^{-1}(Y)_p=\{p\}$ and $\gamma((0,1))_0\subset f(X_{1,p})\subset\Gamma_0$. As $f|_{f^{-1}(\Gamma\setminus Y)}:f^{-1}(\Gamma\setminus Y)\to\Gamma\setminus Y$ is bijective, $f(X_{1,p})$ is by Lemma \ref{lac} a Nash germ. Thus, $f(X_{1,p})=\Gamma_0$ and $p\in\cl(g^{-1}(\gamma((-1,0))))$. As $\Gamma$ is semialgebraically homeomorphic to $(-1,1)$, we conclude $\Lambda\setminus g^{-1}(0)$ is semialgebraically homeomorphic to $(-1,1)\setminus\{0\}$. Consequently, $\Lambda\cap f^{-1}(Y)=\{p\}$ is a singleton and $\Lambda_p=X_{1,p}$ is an irreducible Nash curve germ, as required.
\end{proof}

\section{Algebraic structure of a Nash normal crossing divisor}\label{app}

As an application of classical algebraization Artin--Mazur's result for Nash functions \cite[\S8.4]{bcr}, we show that if $Y$ is a Nash normal crossing divisor of a Nash manifold $M\subset\R^n$, there exist a Nash immersion of $M$ in some affine space $\R^m$ such that $M$ is a union of some connected components of its Zariski closure $V$ in $\R^m$ (a non-singular real algebraic subset of $\R^m$), the Zariski closure $X$ of $Y$ in $\R^m$ is a normal crossings divisor of $V$ and $M\cap X=Y$.

\renewcommand\theparagraph{\thethm.\arabic{paragraph}}

\begin{lem}\label{retoque}
Let $M\subset\R^n$ be a Nash manifold of dimension $d$ and let $Y\subset M$ be a Nash normal crossings divisor of $M$. Then up to a suitable Nash embedding of $M$ in some affine space $\R^m$ we may assume: 
\begin{itemize}
\item[(i)] $M$ is a union of connected components of its Zariski closure $V$ in $\R^m$, which is in addition a non-singular real algebraic subset of $\R^m$ of pure dimension $d$. 
\item[(ii)] The Zariski closure $X$ of $Y$ in $\R^m$ is a normal crossings divisor of $V$ and $M\cap X=Y$. 
\end{itemize}
\end{lem}
\begin{proof}
Let $Y_1,\ldots,Y_r$ be the irreducible components of $Y$ as a Nash subset of $M$, which are non-singular Nash hypersurfaces of $M$ in general position. As ${\mathcal N}(M)$ is a noetherian ring, there exist finitely many Nash functions $g_{ij}\in {\mathcal N}(M)$ such that $I(Y_i)=(g_{i1},\ldots,g_{i\ell}){\mathcal N}(M)$. Let $y\in Y$ and assume (after reordering the indices $i$ if necessary) that $y\in Y_i$ exactly for $i=1,\ldots,e$. As $Y$ is a Nash normal crossings divisor of $M$, we may assume (after reordering the indices $j$ if necessary) that the linear forms $d_yg_{11},\ldots,d_yg_{e1}:T_yM\to\R$ are linearly independent and $Y$ coincides with $\{g_{11}\cdots g_{e1}=0\}$ in some neighborhood of $y$ in $M$, see \cite[Prop.5.1]{fgr}.

By Artin--Mazur's description of Nash functions \cite[\S8.4]{bcr} $M$ is up to a Nash diffeomorphism an open and closed subset of a non-singular real algebraic set $V_0\subset\R^m$ of pure dimension $d$, which is in addition the Zariski closure of $M$, and the Nash functions $g_{ik}$ are restrictions to $M$ of polynomial functions $G_{ik}$ on $V_0$. Consider the real algebraic sets
$$
X_{0i}:=\{x\in V_0:\ G_{i1}(x)=0,\ldots,G_{i\ell}(x)=0\}\quad\text{and}\quad X_0:=X_{01}\cup\cdots\cup X_{0r}.
$$
The obstructions for $X_0$ to be a Nash normal crossings divisor of $V_0$ concentrate outside $M$, because $X_{0i}\cap M=Y_i$ for $i=1,\ldots,r$ and $Y$ is a Nash normal crossings divisor. In addition, $X_0\cap M=Y_1\cup\cdots\cup Y_r=Y$.

\paragraph{}\label{fin}
We claim: \em There exists an algebraic subset $Z$ of $V_0$ satisfying the following: 
\begin{itemize}
\item[(1)] $M\cap Z=\varnothing$. 
\item[(2)] Pick $x\in X_0\setminus Z$ and suppose (after reordering the indices $i$ if necessary) that $x\in X_{0i}$ exactly for $i=1,\ldots,e$. Then we may assume (after reordering the indices $j$ if necessary) that the linear forms $d_xG_{11},\ldots,d_xG_{e1}:T_yV_0\to\R$ are linearly independent and there exists an open Zariski neighborhood $U$ of $x$ in $V_0\setminus Z$ such that $X_0\cap U=\{G_{11}\cdots G_{e1}=0\}\cap U$.
\end{itemize}\em

Assume statement \ref{fin} proved for a while and let $h\in\R[\x]:=\R[\x_1,\ldots,\x_m]$ be a polynomial equation of $Z$ in $\R^m$. Consider the algebraic sets
$$
V:=\{(x,y)\in V_0\times\R:\ yh(x)=1\}\quad\text{and}\quad X:=\{(x,y)\in X_0\times\R:\ yh(x)=1\},
$$
which are biregularly equivalent to the constructible sets $V_0\setminus Z$ and $X_0\setminus Z$ via the projection onto the first $m$ coordinates. Using \cite[Cor.4.3.18]{jp} the reader can check readily that $V$ is a non-singular algebraic subset of $\R^{m+1}$ that, up to a Nash diffeomorphism, contains $M$ as an open and closed subset and $X$ is a normal crossing divisor of $V$ such that $X\cap M=Y$. 

\paragraph{}
Let $Z_0$ be the union of the singular sets $\Sing(\ol{Y_i}^{\rm zar})$ for $i=1,\ldots,r$ and all the irreducible components of $X_0$ different from $\ol{Y_1}^{\rm zar},\ldots,\ol{Y_r}^{\rm zar}$ (which are irreducible algebraic sets because $Y_1,\ldots,Y_r$ are irreducible Nash sets). The irreducible components of $X_0$ different from $\ol{Y_1}^{\rm zar},\ldots,\ol{Y_r}^{\rm zar}$ do not meet $M$ because: $X_0\cap M=Y$ and for each $y\in Y$ we assume $y\in Y_i$ exactly for $i=1,\ldots,e$ (after reordering the indices $i$ if necessary), the linear forms $d_yg_{11},\ldots,d_yg_{e1}:T_yM\to\R$ are linearly independent (after reordering the indices $j$ if necessary) and $Y$ coincides with $\{g_{11}\cdots g_{e1}=0\}$ in some neighborhood of $y$ in $M$ (recall that $g_{ij}=G_{ij}|_M$ for each pair of indices $i,j$). We have 
$$
Y_i\subset\ol{Y_i}^{\rm zar}\cap M\subset X_{0i}\cap M=Y_i\quad\text{and}\quad\Sing(\ol{Y_i}^{\rm zar})\cap M\subset\Sing(X_{0i})\cap M=\varnothing.
$$ 
Consequently, $Z_0\cap M=\varnothing$.

For each $i=1,\ldots,r$ let $G_{i1},\ldots,G_{i\ell},G_{i,\ell+1}\ldots,G_{is}$ be a system of generators of the ideal $I_\R(Y_i)=I_\R(\ol{Y}^{\rm zar})$ of polynomials in $\R[\x]$ vanishing identically on $Y_i$. Let $P_1,\ldots,P_s$ be a system of generators of the ideal $I_\R(M)=I_\R(V_0)$ of polynomials in $\R[\x]$ vanishing identically on $M$. As $V_0$ is non-singular and has dimension $d$, the rank
$$
\rk\{d_xP_1,\ldots,d_xP_s\}=m-d
$$
for each $x\in V_0$. Denote $I_r:=\{1,\ldots,r\}$ and for each non-empty subset $I\subset I_r$ define
$$
Z_I:=\Big\{x\in\bigcap_{i\in I}\ol{Y_i}^{\rm zar}:\ \rk\{d_xG_{ij},d_xP_j:T_x\R^m\to\R:\ i\in I,\ j=1,\ldots,s\}<m-d+\#I\Big\}\subset X_0,
$$
where $\#I$ denotes the cardinal of $I$, and observe that $Y\cap Z_I=\varnothing$. Thus, $Z:=Z_0\cup\bigcup_{\varnothing\neq I\subset I_r}Z_I$ is an algebraic subset of $X_0\subset V_0$. If $x\in X_0\setminus Z$, we define $I_x:=\{i\in I_r:\ x\in\ol{Y_i}^{\rm zar}\}$. Let us check: \em $Z\cap M=\varnothing$ and for each $x\in X_0\setminus Z$ the rank 
$$
\rk\{d_xG_{ij},d_xP_j(x):T_x\R^m\to\R:\ i\in I_x,\ j=1,\ldots,s\}=m-d+\#I_x
$$ 
and there exists an open Zarisiki neighborhood $U$ of $x$ in $V_0$ such that $X_0\cap U=\{G_{1j_1}\cdots G_{rj_r}=0\}\cap U$ for some indices $1\leq j_1,\ldots,j_r\leq s$ (depending on $x$)\em.

Observe first that
$$
M\cap Z=M\cap\bigcup_{\varnothing\neq I\subset I_r}Z_I=M\cap X_0\cap \bigcup_{\varnothing\neq I\subset I_r}Z_I=Y\cap \bigcup_{\varnothing\neq I\subset I_r}Z_I=\varnothing.
$$
Fix a point $x\in X_0\setminus Z$. As $x\not\in Z_{I_x}$, we have
$$
\rk\{d_xG_{ij},d_xP_j:T_x\R^m\to\R:\ i\in I_x,\ j=1,\ldots,s\}=(m-d)+\#I_x.
$$
We may assume (after reordering the indices $i,j$ if necessary) that $I_x:=\{1,\ldots,e\}$ and
\begin{equation}\label{gs}
\rk\{d_xG_{11},\ldots,d_xG_{e1},d_xP_1,\ldots,d_xP_{m-d}:T_x\R^m\to\R\}=(m-d)+e.
\end{equation}
Let $Z_x$ be the union of the irreducible components of the real algebraic set $\{G_{11}\cdots G_{e1}=0\}$ different from $\ol{Y_1}^{\rm zar},\ldots,\ol{Y_e}^{\rm zar}$ and observe that $\{G_{11}\cdots G_{e1}=0\}\setminus Z_x=(\bigcup_{i=1}^e\ol{Y_i}^{\rm zar})\setminus Z_x$. Condition \eqref{gs} guarantees that $x\not\in Z_x$. Consider the open Zariski neighborhood 
\begin{multline*}
U:=\{y\in V_0:\ \rk\{d_yG_{11},\ldots,d_yG_{e1},d_yP_1,\ldots,d_yP_{m-d}:T_y\R^m\to\R\}=(m-d)+e\}\\
\setminus\Big(Z_x\cup Z\cup\bigcup_{i=e+1}^r\ol{Y_i}^{\rm zar}\Big)
\end{multline*}
of $x$ in $V_0\setminus Z$. Observe that $X_0\cap U=(\bigcup_{i=1}^e\ol{Y_i}^{\rm zar})\cap U=\{G_{11}\cdots G_{e1}=0\}\cap U$, as required.
\end{proof}

\bibliographystyle{amsalpha}

\begin{thebibliography}{FGU2}

\bibitem[AG1]{ag1} C. Andradas, J.M. Gamboa: A note on projections of real algebraic varieties. {\em Pacific J. Math.} {\bf115} (1984), no. 1, 1--11.

\bibitem[AG2]{ag2} C. Andradas, J.M. Gamboa: On projections of real algebraic varieties. {\em Pacific J. Math.} {\bf121} (1986), no. 2, 281--291.

\bibitem[BCR]{bcr} J. Bochnak, M. Coste, M.-F. Roy: Real algebraic geometry. {\em Ergeb. Math. } {\bf 36}, Springer-Verlag, Berlin (1998).

\bibitem[BFR]{bfr} E. Baro, J.F. Fernando, J.M. Ruiz: Approximation on Nash sets with monomial singularities. \em Adv. Math. \em {\bf262} (2014), 59--114.

\bibitem[Fe1]{fe1} J.F. Fernando: On the one dimensional polynomial and regular images of $\R^n$. \em J. Pure Appl. Algebra \em {\bf 218} (2014), no. 9, 1745--1753.

\bibitem[Fe2]{fe2} J.F. Fernando: On the substitution theorem for rings of semialgebraic functions. {\em J. Inst. Math. Jussieu} {\bf14} (2015), no. 4, 857--894.

\bibitem[FG1]{fg1} J.F. Fernando, J.M. Gamboa: Polynomial images of $\R^n$. {\em J. Pure Appl. Algebra} {\bf179} (2003), no. 3, 241--254.

\bibitem[FG2]{fg2} J.F. Fernando, J.M. Gamboa: Polynomial and regular images of $\R^n$. {\em Israel J. Math.} {\bf153} (2006), 61--92.

\bibitem[FG3]{fg3} J.F. Fernando, J.M. Gamboa: On the irreducible components of a semialgebraic set. \em Internat. J. Math. \em {\bf23} (2012), no. 4, 1250031, 40 pp.

\bibitem[FGR]{fgr} J.F. Fernando, J.M. Gamboa, J.M. Ruiz: Finiteness problems on Nash manifolds and Nash sets. \em J. Eur. Math. Soc. \em (JEMS) {\bf16} (2014), no. 3, 537--570.

\bibitem[FGU1]{fgu1} J.F. Fernando, J.M. Gamboa, C. Ueno: On convex polyhedra as regular images of $\R^n$. {\em Proc. London Math. Soc.} (3) {\bf103} (2011), 847--878.

\bibitem[FGU2]{fgu2} J.F. Fernando, J.M. Gamboa, C. Ueno: Sobre las propiedades de la frontera exterior de las im\'agenes polin\'omicas y regulares de $\R^n$. {Contribuciones Matem\'aticas en homenaje a Juan Tarr\'es}, Marco Castrill\'on et. al. editores. 159--178, UCM, (2012).

\bibitem[FGU3]{fgu3} J.F. Fernando, J.M. Gamboa, C. Ueno: Unbounded convex polyhedra as polynomial images of $\R^n$. {\em Ann. Sc. Norm. Super. Pisa Cl. Sci.} (5) {\bf XXX} (2018), no. X, XXX-XXX..

\bibitem[FGU4]{fgu4} J.F. Fernando, J.M. Gamboa, C. Ueno: Polynomial, regular and Nash images of Euclidean spaces. Ordered algebraic structures and related topics, 145--167, {\em Contemp. Math.}, {\bf697}, Amer. Math. Soc., Providence, RI (2017).

\bibitem[FU1]{fu1} J.F. Fernando, C. Ueno: On complements of convex polyhedra as polynomial and regular images of $\R^n$. {\em Int. Math. Res. Not.} IMRN {\bf 2014}, no. 18, 5084--5123.

\bibitem[FU2]{fu2} J.F. Fernando, C. Ueno: On the complements of $3$-dimensional convex polyhedra as polynomial images of $\R^3$. {\em Internat. J. Math.} {\bf 25} (2014), no. 7, 1450071 (18 pages).

\bibitem[FU3]{fu3} J.F. Fernando, C. Ueno: On the set of points at infinity of a polynomial image of $\R^n$. {\em Discrete Comput. Geom.} {\bf 52} (2014), no. 4, 583--611.

\bibitem[FU4]{fu4} J.F. Fernando, C. Ueno: On complements of convex polyhedra as polynomial images of $\R^n$. \em Preprint RAAG \em (2015). {\tt arXiv:1412.5107}

\bibitem[FU5]{fu5} J.F. Fernando, C. Ueno: A short proof for the open quadrant problem.
{\em J. Symbolic Comput.} {\bf79} (2017), no. 1, 57--64.

\bibitem[G]{g} J.M. Gamboa: Algebraic images of the real plane. {\em Reelle algebraische Geometrie}, June, $10^{\rm th}-16^{\rm th}$ (1990), Oberwolfach.

\bibitem[Ha]{ha} A. Harnack: \"Uber die Vielheiligkeit der ebenen algebraischen Kurven. {\em Math. Ann.} {\bf10} (1876), 189--198.

\bibitem[H]{h} M.W. Hirsch: Differential topology. \em Graduate Texts in Mathematics\em, {\bf 33}. Springer--Verlag, New York--Heidelberg--Berlin (1976).

\bibitem[Hi1]{hi} H. Hironaka: Resolution of singularities of an algebraic variety over a field of characteristic zero. I, II. \em Ann. of Math. \em (2) {\bf79} (1964), 109--203; ibid. (2) {\bf79} (1964) 205--326.

\bibitem[Hi2]{hi2} H. Hironaka: Stratification and flatness. Real and complex singularities (\em Proc. Ninth Nordic Summer School/NAVF Sympos. Math.\em, Oslo, 1976), pp. 199--265. Sijthoff and Noordhoff, Alphen aan den Rijn (1977). 

\bibitem[HPV]{hpv} J. Hubbard, P. Papadopol, V. Veselov: A compactification of H\'enon mappings in $\C^2$ as dynamical systems. \em Acta Math. \em {\bf184} (2000), no. 2, 203--270. 

\bibitem[JP]{jp} T. de Jong, G. Pfister: Local Analytic Geometry, basic theory and applications, \em Advanced Lectures in Mathematics\em. Braunschweig/Wiesbaden, Vieweg (2000).

\bibitem[K]{k} K. Kurdyka: Ensembles semi-alg\'ebriques sym\'etriques par arcs. {\em Math. Ann.} {\bf282} (1988), no. 3, 445--462.

\bibitem[M]{m} T. Mostowski: Some properties of the ring of Nash functions. {\em Ann. Scuola Norm. Sup. Pisa Cl. Sci.} (4) {\bf3} (1976), no. 2, 245--266. 

\bibitem[Mo]{m2} T. S. Motzkin: The real solution set of a system of algebraic inequalities is the projection of a hypersurface in one more dimension. 1970 {\em Inequalities, II} (Proc. Second Sympos., U.S. Air Force Acad., Colo., 1967) pp. 251--254 Academic Press, New York. 

\bibitem[Mu]{mu} J.R. Munkres: Elementary differential topology. Lectures given at Massachusetts Institute of Technology, Fall, 1961. \em Annals of Mathematics Studies\em, {\bf54} Princeton University Press, Princeton, N.J (1963).

\bibitem[Na]{na} R. Narasimhan: Introduction to the theory of analytic spaces, {\em Lecture Notes in Math.} {\bf 25,} Springer-Verlag, Berlin Heidelberg New York (1966). 

\bibitem[P]{pe} D. Pecker: On the elimination of algebraic inequalities. {\em Pacific J. Math.} {\bf146} (1990), no. 2, 305--314. 

\bibitem[Sh]{sh} M. Shiota: Nash manifolds. \em Lecture Notes in Mathematics\em, {\bf 1269}. Springer-Verlag, Berlin (1987).

\bibitem[S]{ds} F.P. Di Stefano: Oriented blowing-up. {\em Ann. Univ. Ferrara Sez.} VII (N.S.) {\bf24} (1979), 25--35. 

\bibitem[St]{st} J. Stasica: Sth points of a semialgebraic set. {\em Ann. Pol. Math.} {\bf82} (2003), no.2, 149--153.

\bibitem[T]{ta} A. Tarski: A decisi\'on method for elementary algebra and geometry. Prepared for publication by J.C.C. Mac Kinsey, Berkeley (1951).

\bibitem[U1]{u1} C. Ueno: A note on boundaries of open polynomial images of $\R^2$. {\em Rev. Mat. Iberoam.} {\bf24} (2008), no. 3, 981--988.

\bibitem[U2]{u2} C. Ueno: On convex polygons and their complements as images of regular and polynomial maps of $\R^2$. {\em J. Pure Appl. Algebra} {\bf216} (2012), no. 11, 2436--2448.

\end{thebibliography}

\end{document}